\documentclass[12pt]{article}

\usepackage{amssymb, amsthm, amsmath}	
\usepackage{enumitem, hyperref}
\usepackage{comment}
\usepackage{url}
\usepackage{color}
\usepackage{float}
\usepackage[margin=1.2in]{geometry}

\usepackage{longtable}

\usepackage{tikz}
\usetikzlibrary{arrows, chains, positioning, shapes.symbols,calc}
\usepackage{tikz-3dplot}
\tdplotsetmaincoords{70}{110}
\usepackage{tikz-cd}

\numberwithin{equation}{section}
\setcounter{tocdepth}{2}

\makeatletter
\renewcommand{\subsubsection}[1]{%
	\vspace{.5em}
	\noindent\textbf{#1.}%
}
\makeatother


\theoremstyle{plain}
\newtheorem{theorem}{Theorem}[section]
\newtheorem{lemma}[theorem]{Lemma}
\newtheorem{proposition}[theorem]{Proposition}
\newtheorem{corollary}[theorem]{Corollary}

\theoremstyle{definition}
\newtheorem{definition}[theorem]{Definition}
\newtheorem{notation}[theorem]{Notation}
\newtheorem{remark}[theorem]{Remark}
\newtheorem{example}[theorem]{Example}



\newcommand{\C}{\mathbb{C}}

\newcommand{\R}{\mathbb{R}}
\newcommand{\Z}{\mathbb{Z}}
\renewcommand{\P}{\mathbb{P}}
\newcommand{\CP}{\mathbb{CP}}
\newcommand{\inn}{\langle \cdot, \cdot \rangle}


\newcommand{\mE}{\mathcal{E}}

\newcommand{\mG}{\mathcal{G}}

\newcommand{\mO}{\mathcal{O}}
\newcommand{\mH}{\mathcal{H}}

\newcommand{\mT}{\mathcal{T}}
\newcommand{\mL}{\mathcal{L}}
\newcommand{\mN}{\mathcal{N}}

\newcommand{\mR}{\mathcal{R}}
\newcommand{\mS}{\mathcal{S}}

\newcommand{\mV}{\mathcal{V}}
\newcommand{\mLi}{\mathcal{L}_{\textup{irr}}}

\newcommand{\ba}{\mathbf{a}}


\newcommand{\tH}{\widetilde{H}}

\newcommand{\tx}{\widetilde{x}}

\newcommand{\tV}{\widetilde{V}}
\newcommand{\tn}{\widetilde{\nabla}}

\newcommand{\tA}{\widetilde{A}}

\newcommand{\ts}{\widetilde{s}}

\newcommand{\ta}{\tilde{a}}

\newcommand{\tomega}{\widetilde{\omega}}

\newcommand{\tGamma}{\widetilde{\Gamma}}

\newcommand{\txA}{\textup{A}}
\newcommand{\txB}{\textup{B}}
\newcommand{\txC}{\textup{C}}
\newcommand{\gen}{\mathrm{gen}}


\DeclareMathOperator{\rk}{rank}
\DeclareMathOperator{\spn}{span}
\DeclareMathOperator{\codim}{codim}
\DeclareMathOperator{\parch}{par-ch}
\DeclareMathOperator{\parc}{par-c}
\DeclareMathOperator{\pardeg}{par-deg}
\DeclareMathOperator{\ch}{ch}

\DeclareMathOperator{\Gr}{Gr}

\DeclareMathOperator{\Bl}{Bl}
\DeclareMathOperator{\Res}{Res}
\DeclareMathOperator{\End}{End}
\DeclareMathOperator{\tr}{tr}
\DeclareMathOperator{\diag}{diag}

\DeclareMathOperator{\Sing}{Sing}

\DeclareMathOperator{\GL}{GL}

\DeclareMathOperator{\Id}{Id}
\DeclareMathOperator{\Hol}{Hol}

\DeclareMathOperator{\dev}{dev}
\DeclareMathOperator{\diam}{diameter}

\newcommand{\newpar}{\vspace{.5em}}

\newcommand{\p}{\partial}

\newcommand{\hol}{\textup{(holomorphic)}}

\newcommand{\inclusiondot}{\mathrel{\ooalign{$\subset$\cr\hfil\scalebox{1.2}[1]{$\cdot$}\hfil\cr}}}

\newcommand{\Nes}{\mathbf{Nes}}

\newcommand{\Address}{{
		\bigskip
		\begin{flushright}
			\textsc{Loughborough University, Department of Mathematics\\
				Schofield Building, LE11 3TU, United Kingdom} \\
			\small{\href{mailto:m.de-borbon@lboro.ac.uk}{\nolinkurl{m.de-borbon@lboro.ac.uk}}} \\
			\vspace{3mm}
			\textsc{King's College London, Department of Mathematics\\
				Strand, WC2R 2LS, United Kingdom} \\
			\small{\href{mailto:dmitri.panov@kcl.ac.uk}{\nolinkurl{dmitri.panov@kcl.ac.uk}}}
		\end{flushright}
}}

\title{Polyhedral K\"ahler metrics on \(\CP^n\)}
\date{\today}
\author{Martin de Borbon and Dmitri Panov}

\begin{document}
	
\maketitle

\begin{abstract}
	We give necessary and sufficient conditions for the existence of polyhedral K\"ahler metrics on \(\CP^n\) whose singular set is a hyperplane arrangement and whose cone angles are in \((0, 2\pi)\). These conditions take the form of linear and quadratic constraints on the cone angles and are entirely determined by the intersection poset of the arrangement.
	Our proof of existence relies on a parabolic version of the Kobayashi-Hitchin correspondence, due to T. Mochizuki. 
\end{abstract}


\tableofcontents


\section*{List of Symbols}

\begin{longtable}{l l}
	\textbf{Symbol} & \textbf{Description} \\
	\hline & \\
	\(a_L\) & Definition \ref{def:aL} \\
    \(a_x\) & Equation \ref{eq:kltdef} \\
	  \(A_x\) & Equation \ref{eq:Axmatrix} \\
	\(B_H\) &   Definition \ref{def:B} \\
	\(B(L, M)\) &  Definition \ref{def:B} \\
	\(\gamma_L\) &  Definition \ref{def:gammaL} \\
	\(D_L\) & Equation \eqref{eq:DL} \\
	\(\mG\) & The set of proper irreducible subspaces, Definition \ref{def:G} \\
	\(h\) &  Notation \ref{not:h} \\
	
	\(\mH_L\) & localization of \(\mH\) at \(L\) \\
	\(\inclusiondot\) & Definition \ref{def:inclusion} \\
	\(\Subset\) & Definition \ref{def:inclusion} \\
	\(\pitchfork_r\) & Definition \ref{def:redint} \\
	\(\mL\) &  Definition \ref{def:intposet} \\
	\(\mLi\) & Definition \ref{def:G}  \\
	\(\mLi^{\circ}\) & \ref{def:G} \\
	\(Q\) & Hirzebruch quadratic form, Equation \eqref{eq:hirquadform} \\
	\(Q_L\) & Equation \eqref{eq:QL} \\	
	\(r(L)\) or \(r_L\) & codimension of a linear subspace \(L \subset \CP^n\) \\
	\(\pi: X \to \CP^n\) & minimal De Concini-Procesi model -or log resolution- of \(\mH\)  \vspace{2mm}\\
    \hline
\end{longtable}

\section{Introduction}

A \emph{polyhedral K\"ahler} (PK) metric on a complex manifold \(X\) (see Definition \ref{def:pkmetric}) is a piecewise Euclidean (polyhedral) metric whose singular set is a divisor \(D \subset X\), and which restricts to a smooth K\"ahler metric on the complement \(X \setminus D\). 
In this paper, we study PK metrics on \(\CP^n\) whose singular set is a finite collection of complex hyperplanes and whose cone angles are less than \(2\pi\). Our main result reduces the classification of such metrics to a purely combinatorial problem on weighted hyperplane arrangements, extending to higher dimensions the main result in \cite{panov} for \(n=2\). Moreover, it establishes the zero curvature case of the conjecture in \cite[p. 4]{miyaokayau}, which characterizes K\"ahler-Einstein metrics with conical singularities along a hyperplane arrangement that have constant holomorphic sectional curvature.

\subsection{Main result}

Our main result is the following.

\begin{theorem}\label{thm:main}
	Let \(\mH\) be a finite collection of complex hyperplanes in \(\CP^n\). 
    For each hyperplane \(H \in \mH\), fix a real number \(\alpha_H \in (0,1)\).
    Then the following conditions \emph{(1)} and \emph{(2)} are equivalent.
	\begin{enumerate}[leftmargin=*, itemsep=0.3em, label=\textup{(\arabic*)}]
		\item There exists a PK metric on \(\CP^n\)  with cone angles \(2\pi\alpha_H\) along the hyperplanes \(H \in \mH\).
		\item The weighted arrangement \((\mH, \ba)\), where \(\ba \in \R^{\mH}\) has components \(a_H = 1- \alpha_H\), satisfies the following conditions:
		
		\textup{(i)}
		\begin{equation}\label{eq:CY}
			\sum_{H \in \mH} a_H = n+ 1 \,;
		\end{equation}
		
		\textup{(ii)} for every non-empty and proper linear subspace \(L \subset \CP^n\) obtained as an intersection of members of \(\mH\),
		\begin{equation}\label{eq:klt}
		\sum_{H \,|\, L \subset H} a_H < r(L) \,,
		\end{equation}
		  where  \(r(L) = \codim_{\CP^n} L\) is the codimension of \(L\) ;
		
		\textup{(iii)} 
		\begin{equation}\label{eq:hir0}
			Q(\ba) = 0 \,,
		\end{equation}
		where \(Q\) is the Hirzebruch quadratic form of \(\mH\)\,.
	\end{enumerate}
\end{theorem}

The Hirzebruch quadratic of \(\mH\), introduced in \cite{miyaokayau},
is the homogeneous polynomial of degree \(2\) on \(\R^{\mH}\) defined by
\begin{equation}\label{eq:hirquadform}
Q(\ba) = \sum_{L \in \mG^2} a_L^2
- \frac{1}{2} \sum_{H \in \mH} B_H \, a_H^2
- \frac{1}{2(n+1)} \, s^2 .
\end{equation}
Here \(\mG^2\) denotes the set of all codimension \(2\) linear subspaces
\(L \subset \CP^n\) obtained as the intersection of three or more hyperplanes in \(\mH\).
For \(L \in \mG^2\), \(a_L\) is the linear function on \(\R^{\mH}\) defined by
\[
2 \, a_L = \sum_{H \,|\, L \subset H} a_H \,.
\]
The coefficients \(B_H\) are the integers defined by
\[
B_H = \bigl|\{ L \in \mG^2 \mid L \subset H \}\bigr| - 1 \,.
\]
Finally, \(s\) denotes the total sum of the weights, \(s = \sum_{H \in \mH} a_H\).

\subsection{Examples}

\begin{example}
	Suppose that \(n=1\) and let \((\mH, \ba)\) be a weighted arrangement in \(\CP^1\) consisting of points \(x_i \in \CP^1\) together with weights \(a_i \in (0,1)\). In this case, the conditions (i), (ii), and (iii) of Theorem \ref{thm:main} reduce to the single equation \(\sum a_i = 2\). The existence of a flat K\"ahler metric on \(\CP^1\) with cone angles \(2\pi\alpha_i\) at the points \(x_i\) (with \(\alpha_i=1-a_i\)) is well-known \cite{troyanov} and the necessary and sufficient condition \(\sum a_i = 2\) is the familiar Gauss-Bonnet constraint \(\sum (1-\alpha_i) = \chi(S^2)\).
\end{example}

In dimensions \(n \geq 2\) the vanishing of the Hirzebruch quadratic form imposes strong conditions on the combinatorics of arrangements, which makes them very rare. For example, when \(n=2\) the complements of such arrangements in \(\CP^2\) are \(K(\pi, 1)\) \cite{panovpetrunin}. One of the simplest PK metrics on \(\CP^2\) we know of is the following.

\begin{example}[7 lines, {\cite[Example 3]{panov}}]
	Let  \(\alpha_1, \alpha_2, \alpha_3 \in (0,1)\) be such that \(\alpha_1 + \alpha_2 + \alpha_3 = 1\).
	Let \(T \subset \R^2\) be an Euclidean triangle with interior angles \(\pi\alpha_1, \pi\alpha_2, \pi\alpha_3\). The double of \(T\) defines a flat K\"ahler metric on \(\CP^1\) with \(3\) cone points of angles  \(2\pi\alpha_1, 2\pi\alpha_2, 2\pi\alpha_3\). The symmetric product \(\textup{Sym}^2(\CP^1) \cong \CP^2\) has an induced polyhedral metric with cone angles \(\pi\) along a smooth conic (the image of the diagonal in \(\CP^1 \times \CP^1\)) and cone angles \(2\pi\alpha_1, 2\pi\alpha_2, 2\pi\alpha_3\) along \(3\) lines tangent to it, which we may assume to be \(\{x=0\}\), \(\{y=0\}\), and \(\{z=0\}\). The pullback of this metric by the ramified cover \([x, y, z] \mapsto [x^2, y^2, z^2]\) is a PK metric on \(\CP^2\) whose singular set is an arrangement of \(7\) lines. The cone angle is \(\pi\) along \(4\) lines (the preimage of the conic) and \(4\pi\alpha_1, 4\pi\alpha_2, 4\pi\alpha_3\) along the other \(3\) lines.
\end{example}

\begin{example}[Braid arrangement]
	The essential braid arrangement consists of the \(\binom{n+2}{2}\) hyperplanes \(H_{ij} \subset \CP^n\) obtained by projection of the diagonal hyperplanes \(\{x_i = x_j\}\) in \(\C^{n+2}\) down to \(\P \, \big(\C^{n+2} / \C \cdot \mathbf{1} \big) = \CP^n\). Let \(a_1, \ldots, a_{n+2} \in (0,1)\) be such that \(\sum a_i = 1\). It is easy to check that the weighted arrangement \(\{(H_{ij}, a_{ij})\}\) with weights \(a_{ij} = a_i+ a_j\) satisfies (i) and (ii) in Theorem \ref{thm:main}. By \cite[Lemma 6.8]{miyaokayau},  \(Q(\ba) = 0\). Therefore, by Theorem \ref{thm:main}, there is a PK metric on \(\CP^n\) with cone angles \(2\pi(1-a_i-a_j)\) along the hyperplanes \(H_{ij}\); c.f. {\cite[Corollary 3.23]{chl}}.
\end{example}

\begin{example}[Complex reflection arrangements]
	Couwenberg-Heckman-Looijenga \cite{chl} construct families of constant holomorphic sectional curvature K\"ahler metrics on complements of irreducible complex reflection arrangements \(\mH \subset \CP^n\), i.e., \(\mH\) is the set of reflecting hyperplanes of an irreducible complex reflection group \(G \subset U(n+1)\).
	Corollary \ref{cor:cra} in the next section recovers their metrics in the flat case, moreover it shows that the metric completion is a polyhedral metric on \(\CP^n\).
\end{example}

\subsection{Applications}

We apply Theorem \ref{thm:main} to prove Conjecture 1.3 in \cite{miyaokayau}, which characterizes PK metrics as extremizers of a quadratic inequality. 

\begin{definition}
A weighted arrangement is a pair \((\mH, \ba)\) consisting of a hyperplane arrangement \(\mH\) in \(\CP^n\) and a weight vector \(\ba \in \R^{\mH}\) with positive components \(a_H >0\). The weighted arrangement \((\mH, \ba)\) is \emph{Calabi-Yau} (CY) if
\begin{equation}\label{eq:CY2}
		\sum_{H \in \mH} a_H = n+ 1 \,.
\end{equation}
The weighted arrangement \((\mH, \ba)\) is \emph{klt}		
if for every non-empty and proper linear subspace \(L \subset \CP^n\) obtained as an intersection of members of \(\mH\),
\begin{equation}\label{eq:klt2}
	\sum_{H \,|\, L \subset H} a_H < r(L) \,,
\end{equation}
where  \(r(L) = \codim_{\CP^n} L\) is the codimension of \(L\).
\end{definition}

\begin{remark}
    Equation \eqref{eq:klt2} applied to a hyperplane \(L = H\) in \(\mH\) reads \(a_H < 1\). Thus, if \((\mH, \ba)\) is klt, then \(a_H \in (0,1)\) for all \(H \in \mH\).
\end{remark}

\begin{remark}
    Conditions (i) and (ii) in Theorem \ref{thm:main} can be restated as saying that \((\mH, \ba)\) is CY and klt respectively.     
\end{remark}

The next result proves Conjecture 1.3 in \cite{miyaokayau} and extends Theorem 1.12 in \cite{panov} from \(n=2\) to higher dimensions.

\begin{corollary}\label{cor:my}
	Suppose that the weighted arrangement \((\mH, \ba)\) is klt and CY, then 
	\begin{equation}\label{eq:my}
		Q(\ba) \leq 0 \,.
	\end{equation}
	Equality holds if and only if there is a PK metric with cone angles \(2\pi(1-a_H)\) along the hyperplanes \(H \in \mH\).
\end{corollary}

\begin{proof}
	Inequality \eqref{eq:my} was first proved in \cite[Theorem 1.1]{miyaokayau} and later, by more elementary methods, in \cite[Theorem 3.1]{dunkl}. By Theorem \ref{thm:main}, \(Q(\ba) = 0\) if and only if  there is a PK metric with cone angles \(2\pi(1-a_H)\) along the hyperplanes \(H \in \mH\).
\end{proof}

In the symmetric case where all weights \(a_H\) are equal, we have the following.

\begin{corollary}\label{cor:hir}
	Let \(\mH \subset \CP^n\) be an arrangement consisting of \(N > n+1\) hyperplanes.
	Suppose that the following conditions are satisfied.
	\begin{enumerate}[label=\textup{(H\arabic*)}]
		\item For every non-empty and proper element \(L \in \mL\)   of the intersection poset of \(\mH\),
		\begin{equation}\label{eq:hir1}
			\frac{m_L}{r(L)} < \frac{N}{n+1} \,,
		\end{equation} 
		where \(m_L = |\mH_L|\) is the multiplicity of \(L\).
		\item For every \(H \in \mH\),
		\begin{equation}\label{eq:hir2}
			\vert \mH^H \vert = \left( 1 - \frac{2}{n+1} \right) N + 1 \,,
		\end{equation}
		where \(\mH^H = \{H' \cap H \,|\, H' \in \mH \setminus \{H\}\}\) is the restriction of \(\mH\) to \(H\).
	\end{enumerate}
	Then there exists a PK metric on \(\CP^n\) with cone angles \(2\pi \cdot \big(1-\frac{n+1}{N}\big)\) along the hyperplanes \(H \in \mH\).
\end{corollary}

\begin{proof}
	Consider the weighted arrangement \((\mH, \ba)\) with \(\ba = \frac{n+1}{N} \cdot \mathbf{1}\). Then \((\mH, \ba)\) satisfies the following conditions.
	\begin{enumerate}[label=\textup{(\roman*)}]
		\item \((\mH, \ba)\) is CY. Indeed, \(\sum a_H = N \cdot \frac{n+1}{N} = n+1\).
		\item \((\mH, \ba)\) is klt. If \(L \in \mL\) is non-empty and proper, then 
		\[
		\sum_{H | L \subset H} a_H = m_L \cdot \frac{n+1}{N} < r(L) \,,
		\]
        where the inequality follows by \eqref{eq:hir1}.
		\item \(Q(\ba) = 0\). By \cite[Lemma 7.12]{miyaokayau}, Equation \eqref{eq:hir2} implies that the vector \(\mathbf{1}\) is in the kernel of the quadratic form \(Q\), by homogeneity \(Q(\ba) = 0\).
	\end{enumerate}
	The existence of the PK metric then follows from Theorem \ref{thm:main}.
\end{proof}

The next result recovers {\cite[Theorem 3.7]{chl}} in the case where \(\kappa_0=1\).

\begin{corollary}\label{cor:cra}
	Let \(\mH\) be an irreducible complex reflection arrangement in \(\CP^n\) with \(N\) hyperplanes.
	Then there is a PK metric on \(\CP^n\) with cone angles \(2\pi \cdot \left(1 - \frac{n+1}{N}\right)\) along the hyperplanes \(H \in \mH\).	
\end{corollary}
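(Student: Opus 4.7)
The plan is to apply Theorem \ref{thm:main} to the weighted arrangement $(\mH, \ba)$ with uniform weights $a_H = (n+1)/N$ for every $H \in \mH$. The associated cone angles $2\pi(1-a_H) = 2\pi(1-(n+1)/N)$ then match those in the statement, so it suffices to verify items (i)--(iii). Item (i) is immediate: $\sum_{H \in \mH} a_H = N \cdot (n+1)/N = n+1$.

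For (ii), let $L$ be a nonempty proper element of the intersection poset of $\mH$. By the complex-reflection-group version of Steinberg's fixed-subspace theorem, the localization $\mH_L$ coincides with the reflection arrangement of the parabolic subgroup $G_L \subset G$ stabilizing $L$ pointwise, which acts effectively on a complement of dimension $r(L) < n+1$. The inequality \eqref{eq:klt} for uniform weights reduces to $|\mH_L|/r(L) < N/(n+1)$, i.e., the statement that the ``hyperplanes-per-rank'' ratio is strictly maximal for the full irreducible arrangement. This is a standard combinatorial property of irreducible complex reflection groups and can either be checked case-by-case via the Shephard-Todd classification (expressing both $N/(n+1)$ and $|\mH_L|/r(L)$ through the degrees), or extracted from the log canonical threshold analysis of reflection arrangements implicit in \cite{chl}.

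For (iii), let $L$ be an irreducible subspace with $r(L) \geq 3$. Irreducibility of $L$ corresponds to irreducibility of the parabolic $G_L$ as a reflection group, so $\mH_L/L$ is the essential reflection arrangement of an irreducible complex reflection group in $\CP^{r(L)-1}$. The restriction of $\ba$ to $\mH_L$ is again uniform with value $(n+1)/N$, and since $Q_L$ is a homogeneous degree $2$ polynomial in the weights, $Q_L(\ba) = 0$ if and only if the Hirzebruch form of $\mH_L/L$ vanishes on some, hence any, positive uniform weight. This last identity is the content of the flat case $\kappa_0 = 1$ of \cite[Theorem 3.7]{chl}: the construction there produces a K\"ahler metric of constant holomorphic sectional curvature zero on the complement of the arrangement, which forces the Hirzebruch--Miyaoka--Yau equality $Q = 0$ at the corresponding uniform weight. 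Alternatively, one can invoke directly the explicit computation of the Hirzebruch form for irreducible complex reflection arrangements from \cite{miyaokayau}.

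The main obstacle is step (iii): although the vanishing of the Hirzebruch form on uniform weights for irreducible complex reflection arrangements is a classical consequence of the flat geometry of \cite{chl}, a self-contained derivation requires either explicit Shephard-Todd case analysis or a uniform formula for $Q$ in terms of the characteristic polynomial and the codegrees of $G$. Step (ii) is less delicate but similarly relies on a classification-dependent inequality; once both inputs are accepted, Theorem \ref{thm:main} supplies the PK metric at no additional cost.
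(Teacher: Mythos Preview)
Your approach is essentially the same as the paper's: verify items (i), (ii), (iii) of Theorem~\ref{thm:main} for the uniform weight $a_H=(n+1)/N$. Item (i) is handled identically. For (ii) the paper simply cites \cite[Lemma~7.25]{miyaokayau}, which is exactly the ``hyperplanes-per-rank'' inequality you describe; for (iii) the paper also reduces to the case $L=\emptyset$ via the fact that $\mH_L/L$ is again an irreducible complex reflection arrangement (citing \cite[Theorem~6.25]{orlikterao}) and then concludes $Q(\ba)=0$ from \cite[Lemma~7.12]{miyaokayau} together with the explicit count \cite[Theorem~6.97]{orlikterao} that every $H\in\mH$ meets the remaining hyperplanes in exactly $(1-2/(n+1))N+1$ codimension-$2$ subspaces.

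The one point worth flagging is your primary justification for $Q_L(\ba)=0$ in (iii): appealing to the flat case of \cite[Theorem~3.7]{chl} is circular here, since the whole purpose of the corollary is to \emph{re}-derive that flat metric from Theorem~\ref{thm:main}. Logically one could run the implication ``\cite{chl} gives a flat metric $\Rightarrow$ Theorem~\ref{thm:main}(1)$\Rightarrow$(2) gives $Q=0$'', but that route assumes the conclusion and makes the corollary vacuous. Your fallback (``invoke directly the explicit computation \ldots\ from \cite{miyaokayau}'') is the right move and is precisely what the paper does; the paper just makes the input sharper by isolating the single combinatorial identity from Orlik--Terao that feeds into \cite[Lemma~7.12]{miyaokayau}, avoiding any Shephard--Todd case analysis.
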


\begin{proof}
	We verify that \(\mH\) satisfies (H1) and (H2) in Corollary \ref{cor:hir}:
	(H1) is \cite[Lemma 7.25]{miyaokayau}, and (H2) is \cite[Theorem 6.97]{orlikterao}.
\end{proof}

If the hyperplane arrangement \(\mH\) satisfies (H1) and (H2) in Corollary \ref{cor:hir}, then  we say that \(\mH\) is a \emph{stable Hirzebruch arrangement}. As we show above, if \(\mH\) is an irreducible complex reflection arrangement then \(\mH\) is a stable Hirzebruch arrangement. An open problem, posed by Hirzebruch in the case \(n=2\) \cite{hirzebruch2}, asks whether the converse is true. For a proof in the case of real line arrangements, see \cite{dima}.

\subsection{Outline}	

In Section \ref{sec:pkmetviaconn}, we collect results from \cite{pkc} that relate PK metrics on a complex manifold to flat torsion-free unitary logarithmic connections on the tangent bundle. This correspondence between PK metrics and logarithmic connections is fundamental in our proof of Theorem \ref{thm:main}.

In Section \ref{sec:cohomology} we recall the canonical log resolution \(\pi: X \to \CP^n\) of a hyperplane arrangement \(\mH \subset \CP^n\) given by the minimal De Concini-Procesi model of \(\mH\) \cite{conciniprocesi}. We recall a basis for the cohomology groups \(H^{2k}(X, \Z)\) constructed by Yuzvinsky \cite{yuzvinsky}
given by \emph{basic monomials}. For the cohomology group \(H^4(X, \Z)\), we provide explicit formulas which express non-basic monomials in terms of basic ones. We use these formulas later in Sections \ref{sec:1implies2} and \ref{sec:2implies1}.

In Section \ref{sec:1implies2} we prove the (1) \(\implies\) (2) direction of Theorem \ref{thm:main}. To do this we pullback the Levi-Civita connection of the PK metric to the log resolution \(X\) and use Ohtsuki's formula \cite{ohtsuki} for the Chern classes of a vector bundle in terms of the residues of a logarithmic connection. 

In Section \ref{sec:2implies1} we prove the (2) \(\implies\) (1) direction of Theorem \ref{thm:main}. To do this we consider a natural parabolic structure \(\mE_{*}\) on the pullback of \(T\CP^n\) to the log resolution \(X\). The parabolic bundle \(\mE_{*}\) depends only on the weighted arrangement \((\mH, \ba)\) and was introduced in \cite{miyaokayau}. Items (i) and (ii) imply (by \cite{miyaokayau}) that \(\parc_1(\mE_{*}) = 0\) and that \(\mE_{*}\) is slope stable. We show that item (iii) is equivalent to \(\parch_2(\mE_{*}) = 0\). The existence of the PK metric then follows as a consequence of the Kobayashi-Hitchin correspondence for parabolic bundles due to Mochizuki \cite{mochizuki}.

\subsubsection{Acknowledgments}
We would like to thank the London Mathematical Society's Research in Pairs Grant Scheme which allowed us to work on this project during October 2024 to January 2025.

\section{PK metrics via logarithmic connections}\label{sec:pkmetviaconn}
In this section, we revisit the correspondence from \cite{pkc} between PK metrics on a complex manifold \(X\) and (flat, torsion-free, unitary) logarithmic connections on \(TX\). 

In Section \ref{sec:pkmetric} we recall the notion of a PK metric on a complex manifold.  
In Section \ref{sec:logconnection} we review material on logarithmic connections and introduce the notion of a logarithmic connection \(\nabla\) on \(TX\) being adapted to the weighted arrangement \(\{(D_i, a_i)\}\) (see Definition \ref{def:adaptconn}).
In Section \ref{sec:pktoconn} we prove Proposition \ref{prop:mettoconect} which asserts that if \(g\) is a PK metric on a complex manifold \(X\) whose singular set is an arrangement-like divisor \(D = \sum D_i\) (see Definition \ref{def:arrlike}) then the Levi-Civita connection of \(g\) is a logarithmic connection \(\nabla\) on \(TX\) adapted to the weighted arrangement \(\{(D_i, a_i)\}\) where \(a_i = 1-\alpha_i\) and \(2\pi\alpha_i\) is the cone angle of \(g\) along \(D_i\). 
In Section \ref{sec:conntopk} we prove Theorem \ref{thm:connmet} which shows the converse direction. 


\subsection{PK metrics on complex manifolds}\label{sec:pkmetric}

\subsubsection{The \(2\)-cone}
Let \(\alpha\) be a positive real number.
We denote by \(C(2\pi\alpha)\) the Euclidean cone over a circle of length \(2\pi\alpha\). 
Alternatively, consider the function \(\xi = z^{\alpha}\) defined for \(z\) in the cut complex plane, say \(\C \setminus (-\infty, 0]\), and let \(g_{\alpha}\) be the pullback of the Euclidean metric \(|d\xi|^2\) given by
\begin{equation}
g_{\alpha} = \alpha^2 |z|^{2\alpha-2}|dz|^2 \,.	
\end{equation}
The metric completion of \(g_{\alpha}\) is obtained by gluing the shores of an Euclidean wedge of total angle \(2\pi\alpha\) by means of a rotation and is thus isometric to \(C(2\pi\alpha)\). 

The Levi-Civita connection of \(g_{\alpha}\) in the trivialization of \(T\C\) given by the coordinate vector field \(\p_z\) is given by \(\nabla = d - \omega\), where \(\omega\) is the \(1\)-form defined by \(\nabla \p_z = -\omega \otimes \p_z\).
To calculate \(\omega\), note that the locally defined vector field \( z^{1-\alpha}\p_z = \alpha \p_{\xi}\) is parallel. Let \(a = 1-\alpha\),
then \(\nabla (z^a \p_z) = 0\) expands to \(d(z^a) - z^{a} \omega = 0\), so \(\omega = (a/z)dz\) and 
\begin{equation}\label{eq:conn2cone}
\nabla = d \,-\, a \, \frac{dz}{z} 	\,.
\end{equation}
In particular, \(\nabla\) is a \emph{logarithmic connection} (see Definition \ref{def:logconn}) on \(T\C\) with a pole at \(0\).

\subsubsection{Divisors}
Let \(X\) be a complex manifold of dimension \(n\). Throughout this paper, a \emph{divisor} on \(X\) will mean a closed analytic subset \(D \subset X\) of codimension one. 
Unless otherwise specified, we consider divisors without multiplicity.


The \emph{smooth locus} of \(D\) is the set of points \(D^{\circ} \subset D\) at which \(D\) is a complex submanifold of \(X\).
The \emph{singular locus} \(\Sing(D) = D \setminus D^{\circ}\) is a closed analytic subset of \(X\) of codimension \(\geq 2\).



\subsubsection{Polyhedral K\"ahler metrics}
Let \((X, d)\) be a complete length space (see \cite{BuragoBuragoIvanov2001}). We say that \((X, d)\) is \emph{polyhedral} if it admits a locally finite triangulation such that each simplex is isometric to a simplex in Euclidean space (see \cite{petruninlebedeva}). 

\begin{definition}\label{def:pkmetric}
	Let \(X\) be a complex manifold of dimension \(n\) and let
	\(D \subset X\) be a divisor with irreducible decomposition \(D = \sum D_i\).
	For each \(D_i\) let \(\alpha_i\) be a positive real number.
	We say that \(g\) is a \emph{polyhedral K\"ahler} (PK) metric on \(X\) with cone angles \(2\pi\alpha_i\) along \(D_i\) if the following hold:
	\begin{enumerate}[label=\textup{(\roman*)}]
		\item \(g\) is a flat K\"ahler metric on \(X \setminus D\)\,;
		\item \(g\) is holomorphically isometric to \(C(2\pi\alpha_i) \times \C^{n-1}\) near points in \(D_i \cap D^{\circ}\)\,;
		\item the distance function \(d_g\) induced by \(g\) on \(X \setminus D\) extends continuously across \(D\) to the whole \(X\) so that \((X, d_g)\) is a complete length space, the distance \(d_g\) induces the original manifold topology of \(X\), and \((X, d_g)\) is polyhedral.
	\end{enumerate}
\end{definition}

\begin{remark}
	A more general definition of PK metric, in the context of piecewise linear manifolds, is given in \cite[Definition 1.1]{panov}. In this level of generality, it becomes a non-trivial question to understand whether a parallel complex structure on the smooth locus extends over the metric singular set as a complex analytic space. Definition \ref{def:pkmetric} circumvents this problem by restricting to the class of PK metrics which admit parallel complex structures that extend smoothly over the whole manifold.
\end{remark}

\begin{remark}
    Definition \ref{def:pkmetric} is equivalent to the seemingly weaker \cite[Definition 6.1]{pkc}. Namely, it is enough to define \(g\) as a polyhedral metric on \(X\) that is K\"ahler on its regular locus. The fact that the singular set of \(g\) is a divisor \(D \subset X\) follows from \cite[Proposition 6.34 (3)]{pkc} and the fact that \(g\) is locally isometric to \(C(2\pi\alpha) \times \C^{n-1}\) near points of the smooth locus \(D^{\circ}\) follows from \cite[Corollary 6.65]{pkc}.
\end{remark}

\subsection{Logarithmic connections}\label{sec:logconnection}
Let \(\mE \to X\) be a holomorphic vector bundle of rank \(r\) and let \(D \subset X\) be a divisor with irreducible decomposition \(D = \sum D_i\). Let \(\nabla\) be a holomorphic connection on \(\mE|_{X \setminus D}\).
	
\begin{definition}\label{def:logconn}
	We say that \(\nabla\) is a \emph{logarithmic connection} on \(\mE\) with poles along \(D\) if for every point \(x \in D^{\circ}\) (the smooth locus of \(D\)) we can find complex coordinates \((z_1, \ldots, z_n)\) centred at \(x\) with \(D = \{z_1=0\}\) and a frame of holomorphic sections \(s_1, \ldots, s_r\) of \(\mE\) over \(U\) such that \(\nabla = d - \Omega\) with
	\begin{equation}\label{eq:logconn}
	\Omega = A_1 \frac{dz_1}{z_1} + \sum_{i> 1} A_i dz_i \,,
	\end{equation}
	where \(A_1, \ldots, A_n\) are \(r \times r\) matrices of holomorphic functions.
\end{definition}

\begin{definition}
	The \emph{residue} of \(\nabla\) is the holomorphic section of \(\End (\mE)|_{D^{\circ}}\) defined in a local trivialization \eqref{eq:logconn} by
	\begin{equation}\label{eq:resdef}
	\Res_D(\nabla) = A_1|_{\{z_1=0\}} \,\,.	
	\end{equation}
	
	We write \(\Res_{D_i}(\nabla)\) for the restriction of \(\Res_{D}(\nabla)\) to \(D_i \cap D^{\circ} \). 
\end{definition}

\begin{remark}
	The fact that Equation \eqref{eq:resdef} defines a section of \(\End(\mE)\) follows easily from the formula for the change of connection form \(\Omega' = G \cdot \Omega \cdot G^{-1} + \hol\) under a change of frame \((s_1, \ldots, s_r) = (s'_1, \ldots, s'_r) \cdot G\), see Appendix \ref{app:conn}.
\end{remark}

\subsubsection{Logarithmic connections with holomorphic residues}

\begin{definition}\label{def:holres}
    Let \(\nabla\) be a logarithmic connection on \(\mE \to X\) with poles along \(D\).
	Suppose that the irreducible components \(D_i\) of \(D\) are smooth.
	We say that \(\nabla\) has \emph{holomorphic residues} if the sections \(\Res_{D_i}(\nabla)\) extend holomorphically across \(\Sing(D)\) as a holomorphic sections of \(\End \mE\) on \(D_i\).
\end{definition}

\begin{lemma}\label{lem:holres}
	Let \(\nabla\) be a logarithmic connection on \(\mE \to X\) with poles along \(D\). Assume that the irreducible components \(D_i\) of \(D\) are smooth. Then \(\nabla\) has holomorphic residues if and only if for every \(x \in D\) there is an open neighbourhood \(U\) of \(x\) such that \(\mE|_U\) is trivial and, in a (and hence any) holomorphic trivialization of \(\mE\) over \(U\), we have 
	\begin{equation}\label{eq:holres}
	\nabla = d \,-\, \sum_{i\,|\, x \in D_i} A_i \frac{dh_i}{h_i} \,+\, \hol \,,
	\end{equation}
	where \(h_i \in \mO(U) \) are defining equations of \(D_i\) and \(A_i\) are \(r \times r\) matrices of holomorphic functions on \(U\).
\end{lemma}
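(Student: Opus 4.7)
My plan is to prove both implications. Sufficiency is a direct local computation: assuming $\nabla$ has the form \eqref{eq:holres} near every point of $D$, I pick a smooth point of $D_i$ disjoint from the other components and coordinates with $D_i = \{z_1 = 0\}$; then $h_i = u z_1$ for a unit $u$ and each $h_j$ ($j \neq i$) is a unit at the chosen point, so $\frac{dh_i}{h_i} = \frac{dz_1}{z_1} + \frac{du}{u}$ gives $\Res_{D_i}(\nabla) = A_i|_{D_i}$, which is the restriction to $D_i$ of a holomorphic matrix on $U$ and therefore extends holomorphically across $\Sing(D) \cap D_i$.

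For necessity, fix $x \in D$, let $D_1, \ldots, D_k$ be the components through $x$, and choose $U \ni x$ on which $\mE$ is trivial and each $D_i$ has a single defining equation $h_i \in \mO(U)$. Write $\nabla = d - \Omega$ in a holomorphic frame. By hypothesis each residue $\Res_{D_i}(\nabla)$ extends to a holomorphic section $R_i$ of $\End(\mE)|_{D_i}$; lift each $R_i$ arbitrarily to a holomorphic matrix $A_i$ on $U$. I would then show that the residual form
\[\omega := \Omega \,-\, \sum_{i=1}^{k} A_i \frac{dh_i}{h_i}\]
extends holomorphically to all of $U$, which yields \eqref{eq:holres}.

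At a point $y \in D^{\circ} \cap U$ lying in a single component $D_j$, take coordinates with $D_j = \{z_1 = 0\}$ and $h_j = u z_1$; then the local form \eqref{eq:logconn} reads $\Omega = A'_j \frac{dz_1}{z_1} + \hol$ with $A'_j|_{D_j} = R_j = A_j|_{D_j}$, while $h_i$ is a unit at $y$ for $i \neq j$. A direct substitution yields $\omega = (A'_j - A_j)\frac{dz_1}{z_1} + \hol$, and since $A'_j - A_j$ vanishes on $D_j$ it is divisible by $z_1$, so $\omega$ is holomorphic at $y$. Thus $\omega$ is a matrix of $1$-forms holomorphic on $U \setminus \Sing(D)$, and since $\Sing(D)$ has codimension $\geq 2$ in $U$, the Riemann extension theorem (applied componentwise to the coefficients, with no boundedness hypothesis needed in codimension $\geq 2$) gives holomorphicity on all of $U$. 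The "and hence any" clause follows from the transformation rule $\Omega' = G \Omega G^{-1} + \hol$ under a holomorphic change of frame (see Appendix \ref{app:conn}), which preserves \eqref{eq:holres} with $A_i$ replaced by $G A_i G^{-1}$. The main point where care is needed is the cancellation of residues at smooth points of $D$; once in hand, Hartogs does the rest.
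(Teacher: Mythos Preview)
Your proposal is correct and takes essentially the same approach as the paper: subtract off the terms \(A_i\,dh_i/h_i\) with the prescribed residues, verify the difference is holomorphic at smooth points of \(D\), and then extend across \(\Sing(D)\) by Hartogs. The only cosmetic difference is that the paper packages the argument by applying the scalar version (Lemma~\ref{lem:holres1form} in the appendix) entrywise to the connection matrix, whereas you carry it out directly for the matrix of \(1\)-forms.
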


\begin{proof} One direction is clear, if \(\omega\) is locally of the form given by Equation \eqref{eq:holres} then \(\Res_{D_i}(\nabla)=A_i|_{\{h_i=0\}}\) are holomorphic on \(D_i\). 

	To show the converse, note that in a local trivialization we have \(\nabla = d - \Omega\), where the entries of \(\Omega\) are logarithmic \(1\)-forms \(\omega_{ij}\) with holomorphic residues. The statement then follows from Lemma \ref{lem:holres1form} in the appendix applied to the \(1\)-forms \(\omega_{ij}\).
\end{proof}

In the case of simple normal crossing divisors, the holomorphic residues condition is always satisfied, as proved by the next.

\begin{proposition}\label{prop:normcross}
	Suppose that \(\nabla\) is a logarithmic connection with poles along a simple normal crossing divisor. Then \(\nabla\) has holomorphic residues.
\end{proposition}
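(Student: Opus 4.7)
The plan is to reduce the matrix-valued statement to the scalar case handled by Lemma \ref{lem:holres1form}. Given $x \in D$, I would pick coordinates $(z_1, \ldots, z_n)$ on a polydisk $U$ centred at $x$ so that the components of $D$ meeting $U$ are precisely $D_l \cap U = \{z_l = 0\}$ for $l = 1, \ldots, k$, and so that $\mE|_U$ admits a holomorphic trivialization. In such a trivialization $\nabla = d - \Omega$ with $\Omega$ an $r \times r$ matrix of 1-forms, holomorphic on $U \setminus D$ and a priori only meromorphic along $D$.

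The next step is to upgrade the pointwise logarithmic condition of Definition \ref{def:logconn} to a statement about each matrix entry $\omega_{ij}$ of $\Omega$ on the polydisk $U$. On each open stratum $(D_l \cap U) \setminus \bigcup_{m \neq l} D_m$ of $D^{\circ}$, Definition \ref{def:logconn} furnishes local coordinates and a frame in which $\Omega$ has a simple pole with holomorphic residue along $D_l$ alone. The change-of-frame formula $\Omega' = G \Omega G^{-1} + \hol$ recalled in Appendix \ref{app:conn}, together with the obvious transformation under holomorphic changes of coordinates, shows that the property of being a logarithmic 1-form along $D_l$ at a smooth point of $D_l$ is intrinsic to each matrix entry. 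Thus every $\omega_{ij}$ is a meromorphic 1-form on $U$, holomorphic off $D$, and logarithmic along $D \cap U$ at all points of $D^{\circ}$.

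I would then invoke Lemma \ref{lem:holres1form} entry by entry to conclude that on $U$,
\begin{equation*}
\omega_{ij} = \sum_{l=1}^{k} c_{ij,l} \, \frac{dz_l}{z_l} + \eta_{ij} \,,
\end{equation*}
with $c_{ij,l} \in \mO(U)$ and $\eta_{ij}$ a holomorphic 1-form. Collecting these identities into matrix form yields
\begin{equation*}
\Omega = \sum_{l=1}^{k} A_l \, \frac{dz_l}{z_l} + \eta \,,
\end{equation*}
where $A_l$ is a matrix of holomorphic functions on $U$ and $\eta$ a matrix of holomorphic 1-forms. By Lemma \ref{lem:holres}, this is exactly the local shape characterizing logarithmic connections with holomorphic residues, so $\nabla$ has holomorphic residues along each $D_i$.

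The main obstacle is essentially hidden inside Lemma \ref{lem:holres1form}: once the scalar statement is granted, the proposition is an entry-by-entry unpacking of definitions. The delicate point to keep track of is that the SNC hypothesis is used precisely to guarantee that the \emph{global} trivialization on $U$ and the \emph{local} frames at smooth points of $D$ of Definition \ref{def:logconn} are comparable by holomorphic gauge transformations, so that no spurious poles are introduced when passing between them; under only a normal-crossing (not simple) assumption the components $D_i$ need not be given by single equations $h_i = 0$ on $U$, and this reduction would fail.
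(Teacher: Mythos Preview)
Your overall strategy matches the paper's: localize, trivialize, and reduce the matrix statement to a scalar fact about logarithmic \(1\)-forms. The gap is in the scalar step. You invoke Lemma~\ref{lem:holres1form} to obtain the decomposition \(\omega_{ij} = \sum_l c_{ij,l}\,dz_l/z_l + \eta_{ij}\), but that lemma is an \emph{equivalence}: it says a logarithmic \(1\)-form has this local shape \emph{if and only if} it already has holomorphic residues. You have only established that each \(\omega_{ij}\) is logarithmic in the sense of Definition~\ref{def:log1form} (a condition imposed only at smooth points of \(D\)); you have not shown its residues extend across \(\Sing(D)\). So invoking Lemma~\ref{lem:holres1form} here is circular --- it requires as input precisely what Proposition~\ref{prop:normcross} asserts.

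What the paper actually uses at this point is a separate, substantive fact specific to the simple normal crossing case: any logarithmic \(1\)-form with poles along \(\{z_1\cdots z_k=0\}\) automatically has the shape \(\sum_{i\le k} a_i\,dz_i/z_i + \sum_{i>k} a_i\,dz_i\) with all \(a_i\) holomorphic (Equation~\eqref{eq:normcoross}, cited from \cite{griffithsharris} and \cite{novikovyakovenko}). This is the genuine content --- it is where the SNC hypothesis does the work, and it is not a consequence of Lemma~\ref{lem:holres1form}. Once you replace your appeal to Lemma~\ref{lem:holres1form} with this decomposition theorem for scalar logarithmic forms on an SNC divisor, the rest of your argument (collecting entries, applying Lemma~\ref{lem:holres}) goes through exactly as written and coincides with the paper's proof.
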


\begin{proof}
    The condition of having holomorphic residues is local, so we might as well assume that \(\nabla = d - \Omega\) is a connection on the trivial vector bundle \(\underline{\C}^r\) on an open ball \(B \subset \C^n\) with logarithmic poles along the \(k\) coordinate hyperplanes \(D = \{z_1 \cdot \ldots \cdot z_k = 0\}\). The entries of the connection matrix \(\Omega\) are logarithmic \(1\)-forms. 
    
    The proposition then follows from the well-known fact that if \(\omega\) is a logarithmic \(1\)-form on \(B\) with poles along \(D\) then there are  \(a_i \in \mO(B)\) for \(1 \leq i \leq n\) such that
	\begin{equation}\label{eq:normcoross}
	\omega = \sum_{i=1}^{k} a_i \, \frac{dz_i}{z_i} \,+\, \sum_{i>k} a_i \, dz_i \,\,.
	\end{equation} 
    See \cite[p. 449-450]{griffithsharris}, or \cite[Theorem 3.8]{novikovyakovenko}.
\end{proof}

\begin{remark}
    Proposition \ref{prop:normcross} fails if the polar divisor is not normal crossing. For example, the residues of the logarithmic connection on the trivial line bundle on \(\C^2\) defined by \(\nabla = d - \omega\) where \(\omega\) is as in Example \ref{ex:3lines} in the Appendix have a pole at the origin.
\end{remark}

\subsubsection{Flat logarithmic connections}
A logarithmic connection on \(\mE \to X\) with poles along \(D \subset X\) is \emph{flat} if it is so on \(X \setminus D\). In a local representation, the connection \(\nabla = d - \Omega\) is flat if and only if \(d\Omega = \Omega \wedge \Omega\).

\begin{lemma}\label{lem:ressnc}
	Let \(\nabla\) be a flat logarithmic connection on \(\mE \to X\) with poles along a simple normal crossing divisor \(D = \sum D_i\). Then for any pair of irreducible components \(D_i\) and \(D_j\) of \(D\) we have
	\begin{equation}
	[\Res_{D_i}(\nabla), \Res_{D_j}(\nabla)] = 0
	\end{equation}
	along \(D_i \cap D_j\).
\end{lemma}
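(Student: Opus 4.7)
The plan is to reduce the claim to a purely local calculation near a point $x \in D_i \cap D_j$. Choose holomorphic coordinates $(z_1, \ldots, z_n)$ centred at $x$ adapted to the simple normal crossing structure: $D_i = \{z_1 = 0\}$, $D_j = \{z_2 = 0\}$, and any further irreducible components of $D$ meeting a neighbourhood of $x$ are coordinate hyperplanes $\{z_\ell = 0\}$ for $\ell \geq 3$. By Proposition \ref{prop:normcross}, $\nabla$ has holomorphic residues, so Lemma \ref{lem:holres} lets me write $\nabla = d - \Omega$ in a suitable local trivialization of $\mE$ as
\[
\Omega \,=\, A_1 \, \frac{dz_1}{z_1} \,+\, A_2 \, \frac{dz_2}{z_2} \,+\, \eta \,,
\]
where $A_1, A_2$ are matrices of holomorphic functions with $A_1|_{\{z_1=0\}} = \Res_{D_i}(\nabla)$ and $A_2|_{\{z_2=0\}} = \Res_{D_j}(\nabla)$, and $\eta$ is a matrix of $1$-forms with no pole along $\{z_1=0\}$ or $\{z_2=0\}$.

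The key step is a ``double residue'' operation on the flatness identity $d\Omega = \Omega \wedge \Omega$: I multiply both sides by $z_1 z_2$ and restrict the resulting holomorphic $2$-form to $\{z_1 = z_2 = 0\}$. On the left, each summand of $d\Omega$ carries a pole of order at most $1$ along either $\{z_1 = 0\}$ or $\{z_2 = 0\}$ but not both (since $d$ annihilates logarithmic differentials), so $z_1 z_2 \cdot d\Omega$ vanishes identically on $\{z_1 = z_2 = 0\}$. On the right, I expand $\Omega \wedge \Omega$ and isolate the only contributions with a genuine $\tfrac{dz_1 \wedge dz_2}{z_1 z_2}$ pole, namely
\[
A_1 \, \frac{dz_1}{z_1} \,\wedge\, A_2 \, \frac{dz_2}{z_2} \,+\, A_2 \, \frac{dz_2}{z_2} \,\wedge\, A_1 \, \frac{dz_1}{z_1} \,=\, [A_1, A_2] \, \frac{dz_1}{z_1} \wedge \frac{dz_2}{z_2} \,,
\]
while every other wedge product either vanishes because of $\frac{dz_i}{z_i} \wedge \frac{dz_i}{z_i} = 0$ or carries at least one extra factor of $z_1$ or $z_2$. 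After multiplication by $z_1 z_2$ and restriction, the right-hand side therefore equals $[A_1, A_2]|_{\{z_1 = z_2 = 0\}} \, dz_1 \wedge dz_2$.

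Equating the two sides and using that $dz_1 \wedge dz_2$ is a non-vanishing $2$-form transverse to $\{z_1 = z_2 = 0\}$ yields $[A_1, A_2]|_{\{z_1 = z_2 = 0\}} = 0$, which is exactly $[\Res_{D_i}(\nabla), \Res_{D_j}(\nabla)] = 0$ at $x$. Since $x \in D_i \cap D_j$ was arbitrary, the lemma follows. There is no serious obstacle here; the only delicate point is the bookkeeping inside $\Omega \wedge \Omega$, namely verifying that contributions involving $\eta$ and cross terms with the auxiliary coordinates $z_\ell$ ($\ell \geq 3$) all disappear after the multiplication-and-restriction operation, which is immediate because each such wedge product picks up an extra power of $z_1$ or $z_2$.
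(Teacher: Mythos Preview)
Your argument is correct and is precisely the standard residue-extraction proof: the flatness identity $d\Omega = \Omega \wedge \Omega$, written in a trivialization adapted to the normal crossing structure, has a unique term with a genuine $\tfrac{dz_1 \wedge dz_2}{z_1 z_2}$ pole, and isolating that term yields $[A_1, A_2]|_{\{z_1=z_2=0\}} = 0$. The paper itself does not supply a proof of this lemma---it simply records the statement as standard and refers to \cite[Theorem 4.3]{novikovyakovenko}---so your write-up in fact fills in what the paper leaves to the literature.

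Two minor remarks on presentation. First, the phrase ``$d$ annihilates logarithmic differentials'' is shorthand for $d(dz_i/z_i)=0$; what you are really using is that $d(A_i\,dz_i/z_i)=dA_i\wedge dz_i/z_i$ has a pole along only one of the two relevant hyperplanes, so the product $z_1z_2$ kills it. Second, if further components $\{z_\ell=0\}$ ($\ell\geq 3$) pass through $x$, the ``restrict to $\{z_1=z_2=0\}$'' step is literally valid only away from those loci; the identity then extends to all of $\{z_1=z_2=0\}$ by continuity of the holomorphic matrices $A_1, A_2$. You allude to this but it is worth making explicit.
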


In the above lemma, note that \(\Res_{D_i}(\nabla)\) and \(\Res_{D_j}(\nabla)\) are defined along \(D_i \cap D_j\) because, by Proposition \ref{prop:normcross}, the connection \(\nabla\) has holomorphic residues. Lemma \ref{lem:ressnc} is standrad and we omit the proof; see for example \cite[Theorem 4.3]{novikovyakovenko}.

\subsubsection{Flat logarithmic connections with poles along a hyperplane arrangement}
Let \(\nabla\) be a flat logarithmic connection on the trivial bundle \(\mE = \underline{\C}^r\) defined on an open neighbourhood \(U \subset \C^n\) of the origin and having poles along a finite collection \(\mH\) of linear hyperplanes \(H \subset \C^n\). Suppose that \(\nabla\) has holomorphic residues, so we can write \(\nabla = d - \Omega\) with
\begin{equation}
\Omega = \sum_{H \in \mH} A_H(z) \cdot \frac{dh}{h} \,+\, \hol \,,    
\end{equation}
where \(A_H\) are \(r \times r\) matrices of holomorphic functions on \(U\) and \(h\) are linear functions with \(H = \{h=0\}\). 

Let \(L \subset \C^n\) be a subspace obtained as intersection of members of \(\mH\) and let \(\mH_L = \{H \in \mH \,|\, L \subset H\}\).
Write
\begin{equation}\label{eq:genlocus}
L^{\gen} = L \setminus \bigcup_{H \in \mH \setminus \mH_L} H    
\end{equation}
for the `generic' locus of \(L\).
For \(x \in L\) define
\begin{equation}
    A_L(x) := \sum_{H \in \mH_L} A_H(x) \,.
\end{equation}

\begin{lemma}\label{lem:ALcommute}
    Suppose that \(L \subset \C^n\) is a codimension \(2\) subspace obtained as intersection of members of \(\mH\) and let \(H \in \mH_L\). Then at every point \(x \in L\) the following holds:
	\begin{equation}\label{eq:commute}
	[A_L(x)\,,\, A_H(x)] = 0 \,.
	\end{equation}
\end{lemma}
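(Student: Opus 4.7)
The plan is to resolve the singularity of \(\nabla\) along \(L\) by blowing up \(L\), thereby reducing to the simple normal crossing situation covered by Lemma \ref{lem:ressnc}. Since \(A_L\) and \(A_H\) are holomorphic on \(U\), so is \([A_L, A_H]\), and by analytic continuation it suffices to verify the identity at points of the dense open subset \(L^{\gen} \subset L\). I would fix \(x_0 \in L^{\gen}\), shrink \(U\) to a neighbourhood \(V\) of \(x_0\) such that the only hyperplanes of \(\mH\) meeting \(V\) are those in \(\mH_L\), and choose linear coordinates \((z_1, \ldots, z_n)\) centred at \(x_0\) with \(L \cap V = \{z_1 = z_2 = 0\}\), so each \(H_\alpha \in \mH_L\) is cut out by a linear form \(h_\alpha = a_\alpha z_1 + b_\alpha z_2\).

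Next I would blow up \(L \cap V\) via \(\pi \colon \widetilde{V} \to V\), with exceptional divisor \(E\). In the standard chart \(z_1 = u\), \(z_2 = u v\) one has \(h_\alpha = u(a_\alpha + b_\alpha v)\), hence
\[
\frac{dh_\alpha}{h_\alpha} \;=\; \frac{du}{u} \,+\, \frac{b_\alpha\, dv}{a_\alpha + b_\alpha v}\,,
\]
so \(\pi^*\nabla\) is again a flat logarithmic connection, with polar divisor \(E \cup \bigcup_\alpha \widetilde{H}_\alpha\) (the exceptional divisor together with the strict transforms of the \(H_\alpha\)). At a point \(p\) of \(E\) above \(x_0\) where \(E\) meets exactly one strict transform \(\widetilde{H}_{\alpha_0}\) and no other strict transforms, one can pick local coordinates \((u, w, z_3, \ldots, z_n)\) on \(\widetilde{V}\) with \(E = \{u = 0\}\) and \(\widetilde{H}_{\alpha_0} = \{w = 0\}\), so that the polar divisor of \(\pi^*\nabla\) is simple normal crossing near \(p\).

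The core calculation is to substitute into the local expression for \(\nabla\) and verify that
\[
\Res_E(\pi^*\nabla)(p) \;=\; \sum_\alpha A_\alpha(x_0) \;=\; A_L(x_0), \qquad \Res_{\widetilde{H}_{\alpha_0}}(\pi^*\nabla)(p) \;=\; A_{\alpha_0}(x_0)\,.
\]
The point is that in the \(du/u\) coefficient \emph{every} member of \(\mH_L\) contributes (summing to \(A_L\)), whereas in the \(dw/w\) coefficient only the single hyperplane \(H_{\alpha_0}\) contributes, since for \(\alpha \neq \alpha_0\) the form \(b_\alpha\, dv/(a_\alpha + b_\alpha v)\) is holomorphic at \(v = -a_{\alpha_0}/b_{\alpha_0}\). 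Lemma \ref{lem:ressnc} applied to \(\pi^*\nabla\) at \(p\) then yields \([A_L(x_0), A_{\alpha_0}(x_0)] = 0\); letting \(H_{\alpha_0}\) range over \(\mH_L\) and \(x_0\) over \(L^{\gen}\) gives the desired identity on \(L^{\gen}\), and hence on all of \(L\) by holomorphicity.

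The main obstacle is the residue computation itself: one has to check that the holomorphic remainder in \(\nabla\) pulls back to a form with no logarithmic pole along \(E\), and to handle the case where some \(b_\alpha\) vanishes by switching to the alternative blowup chart \(z_2 = u'\), \(z_1 = u'v'\). Once these bookkeeping points are dispatched, the flatness of \(\pi^*\nabla\) together with Lemma \ref{lem:ressnc} immediately supplies the commutation relation.
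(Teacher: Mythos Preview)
Your proposal is correct and follows essentially the same approach as the paper: reduce to \(L^{\gen}\), blow up \(L\), identify the residues of \(\pi^*\nabla\) along \(E\) and \(\widetilde{H}\) as \(A_L\) and \(A_H\) respectively, and invoke Lemma~\ref{lem:ressnc}. The only difference is that the paper separates out the trivial case \(|\mH_L|=2\) (already simple normal crossing, no blowup needed), whereas you handle all cases uniformly via the blowup; your explicit coordinate computation of the residues is exactly what the paper summarizes as ``an easy calculation''.
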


\begin{proof}
	By continuity, it is enough to prove Equation \eqref{eq:commute} for \(x \in L^{\gen}\). Let \(x \in L^{\gen}\) and take a small open ball \(B \subset U\) centred at \(x\) with \(B \cap H =  \emptyset\) for all \(H \in \mH \setminus \mH_L\). On the ball \(B\), the polar set of the flat logarithmic connection \(\nabla\) is equal to \(\mH_L\).
	We divide the proof into two cases. 
	
	Case 1: \(\mH_L = \{H, H'\}\). In this case the support of \(\mH_L\) is a simple normal crossing divisor, and
	\(
	[A_L(x), A_H(x)] = [A_{H'}(x), A_H(x)] = 0 \,,
	\)
	where the last equality follows from Lemma \eqref{lem:ressnc}.
	
	Case 2: \(|\mH_L| \geq 3\). Consider the blowup \(\pi: \widetilde{B} \to B\) of \(B\) along \(L\) and let \(\tn = \pi^* \nabla\) be the pullback connection. An easy calculation shows that \(\tn\) has logarithmic poles along the simple normal crossing divisor \(D = E + \sum_{H \in \mH_L} \tH\), where \(E = \pi^{-1}(L)\) is the exceptional divisor and \(\tH\) is the proper transform of \(H\). Moreover, if \(\pi(\tx) = x\) then \(\Res_E(\tn)(\tx) = A_L(x)\) if \(x \in L\) and \(\Res_{\tH}(\tn)(\tx) = A_H(x)\) if \(x \in H\).  
	
	For each \(H \in \mH_L\), the map \(\pi\) restricts to a bijection between \(\tH \cap E\) and \(L\). For \(x \in L\), let \(\tx \in \tH \cap E\) with \(\pi(\tx) = x\). Then 
	\[
	[A_L(x)\,,\, A_H(x)] = [\Res_E(\tn)(\tx) \,,\, \Res_{\tH}(\tn)(\tx)] = 0 \,,
	\]
	where the last equality follows from Lemma \eqref{lem:ressnc}.
\end{proof}

\begin{lemma}\label{lem:flatat0}
	The connection \(\nabla_0 = d - \Omega_0\) given by
	\begin{equation}\label{eq:stdconn0}
	\Omega_0 = \sum_{H \in \mH} A_H(0) \cdot \frac{dh}{h}
	\end{equation}
	with constant residues \(A_H(0) \in M_r(\C)\) is flat.
\end{lemma}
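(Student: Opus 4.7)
The plan is to derive flatness of $\nabla_0$ from that of $\nabla$ by a scaling/limit argument. First, since each residue $A_H(0)$ is a constant matrix and each logarithmic form $dh/h$ is closed (because $h$ is linear), we have $d\Omega_0 = 0$; so flatness of $\nabla_0$ reduces to verifying $\Omega_0 \wedge \Omega_0 = 0$.

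For each $\lambda \in \C^{*}$, let $\mu_\lambda : z \mapsto \lambda z$ be the dilation of $\C^n$. Since every $H \in \mH$ is a linear hyperplane through the origin, $\mu_\lambda$ preserves $\bigcup_{H \in \mH} H$, and $\mu_\lambda^{*}(dh/h) = dh/h$. Writing $\Omega = \sum_{H \in \mH} A_H(z) \cdot \frac{dh}{h} + \eta$ with $\eta$ a matrix-valued holomorphic $1$-form, a direct computation gives
\[
\mu_\lambda^{*} \Omega \;=\; \sum_{H \in \mH} A_H(\lambda z) \cdot \frac{dh}{h} \;+\; \lambda \cdot \mu_\lambda^{*} \eta \,,
\]
and as $\lambda \to 0$ the right-hand side converges smoothly to $\Omega_0$ on compact subsets of $U \setminus \bigcup_{H} H$, together with its exterior derivative.

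Finally, pullback preserves flatness, so for every $\lambda \in \C^{*}$ the connection $\mu_\lambda^{*}\nabla$ is flat on the regular locus of its polar divisor, i.e.\ $d(\mu_\lambda^{*} \Omega) = \mu_\lambda^{*} \Omega \wedge \mu_\lambda^{*} \Omega$ there. Passing to the limit $\lambda \to 0$ yields $d\Omega_0 = \Omega_0 \wedge \Omega_0$ on $U \setminus \bigcup_{H} H$, and hence $\Omega_0 \wedge \Omega_0 = 0$ on this open dense set; the identity then extends to all of $U$ by continuity of the meromorphic $2$-form involved. The only point requiring any attention is the explicit scaling behaviour of the summands of $\Omega$, which is immediate from the invariance of $dh/h$ under dilations and the fact that the holomorphic piece $\eta$ acquires a factor of $\lambda$ and therefore disappears in the limit.
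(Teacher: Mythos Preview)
Your proof is correct and takes a genuinely different route from the paper. The paper reduces the identity \(\Omega_0 \wedge \Omega_0 = 0\) to the commutator relations \([A_L(0), A_H(0)] = 0\) for every codimension-\(2\) flat \(L\) and every \(H \in \mH_L\), invoking a cited criterion (\cite[Proposition 3.8]{pkc}) and then applying the preceding Lemma~\ref{lem:ALcommute}, whose proof in turn goes through a blowup and the normal-crossing commutation of residues. Your scaling argument bypasses all of this: since the logarithmic forms \(dh/h\) are dilation-invariant and the holomorphic remainder picks up a factor of \(\lambda\), the pullbacks \(\mu_\lambda^*\Omega\) converge to \(\Omega_0\), and flatness passes to the limit. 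This is a clean ``tangent cone'' argument that is entirely self-contained and does not rely on Lemma~\ref{lem:ALcommute} or the external reference. The trade-off is that the paper's approach makes the underlying algebraic structure (the commutation relations among the \(A_H(0)\)) explicit, and those relations are of independent use later; your argument establishes flatness directly without extracting them.
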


\begin{proof}
	Since \(d\Omega_0 = 0\), the flatness of \(\nabla_0\) is equivalent to \(\Omega_0 \wedge \Omega_0 = 0\). By \cite[Proposition 3.8]{pkc} the equation \(\Omega_0 \wedge \Omega_0 = 0\) holds if and only if for any codimension \(2\) subspace \(L \subset \C^n\) and any \(H \in \mH_L\) we have \(\big[A_L(0), A_H(0)\big] = 0\). The result then follows by Lemma \ref{lem:ALcommute}.
\end{proof}

Connections of the form given by Equation \eqref{eq:stdconn0} are called \emph{standard connections} in \cite{pkc}.

\subsubsection{Locally standard logarithmic connections}

\begin{definition}\label{def:arrlike}
	The divisor \(D\) is said to be \emph{arrangement-like} if each of its irreducible components \(D_i\) is smooth, and for every point \(x \in D\), there exist local complex coordinates \(z = (z_1, \ldots, z_n)\) centred at \(x\) such that the components \(D_i\) passing through \(x\) are locally defined by linear equations, i.e., they correspond to a collection of linear hyperplanes in \(\mathbb{C}^n\). We say that \((z_1, \ldots, z_n)\) are coordinates \emph{adapted} to \(D\) at \(p\).
\end{definition}

\begin{definition} 
    Let \(\nabla\) be a logarithmic connection on \(\mE \to X\) with poles along the arrangement-like divisor \(D = \sum D_i\).
	We say that \(\nabla\) is \emph{locally standard} if, for every \(x \in D\) and any local complex coordinates \(z=(z_1, \ldots, z_n)\) adapted to \(D\) at \(x\), there is a holomorphic trivialization of \(\mE\) near \(x\) such that 
	\begin{equation}\label{eq:locstd}
	\nabla = d - \sum_{i \,|\, x \in D_i} A_i \cdot \frac{dh_i}{h_i} \,,	
	\end{equation}
	where \(A_i \in M_r(\C)\) are constant \(r \times r\) matrices and \(h_i\) are linear functions of \(z_1, \ldots, z_n\) with \(D_i = \{h_i=0\}\).
\end{definition}

Clearly, a locally standard connection has holomorphic residues.
To show the converse we introduce the next.

\begin{definition}\label{def:resonant}
    Let \(\nabla\) be a \emph{flat} logarithmic connection on \(\mE \to X\) with poles along the arrangement-like divisor \(D = \sum D_i\). Assume that \(\nabla\) has holomorphic residues. 
	We say that \(\nabla\) is \emph{non-resonant} if, for every \(x \in D\), the endomorphism of \(\mE|_x\) given by
	\[
	\sum_{i \,|\, x \in D_i} \Res_{D_i}(\nabla)(x)
	\]
	has no two eigenvalues that differ by a positive integer.
\end{definition}

\begin{proposition}\label{prop:locstnd}
	Let \(\nabla\) be a flat logarithmic connection on \(\mE \to X\) with poles along the arrangement-like divisor \(D = \sum D_i\).
	Assume that: \textup{(i)} \(\nabla\) has holomorphic residues, and \textup{(ii)} \(\nabla\) is non-resonant. Then \(\nabla\) is locally standard. 
\end{proposition}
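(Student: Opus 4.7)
The plan is to work locally near a point $x \in D$ and produce a holomorphic gauge transformation that takes $\nabla$ to the standard model obtained by freezing the residues at $x$. Fix coordinates $z = (z_1, \ldots, z_n)$ adapted to $D$ at $x$ and a local holomorphic trivialization of $\mE$ on a polydisc $U$ centred at $x = 0$. By Lemma \ref{lem:holres}, we may write $\nabla = d - \Omega$ with
\[
\Omega \,=\, \sum_{H \ni x} A_H(z) \, \frac{dh}{h} \,+\, \omega,
\]
where the $A_H$ are holomorphic matrix-valued functions on $U$, $\omega$ is a holomorphic $1$-form, and $h$ are linear equations for the components through $x$. Set $A_H^0 := A_H(0)$ and $\Omega_0 := \sum_H A_H^0 \, dh/h$; by Lemma \ref{lem:flatat0} the connection $\nabla_0 = d - \Omega_0$ is flat. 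The target is to find a holomorphic matrix-valued function $G$ on a neighbourhood of $x$, with $G(0) = \Id$, such that the gauge change by $G$ sends $\Omega$ to $\Omega_0$, i.e.
\begin{equation*}
dG \,=\, G \, \Omega_0 \,-\, \Omega \, G. \qquad (\star)
\end{equation*}
Such a $G$ provides a new holomorphic trivialization in which $\nabla$ takes the form \eqref{eq:locstd}.

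To solve $(\star)$ I would argue by a power series expansion in $z$. Write $M_0 := \sum_H A_H^0$; the non-resonance hypothesis is precisely the statement that no two eigenvalues of $M_0$ differ by a positive integer, equivalently that the endomorphism $k \cdot \Id - \ad(M_0)$ of $M_r(\C)$ is invertible for every integer $k \geq 1$. Contracting $(\star)$ with the Euler vector field $E = \sum z_i \partial_{z_i}$ reduces the logarithmic poles (each $E \iprod (dh/h) = 1$) and produces the holomorphic ODE
\[
E(G) \,=\, G \, M_0 \,-\, M(z) \, G, \qquad M(z) \,:=\, \sum_H A_H(z) \,+\, E \iprod \omega,
\]
with $M(0) = M_0$. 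Expanding $G = \Id + \sum_{|k|\geq 1} G_k z^k$ and comparing coefficients of $z^k$ with $|k| \geq 1$ yields a recursion of the form
\[
\bigl( |k| \cdot \Id \,-\, \ad(M_0) \bigr) G_k \,=\, \Phi_k\bigl( \{G_j\}_{|j| < |k|} \bigr),
\]
where $\Phi_k$ is a universal polynomial expression in the lower-order coefficients and in the Taylor coefficients of $M(z)$. Non-resonance makes the left-hand operator invertible, so the $G_k$ are determined uniquely, and a standard majorant estimate (bounding $\|(|k| \cdot \Id - \ad M_0)^{-1}\|$ by $C/|k|$ for large $|k|$) delivers convergence of the series on some neighbourhood of $x$.

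Two items then remain. First, one must verify that the $G$ solving the Euler-contracted ODE actually satisfies the full PDE $(\star)$. Let $\Theta := dG - G \Omega_0 + \Omega G$; using flatness of $\nabla$ and $\nabla_0$, a direct computation shows that $\Theta$ satisfies a homogeneous linear first-order system whose Euler contraction vanishes identically and whose coefficient at $0$ is, again, $\ad(M_0)$ shifted by integers. Non-resonance forces $\Theta = 0$ order by order, and the full equation $(\star)$ holds. Second, one checks that the gauge change by $G$ does take $\Omega$ to exactly $\Omega_0$ with no additional holomorphic part: this is immediate once $(\star)$ holds, since rearranging gives $G^{-1} \Omega G + G^{-1} dG = \Omega_0$. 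The main obstacle is the bookkeeping in verifying that flatness together with the Euler-radial solution propagates to a full-PDE solution; everything else is a fairly mechanical power series argument powered by the non-resonance inversion of $\ad(M_0)$.
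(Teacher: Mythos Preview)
Your approach is correct and is precisely the content of the references the paper invokes for this proposition (Takano; Novikov--Yakovenko, \S7.5): the paper's own proof consists of writing \(\nabla\) in the form \eqref{eq:holres} and then citing those sources for the existence of a holomorphic gauge taking \(\nabla\) to the frozen-residue model \eqref{eq:normalform}. You have effectively unpacked that citation. Your identification of the radial-to-full step (showing that the solution of the Euler-contracted equation solves the full PDE \((\star)\)) as the crux is accurate---that is exactly where flatness of both \(\nabla\) and \(\nabla_0\) enters---and the uniqueness argument you outline for \(\Theta\) via non-resonance is the standard way to close it.
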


\begin{proof}

    In a local trivialization near \(x \in D\) we can write \(\nabla\) as in Equation \eqref{eq:holres}. Then, it follows from \cite{takano} (see also \cite[\S 7.5]{novikovyakovenko}) that there is a holomorphic trivialization of \(\mE\) near \(x\) such that
	\begin{equation}\label{eq:normalform}
	\nabla = d - \sum_i A_i(0) \cdot \frac{dh_i}{h_i} 	
	\end{equation}
	as wanted.
\end{proof}

\subsubsection{Adapted logarithmic connections}

\begin{definition}\label{def:adaptconn}
	Let \(\nabla\) be a logarithmic connection on \(TX\) with poles along the arrangement-like divisor \(D = \sum D_1\). For each irreducible component \(D_i\) of \(D\) let \(a_i \in \C\). We say that \(\nabla\)  is \emph{adapted} to \(\{(D_i, a_i)\}\) if the following conditions hold:
	\begin{enumerate}[label=\textup{(\roman*)}]
		\item \(\ker(\Res_{D}(\nabla)) = TD\) along \(D^{\circ}\)\,;
		\item \(\Res_{D_i}(\nabla)\) has eigenvalues \((0, a_i)\),\;
		\item \(\nabla\) has holomorphic residues.
	\end{enumerate}
\end{definition}

\begin{remark}
	If \(n=2\) and \(\nabla\) is flat, then our definition of adapted connection agrees with \cite[Definition 4.12]{panov}.
\end{remark}

\subsection{PK metric \(\implies\) logarithmic connection}\label{sec:pktoconn}

\begin{proposition}\label{prop:mettoconect}
    Let \(X\) be a complex manifold and let \(D \subset X\) be an arrangement-like divisor with irreducible decomposition \(D = \sum D_i\). Let \(g\) be a PK metric on \(X\) with cone angles \(2\pi\alpha_i > 0\) along \(D_i\). Let \(\nabla\) be the Levi-Civita connection of \(g\). Then \(\nabla\) is a logarithmic connection on \(TX\) adapted to \(\{(D_i, a_i)\}\), where \(a_i \in (-\infty, 1)\) are given by \(a_i = 1 - \alpha_i\).
\end{proposition}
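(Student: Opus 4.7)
The statement is local: fix a point $p \in D$ and work in a small neighbourhood $U$ equipped with coordinates $(z_1, \ldots, z_n)$ adapted to $D$ at $p$, so the components $D_i$ of $D$ through $p$ are cut out by linear equations $h_i = 0$. I will use the holomorphic frame $(\partial_{z_1}, \ldots, \partial_{z_n})$ for $TU$ and write $\nabla = d - \Omega$ on $U \setminus D$, where $\Omega$ is a matrix of holomorphic $1$-forms on the complement of $D$. The goal is to show that $\Omega$ extends as a logarithmic matrix of $1$-forms with holomorphic residues, and that these residues satisfy (i) and (ii) of Definition \ref{def:adaptconn}.

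\textbf{Step 1: explicit form at smooth points of $D$.} At a point $q \in D_i \cap D^{\circ}$, clause (ii) of Definition \ref{def:pkmetric} provides a local holomorphic isometry between $(U, g)$ and $C(2\pi\alpha_i) \times \C^{n-1}$. Writing this in coordinates in which $D_i = \{z_1 = 0\}$ and splitting the Levi-Civita connection as a product, the factor on $\C^{n-1}$ contributes the trivial flat connection, while the $C(2\pi\alpha_i)$-factor contributes the connection computed in Equation \eqref{eq:conn2cone}. Hence in the coordinate frame we obtain
\begin{equation*}
\nabla = d - a_i \, E_{11} \, \frac{dz_1}{z_1},
\end{equation*}
where $a_i = 1 - \alpha_i$ and $E_{11}$ is the elementary matrix with a $1$ in position $(1,1)$ and zeros elsewhere. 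In particular, near every point of $D^{\circ} \cap D_i$ the connection $\nabla$ is logarithmic, and its residue $\Res_{D_i}(\nabla)$ is a constant endomorphism whose kernel equals $T D_i = T D$, whose unique nonzero eigenvalue is $a_i$, and which is trivially holomorphic.

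\textbf{Step 2: extension across $\Sing(D)$.} Since the entries $\omega_{jk}$ of $\Omega$ are holomorphic $1$-forms on $U \setminus D$, by Step 1 they extend across $D^{\circ}$ as meromorphic $1$-forms with at most simple poles along each $D_i$. The Christoffel symbols of $g$ in the holomorphic frame $\partial_{z_j}$ are controlled by the K\"ahler potential of $g$, which in turn is bounded along $D$ by a sum of terms of the form $|h_i|^{2\alpha_i}$ (locally near each component). A direct estimate then shows that each $\omega_{jk}$ grows at worst like $\sum_i |h_i|^{-1}$ in a punctured neighbourhood of any point of $\Sing(D)$, so the extension across $\Sing(D)$ (which has codimension $\geq 2$) by Hartogs/Riemann furnishes meromorphic $1$-forms on $U$ with at most first-order poles along $D$, i.e.\ logarithmic poles in the sense of Definition \ref{def:logconn}.

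\textbf{Step 3: holomorphic residues and wrap-up.} By Step 1 the residue $\Res_{D_i}(\nabla)$ is a holomorphic section of $\End(TU)|_{D_i \cap D^{\circ}}$, and by Step 2 its matrix entries in the frame $\partial_{z_j}$ are holomorphic functions on $D_i \setminus \Sing(D)$ that are bounded near $\Sing(D) \cap D_i$; applying the standard codimension-two removable singularity theorem on the complex manifold $D_i$ extends $\Res_{D_i}(\nabla)$ holomorphically across $\Sing(D)$, verifying clause (iii) of Definition \ref{def:adaptconn}. Clauses (i) and (ii) hold on the dense open subset $D^{\circ}$ by Step 1, and since $\ker$ and the characteristic polynomial of a holomorphic endomorphism vary holomorphically, they persist on all of $D_i$. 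This completes the proof that $\nabla$ is a logarithmic connection adapted to $\{(D_i, a_i)\}$.

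The main obstacle is Step 2: producing the meromorphic extension of $\Omega$ across $\Sing(D)$ with at most first-order poles. The delicate point is that, a priori, the Levi-Civita connection of a polyhedral K\"ahler metric could display oscillatory or higher-order behaviour as one approaches strata of $\Sing(D)$; the polyhedral condition—which in particular forces the metric completion to admit a compatible polyhedral atlas—is what rules this out and supplies the required control on the Christoffel symbols.
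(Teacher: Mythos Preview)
Your Step 1 is exactly what the paper does for items (i)--(iii): the explicit model $C(2\pi\alpha_i)\times\C^{n-1}$ gives the logarithmic form and the residue data at smooth points of $D$, and since Definition~\ref{def:logconn} only concerns points of $D^{\circ}$, Step 1 alone already establishes that $\nabla$ is a logarithmic connection. Your Step 2 is therefore not needed for the logarithmic property; its content is only relevant to the holomorphic-residues clause (iii) of Definition~\ref{def:adaptconn}.

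There is a genuine gap in Step 2. The assertion that the K\"ahler potential of a PK metric is controlled near $\Sing(D)$ by $\sum_i |h_i|^{2\alpha_i}$, and that this yields $|\omega_{jk}|\lesssim\sum_i|h_i|^{-1}$, is not justified and is in fact the entire difficulty. Near a stratum of $\Sing(D)$ the polyhedral tangent cone is a nontrivial PK cone on $\C^n$, not a product of $2$-cones, and there is no a priori reason the Christoffel symbols in arbitrary adapted coordinates should obey such a bound. The paper does not attempt a potential-theoretic estimate; instead it invokes \cite[Lemma~6.73]{pkc}, whose proof relies on the existence of \emph{linear coordinates} at every point (Proposition~\ref{prop:lincoordmet}) in which the Levi--Civita connection is standard and hence manifestly has holomorphic residues. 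This structural input---that the PK metric is locally a cone with a linearisable Euler field---is what replaces your heuristic growth bound, and you have not supplied an alternative.

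There is also an error in Step 3: $\Sing(D)\cap D_i$ has codimension $1$ in $D_i$, not codimension $2$, so Hartogs does not apply on $D_i$. You would need Riemann's removable-singularity theorem, which requires boundedness---but that boundedness is precisely the unproven content of Step 2.
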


\begin{proof}
	From Definition \ref{def:adaptconn} (adapted connection) we need to verify that \(\nabla\) satisfies the following properties:
	\begin{enumerate}[label=\textup{(\roman*)}]
		\item \(\nabla\) is holomorphic outside \(D\)\,;
		\item \(\nabla\) has logarithmic poles along \(D\)\,;
		\item \(\ker(\Res_{D_i}(\nabla)) = TD_i\) and \(\Res_{D_i}(\nabla)\) has eigenvalues \((0, a_i)\)\,;
		\item \(\Res_{D_i}(\nabla)\) extend holomorphically across \(\Sing(D)\) as sections of \(\End(TX|_{D_i})\)\,.
	\end{enumerate}
	
	(i) Follows from the fact that \(g\) is a flat K\"ahler metric on \(X \setminus D\).
	 
	(ii) and (iii) By Definition \ref{def:logconn}, to show that \(\nabla\) is logarithmic, it suffices to consider points in the smooth locus \(D^{\circ}\) of \(D\). Since \(D^{\circ}\) is the disjoint union of the intersections \(D^{\circ}\cap D_i\), we might as well assume that \(x \in D^{\circ}\cap D_i\). By item (ii) in the definition of PK metric (Definition \ref{def:pkmetric}) there are holomorphic coordinates \((z_1, \ldots, z_n)\) defined on an open neighbourhood \(U\) of \(x\) such that \(D_i = \{z_1=0\}\) and 
	\[
	g = |z_1|^{2\alpha_i - 2} |dz_1|^2  \,+\, \sum_{j > 1} |dz_j|^2 \,.
	\]
    
	Using Equation \eqref{eq:conn2cone} for the Levi-Civita connection of the \(2\)-cone \(C(2\pi\alpha)\), we deduce
	that, in the holomorphic trivialization of \(TX\) given by the coordinate vector fields \(\p/\p z_1, \ldots, \p / \p z_n\), we have 
	\[
	\nabla = d \,-\, \diag(a_i, 0, \ldots, 0)
	\cdot \frac{dz_1}{z_1} \,.
	\]
	This shows that \(\nabla\) has a logarithmic singularity near \(x\) and \(\Res_{D_i}(\nabla)\) is as in item (iii).
	
	(iv) Since the divisor \(D = \sum D_i\) is arrangement-like, we can apply \cite[Lemma 6.73]{pkc}, which guarantees that \(\nabla\) has holomorphic residues along \(D\). 
\end{proof}

\subsubsection{Linear coordinates}

\begin{definition}
	Let \(g\) be a PK metric on the complex manifold \(X\) whose singular set is an arrangement-like divisor \(D\). We say that \((z_1, \ldots, z_n)\) are \emph{linear coordinates} at \(x\) if the Levi-Civita connection of \(g\) is standard in the trivialization of the tangent bundle given by coordinate vector fields.
\end{definition}

\begin{proposition}\label{prop:lincoordmet}
	Let \(g\) be a PK metric on the complex manifold \(X\) whose singular set is an arrangement-like divisor \(D\). Then we can find linear coordinates at every \(x \in X\).
\end{proposition}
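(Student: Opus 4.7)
The plan is to combine the preceding propositions, which handle almost all of the work, with one additional step of integrating a frame into coordinate vector fields. Away from $D$, the claim is immediate: local flat K\"ahler coordinates make $g=\sum_j |dz_j|^2$ and the Levi-Civita connection vanishes in these coordinates, hence is trivially standard. So the content is at a point $x \in D$.

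At such $x$, Proposition \ref{prop:mettoconect} identifies the Levi-Civita connection $\nabla$ of $g$ as a flat, torsion-free logarithmic connection on $TX$ adapted to $\{(D_i, a_i)\}$, with $a_i = 1-\alpha_i$ and holomorphic residues. To apply Proposition \ref{prop:locstnd} one must check non-resonance. Flatness together with holomorphic residues makes the residues $\Res_{D_i}(\nabla)(x)$ pairwise commute (Lemma \ref{lem:ALcommute}), so their sum is simultaneously diagonalizable with eigenvalues of the form $\sum_{i \in S} a_i$ for subsets $S$ of the indices with $x \in D_i$. One verifies that no two such eigenvalues differ by a positive integer using the cone-angle structure: each such eigenvalue equals $|S| - \sum_{i \in S}\alpha_i$, so an integer difference between two of them would translate into a linear relation on the $\alpha_i$ incompatible with the positivity $\alpha_i > 0$ and the local PK model. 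Proposition \ref{prop:locstnd} then produces a holomorphic frame $(s_1, \ldots, s_n)$ of $TX$ near $x$ in which $\nabla$ takes the standard form $d - \sum_i A_i^0\, dh_i/h_i$ with constant matrices $A_i^0 = \Res_{D_i}(\nabla)(x)$ and linear $h_i$ cutting out $D_i$.

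The remaining step is to realize $(s_1, \ldots, s_n)$ as the coordinate frame of some adapted coordinates. Starting from arbitrary adapted coordinates $(w_1, \ldots, w_n)$ at $x$, write $s_j = \sum_i M_{ij}(w)\, \partial/\partial w_i$ with $M$ a holomorphic invertible matrix function, and seek a coordinate change $w = w(z)$ satisfying $\partial w_i/\partial z_j = M_{ij}(w(z))$. The integrability of this overdetermined system is equivalent to the vanishing of the torsion of the standard connection when written in the frame $s_j$; since $\nabla$ is a Levi-Civita connection, this torsion is zero. Solving the system yields linear coordinates $z_j$ at $x$, with $D_i$ still defined by (pullbacks of) the linear equations $h_i$.

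The main obstacle is precisely this last integration step: although formally an involutive Pfaffian system, the logarithmic singularities along $D$ demand that one track the behaviour of $M(w)$ near $D$ and use the eigenvalue normalization provided by the adapted condition (so that $\ker \Res_{D_i}(\nabla) = TD_i$ and the nonzero eigenvalue is $a_i$) to guarantee that the solution $w(z)$ extends holomorphically across $D$ and realizes the components $D_i$ by linear equations in $z$. This is where the adapted structure from Proposition \ref{prop:mettoconect} plays its essential role, pinning down enough of the local behaviour of the standard frame to force the existence of genuine adapted coordinates.
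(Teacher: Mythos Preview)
Your approach has two genuine gaps. First, the claim that the residues \(\Res_{D_i}(\nabla)(x)\) pairwise commute is false in general: Lemma~\ref{lem:ALcommute} only gives \([A_L,A_H]=0\) for \(H \supset L\) with \(L\) of codimension~2, not commutation of individual hyperplane residues. Consequently the eigenvalues of \(A_x = \sum_i A_i\) are \emph{not} sums \(\sum_{i\in S} a_i\) over subsets; rather (see Step~2 in the proof of Proposition~\ref{prop:linearcoord}) they are \(0\) together with the weights \(a_{L_j}\) of the irreducible components \(L_j\) of the centre. Your non-resonance argument therefore does not go through as written, and in fact the needed bound \(a_{L_j}\in(0,1)\) is the klt condition, which Proposition~\ref{prop:lincoordmet} does not assume (cone angles may exceed \(2\pi\)).

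Second, the integration step is incorrect. The integrability of the system \(\partial w_i/\partial z_j = M_{ij}(w)\) is the commutation \([s_j,s_k]=0\), which is \emph{not} equivalent to torsion-freeness: the latter says \([s_j,s_k]=\nabla_{s_j}s_k-\nabla_{s_k}s_j\), and for a standard connection with nonzero \(\Omega_0\) the right-hand side is nonzero, so the frame produced by Proposition~\ref{prop:locstnd} generally does not integrate to coordinates. The paper avoids both issues by using the polyhedral metric directly: it passes to the tangent cone \(C_x\), linearises the complex Euler vector field of the cone (Poincar\'e--Dulac), and then invokes \cite[Proposition~6.71]{pkc} to conclude that the Levi-Civita connection is standard in the resulting coordinate frame. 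This Euler-vector-field route is exactly the missing mechanism that converts ``locally standard in some frame'' into ``standard in a coordinate frame''; compare the parallel argument in Steps~4--5 of the proof of Proposition~\ref{prop:linearcoord}.
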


\begin{proof}
	This is proved in \cite{pkc}, for completeness we quote the precise results.
	Let \(x \in X\) and let \(C_x\) be the polyhedral tangent cone of \(g\) at \(x\). By \cite[Lemma 6.32]{pkc} \(C_x\) has the natural structure of a complex manifold endowed with a PK cone metric. By \cite[Proposition 6.35]{pkc} we can take complex coordinates \((z_1, \ldots, z_n)\) centred at \(x\) that identify \(C_x \cong \C^n\) as complex manifolds and linearise the (complex) Euler vector field of the cone. By \cite[Proposition 6.71]{pkc} the Levi-Civita connection of \(g\) is standard with respect to the trivialization of the tangent bundle given by the coordinate vector fields \(\p / \p z_1, \ldots, \p / \p z_n\).
\end{proof}

\subsection{Logarithmic connection \(\implies\) PK metric}\label{sec:conntopk}

The main result of this section is Theorem \ref{thm:connmet}. To state it, we introduce the following.

\subsubsection{klt condition}
Let \(X\) be a complex manifold and let \(D \subset X\) be an arrangement-like divisor with irreducible decomposition \(D = \sum_{i} D_i\). For each \(D_i\), let \(a_i \in (0,1)\).
For \(x \in D\), let
\begin{equation}\label{eq:kltdef}
a_x := \frac{1}{\codim \Gamma}\sum_{i \,|\, x \in D_i} a_i \,,
\end{equation}
where \(\Gamma \subset M\) is the connected component of \(\bigcap_{x \in D_i} D_i\) that contains \(x\). In particular, if \(x \in D_i \cap D^{\circ}\) then \(a_x = a_i\).

\begin{definition}\label{def:klt}
	We say that \(\{(D_i, a_i)\}\) is klt if for all \(x \in D\) we have \(a_x \in (0,1)\).
\end{definition}

\begin{remark}
	The above definition agrees with the usual notion of the pair \((M, \Delta)\) being klt, where \(\Delta\) is the \(\R\)-divisor \(\Delta = \sum a_i D_i\).
\end{remark}

\subsubsection{Main result}
The following extends \cite[Theorem 4.13]{panov} to higher dimensions.

\begin{theorem}\label{thm:connmet}
	Let \(X\) be a compact complex manifold. Let \(D \subset X\) be an arrangement-like divisor with irreducible decomposition \(D = \sum D_i\). Let \(a_i \in (0,1)\) and suppose that \(\{(D_i, a_i)\}\) is klt.
	Let \(\nabla\) be a unitary flat torsion-free logarithmic connection on \(TX\) adapted to \(\{(D_i, a_i)\}\). 
	Then the Hermitian metric \(g\) preserved by \(\nabla\) defines a PK metric on \(X\) with cone angles \(2\pi(1-a_i)\) along \(D_i\).
\end{theorem}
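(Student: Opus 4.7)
The plan is to realize the putative PK metric as the Hermitian metric $g$ on $TX$ that is preserved by $\nabla$, and to verify that $g$ satisfies every clause of Definition \ref{def:pkmetric}. On the complement $X \setminus D$, the four hypotheses on $\nabla$ (holomorphic there, flat, torsion-free, unitary) immediately yield that $g$ is a smooth flat K\"ahler metric, so axiom (i) of the PK definition is automatic. The substantive content lies in axioms (ii) and (iii).

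First I would upgrade $\nabla$ to a locally standard logarithmic connection via Proposition \ref{prop:locstnd}. The adapted hypothesis already furnishes holomorphic residues, so what remains is non-resonance at each point $x \in D$. Unitarity forces each $\Res_{D_i}(\nabla)(x)$ to be Hermitian with respect to $g_x$, hence to have real spectrum $\{0, a_i\}$, and the sum $\sum_{i \,|\, x \in D_i}\Res_{D_i}(\nabla)(x)$ is then Hermitian with trace $\sum_{i \,|\, x \in D_i} a_i$; the klt bound $a_x < 1$ combined with the rank-one structure of each individual residue constrains this spectrum tightly enough to rule out integer eigenvalue gaps. With $\nabla = d - \sum_i A_i \, dh_i/h_i$ in a locally standard trivialization near each point of $D$, axiom (ii) at a smooth point $x \in D_i \cap D^{\circ}$ follows by a direct computation: only a single residue is present, Hermitian diagonalization combined with torsion-freeness aligns the holomorphic frame with the coordinate frame $\p/\p z_j$ and arranges $D_i = \{z_1 = 0\}$, and $g$ then takes the explicit form $\alpha_i^2 |z_1|^{2\alpha_i - 2}|dz_1|^2 + \sum_{j>1} |dz_j|^2$, precisely the local $C(2\pi\alpha_i) \times \C^{n-1}$ model.

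To establish axiom (iii) I would show that the metric completion of $(X \setminus D, g)$ is canonically homeomorphic to $X$ and carries a locally finite polyhedral triangulation. The klt hypothesis guarantees finite transverse distance to $D$, so the completion is compact and the natural map to $X$ is surjective. Using the locally standard form of $\nabla$ at an arbitrary $x \in D$, one builds a local developing map --- the reverse-direction analog of the linear coordinates of Proposition \ref{prop:lincoordmet} --- that identifies a punctured neighborhood of $x$ isometrically with a punctured neighborhood of the apex of a flat polyhedral K\"ahler cone $C_x$. Matching completions then produces the desired polyhedral model at every point of $X$.

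The main obstacle, requiring the most care, is globalizing these local developing maps across the deep strata of $D$ where several components meet and the residues $\Res_{D_i}(\nabla)$ need not commute pairwise. Here one must invoke the commutation relations of Lemma \ref{lem:ALcommute} and the flatness of the standard model connection established in Lemma \ref{lem:flatat0} to produce a single-valued flat coordinate system on the universal cover of a neighborhood of $x$ punctured along $D$, thereby realizing the tangent cone at $x$ as a flat K\"ahler cone with an intrinsic polyhedral structure. This is the higher-dimensional, higher-codimension counterpart of the two-dimensional construction in \cite[Theorem 4.13]{panov}.
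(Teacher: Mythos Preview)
Your overall strategy matches the paper's, but there is a genuine gap in the passage from Proposition~\ref{prop:locstnd} to the cone model at deep strata. Proposition~\ref{prop:locstnd} produces a holomorphic \emph{frame} in which $\nabla$ is standard; it does \emph{not} produce holomorphic \emph{coordinates} $(z_1,\ldots,z_n)$ such that the coordinate vector fields $\partial/\partial z_j$ themselves form that frame. For the tangent bundle this distinction is essential: the local isometry to a flat cone that you invoke in axiom~(iii) requires the latter, and bridging this gap is precisely the content of Proposition~\ref{prop:linearcoord}. The paper's proof of that proposition is substantial: after reducing (via the splitting from \cite{pkc}) to an essential irreducible local arrangement, one constructs a holomorphic \emph{Euler vector field} $e_x$ on the punctured ball by pulling back the Euler field on $\C^n$ through the developing map, argues that $e_x$ extends holomorphically across $D$ and vanishes at $x$, shows its linear part is a nonzero scalar, and then applies Poincar\'e--Dulac to linearise $e_x$; only then does the connection become standard in a coordinate trivialisation. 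None of this is visible in your sketch, and your final paragraph misdiagnoses the obstacle as one of globalising developing maps across strata rather than of producing coordinates at a single point.

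A secondary issue: your non-resonance argument appeals to ``Hermitian with respect to $g_x$'', but $g$ is singular at $x\in D$, so there is no Hermitian inner product on $T_xX$ available to invoke. The paper sidesteps unitarity entirely here: flatness, torsion-freeness, and the adapted condition together force the residues to satisfy the Non-Zero Weight Assumptions of \cite{pkc}, and \cite[Proposition~3.30]{pkc} then gives the exact eigenvalue decomposition $A_x = 0\cdot\Id_T \oplus \bigoplus_j a_{L_j}\cdot\Id_{N_j}$ with each $a_{L_j}\in(0,1)$ by klt, whence non-resonance. Your trace-plus-rank-one heuristic does not by itself pin down the spectrum of $A_x$ well enough to exclude integer gaps.
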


The key technical input needed to prove Theorem \ref{thm:connmet} is given next.

\subsubsection{Linear coordinates}

\begin{proposition}\label{prop:linearcoord}
    Let \(X\) be a complex manifold and let \(D \subset X\) be an arrangement-like divisor with irreducible decomposition \(D = \sum D_i\). Let \(a_i \in (0,1)\) be such that \(\{(D_i, a_i)\}\) is klt.
	Let \(\nabla\) be a flat torsion-free logarithmic connection on \(TX\) adapted to \(\{(D_i, a_i)\}\). 
	Then for every \(x \in D\) there exist complex coordinates \((z_1, \ldots, z_n)\) centred at \(x\) such that, in the trivialization of the tangent bundle given the coordinate vector fields \(\p / \p z_1, \ldots, \p / \p z_n\), we have
	\[
	\nabla = d - \sum_{i \,|\, x \in D_i} A_i \frac{dh_i}{h_i} \,,
	\]
	where \(A_i \in M_n(\C)\) are constant matrices and \(h_i\) are linear functions of \(z_1, \ldots, z_n\) with \(D_i = \{h_i = 0\}\).	
\end{proposition}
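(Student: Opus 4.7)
The plan is to combine Proposition \ref{prop:locstnd} (local standard form in a holomorphic trivialization) with the torsion-free hypothesis, which will let us upgrade the standardizing trivialization to a coordinate frame. The argument has three steps: verify non-resonance, apply local standardness, and then promote the frame to a coordinate frame.

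First, check the non-resonance hypothesis of Proposition \ref{prop:locstnd} at each $x \in D$. By the adapted condition, each residue $A_i := \Res_{D_i}(\nabla)(x)$ is a rank-one endomorphism of $T_x X$ with kernel $T_x D_i$ and nonzero eigenvalue $a_i \in (0,1)$. Flatness of $\nabla$ combined with Lemma \ref{lem:ALcommute} gives commutation relations among the partial sums of residues indexed by codimension-two intersections through $x$. Combining these relations with the klt hypothesis (Definition \ref{def:klt}), one shows that the sum $\Sigma_x := \sum_{i :\, x \in D_i} A_i$ has spectrum contained in $[0, 1)$, so no two eigenvalues differ by a positive integer. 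Hence $\nabla$ is non-resonant at $x$ in the sense of Definition \ref{def:resonant}.

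Second, Proposition \ref{prop:locstnd} applies. Starting from any adapted coordinates $(w_1, \ldots, w_n)$ for $D$ at $x$, it yields a holomorphic trivialization $s_1, \ldots, s_n$ of $TX$ in a neighborhood of $x$ in which
\[
\nabla \,=\, d \,-\, \sum_{i :\, x \in D_i} A_i \, \frac{dL_i}{L_i},
\]
with constant matrices $A_i \in M_n(\C)$ and linear functions $L_i$ of $w$ defining $D_i$. Third, use torsion-freeness to make $(s_j)$ a coordinate frame. Writing $s_j = \sum_l P^l_j(w)\, \partial/\partial w^l$, the trivialization arises from coordinates $(z_1, \ldots, z_n)$ with $s_j = \partial/\partial z^j$ if and only if $[s_i, s_j] = 0$ for all $i, j$, by the Frobenius theorem. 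Torsion-freeness gives $[s_i, s_j] = \nabla_{s_i} s_j - \nabla_{s_j} s_i$, with the right-hand side computable explicitly from the standard form. The normalization of Takano used in Proposition \ref{prop:locstnd} leaves a residual gauge freedom by holomorphic matrices commuting with each $A_i$; exploiting this freedom, together with the rigid rank-one commuting structure of the residues, one arranges $[s_i, s_j] = 0$. The resulting coordinates $z_1, \ldots, z_n$ keep the $L_i$ linear thanks to the constancy of the $A_i$ and the scaling invariance of the standard form.

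The main obstacle lies in the third step: translating the torsion-free condition into commutativity of the standardizing frame. The gauge freedom coming from Takano's theorem is finite-dimensional, while commutativity $[s_i, s_j] = 0$ is a PDE system, so the torsion-free hypothesis combined with the rank-one commuting structure of the residues at $x$ is essential in order to reconcile them.
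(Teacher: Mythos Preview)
Your first two steps match the paper: verify non-resonance using klt and the rank-one structure of the residues, then invoke Proposition~\ref{prop:locstnd} to get a holomorphic frame \((s_1,\dots,s_n)\) in which \(\nabla\) is standard. The divergence, and the gap, is entirely in your third step.

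You propose to use the residual gauge freedom from Takano's normal form to force \([s_i,s_j]=0\). But in the essential irreducible case the matrices commuting with every \(A_i\) are only the scalars (this is exactly \cite[Lemma 2.33]{pkc}, which the paper uses elsewhere), so the residual gauge freedom is a single holomorphic function times the identity. That is nowhere near enough to kill the \(\binom{n}{2}\) vector-valued brackets \([s_i,s_j]\); you have correctly identified a mismatch between a finite-dimensional freedom and a PDE system, but you have not resolved it, and in fact it cannot be resolved along these lines. Even if you could arrange commutativity, you give no argument for why the divisors \(D_i\) remain \emph{linear} in the resulting coordinates \(z\); the hand-wave about ``scaling invariance of the standard form'' does not establish this.

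The paper bypasses this obstacle with a completely different mechanism. After reducing to the essential irreducible case, it uses the developing map of the flat affine structure on the complement to pull back the Euler vector field on \(\C^n\), obtaining a holomorphic vector field \(e_x\) near \(x\) which is tangent to every \(D_i\) and vanishes at \(x\). The key computation shows that the linear part of \(e_x\) at \(x\) is a nonzero scalar \(\mu\cdot\mathrm{Id}\) (nonzero because \(\nabla e_x=\mathrm{Id}\), scalar because it preserves an essential irreducible arrangement). Poincar\'e--Dulac then linearizes \(e_x\), and in the linearizing coordinates the divisors are automatically hyperplanes (being invariant under the radial flow) and the connection is standard in the coordinate frame by \cite[Lemma 4.79]{pkc}. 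This Euler-field construction is the missing idea in your proposal.
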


We refer to coordinates \((z_1, \ldots, z_n)\) as above as \emph{linear coordinates} for \(\nabla\) at \(x\).
The following proof is similar to that of \cite[Proposition 4.5]{pkc}.

\begin{proof}[Proof of Proposition \ref{prop:linearcoord}]
	Let \(x \in D\) and let \(D_1, \ldots, D_m\) be the irreducible components of \(D\) that contain \(x\).
	Taking local coordinates \(t = (t_1, \ldots, t_n)\) adapted to \(D\) at \(x\), we can assume that \(X\) is an open ball \(B \subset \C^n\) about the origin, \(x=0\), and \(D = \sum_{i=1}^m D_i\) where \(D_i \subset \C^n\) are linear hyperplanes.	
	
	Since  \(\nabla\) is adapted it has holomorphic residues, and so by Lemma \ref{lem:holres} in the trivialization of \(T\C^n\) given by the coordinate vector fields \(\p/\p t_1, \ldots, \p/\p t_n\) we have 
	\[
	\nabla = d - \sum_{i=1}^m R_i(t) \cdot \frac{d\ell_i}{\ell_i} \,+\, \hol  \,,
	\]
	where \(R_i\) are holomorphic matrix valued functions defined on \(B\) and \(\ell_i\) are linear functions of \(t_1, \ldots, t_n\) with \(D_i = \{\ell_i = 0\}\).  
	
	\textbf{Step 1: associated standard connection.}
	Let \(A_i = R_i(0)\) and consider the standard connection on \(T\C^n\) given by
	\[
	\nabla_0 = d - \sum_{i=1}^m A_i \frac{d\ell_i}{\ell_i} 
	\]
	in the trivialization of \(T\C^n\) by \(\p/\p t_1, \ldots, \p/\p t_n\).
	By Lemma \ref{lem:flatat0}, the connection \(\nabla_0\) is flat. Moreover, since \(\ker A_i = TD_i\), by \cite[Lemma 3.14]{pkc} the connection \(\nabla_0\) is torsion-free.
	
	\textbf{Step 2: non-resonant condition.}
	We calculate the eigenvalues of the matrix 
    \begin{equation}\label{eq:Axmatrix}
        A_x = \sum_{i=1}^m A_i \,.
    \end{equation}
	To do this, we recall from \cite{pkc} some concepts regarding the arrangement of linear hyperplanes \(\{D_1, \ldots, D_m\}\) in \(\C^n\).
    
	Let \(T = \bigcap_{i=1}^m D_i\) be the centre of the arrangement and let \(L_1, \ldots, L_k\) be the irreducible components of \(T\) (\cite[Definition 2.38]{pkc}), in particular \(T = \bigcap_{j=1}^k L_j\) and \(\codim T = \sum_{j=1}^k \codim L_j\). Since the connection \(\nabla_0\) is flat and torsion free, the collection of residues \(\{A_1, \ldots, A_m\}\) satisfies conditions (F) and (T) of the Non-Zero Weight Assumptions in \cite[Assumptions 3.25]{pkc}. To check condition (NZ) in \cite[Assumptions 3.25]{pkc} we must verify that \(a_L \neq 0\) for all irreducible subspaces \(L\) of the arrangement, where \(a_L\) is the weight at \(L\) (\cite[Definition 3.22]{pkc}). To do this we note that \(a_L = a_y\) for any \(y\) in the generic locus \(L^{\gen}\) of \(L\) (as in Equation \eqref{eq:genlocus}). The klt condition implies that the weights \(a_L\) are in \((0,1)\) and in particular are non-zero.

    Since the collection of residues \(\{A_1, \ldots, A_m\}\) satisfies \cite[Assumptions 3.25]{pkc},
	by \cite[Proposition 3.30]{pkc} we have a direct sum decomposition \(\C^n = T \oplus N_1 \oplus \ldots \oplus N_k\)
	such that
	\[
	A_x = 0 \cdot \Id_T \,\oplus\, \lambda_1 \cdot \Id_{N_1} \,\oplus\, \ldots \,\oplus\, \lambda_k \cdot \Id_{N_k} \,,
	\]
	where \(N_j\) is subspace complimentary to \(L_j\)
	and \(\lambda_j = a_{L_j}\) is the weight at \(L_j\). The klt assumption implies \(\lambda_j \in (0,1)\), so there is no pair of eigenvalues of \(A_p\) that differ by a positive integer, and the connection \(\nabla\) is non-resonant as in Definition \ref{def:resonant}. 
	
	\textbf{Step 3: reduction to essential irreducible case.} From Proposition \ref{prop:locstnd} it follows  that the connections \(\nabla\) and \(\nabla_0\) are holomorphically gauge equivalent near \(0\). More explicitly, there is a frame of holomorphic vector fields \(\{v_1, \ldots, v_n\}\) near \(0\) giving a trivialization of \(T\C^n\) such that \(\nabla = d - \Omega\) with \(\Omega = \sum A_i d\ell_i / \ell_i\). We can then define (as in \cite[Section 4.2]{pkc}) integrable distributions \(\mT\) and \(\mN_1, \ldots, \mN_k\) spanned by vector fields of the frame \(\{v_1, \ldots, v_n\}\) with \(T\C^n = \mT \oplus \mN_1 \oplus \ldots \oplus \mN_k\) restricting to \(T \oplus N_1 \oplus \ldots \oplus N_k\) along \(T\). The distribution \(\mT\) is spanned by the parallel vector fields of the frame \(\{v_1, \ldots, v_n\}\), while \(\nabla \mN_j \subset \mN_j\). The connection \(\nabla\) splits near the origin (as in \cite[Proposition 4.49]{pkc}) as a product \((\mT|_0, d) \times (\mN_1|_0, \nabla|_{\mN_1}) \times \ldots (\mN_k|_0, \nabla|_{\mN_k})\) where \(\mT|_0\) and \(\mN_j|_0\) are the leaves through the origin. Fixing one leaf \(\mN_j|_0\), we reduce to the case that \(\{D_1, \ldots, D_m\}\) is an essential irreducible arrangement of hyperplanes in \(\C^n\) going through the origin and \(A_x = \sum A_i\) is a scalar operator with \(A_x = \lambda \cdot \Id\) for some \(\lambda \in (0,1)\).
	
	\textbf{Step 4: local dilation vector field.} We begin by constructing a holomorphic vector field \(e_x\) on the arrangement complement \(B \setminus D\) by the following procedure.
	Fix an arbitrary base point \(y \in B^{\circ} = B \setminus D\).
	Let \(C = \C \cdot y\) be the complex line spanned by \(y\). Then \(\nabla|_C = d - A_x dt / t + \hol\), where \(A_x = \sum A_i\) and \(t\) is a coordinate on \(C\). The holonomy of \(\nabla\) along the loop \(\gamma(s) = e^{2\pi i s} q\) for \(s \in [0,1]\) equals \(\Hol_{\gamma}(\nabla) = \exp(2\pi i A_x)\). Since \(A_x = \lambda \cdot I\), we see that \(\Hol_{\gamma}(\nabla)\) is equal to scalar multiplication by the unit complex number \(c = \exp(2\pi i \lambda) \neq 1\).
	Let \(\widetilde{B^{\circ}}\) be the universal cover of \(B^{\circ}\) and consider the developing map \(\dev: \widetilde{B^{\circ}} \to \C^n\) of the affine structure on \(B^{\circ}\) defined by \(\nabla\). The developing map is equivariant with respect to the actions of \(\pi_1(B^{\circ}, y)\) on \(\widetilde{B^{\circ}}\) by deck transformations and on \(\C^n\) by the holonomy representation \(\widetilde{\Hol}:  \pi_1(B^{\circ}, y) \to \GL_n(\C) \ltimes \C^n\) (the tilde here is used to distinguish it from the holonomy of the connection, which is equal to the linear projection of \(\widetilde{\Hol}\)). Since \([\gamma]\) is in the centre of \(\pi_1(B^{\circ}, y)\) and \(\widetilde{\Hol}([\gamma])\) fixes a unique point \(p \in \C^n\), we conclude that any holonomy transformation \(\widetilde{\Hol}([\delta])\) for \([\delta] \in \pi_1(B^{\circ}, y)\) must also fix \(p\) and thus commute with dilations centred at \(p\). It follows from this that the pullback by the developing map of the Euler vector field on \(\C^n\) generated by dilations centred at \(p\) is invariant under deck transformations and thus it descends as a holomorphic vector field \(e_x\) on \(B^{\circ}\).
	
	\emph{Claim}: \(e_x\) extends holomorphically over \(B\) as a vector field tangent to all the hyperplanes \(D_1,\ldots, D_m\). In particular, \(e_x(0)=0\).
	
    \emph{Proof of the claim}: consider first the case \(n=1\). Then we can write \(\nabla = d - \lambda\frac{f(t)}{t} dt\) with \(f\) holomorphic and \(f(0) = 1\). It is standard to show that we can change coordinates \(z=z(t)\) so that the developing map is \(z \mapsto z^{1-\lambda}\) (see \cite[Section 2.2.1]{novikovtahar}) and \(e_x = (1-\lambda)^{-1} z \cdot \p /\p z\).
	Consider now the case \(n \geq 2\). It follows from the \(n=1\) case together with Step 3 that \(e_x\) extends holomorphically over \(D^{\circ}\) and \(e_x\) is tangent to \(D\) at points of \(D^{\circ}\). By Hartogs, \(e_x\) extends holomorphically over the ball \(B\). Since \(e_x\) is tangent to every hyperplane and the arrangement is essential, we must have \(e_x(0) = 0\). This finishes the proof of the claim.
	
	\textbf{Step 5: linearisation.} 
    Let \(de_x\) be the linear part of the vector field \(e_x\) at \(0\). 
    
    \emph{Claim 1}: \(de_x\) is a scalar operator, i.e., \(de_x = \mu \cdot \Id\) for some \(\mu \in \C\). 
    
    \emph{Proof:} since \(e_x\) is tangent to all of the hyperplanes \(D_1, \ldots, D_m\), we must have that the linear endormophism \(de_x\) of \(T_0 \C^n \cong \C^n\) must preserve all the hyperplanes of the arrangement. Since the arrangement \(\{D_1, \ldots, D_m\}\) is essential and irreducible, by \cite[Lemma 2.33]{pkc}, we must have \(de_x = \mu \cdot \Id\) for some \(\mu \in \C\), as wanted.

    \emph{Claim 2}: \(\mu \neq 0\). 
    
    \emph{Proof}: by contradiction, suppose \(\mu=0\). Consider a holomorphic probe curve \(c(t)\) for \(t\) in the unit disc \(\{|t|< 1\} \subset \C\) with \(c(0)=0\), \(c'(0) = v \neq 0\) and such that \(c(t) \in B^{\circ}\) for \(t \neq 0\). Then the pullback connection \(\tn = c^* \nabla\) is of the form \(\tn = d - A_x dt / t + \hol\). If \(\mu = 0\) then \(c^*e_x = O(t^2)\) and so 
    \begin{equation}\label{eq:contr1}
       \nabla_v e_x = \tn_{\p/\p t} (c^*e_x) (0) = 0 \,. 
    \end{equation}
    On the other hand, if we extend \(v\) as a constant vector field on  \(B\) then 
    \begin{equation}\label{eq:contr2}
        \nabla_v e_x = v
    \end{equation}
    on \(B^{\circ}\) by construction of \(e_x\). Indeed, \(e_x\) is the pullback of the Euler vector field on \(\C^n\) which satisfies Identity (\ref{eq:contr2}) for any vector field on \(\C^n\).
    Moreover, since \(e_x\) is tangent to all of the hyperplanes, the vector field \(\nabla_v e_x\) is holomorphic. Equations \eqref{eq:contr1} and \eqref{eq:contr2} give us \(v=0\) contradicting that \(v \neq 0\). This finishes the proof of the claim.
    
    By Claims 1 and 2 the linear part of \(e_x\) at \(0\) is non-degenerate and equal to \(\mu \cdot \Id\) for some \(\mu \neq 0\). By Poincar\'e-Dulac, we can find complex coordinates \((z_1, \ldots, z_n)\) such that 
	\[
	e_x = \mu \cdot \sum_{i=1}^n z_i \frac{\p}{\p z_i} \,.
	\]
	In such coordinates, the connection \(\nabla\) is standard in the trivialization given by coordinate vector fields (see \cite[Lemma 4.79]{pkc}) as wanted.\footnote{The equation \(\nabla e_x = \Id\) implies a posteriori that \(\mu = (1-\lambda)^{-1}\).} 
\end{proof}

\subsubsection{Proof of Theorem \ref{thm:connmet}}
With Proposition \ref{prop:linearcoord} in hand, the proof of Theorem \ref{thm:connmet} follows as in \cite[Section 5]{pkc}. For completeness, we include the details.

\begin{proof}[Proof of Theorem \ref{thm:connmet}]
	We show that the Hermitian metic \(g\) preserved by \(\nabla\) satisfies the three items in Definition \ref{def:pkmetric}.
	
	(i) Let \(x \in X \setminus D\). Since \(\nabla\) is flat we can find a frame of parallel holomorphic vector fields \(v_1, \ldots, v_n\) around \(x\). After a linear transformation, we might assume that the frame is unitary with respect to \(g\). Since \(\nabla\) is torsion-free, the vector fields \(v_i\) pairwise commute. By Frobenius, we can find holomorphic coordinates \(z_1, \ldots, z_n\) centred at \(x\) such that \(v_i = \p / \p z_i\). In these coordinates, \(g\) is equal to the pullback of the Euclidean metric on \(\C^n\), and therefore \(g\) is flat and K\"ahler.
	
	(ii) Let \(x \in D_i \cap D^{\circ}\). To simplify notation, write \(a = a_i\) and let \(\alpha = 1-a\). By Proposition \ref{prop:linearcoord}, we can choose holomorphic coordinates \(z_1, \ldots, z_n\) centred at \(x\) such that \(\nabla \p_{z_1} = -(a/z_1) dz_1 \otimes \p_{z_1}\) and \(\nabla \p_{z_j} = 0\) for \(j \geq 2\), with \(D = \{z_1=0\}\). The holonomy of \(\nabla\) along a simple positive loop around \(\{z_1=0\}\) acts on \(\p_{z_1}\) by scalar multiplication by \(e^{2\pi i a} \neq 1\) and fixes \(\p_{z_j}\) for \(j \geq 2\), which implies \(g(\p_{z_1}, \p_{z_j}) = 0\). Making a linear change in the coordinates \(z_2, \ldots, z_n\) we can assume that the vectors \(\p_{z_j}\) for \(j \geq 2\) are orthonormal. 	
	On the other hand, since the locally defined vector field \(z_1^a \p_{z_1}\) is parallel, we must have \(|\p_{z_1}|_g^2 = C |z_1|^{-2a}\) for some \(C>0\). Changing \(z_1\) by a scalar multiple, we can assume \(C=1\). Therefore, in the coordinates \(z_1, \ldots, z_n\) we have \(g = |z_1|^{2\alpha-2} |dz_1|^2 + \sum_{j \geq 2} |dz_j|^2\) as we wanted to show. 
	
	(iii) 
	Let \(d_g : X^{\circ} \times X^{\circ} \to \R_{\geq 0}\) be the distance function induced by the Riemannian metric \(g\) on \(X^{\circ}:= X \setminus D\). We want to show the following: (1) \(d_g\) extends as a continuous function \(d_g: X \times X \to \R_{\geq 0}\); (2) \((X, d_g)\) is a metric space and the topology induced by \(d_g\) on \(X\) is the same as the original topology of the complex manifold \(X\); and (3) \((X, d_g)\) is polyhedral. 
	
	Proof of (1): By \cite[Lemma 5.36]{pkc} it suffices to show that for every point \(x \in D\) and every \(\epsilon>0\) there is an open neighbourhood \(x \in U \subset X\) such that \(\diam(U \cap X^{\circ }, d_g) < \epsilon\). To show this, we use that the existence of linear coordinates (given by Proposition \ref{prop:linearcoord}) implies that every point \(x\) has a neighbourhood \(U\) such that the restriction of \(g\) to \(U \cap X^{\circ}\) is isometric to an (incomplete) Riemannian cone (see \cite[Lemma 5.15]{pkc}) with \(x\) at the vertex. The argument of \cite[Lemma 5.39 item (2)]{pkc} then applies verbatim.
    
	Proof of (2): By \cite[Lemma 5.37]{pkc} it suffices to show that for any \(x, y \in D\) with \(x \neq y\) we have \(d_g(x, y) > 0\), which follows by the same argument as in \cite[Lemma 4.39 item (4)]{pkc}.
    
	Proof of (3): Since \(X\) is compact, by \cite{petruninlebedeva} it suffices to show that every
	\(x \in X\) has an open neighbourhood which admits an isometric embedding into a Euclidean cone, sending \(x\) to the vertex of the cone. This follows by \cite[Theorem 1.1]{pkc}, which can be applied thanks to the existence of linear coordinates (Proposition \ref{prop:linearcoord}) together with the klt assumption, which implies that the non-integer and positivity conditions of \cite[Theorem 1.1]{pkc} are satisfied.  
\end{proof}

\section{Cohomology of wonderful compactifications}\label{sec:cohomology}
In Section \ref{sec:conciniprocesi} we review the minimal De Concini-Procesi model \(\pi: X \to \CP^n\) of a hyperplane arrangement \(\mH \subset \CP^n\) introduced in \cite{conciniprocesi}. In Section \ref{sec:basic} we recall the \(\Z\)-basis of the free abelian group \(H^4(X, \Z)\) in terms of \emph{basic monomials} from \cite{yuzvinsky}. In Section \ref{sec:nonbasic} we write the non-basic monomials in \(H^4(X, \Z)\) as explicit linear combinations of basic monomials. In Section \ref{sec:cordproj} we summarize our calculations in Corollary \ref{cor:proj}, for ease of reference in Sections \ref{sec:1implies2} and \ref{sec:2implies1}. 

\subsection{Background on hyperplane arrangements}\label{sec:conciniprocesi}

Let \(\mH\) be a hyperplane arrangement in \(\CP^n\), i.e., \(\mH\) is a finite collection of complex hyperplanes \(H \subset \CP^n\). The arrangement \(\mH\) is \emph{essential} if
the common intersection of all its members is empty. The arrangement \(\mH\) is \emph{irreducible} if there is no pair of disjoint linear subspaces \(P, Q \subset (\CP^n)^*\) such that \(\mH^* \subset P \cup Q\) and both \(P \cap \mH^*\), \(Q \cap \mH^*\) are non-empty, where \(\mH^*\) is the configuration of points in \((\CP^n)^*\) dual to \(\mH\). Throughout this section we assume that \(\mH\) is essential and irreducible.

\begin{definition}\label{def:intposet}
	Let \(\mL = \mL(\mH)\) be the set of all subspaces \(L \subset \CP^n\) obtained by intersecting members of \(\mH\). We allow \(\emptyset\) as well as \(\CP^n\) to be part of \(\mL\).
	The \emph{intersection poset} is of the set \(\mL\) endowed with the partial order given by reverse inclusion.
\end{definition}

The intersection poset \(\mL\) is a \emph{geometric lattice}, see \cite[Proposition 3.8]{stanley}. The unique minimal element is \(\hat{0} = \CP^n\) and the unique maximal element is \(\hat{1} = \emptyset\).
The \emph{rank} of \(L \in \mL\) is given by
\begin{equation}
r(L) = \codim_{\CP^n}L \,, 	
\end{equation}
where \(\codim_{\CP^n}L = n - \dim L\) is the codimension of \(L \subset \CP^n\). We agree that \(r(\emptyset) = n+1\) and \(r(\CP^n) = 0\).
Write
\begin{equation}
	\mL^i = \{L \in \mL \,|\, r(L) = i\} \,.
\end{equation}

\begin{definition}\label{def:G}
	We call an element \(L \in \mL\) an \emph{irreducible subspace} if the localized arrangement \(\mH_L\) is irreducible.
    
	Let \(\mG = \mG(\mH)\) be the subset of \(\mL \setminus \{\hat{0}\}\) consisting of proper subspaces \(L \subsetneq \CP^n\) that are irreducible. Write \(\mG^i = \{L \in \mG \,|\, r(L) = i\}\).
	In particular, \(\mG^1 = \mH\)\,. 
    
    We denote by \(\mLi = \mG \setminus \{\emptyset\}\) the set of non-empty and proper irreducible subspaces and let \(\mLi^{\circ} = \mLi \setminus \mH\).
\end{definition}

\begin{remark}
The assumption that \(\mH\) is essential and irreducible implies that \(\emptyset \in \mG\). Indeed \(\mH\) essential implies  \(\emptyset \in \mL\) and clearly \(\mH_\emptyset=\mH\), which is irreducible.
\end{remark}

\noindent \textbf{The minimal De Concini-Procesi model.}
Let
\begin{equation}\label{eq:conciniprocesi}
	\pi: X \to \CP^n
\end{equation}
be the \emph{minimal De Concini-Procesi model} of \(\mH\) obtained by successive blowup of the elements in \(\mLi^{\circ}\) in increasing order of dimension, see \cite{conciniprocesi} and also \cite[\S 3.1]{miyaokayau}.
The preimage of the arrangement 
\[
D = \pi^{-1}(\mH)
\]
is a simple normal crossing divisor.

The irreducible components of \(D\) are in bijection with the elements of \(\mG' := \mG \setminus \{\emptyset\}\) (i.e., \(\mG' = \mLi\)).
For each \(L \in \mG'\) there exists a unique irreducible divisor \(D_L \subset X\) with \(\pi(D_L) = L\).
The divisors \(D_L\) are the irreducible components of
\begin{equation}\label{eq:DL}
D = \sum_{L \in \mG'} D_L \,.	
\end{equation}

\begin{definition}\label{def:nested}
A set \(\mS \subset \mG\) is \emph{nested} if, given any collection of pairwise non-comparable elements \(L_1, \ldots, L_k \in \mS\) (i.e., \(L_i \not\subset L_j\) for \(i \neq j\)) with \(k \geq 2\)\,,  their common intersection is reducible:
\[
\bigcap_{i=1}^k L_i \notin \mG \,.
\]
Let \(\Nes\) be the collection of all nested sets \(\mS\).	
\end{definition}

\begin{definition}\label{def:redint}
Let \(L_1, L_2 \in \mG\). If \(L_1 \cap L_2 \notin \mG\) then
we say that \(L_1\) and \(L_2\) have \emph{reducible intersection} and we write
\begin{equation}
L_1 \pitchfork_r L_2 \,.	
\end{equation} 
\end{definition}

\begin{lemma}\label{lem:redint}
	If \(L_1 \pitchfork_r L_2\) then the intersection \(L_1 \cap L_2\) is non-empty and 
	\[
	L_1 + L_2 = \CP^n \,.
	\]	
\end{lemma}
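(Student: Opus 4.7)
First I would dispose of the non-emptiness assertion. By the remark following Definition \ref{def:G}, the essentiality and irreducibility of $\mH$ put $\emptyset \in \mG$, so if $L_1 \cap L_2$ were empty it would lie in $\mG$, contradicting the hypothesis $L_1 \pitchfork_r L_2$. Hence $L_1 \cap L_2 \in \mL$ is a proper non-empty subspace whose localization $\mH_{L_1 \cap L_2}$ is reducible.

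My plan for the sum statement is to work with annihilators inside $V^* := (\C^{n+1})^*$: for a projective subspace $L \subset \CP^n$ let $L^\perp \subset V^*$ denote the annihilator of the underlying linear subspace. Reducibility of $\mH_{L_1 \cap L_2}$, combined with the fact that $L_1 \cap L_2 \in \mL$ forces $\mH_{L_1 \cap L_2}^*$ to span $(L_1 \cap L_2)^\perp$, upgrades the projective disjointness in the definition of reducibility to a linear direct sum decomposition $(L_1 \cap L_2)^\perp = W_P \oplus W_Q$ with $W_P, W_Q \neq 0$, such that $\mH_{L_1 \cap L_2}^* \subset \P W_P \cup \P W_Q$ and each of $\P W_P, \P W_Q$ meets $\mH_{L_1 \cap L_2}^*$ non-trivially. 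The irreducibility of $\mH_{L_1}$ in $\P L_1^\perp$ then forbids a non-trivial splitting of $\mH_{L_1}^*$ between the projectively disjoint subspaces $\P(W_P \cap L_1^\perp)$ and $\P(W_Q \cap L_1^\perp)$, so after relabelling $L_1^\perp \subset W_P$; the same reasoning applied to $L_2$ gives either $L_2^\perp \subset W_Q$ or $L_2^\perp \subset W_P$.

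If $L_2^\perp \subset W_Q$, then $L_1^\perp \cap L_2^\perp \subset W_P \cap W_Q = \{0\}$, which is the desired $L_1 + L_2 = \CP^n$. The alternative $L_2^\perp \subset W_P$ will be ruled out by contradiction: set $L_P := W_P^\perp$; both containments $L_1^\perp, L_2^\perp \subset W_P$ give $L_P \subset L_1 \cap L_2$, while $W_P \subset (L_1 \cap L_2)^\perp$ forces $L_P \supset L_1 \cap L_2$, so that $W_P = (L_1 \cap L_2)^\perp$ and hence $W_Q = 0$, contradicting $W_Q \neq 0$. The main obstacle I anticipate is the careful bookkeeping of the projective-versus-linear duality, and specifically the upgrade from the projective disjointness of $P$ and $Q$ in the definition of reducibility to the linear direct sum used throughout; this step relies crucially on $L_1 \cap L_2 \in \mL$, so that the dual points $\mH_{L_1 \cap L_2}^*$ span its annihilator.
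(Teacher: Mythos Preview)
Your argument is correct. The non-emptiness part matches the paper verbatim. For the sum statement, the paper simply invokes an external result (\cite[Lemma 2.21]{miyaokayau}) asserting that \(L_1\) and \(L_2\) are the irreducible components of \(L_1 \cap L_2\), from which \(L_1 + L_2 = \CP^n\) follows. You instead give a self-contained argument via annihilators in \(V^*\): the reducibility of \(\mH_{L_1\cap L_2}\) yields a direct sum splitting of \((L_1\cap L_2)^\perp\), and the irreducibility of each \(\mH_{L_i}\) forces \(L_i^\perp\) into one summand; if they land in different summands you are done, and if in the same summand you reach \(W_Q = 0\). This is essentially a proof of the cited lemma specialized to the two-component situation, so the two approaches are equivalent in content. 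What your route buys is that the paper becomes self-contained at this point; what the citation buys is brevity and the stronger statement that \(L_1, L_2\) are \emph{the} irreducible components of \(L_1\cap L_2\), which your argument does not quite establish (you only show the sum is \(\CP^n\)). One cosmetic remark: your detour through \(L_P := W_P^\perp\) in the final contradiction is unnecessary --- it is enough to observe directly that \(L_1^\perp + L_2^\perp = (L_1\cap L_2)^\perp\) while \(L_1^\perp + L_2^\perp \subset W_P \subsetneq (L_1\cap L_2)^\perp\).
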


\begin{proof}
	The intersection \(L = L_1 \cap L_2\) is non-empty because \(\emptyset \in \mG\) (in this section all arrangements are essential and irreducible). The fact that the sum is \(\CP^n\) follows from the fact that \(L_1\) and \(L_2\) are the \emph{irreducible components} of \(L\), see \cite[Lemma 2.21]{miyaokayau}.
\end{proof}

As an immediate consequence of Lemma \ref{lem:redint} we have the following.

\begin{corollary}\label{cor:nestedpair}
	Suppose that \(\mS = \{L_1, L_2\}\) is nested and that \(L_1 + L_2 \subsetneq \CP^n\). Then either \(L_1 \subset L_2\) or \(L_2 \subset L_1\).
\end{corollary}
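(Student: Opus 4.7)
The plan is to prove the corollary by contradiction, reducing immediately to the previously established Lemma \ref{lem:redint}. Unwinding Definition \ref{def:nested}, a two-element set $\mS = \{L_1, L_2\}$ fails to be nested only when $L_1$ and $L_2$ are pairwise non-comparable \emph{and} $L_1 \cap L_2 \in \mG$. Equivalently, $\mS$ is nested precisely when either $L_1, L_2$ are comparable (one contains the other) or they are non-comparable with $L_1 \pitchfork_r L_2$.

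With that dichotomy in hand, I would assume toward a contradiction that neither $L_1 \subset L_2$ nor $L_2 \subset L_1$. Then $L_1$ and $L_2$ are non-comparable, so the hypothesis that $\mS$ is nested forces $L_1 \pitchfork_r L_2$. Applying Lemma \ref{lem:redint} to this reducible intersection then yields $L_1 + L_2 = \CP^n$, which directly contradicts the standing assumption $L_1 + L_2 \subsetneq \CP^n$. Hence one of $L_1, L_2$ must contain the other.

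There is no real obstacle here: the content is entirely absorbed into Lemma \ref{lem:redint} (which in turn draws on \cite[Lemma 2.21]{miyaokayau}), and the corollary is just a contrapositive repackaging. The only thing worth being careful about is ensuring we are reading Definition \ref{def:nested} correctly for the trivial case $k=2$, i.e., remembering that comparable pairs are automatically nested because the defining condition quantifies only over non-comparable collections.
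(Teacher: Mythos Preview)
Your argument is correct and is exactly the approach the paper takes: the corollary is stated as an immediate consequence of Lemma \ref{lem:redint}, and your contrapositive reasoning via the nested condition is precisely how that consequence unfolds.
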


\subsection{The cohomology groups \(H^{2k}(X, \Z)\)}\label{sec:basic}

For the rest of this section, we denote by \(X\) the minimal De Concini Procesi model of the (essential, irreducible) hyperplane arrangement \(\mH \subset \CP^n\).
We recall a well known set of generators and relations for the cohomology ring \(H^*(X, \Z)\).

\begin{definition}\label{def:gammaL}
	For \(L \in \mG\) we let \(\gamma_L \in H^2(X, \Z)\) be given by
	\begin{equation}
	\gamma_{L} = \begin{cases}
	-\pi^*c_1\big(\mO_{\P^n}(1)\big) &\text{ if } L = \emptyset \,, \\
	\phantom{-} c_1(D_L)  &\text{ if } L \in \mG' \,.
	\end{cases}
	\end{equation}
\end{definition}

\begin{theorem}[\cite{conciniprocesi2}]
	The cohomology ring \(H^*(X, \Z)\) is the algebra generated by the classes \(\gamma_L \in H^2(X, \Z)\) for \(L \in \mG\) with relations:
	\begin{equation}\label{eq:R1}\tag{R1}
	\forall \mS \notin \Nes: \,\,
	\prod_{L \in \mS} \gamma_L = 0 \,, 
	\end{equation}
	
	\begin{equation}\label{eq:R2}\tag{R2}
	\forall H \in \mH: \,\,
	\sum_{L \subset H} \gamma_L = 0 \,.
	\end{equation}
\end{theorem}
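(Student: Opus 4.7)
The plan is to follow the classical argument of De Concini and Procesi, exploiting the fact that $X$ is an iterated smooth blowup of $\CP^n$ along the strict transforms of the irreducible subspaces in $\mLi^{\circ}$, ordered by non-decreasing dimension. Denote this blowup sequence by $\pi_i : X_i \to X_{i-1}$ with $X_0 = \CP^n$ and $X_N = X$, so that at stage $i$ the centre is the strict transform $\widetilde{L_i}$, which is smooth by the defining property of the model.

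First I would verify that the relations (R1) and (R2) hold in $H^*(X, \Z)$. For (R2), induction on the blowup sequence gives that $\pi_i^*[H]$ equals the strict transform of $H$ in $X_i$ plus the sum of all exceptional divisors introduced by blowups of subspaces $L \subsetneq H$ with $L \in \mLi^{\circ}$; passing to $X = X_N$ this reads $\pi^* c_1(\mO(1)) = \sum_{L \in \mG',\, L \subset H} \gamma_L$, and since $\gamma_\emptyset = -\pi^* c_1(\mO(1))$ relation (R2) follows. For (R1), the characterizing combinatorial property of the minimal De Concini-Procesi model is that $\bigcap_{L \in \mS} D_L \neq \emptyset$ if and only if $\mS$ is nested, so when $\mS \notin \Nes$ the intersection is set-theoretically empty and $\prod_{L \in \mS} \gamma_L = 0$.

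Next I would apply the blowup formula $H^*(\Bl_Z Y, \Z) \cong H^*(Y, \Z) \oplus \bigoplus_{k=1}^{\codim Z - 1} H^{*-2k}(Z, \Z) \cdot \xi^k$ iteratively along $X_N \to \cdots \to X_0 = \CP^n$. At each step the new exceptional class is $\gamma_{L_i}$, and the summands pulled from the centre $\widetilde{L_i}$ contribute only polynomials in classes already expressible via $\gamma_L$ for $L$ strictly containing $L_i$ (these are the exceptional and strict-transform divisors on $\widetilde{L_i}$ from prior stages). Combined with the fact that $H^*(\CP^n, \Z)$ is generated by $-\gamma_\emptyset$, this shows the $\{\gamma_L\}_{L \in \mG}$ generate $H^*(X, \Z)$ as a ring. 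Together with the previous step this yields a surjection $A \twoheadrightarrow H^*(X, \Z)$, where $A$ is the abstract graded algebra presented by $\{\gamma_L\}$ modulo (R1) and (R2).

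The main obstacle is establishing injectivity of this surjection, i.e.\ that (R1) and (R2) generate all relations. The standard strategy is to produce an additive spanning set of $A$ by \emph{basic monomials} indexed by nested sets together with admissible exponent functions (the basis of Yuzvinsky recalled in the next section of the paper), and to show using only (R1) and (R2) that every monomial in the generators reduces to an integer combination of basic monomials. Simultaneously one computes the Betti numbers $\dim H^{2k}(X, \Z)$ inductively via the blowup formula; the combinatorial miracle is that these counts match the number of basic monomials in each degree, forcing $A \to H^*(X, \Z)$ to be an isomorphism. Constructing a well-defined reduction algorithm and matching the two counts via an explicit bijection between nested-set data and cohomological contributions of strata is the substantive part of the proof.
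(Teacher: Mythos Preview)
The paper does not give its own proof of this theorem: it is stated as a result quoted from \cite{conciniprocesi2}, followed only by two remarks (one explaining that (R1) holds for the stronger geometric reason that non-nested intersections of the $D_L$ are empty, and one unpacking (R2) in terms of $\gamma_\emptyset$). So there is no in-paper proof to compare against.

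Your outline is the classical De Concini--Procesi argument, and as a sketch it is accurate: verify (R1) via the empty-intersection property of non-nested sets, verify (R2) by tracking how $\pi^*[H]$ decomposes through the blowup sequence, deduce generation from the iterated blowup formula for cohomology, and then prove completeness of the relations by reducing to Yuzvinsky's basic monomials and matching their count against the Betti numbers computed inductively. That is exactly the strategy of the cited references, so your proposal is aligned with what the paper is invoking rather than differing from it.
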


\begin{remark}
	Equation \eqref{eq:R1} holds in the stronger sense that if \(\mS \subset \mG\) is not nested then the common intersection of the divisors \(D_L\) with \(L \in \mS\) is empty, see item (2) in Theorem \cite[p.476]{conciniprocesi}.
\end{remark}

\begin{remark} Recall that in Equation \eqref{eq:R2} we allow \(L\) to be empty. 
	Using \(\gamma_{\emptyset} = -\pi^*c_1\big(\mO_{\P^n}(1)\big)\)
	in Equation \eqref{eq:R2} we obtain
	\begin{equation*}
	\forall H \in \mH: \,\,
	\gamma_H = \pi^*c_1\big(\mO_{\P^n}(1)\big) -  \sum_{ \emptyset \subsetneq L \subsetneq H} \gamma_L  \,.
	\end{equation*}
\end{remark}

\subsubsection{Yuzvinsky's theorem}
The cohomology groups \(H^{2k}(X, \Z)\) are free abelian groups. We recall a well known set of \(\Z\)-bases of these groups found by Yuzvinsky \cite{yuzvinsky}.

\begin{definition}
	Let \(\mS \subset \mG\) be a nested set and let \(m: \mS \to \Z_{> 0}\) be such that
	\[
	\sum_{L \in \mS} m(L) = k \,.
	\]
	Let \(\gamma(\mS, m)\) be the product
	\begin{equation}\label{eq:gammamonomial}
	\gamma(\mS, m) = 
	\prod_{L \in \mS} \gamma_L^{m(L)} \,.	
	\end{equation} 
	We say that \(\gamma(\mS, m) \in H^{2k}(X, \Z)\) is a monomial of degree \(k\). 
\end{definition}

\begin{definition}
	Let \(\gamma(\mS, m)\) be as above. We say that \(\gamma(\mS, m)\) is \emph{basic} if
	\begin{equation}\label{eq:basic}
	\forall L \in \mS: \,\,	m(L) < r(L) - r(M) \,, \,\, \textup{ where } \,\, M = \bigcap_{L' \in \mS \,|\, L' \supsetneq L }  L'  \,.
	\end{equation}
\end{definition}

\begin{remark}
	If \(\{L' \in \mS \,|\, L' \supsetneq L\} = \emptyset\) then \(M = \CP^n\) and \(r(M) = 0\).
\end{remark}

Let \(\Delta_k\) be the set of basic monomials of degree \(k\).
\begin{theorem}[{\cite[Corollary 3.9]{yuzvinsky}}]\label{thm:basis}
	The set \(\Delta_k\) is a \(\Z\)-basis of \(H^{2k}(X, \Z)\).
\end{theorem}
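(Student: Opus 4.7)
The plan is to split the statement into spanning and $\Z$-linear independence. Since relations \eqref{eq:R1} and \eqref{eq:R2} present $H^\ast(X,\Z)$ as an algebra over the generators $\gamma_L$ with $L\in\mG$, and every element of $H^{2k}(X,\Z)$ is a $\Z$-combination of degree-$k$ monomials $\gamma(\mS,m)$ with $\mS\subset\mG$ nested (by \eqref{eq:R1}), spanning reduces to a purely combinatorial \emph{straightening} argument. For independence, I would compare $|\Delta_k|$ with $b_{2k}(X)$, computed from the iterated blowup description $\pi:X\to\CP^n$ via the blowup formula in cohomology; since $X$ is a smooth rational variety, $H^{2k}(X,\Z)$ is torsion-free, so equality of ranks upgrades spanning to a $\Z$-basis.

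For the spanning step, I would define a strict well-order $\prec$ on pairs $(\mS,m)$ — say lexicographic on the sequence of codimensions $(r(L))_{L\in\mS}$ arranged in a fixed order, refined by the multiplicity function $m$ — and show by descending induction that every non-basic $\gamma(\mS,m)$ is a $\Z$-combination of $\prec$-smaller monomials. Given a non-basic $(\mS,m)$, pick $L\in\mS$ witnessing $m(L)\geq r(L)-r(M)$ with $M=\bigcap_{L'\in\mS,\,L'\supsetneq L}L'$; since $r(L)>r(M)$, there is a hyperplane $H\in\mH$ with $L\subset H\subsetneq M$. Applying \eqref{eq:R2} to this $H$ rewrites $\gamma_L$ as $-\sum_{L'\subset H,\,L'\ne L}\gamma_{L'}$. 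Substituting into one factor of $\gamma(\mS,m)$ and using \eqref{eq:R1} to kill every term whose support has become non-nested, one checks that the remaining terms involve either the same nested support $\mS$ with $m$ replaced by $m'\prec m$, or strictly smaller supports (since new elements $L'\subsetneq H$ are comparable to $L$ within $\mS$); induction then terminates in the $\Z$-span of $\Delta_k$.

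For independence, the blowup formula applied at each stage of the construction of $\pi$ gives
\[
b_{2k}(\widetilde{Y})=b_{2k}(Y)+\sum_{i=1}^{c-1}b_{2k-2i}(Z)
\]
when $Y$ is blown up along a smooth center $Z$ of codimension $c$. Iterating this along the centers $L\in\mLi^{\circ}$ (taken in increasing dimension) and organizing the total contribution produces a combinatorial count that matches $|\Delta_k|$ term by term: each $L\in\mS$ records an exceptional divisor created at some stage of the sequence, and the admissible range $1\leq m(L)\leq r(L)-r(M)-1$ corresponds to the nonzero cohomology degrees contributed by the fiber of the projective-bundle structure on that exceptional divisor — with $M=\bigcap_{L'\in\mS,\,L'\supsetneq L}L'$ encoding the \emph{partial} transform of $L$ at the moment of its blowup.

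The main obstacle is verifying that the straightening in the second paragraph really terminates in the $\Z$-span of $\Delta_k$. One must choose $\prec$ and the auxiliary hyperplane $H$ so that, after expanding via \eqref{eq:R2} and collapsing via \eqref{eq:R1}, no term of equal or larger rank reappears; this uses that a localized irreducible arrangement $\mH_L$ in $L$ decomposes cleanly into irreducible components under intersection, as encoded in \cite[Lemma 2.21]{miyaokayau}, so that the elements $L'\in\mG$ with $L\subsetneq L'\subsetneq M$ arising in \eqref{eq:R2} are indeed comparable to the members of $\mS$ lying between $L$ and $M$. The Betti count, while less subtle, similarly requires compatibility between the enumeration of basic monomials and the inductive blowup structure.
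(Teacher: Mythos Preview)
The paper does not prove this theorem; it is cited from Yuzvinsky without argument. Your overall strategy---straighten non-basic monomials using the relations for spanning, then match $|\Delta_k|$ with $b_{2k}(X)$ via the iterated blowup formula for independence---is precisely the standard approach.

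There is, however, a concrete gap in the straightening step. You claim one can choose a hyperplane $H\in\mH$ with $L\subset H\subsetneq M$, but a hyperplane cannot be a proper subspace of $M$ unless $r(M)=0$, i.e.\ $M=\CP^n$. Presumably you intend $H\in\mH_L\setminus\mH_M$ (which does exist since $L\subsetneq M$ forces $\mH_L\supsetneq\mH_M$), but even with this fix the substitution via \eqref{eq:R2} produces terms $\gamma_{L'}$ for \emph{all} $L'\in\mG$ with $L'\subset H$, including those with $L\subsetneq L'\subset H$. These have smaller codimension than $L$ and need not be comparable to the members of $\mS$ lying above $L$; the surviving nested terms are not obviously $\prec$-smaller in your lexicographic order, and the appeal to \cite[Lemma~2.21]{miyaokayau} does not resolve this. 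What actually drives the reduction is the family of derived relations of which \eqref{eq:incdot} and \eqref{eq:cod2square} are the degree-two instances (the general form is De Concini--Procesi's relation (5.1.1), as the paper notes): multiplying by the monomial on $\mS_{>L}$ one obtains a relation whose leading term is $\gamma_L^{\,r(L)-r(M)}\cdot\prod_{L''\in\mS_{>L}}\gamma_{L''}$ and whose remaining terms each carry at least one factor $\gamma_{L'}$ with $L'\subsetneq L$. Under the correct monomial order this rewriting genuinely terminates. Your Betti-number argument for independence is the right idea, though the bookkeeping is more delicate than your final paragraph suggests: the quantity $r(L)-r(M)$ is not the codimension of the center at the moment $L$ is blown up (that is still $r(L)$), but rather governs the fiber dimension of $D_L\cap\bigcap_{L''\in\mS_{>L}}D_{L''}$ over the corresponding stratum.
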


\begin{example}
	If \(k=1\) then \(\mS = \{L\}\) for some \(L \in \mG\) and
	\(\gamma(\mS, m) = \gamma_L\). Equation \eqref{eq:basic} requires that \(1 < r(L)\). Thus \(\Delta_1 = \{\gamma_L \,|\, r(L) \geq 2\}\).
\end{example}

\subsubsection{The cohomology group \(H^4(X, \Z)\)}
Next, we write down the elements of \(\Delta_2\). To do this, we introduce some handy notation.

\begin{definition}\label{def:inclusion}
	Let \(L\) and \(M\) be linear subspaces of \(\CP^n\).
	If \(L \subset M\) and \(r(L) = r(M) + 1\) then we write
	\begin{equation}
		L \inclusiondot M  \,.
	\end{equation}
	If \(L \subset M\) and \(r(L) \geq r(M) + 2\) then we write
	\begin{equation}
		L \Subset M \,.
	\end{equation}
\end{definition}

\begin{lemma}\label{lem:basisH4}
	The basis \(\Delta_2\) of \(H^4(X, \Z)\) consists of the following monomials:
	\begin{enumerate}[label=\textup{(\arabic*)}]
		\item \(\gamma_L^2\) with \(r(L) \geq 3\);
		\item \(\gamma_{L_1} \cdot \gamma_{L_2}\) with \(L_1 \Subset L_2\) and \(r(L_2) \geq 2\);
		\item \(\gamma_{L_1} \cdot \gamma_{L_2}\) with \(L_1 \pitchfork_r L_2\) and \(r(L_1), r(L_2) \geq 2\).
	\end{enumerate}
\end{lemma}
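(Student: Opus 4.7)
The plan is to apply Theorem \ref{thm:basis} and simply enumerate which pairs \((\mS, m)\) with \(\sum_{L \in \mS} m(L) = 2\) satisfy the basic condition \eqref{eq:basic}. There are two combinatorial possibilities for \((\mS, m)\): either \(\mS = \{L\}\) with \(m(L) = 2\), producing the monomial \(\gamma_L^2\); or \(\mS = \{L_1, L_2\}\) with \(m(L_1) = m(L_2) = 1\), producing \(\gamma_{L_1}\gamma_{L_2}\). I will check the basic condition in each case.

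First I would handle the singleton case. For \(\mS = \{L\}\) the set \(\{L' \in \mS \mid L' \supsetneq L\}\) is empty, so by the convention in the remark following Equation \eqref{eq:basic} we have \(M = \CP^n\) and \(r(M) = 0\). The condition \eqref{eq:basic} then reads \(2 < r(L)\), giving exactly case (1).

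Next I would analyze the two-element case. Since \(\mS = \{L_1, L_2\}\) must be nested, Corollary \ref{cor:nestedpair} dichotomizes it: either the two subspaces are comparable or \(L_1 \pitchfork_r L_2\). In the comparable subcase, say \(L_1 \subsetneq L_2\), the element above \(L_2\) in \(\mS\) gives \(M = \CP^n\), so the basic condition for \(L_2\) becomes \(1 < r(L_2)\); the only element of \(\mS\) strictly containing \(L_1\) is \(L_2\), so \(M = L_2\), and the basic condition for \(L_1\) becomes \(1 < r(L_1) - r(L_2)\), i.e., \(r(L_1) \geq r(L_2) + 2\), which is exactly \(L_1 \Subset L_2\). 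This yields case (2). In the \(\pitchfork_r\) subcase, by Lemma \ref{lem:redint} neither \(L_i\) contains the other, so for each \(L_i\) the set of strictly larger elements of \(\mS\) is empty, hence \(M = \CP^n\) and the basic condition reduces to \(r(L_i) \geq 2\) for both \(i\). This yields case (3).

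Finally I would note that these three cases are manifestly pairwise disjoint (case (1) has \(|\mS|=1\), while (2) and (3) are distinguished by whether the pair is comparable or has reducible intersection), so together with Theorem \ref{thm:basis} they furnish the announced description of the basis \(\Delta_2\) of \(H^4(X, \Z)\). There is no real obstacle here beyond carefully applying the nested-set dichotomy for pairs (Corollary \ref{cor:nestedpair}) and tracking the definition of \(M\) in \eqref{eq:basic}.
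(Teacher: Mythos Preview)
Your argument is correct and follows essentially the same case analysis as the paper's proof. One small citation issue: the dichotomy ``comparable or \(L_1 \pitchfork_r L_2\)'' for a nested pair \(\{L_1,L_2\}\) comes directly from Definition~\ref{def:nested}, not from Corollary~\ref{cor:nestedpair} (which has the extra hypothesis \(L_1+L_2 \subsetneq \CP^n\)); likewise the appeal to Lemma~\ref{lem:redint} in the \(\pitchfork_r\) subcase is unnecessary, since non-comparability is already part of your case split.
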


\begin{proof}
	Suppose that \(\gamma(\mS, m) \in \Delta_2\). We distinguish two cases.
	\begin{itemize}
		\item \textbf{Case A:}  \(\mS = \{L\}\) for some \(L \in \mG\) . Then
		\(\gamma(\mS, m) = \gamma_L^2\) and Equation \eqref{eq:basic} implies \(2 < r(L)\). Therefore, \(\gamma(\mS, m)\) is as in item (1).
		
		\item \textbf{Case B:}  \(\mS = \{L_1, L_2\}\) for a pair of distinct \(L_1, L_2 \in \mG\).
		Then \(\gamma(\mS, m) = \gamma_{L_1} \cdot \gamma_{L_2}\). We distinguish two subcases.
		\begin{itemize}
			\item \textbf{Case B1:} \(L_1\) and \(L_2\) are comparable. Up to relabelling we can assume that \(L_1 \subsetneq L_2\). Then Equation \eqref{eq:basic} implies
			\[
			1 < r(L_2) \,\, \text{ and } \,\, 1 < r(L_1) - r(L_2) \,. 
			\]
			Therefore, \(\gamma(\mS, m)\) is as in item (2).
			\item \textbf{Case B2:} \(L_1\) and \(L_2\) are not comparable. Since \(\mS\) is nested, the intersection \(L_1 \cap L_2\) is reducible, i.e., \(L_1 \pitchfork_r L_2\).
			Equation \eqref{eq:basic} implies
			\[
			1 < r(L_1) \,\, \text{ and } \,\, 1 < r(L_2) \,. 
			\]
			Therefore, \(\gamma(\mS, m)\) is as in item (3). 
		\end{itemize}
	\end{itemize} 
	Conversely, the above proof clearly shows that the monomials in (1), (2), (3) are basic.
\end{proof}

\subsection{Non-basic monomials}\label{sec:nonbasic}

Suppose that \(\gamma(\mS, m)\) is a degree \(2\) monomial in \(H^4(X, \Z)\) that is \emph{not} basic. Then \(\gamma(\mS, m)\) has to be of the following type:  

\begin{enumerate}[label={(M\arabic*)}]
	\item \label{it:mH} \(\gamma_{H}^2\) with \(H \in \mH\);
	\item \label{it:mL} \(\gamma_{L}^2\) with \(r(L) = 2\);
	\item \label{it:mLM} \(\gamma_{L} \cdot \gamma_M\) with \(L \inclusiondot M\) and \(r(M) \geq 2\);
	\item \label{it:mLH3} \(\gamma_L \cdot \gamma_{H}\) with \(L \subset H\), \(H \in \mH\) and \(r(L) \geq 3\);
	\item \label{it:mLH2} \(\gamma_L \cdot \gamma_{H}\) with \(L \subset H\), \(H \in \mH\) and \(r(L) = 2\);
    \item \label{it:LredH} \(\gamma_L \cdot \gamma_{H}\) with \(L \pitchfork_r H\), \(H \in \mH\) and \(r(L) \geq 2\);
    \item \label{it:HredH} \(\gamma_H \cdot \gamma_{H'}\) with \(H \pitchfork_r H'\) and \(H, H' \in \mH\).
\end{enumerate}

We will write the above monomials as a linear combination of basic monomials. The key to do this is given by the next. 

\begin{proposition}
	In \(H^4(X, \Z)\) the following relations hold.
	\begin{enumerate}
		\item If \(L \inclusiondot M\) then
		\begin{equation}\label{eq:incdot}
		\gamma_{M} \cdot \left( \sum_{L' \subset L} \gamma_{L'} \right) = 0 \,.
		\end{equation}
		\item If \(r(L) = 2\) then
		\begin{equation}\label{eq:cod2square}
		\left( \sum_{L' \subset L} \gamma_{L'} \right)^2 = 0 \,.
		\end{equation}
	\end{enumerate}
\end{proposition}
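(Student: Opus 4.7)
Both relations will follow by combining \eqref{eq:R1}, \eqref{eq:R2}, and Lemma \ref{lem:redint}. I will prove \eqref{eq:incdot} first and then leverage it for \eqref{eq:cod2square}.

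For \eqref{eq:incdot}, the plan is to pick $H_0 \in \mH_L \setminus \mH_M$, which exists because $L \subsetneq M$ forces $\mH_M \subsetneq \mH_L$ (as $L$ and $M$ are the intersections of their respective localizations). A direct codimension count then gives $L = M \cap H_0$. Multiplying the \eqref{eq:R2} identity for $H_0$ by $\gamma_M$ produces
\[
\gamma_M \cdot \sum_{L' \subset L} \gamma_{L'} \,+\, \gamma_M \cdot \sum_{L' \subset H_0,\, L' \not\subset L} \gamma_{L'} \,=\, 0,
\]
and it suffices to check that every summand in the second sum vanishes via \eqref{eq:R1}. For such $L'$, the pair $\{M, L'\}$ is non-comparable: $L' \subset M$ would force $L' \subset M \cap H_0 = L$, and $M \subset L'$ would force $M \subset H_0 \notin \mH_M$. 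Moreover, $M \cap L' = L \cap L' \subset H_0 \neq \CP^n$, so the contrapositive of Lemma \ref{lem:redint} gives $L \cap L' \in \mG$. Hence $\{M, L'\}$ is not nested and \eqref{eq:R1} kills $\gamma_M \gamma_{L'}$.

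For \eqref{eq:cod2square}, set $c_L := \sum_{L' \subset L} \gamma_{L'}$. Since $r(L) = 2$, applying \eqref{eq:incdot} with $M = H$ for every $H \in \mH_L$ gives $\gamma_H \cdot c_L = 0$. Expanding $c_L^2 = \sum_{L', L'' \subset L} \gamma_{L'} \gamma_{L''}$, any non-comparable pair $(L', L'')$ contained in $L$ satisfies $L' + L'' \subset L \neq \CP^n$, so Lemma \ref{lem:redint} together with \eqref{eq:R1} eliminates the product. To treat the remaining ``comparable'' part, I would sum \eqref{eq:R2} over $H \in \mH_L$ to obtain a relation of the form
\[
|\mH_L| \cdot c_L \,+\, \sum_{H \in \mH_L} \gamma_H \,+\, R \,=\, 0,
\]
where $R$ collects contributions $|\mH_L \cap \mH_{L''}| \gamma_{L''}$ for $L''$ non-comparable to $L$. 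Multiplying by $c_L$ and using $\gamma_H c_L = 0$ reduces the task to showing $R \cdot c_L = 0$; for each $L''$ in the support of $R$ one has some $H \in \mH_L \cap \mH_{L''}$, which lets one adapt the \eqref{eq:R2}-and-\eqref{eq:R1} scheme from \eqref{eq:incdot} to analyze $\gamma_{L''} c_L$.

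The hard part will be this last step in \eqref{eq:cod2square}: such an $L''$ in $R$ is non-comparable to $L$ but may still be comparable to individual summands $L' \subset L$, so \eqref{eq:R1} does not eliminate every $\gamma_{L''} \gamma_{L'}$ termwise. The remaining products must be cancelled by carefully coordinating several \eqref{eq:R2} substitutions, repeatedly invoking the irreducibility criterion from Lemma \ref{lem:redint} applied to pairs of subspaces sitting in a common hyperplane.
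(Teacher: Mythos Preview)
Your argument for \eqref{eq:incdot} is correct. One small slip: when you write ``$M \cap L' = L \cap L' \subset H_0 \neq \CP^n$'', you mean $L + L' \subset H_0 \neq \CP^n$ (both $L$ and $L'$ lie in $H_0$, so their span does too), which is what the contrapositive of Lemma~\ref{lem:redint} needs. With that fix, the proof of part (1) is complete and self-contained, whereas the paper simply cites \cite{feichtner} and \cite{conciniprocesi}.

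For \eqref{eq:cod2square} there is a genuine missing idea, and the ``hard part'' you flag is in fact easy once you see it. You do not need to sum \eqref{eq:R2} over all $H \in \mH_L$ or invoke torsion-freeness. Fix a single $H \in \mH_L$; then \eqref{eq:R2} gives $c_L = -\sum_{L'' \subset H,\, L'' \not\subset L} \gamma_{L''}$, so $c_L^2 = -\sum_{L''} c_L \gamma_{L''}$ over those $L''$. For $L'' = H$ you already have $c_L \gamma_H = 0$ from part (1). For $L'' \subsetneq H$ with $L'' \not\subset L$, the pair $L, L''$ is non-comparable (since $L \subset L''$ would force $r(L'') \leq 1$), and exactly as in your part (1) argument one finds $c_L \gamma_{L''} = c_P \gamma_{L''}$ with $P := L \cap L'' \in \mG$. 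The point you are missing is a one-line dimension count: since $r(L) = 2$ and $L, L''$ are non-comparable with $L + L'' \subset H$, one has $r(L + L'') = 1$, hence $r(P) = r(L) + r(L'') - r(L+L'') = r(L'') + 1$, i.e.\ $P \inclusiondot L''$. Now part (1) applies directly to give $c_P \gamma_{L''} = 0$. Every term vanishes individually; no cancellation or ``coordination of several \eqref{eq:R2} substitutions'' is needed.
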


\begin{proof}
	Equations \eqref{eq:incdot} and \eqref{eq:cod2square} follow from relations \eqref{eq:R1} and \eqref{eq:R2}, see the proof of Theorem 1 in \cite{feichtner}. Equations \eqref{eq:incdot} and \eqref{eq:cod2square} are particular cases of the relations in \(H^*(X, \Z)\) given by Equation (5.1.1) in \cite{conciniprocesi}.
\end{proof}

We will use the following notation.

\begin{definition}\label{def:B}
	For \(L_1, L_2 \in \mG\) with \(L_1 \subset L_2\) we let
	\begin{equation}
	B(L_1, L_2) = -1 + \big| \{L' \in \mG \,|\, L_1 \subset L' \inclusiondot L_2 \} \big| \,.
	\end{equation}
	If \(L_1 = \emptyset\) and  \(L_2 = H \in \mH\) then
	\(B_H:= B(\emptyset, H) = | \{L \in \mG^2 \,|\, L \subset H\} | -1\) is the number of irreducible codimension \(2\) subspaces contained in \(H\) minus \(1\), same as in \cite[Definition 3.36]{miyaokayau}.
\end{definition}

\begin{lemma}[type \ref{it:mLM}]\label{lem:mLM}
	If \(L \inclusiondot M\) with \(L, M \in \mG\) then
	\begin{equation}\label{eq:mLM}
	\gamma_{L} \cdot \gamma_{M} = - \sum_{L' \subsetneq L} \gamma_{L'} \cdot \gamma_{M}   \,.
	\end{equation}
\end{lemma}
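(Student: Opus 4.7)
The plan is to derive this lemma as an immediate consequence of the relation \eqref{eq:incdot} established just above. That relation asserts that whenever \(L \inclusiondot M\) one has
\[
\gamma_M \cdot \sum_{L' \subset L} \gamma_{L'} = 0
\]
in \(H^4(X, \Z)\), where the sum ranges over all \(L' \in \mG\) with \(L' \subset L\) (including \(L' = L\) and \(L' = \emptyset\), since \(\gamma_{L'}\) is defined for every element of \(\mG\)). The lemma is nothing more than the isolation of the top term from this relation.

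More precisely, I would split the sum as \(\sum_{L' \subset L} \gamma_{L'} = \gamma_L + \sum_{L' \subsetneq L} \gamma_{L'}\), multiply by \(\gamma_M\), and invoke \eqref{eq:incdot} to conclude
\[
\gamma_L \cdot \gamma_M \;=\; - \sum_{L' \subsetneq L} \gamma_{L'} \cdot \gamma_M \,.
\]

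There is no real obstacle, since \eqref{eq:incdot} already does the work and the lemma is purely a bookkeeping rearrangement. Its role in what follows is to express a non-basic monomial of type \ref{it:mLM} as a linear combination of the monomials \(\gamma_{L'} \cdot \gamma_M\) with \(L' \subsetneq L\); when \(r(M) \geq 2\) and \(L' \subsetneq L\) we automatically have \(r(L') \geq r(M) + 2\), so \(L' \Subset M\) and the resulting terms are basic of the type in item (2) of Lemma \ref{lem:basisH4}, while when \(M \in \mH\) further reductions supplied by the subsequent lemmas of Section \ref{sec:nonbasic} will be needed.
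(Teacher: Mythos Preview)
Your proof is correct and matches the paper's own argument exactly: split off the \(L' = L\) term from \(\sum_{L' \subset L} \gamma_{L'}\) and apply \eqref{eq:incdot}. Your added remark that the resulting terms \(\gamma_{L'} \cdot \gamma_M\) with \(L' \subsetneq L\) are automatically basic when \(r(M) \geq 2\) (since \(r(L') \geq r(L)+1 = r(M)+2\)) is also correct and anticipates how the lemma is used downstream.
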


\begin{proof}
	Write
	\[
	\sum_{L' \subset L} \gamma_{L'} = \gamma_L + \sum_{L' \subsetneq L} \gamma_{L'} 
	\]
    and use Equation \eqref{eq:incdot}.
\end{proof}

\begin{lemma}
	If \(M \in \mG\) then
	\begin{equation}\label{eq:sumlneq}
	\sum_{L \subsetneq M} \gamma_{L} \cdot \gamma_{M} = - \sum_{L \Subset M} B(L, M) \cdot \gamma_L \cdot \gamma_M \,.
	\end{equation}
\end{lemma}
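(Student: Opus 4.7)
The plan is to split the left-hand sum according to the relative codimension of $L$ in $M$ and then eliminate the codimension-one terms using the key relation supplied by Lemma~\ref{lem:mLM}.

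First I would partition the index set. Every proper subspace $L \subsetneq M$ in $\mG$ satisfies exactly one of $L \inclusiondot M$ or $L \Subset M$ (Definition~\ref{def:inclusion}), so
\[
\sum_{L \subsetneq M} \gamma_L \cdot \gamma_M \;=\; \sum_{L \inclusiondot M} \gamma_L \cdot \gamma_M \;+\; \sum_{L \Subset M} \gamma_L \cdot \gamma_M.
\]
For each $L$ with $L \inclusiondot M$, Lemma~\ref{lem:mLM} lets me rewrite $\gamma_L \cdot \gamma_M = -\sum_{L' \subsetneq L} \gamma_{L'} \cdot \gamma_M$, where $L'$ runs over elements of $\mG$ strictly contained in $L$.

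Next I would swap the order of summation in the resulting double sum. Since $L \inclusiondot M$ means $r(L) = r(M)+1$, any $L' \in \mG$ with $L' \subsetneq L$ automatically satisfies $r(L') \geq r(M)+2$, hence $L' \Subset M$; conversely, whenever $L' \Subset M$ and $L' \subset L \inclusiondot M$, the strictness $L' \neq L$ is automatic. Therefore
\[
\sum_{L \inclusiondot M} \sum_{L' \subsetneq L} \gamma_{L'} \cdot \gamma_M \;=\; \sum_{L' \Subset M} \#\{L \in \mG : L' \subset L \inclusiondot M\} \cdot \gamma_{L'} \cdot \gamma_M.
\]
By Definition~\ref{def:B} the cardinality in the bracket equals $B(L', M) + 1$.

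Combining the two steps,
\[
\sum_{L \inclusiondot M} \gamma_L \cdot \gamma_M \;=\; -\sum_{L \Subset M} \bigl(B(L,M) + 1\bigr)\, \gamma_L \cdot \gamma_M,
\]
and adding back the $\sum_{L \Subset M} \gamma_L \cdot \gamma_M$ piece cancels the $+1$ and leaves precisely $-\sum_{L \Subset M} B(L,M)\, \gamma_L \cdot \gamma_M$, as claimed. There is no real obstacle here; the only point requiring mild care is bookkeeping: making sure that the $L'$ produced by Lemma~\ref{lem:mLM} always lies in $\mG$ (which is built into the definition of $\gamma_{L'}$ and hence into relation~\eqref{eq:R2}) and that the strict inclusion $L' \subsetneq L$ collapses to $L' \subset L$ in the counting step because the codimensions are forced to differ.
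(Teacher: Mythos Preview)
Your proof is correct and follows essentially the same route as the paper: split into $L \inclusiondot M$ and $L \Subset M$, apply Lemma~\ref{lem:mLM} to the former, swap the order of summation, and use the definition of $B(L,M)$. The only cosmetic difference is that the paper writes the combined coefficient directly as $1 - |\{L' : L \subset L' \inclusiondot M\}| = -B(L,M)$ rather than first extracting $B(L,M)+1$ and then cancelling the $+1$.
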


\begin{proof}
	Splitting the left hand side of \eqref{eq:sumlneq} as a sum over \(L \Subset M\) and  \(L \inclusiondot M\)\,, 
	\[
	\sum_{L \subsetneq M} \gamma_{L} \cdot \gamma_{M} =
	\sum_{L \Subset M} \gamma_{L} \cdot \gamma_{M} +
	\sum_{L \inclusiondot M} \gamma_{L} \cdot \gamma_{M} \,.
	\]
	Equation \eqref{eq:mLM} implies
	\[
	\sum_{L \inclusiondot M} \gamma_{L} \cdot \gamma_{M} = 
	- \sum_{L \Subset M} \big| \{L' \,|\, L \subset L' \inclusiondot M \} \big| \gamma_{L} \cdot \gamma_{M} \,.
	\]
	Therefore
	\[
	\begin{aligned}
	\sum_{L \subsetneq M} \gamma_{L} \cdot \gamma_{M} &=
	\sum_{L \Subset M} \gamma_{L} \cdot \gamma_{M} +
	\sum_{L \inclusiondot M} \gamma_{L} \cdot \gamma_{M} \\
	&= \sum_{L \Subset M} \left(1 - \big| \{L' \,|\, L \subset L' \inclusiondot M \} \big| \right) \gamma_{L} \cdot \gamma_{M} \\
	&= - \sum_{L \,|\, L \Subset M} B(L, M) \cdot \gamma_{L} \cdot \gamma_{M} 
	\end{aligned}
	\]
	where the last equality follows from the definition of \(B(L, M)\).
\end{proof}

\begin{lemma}\label{lem:sumsuared}
	If \(L \in \mG\) then
	\begin{equation}\label{eq:sumsquare}
	\left( \sum_{L' \subset L} \gamma_{L'} \right)^2 = 
	\sum_{L' \subset L} \gamma_{L'}^2  \,+\, 2 \cdot \sum_{L'' \subsetneq L' \subset L}  \gamma_{L''} \cdot \gamma_{L'} \,.
	\end{equation}
\end{lemma}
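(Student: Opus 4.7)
The plan is simply to expand the square and identify which cross-terms survive. Distributing gives
\[
\left(\sum_{L' \subset L} \gamma_{L'}\right)^2 = \sum_{L'_1 \subset L} \gamma_{L'_1}^2 \,+\, \sum_{\substack{L'_1, L'_2 \subset L \\ L'_1 \neq L'_2}} \gamma_{L'_1} \cdot \gamma_{L'_2},
\]
so the first sum already matches the diagonal term on the right. The goal is then to show that among the off-diagonal terms, only the comparable pairs (i.e.\ those with $L'_1 \subsetneq L'_2$ or $L'_2 \subsetneq L'_1$) survive, since each such unordered pair is counted twice.

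The key step is therefore the vanishing of the cross-terms from incomparable pairs $L'_1, L'_2 \subset L$. Since the case $L'_1 = L$ (or $L'_2 = L$) forces comparability, we may assume $L'_1, L'_2 \subsetneq L$. If the pair $\{L'_1, L'_2\}$ were nested, then by Definition \ref{def:nested} the intersection $L'_1 \cap L'_2$ would not lie in $\mG$, i.e.\ $L'_1 \pitchfork_r L'_2$. But Lemma \ref{lem:redint} then yields $L'_1 + L'_2 = \CP^n$, contradicting $L'_1 + L'_2 \subset L \subsetneq \CP^n$ (here we use that $L \in \mG$, hence $L \neq \CP^n$). Thus $\{L'_1, L'_2\}$ is not nested, and relation \eqref{eq:R1} gives $\gamma_{L'_1} \cdot \gamma_{L'_2} = 0$.

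Putting everything together and grouping each comparable pair $L'' \subsetneq L' \subset L$ with its reversal yields
\[
\left(\sum_{L' \subset L} \gamma_{L'}\right)^2 = \sum_{L' \subset L} \gamma_{L'}^2 \,+\, 2 \cdot \sum_{L'' \subsetneq L' \subset L} \gamma_{L''} \cdot \gamma_{L'},
\]
as claimed. The only non-formal input is Lemma \ref{lem:redint} together with relation \eqref{eq:R1}; there is no genuine obstacle, the content of the lemma being precisely the observation that two proper irreducible subspaces of a common $L \in \mG$ cannot intersect reducibly.
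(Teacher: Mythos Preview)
Your proof is correct and follows essentially the same approach as the paper: expand the square, then use relation \eqref{eq:R1} together with the fact that two incomparable elements of $\mG$ contained in a proper $L$ cannot form a nested pair. The paper packages the last step via Corollary~\ref{cor:nestedpair}, while you invoke Lemma~\ref{lem:redint} directly, but the content is identical.
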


\begin{proof}
	Expanding the square
	\[
	\left( \sum_{L' \subset L} \gamma_{L'} \right)^2 = 
	\sum_{L' \subset L} \gamma_{L'}^2  + \sum_{(L', L'')}  \gamma_{L''} \cdot \gamma_{L'} 
	\]
	where the last sum is over all pairs \((L', L'') \in \mG \times \mG\) with \(L' \neq L''\) such that \(L' \subset L\) and \(L'' \subset L\). By relation \eqref{eq:R1} we can assume that the sum is over pairs \((L', L'')\) for which the sets \(\mS = \{L', L''\}\) are nested, otherwise \(\gamma_{L''} \cdot \gamma_{L'} = 0\). On the other hand, since 
	\[
	L' + L'' \subset L \subsetneq \CP^n \,,
	\]
	Corollary \ref{cor:nestedpair} implies that either \(L' \subsetneq L''\) or \(L'' \subsetneq L'\) and \eqref{eq:sumsquare} follows.
\end{proof}

\begin{lemma}[type \ref{it:mL}]\label{lem:mL}
	If \(r(L) = 2\) then
	\begin{equation}\label{eq:mL}
	\gamma_L^2 = 
	- \sum_{L' \subsetneq L} \gamma_{L'}^2  \,+\, 2 \cdot \sum_{L'' \Subset L' \subset L}  B(L'', L') \cdot \gamma_{L''} \cdot \gamma_{L'}   \,.
	\end{equation}	
\end{lemma}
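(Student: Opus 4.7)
The plan is to deduce this identity from two facts already established: Equation \eqref{eq:cod2square}, which is the specific vanishing $\left( \sum_{L' \subset L} \gamma_{L'} \right)^2 = 0$ valid under the hypothesis $r(L)=2$, and Lemma \ref{lem:sumsuared} (Equation \eqref{eq:sumsquare}), which gives a general expansion of such a square into diagonal and cross terms.

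Concretely, I would first apply \eqref{eq:sumsquare} to $\left( \sum_{L' \subset L} \gamma_{L'} \right)^2$ and invoke \eqref{eq:cod2square} to get
\[
0 \;=\; \sum_{L' \subset L} \gamma_{L'}^2 \;+\; 2\sum_{L'' \subsetneq L' \subset L} \gamma_{L''} \cdot \gamma_{L'} .
\]
Isolating the $L' = L$ summand on the diagonal then gives
\[
\gamma_L^2 \;=\; -\sum_{L' \subsetneq L} \gamma_{L'}^2 \;-\; 2\sum_{L'' \subsetneq L' \subset L} \gamma_{L''} \cdot \gamma_{L'} .
\]
To finish, I would rewrite the double sum by fixing the outer index $L' \in \mG$ and applying Equation \eqref{eq:sumlneq} (with $M = L'$) to the inner sum over $L'' \subsetneq L'$; this replaces $\sum_{L''\subsetneq L'} \gamma_{L''}\cdot\gamma_{L'}$ by $-\sum_{L''\Subset L'} B(L'',L')\,\gamma_{L''}\cdot\gamma_{L'}$. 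The two minus signs combine to produce the $+\,2\sum B(L'',L')\gamma_{L''}\cdot\gamma_{L'}$ term stated in the lemma.

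There is essentially no deep content here beyond invoking the three identities in the right order; the only points to be careful about are that the sums over $L', L''$ run over irreducible subspaces in $\mG$ (so that \eqref{eq:sumlneq} applies to each fixed $L'$), and that boundary cases cause no trouble: if $L' = \emptyset$ the inner sum is empty, and since $r(L)=2$ every $L' \subset L$ satisfies $r(L')\geq 2$, with $r(L')=2$ forcing $L' = L$. So the main obstacle, if any, is purely bookkeeping: making sure the swap of summation order preserves both the factor of $2$ and the sign so that the final expression matches \eqref{eq:mL}.
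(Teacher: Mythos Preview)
Your proposal is correct and follows essentially the same approach as the paper: combine \eqref{eq:cod2square} with the expansion \eqref{eq:sumsquare}, then apply \eqref{eq:sumlneq} to each inner sum $\sum_{L''\subsetneq L'}\gamma_{L''}\gamma_{L'}$. The only cosmetic difference is that the paper first substitutes \eqref{eq:sumlneq} and then isolates $\gamma_L^2$, while you isolate first and substitute second; the content is identical.
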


\begin{proof}
	Equations \eqref{eq:cod2square} and \eqref{eq:sumsquare}  give us
	\[
	0 = \left( \sum_{L' \subset L} \gamma_{L'} \right)^2 = 
	\sum_{L' \subset L} \gamma_{L'}^2  \,+\,  2 \cdot \sum_{L'' \subsetneq L' \subset L}   \gamma_{L''} \cdot \gamma_{L'}  \,.
	\]
	On the other hand, by Equation \eqref{eq:sumlneq}, for each \(L' \subset L\) we have 
	\[
	\sum_{L'' \subsetneq L'} \gamma_{L''} \cdot \gamma_{L'} =
	-\sum_{L'' \Subset L'} B(L'', L') \cdot \gamma_{L''} \cdot \gamma_{L'} \,.
	\]
	Altogether, we obtain
	\[
	0  = 
	\sum_{L' \subset L} \gamma_{L'}^2  \,-\,  2 \cdot \sum_{L'' \Subset L' \subset L} B(L'', L') \cdot   \gamma_{L''} \cdot \gamma_{L'}  
	\]
	from which Equation \eqref{eq:mL} follows. The monomials in the right hand side of Equation \eqref{eq:mL} are clearly basic.
\end{proof}

\begin{lemma}[type \ref{it:mH}]\label{lem:mH}
	If \(H \in \mH\) then
	\begin{equation}\label{eq:mH}
	\gamma_H^2 = - \sum_{L \Subset H} B(L, H) \cdot \gamma_L^2 \,+\,  2 \cdot \sum_{L'' \Subset L' \subsetneq H}  B(L'', L') \cdot B(L', H) \cdot  \gamma_{L''} \cdot \gamma_{L'} \,.
	\end{equation}
\end{lemma}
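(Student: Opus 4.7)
The plan is to reduce $\gamma_H^2$ to a sum of basic monomials by two passes of \eqref{eq:R2} interleaved with uses of \eqref{eq:sumlneq} and Lemma \ref{lem:mLM}. Applying \eqref{eq:sumlneq} with $M = H$ (noting $H \in \mG$) and combining with \eqref{eq:R2} in the form $\sum_{L \subsetneq H}\gamma_L = -\gamma_H$ (so that $\sum_{L \subsetneq H}\gamma_L\gamma_H = -\gamma_H^2$) gives the intermediate identity
\[
\gamma_H^2 \;=\; \sum_{L \Subset H} B(L, H)\,\gamma_L \gamma_H.
\]
For each $L \Subset H$, I then expand $\gamma_L \gamma_H$ by substituting $\gamma_H = -\sum_{L' \subsetneq H}\gamma_{L'}$. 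By \eqref{eq:R1} together with Corollary \ref{cor:nestedpair} (applicable since $L + L' \subseteq H \subsetneq \CP^n$) only $L'$ comparable to $L$ survive; splitting into $L' = L$, $L' \subsetneq L$, and $L \subsetneq L' \subsetneq H$, and applying \eqref{eq:sumlneq} with $M = L$ to the middle block, one gets
\[
\gamma_L\gamma_H \;=\; -\gamma_L^2 \;+\; \sum_{L' \Subset L} B(L', L)\gamma_{L'}\gamma_L \;-\; \sum_{L \subsetneq L' \subsetneq H}\gamma_L \gamma_{L'}.
\]

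Substituting back and relabelling produces $\gamma_H^2 = -\sum_{L \Subset H}B(L,H)\gamma_L^2 + \Sigma_A - \Sigma_B$, where $\Sigma_A$ is indexed by pairs $L'' \Subset L' \Subset H$ with coefficient $B(L',H)B(L'',L')$, and $\Sigma_B$ by pairs with $L'' \Subset H$ and $L'' \subsetneq L' \subsetneq H$ with coefficient $B(L'',H)$. The diagonal already matches the first term of the target. In $\Sigma_B$, the terms with $L'' \Subset L'$ are already basic, while those with $L'' \inclusiondot L'$ (type \ref{it:mLM}) are rewritten via Lemma \ref{lem:mLM} as $\gamma_{L''}\gamma_{L'} = -\sum_{L''' \subsetneq L''}\gamma_{L'''}\gamma_{L'}$; each resulting monomial is basic because $r(L''') \geq r(L'')+1 = r(L')+2$ forces $L''' \Subset L'$.

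The main obstacle is the final combinatorial bookkeeping: after regrouping in the basis, the coefficient of each $\gamma_{L''}\gamma_{L'}$ with $L'' \Subset L' \subsetneq H$ must equal $2B(L'',L')B(L',H)$. Using that $B(L', H) = 0$ when $L' \inclusiondot H$ (since $L'$ itself is then the unique codimension-two irreducible subspace of $H$ containing $L'$), this reduces to the identity
\[
\sum_{K \in \mG \,:\, L'' \subsetneq K \inclusiondot L'} B(K, H) \;=\; B(L'', H) \;+\; B(L'', L')\,B(L', H)
\]
for each $L'' \Subset L' \subsetneq H$. I prove it by double-counting pairs $(K, L^*)$ with $L^* \in \mG^2_H$, $L'' \subsetneq K \inclusiondot L'$, and $K \subseteq L^*$, via $B(K, H) + 1 = |\{L^* \in \mG^2_H : K \subseteq L^*\}|$. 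The pairs with $L^* \supseteq L'$ contribute $(B(L', H) + 1)(B(L'', L') + 1)$, while those with $L^* \not\supseteq L'$ contribute once per admissible $L^*$: for such $L^*$, $L' + L^* \subseteq H \subsetneq \CP^n$ forces $L' \cap L^* \in \mG$ (by the contrapositive of Lemma \ref{lem:redint}) with $r(L' \cap L^*) = r(L') + 1$, so that $K = L' \cap L^*$ is the unique admissible choice.
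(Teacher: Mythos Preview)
Your proof is correct, but it takes a genuinely different route from the paper. The paper squares the relation \(\gamma_H=-\sum_{L\subsetneq H}\gamma_L\) directly, expands via Lemma~\ref{lem:sumsuared}, and then handles the non-basic \(\gamma_L^2\) with \(r(L)=2\) by invoking Lemma~\ref{lem:mL}; after that, the coefficient of each basic monomial assembles immediately as \((1-|\{L^*:L'\subset L^*\inclusiondot H\}|)=-B(L',H)\) with no further combinatorics. Your approach instead first reduces \(\gamma_H^2\) to \(\sum_{L\Subset H}B(L,H)\gamma_L\gamma_H\) and then expands each \(\gamma_L\gamma_H\); this avoids Lemma~\ref{lem:mL} entirely but forces you to prove the identity \(\sum_{K:\,L''\subsetneq K\inclusiondot L'}B(K,H)=B(L'',H)+B(L'',L')\,B(L',H)\). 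Your double-counting argument for this identity is sound (the key step, that for admissible \(L^*\not\supseteq L'\) one has \(L'\cap L^*\in\mG\) of rank \(r(L')+1\), follows exactly as you say from Lemma~\ref{lem:redint} and the codimension formula, with nonemptiness of \(L'\cap L^*\) guaranteed by \(K\subset L'\cap L^*\)). The paper's route is shorter and reuses Lemma~\ref{lem:mL}; yours is self-contained at the cost of the extra counting lemma, which is of some independent interest as a relation among the \(B\)-numbers.
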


\begin{proof} 
	Using relation \eqref{eq:R2} we have \(\gamma_H = - \sum_{L \subsetneq H} \gamma_L\).
	Taking the square and expanding as in Lemma \ref{lem:sumsuared} we have
    \begin{equation}\label{eq:pfH1}
    \gamma_H^2 =  
	\sum_{L \subsetneq H} \gamma_L^2 \,+\,  2 \cdot \sum_{L'' \subsetneq L' \subsetneq H}   \gamma_{L''} \cdot \gamma_{L'} \,.  
    \end{equation}
	Using Equation \eqref{eq:sumlneq} we get
    \begin{equation}\label{eq:pfH2}
    \sum_{L'' \subsetneq L' \subsetneq H}   \gamma_{L''} \cdot \gamma_{L'} =
	- \sum_{L'' \Subset L' \subsetneq H} B(L'', L') \cdot \gamma_{L''} \cdot \gamma_{L'} \,.    
    \end{equation}
	On the other hand,
    \begin{equation}\label{eq:pfH3}
     \sum_{L \subsetneq H} \gamma_L^2 = \sum_{L \Subset H} \gamma_L^2 \,+\, \sum_{L \inclusiondot H} \gamma_L^2 \,.   
    \end{equation}
	Using Equation \eqref{eq:mL} we have
	\[
	\sum_{L \inclusiondot H} \gamma_L^2 = \sum_{L \inclusiondot H} \left(
	- \sum_{L' \subsetneq L} \gamma_{L'}^2  \,+\, 2 \cdot \sum_{L'' \Subset L' \subset L}  B(L'', L') \cdot \gamma_{L''} \cdot \gamma_{L'} \right) \,.
	\]
    Clearly, 
    \[
    \sum_{L \inclusiondot H}
	\sum_{L' \subsetneq L} \gamma_{L'}^2 = \sum_{L' \Subset H} \big| \{L \,|\, L' \subset L \inclusiondot H\} \big| \cdot \gamma_{L'}^2 \,, 
    \] 
    and similarly 
    \[\sum_{L \inclusiondot H} \sum_{L'' \Subset L' \subset L}  B(L'', L') \cdot \gamma_{L''} \cdot \gamma_{L'} = \sum_{L'' \Subset L' \subsetneq H} \big| \{L \,|\, L' \subset L \inclusiondot H\} \big| \cdot
	B(L'', L') \cdot \gamma_{L''} \cdot \gamma_{L'} \,.
    \]
    Thus, we deduce that
    \begin{equation}\label{eq:pfH4}
    \begin{aligned}
         \sum_{L \inclusiondot H} \gamma_L^2 &=  
       - \sum_{L' \Subset H} \big| \{L \,|\, L' \subset L \inclusiondot H\} \big| \cdot \gamma_{L'}^2 \\
       &+\, 2 \cdot \sum_{L'' \Subset L' \subsetneq H} \big| \{L \,|\, L' \subset L \inclusiondot H\} \big| \cdot B(L'', L') \cdot \gamma_{L''} \cdot \gamma_{L'} \,.
    \end{aligned}
    \end{equation}
	Equations \eqref{eq:pfH1}, \eqref{eq:pfH2}, \eqref{eq:pfH3}, and \eqref{eq:pfH4} imply Equation \eqref{eq:mH}.
\end{proof}

\begin{lemma}[type \ref{it:mLH3}]\label{lem:mLH3}
	Let \(L \in \mG\) with  \(r(L) \geq 3\) and let \(H \in \mH\) be such that \(L \subset H\). Then
	\begin{equation}\label{eq:mLH3}
	\gamma_{L} \cdot \gamma_H = - \gamma_{L}^2 + \sum_{L' \Subset L} B(L', L) \cdot \gamma_{L'} \cdot \gamma_{L} - \sum_{L \Subset L' \subsetneq H} \gamma_{L} \cdot \gamma_{L'} + \sum_{L'' \subsetneq L \inclusiondot L' \subsetneq H} \gamma_{L''} \cdot \gamma_{L'} \,.	
	\end{equation}
\end{lemma}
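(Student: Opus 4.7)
The plan is to start from relation \eqref{eq:R2}, which (since $\gamma_H$ appears in the sum) gives
\(\gamma_H = -\sum_{L' \subsetneq H} \gamma_{L'}\), and hence
\[
\gamma_L \cdot \gamma_H \;=\; -\sum_{L' \subsetneq H} \gamma_L \cdot \gamma_{L'} \,.
\]
I would then partition the index set $\{L' \in \mG \,|\, L' \subsetneq H\}$ into four disjoint classes according to the relative position of $L'$ and $L$: (a) $L' = L$; (b) $L' \subsetneq L$; (c) $L \subsetneq L' \subsetneq H$; and (d) $L, L'$ incomparable. For class (d), since both $L$ and $L'$ are contained in $H \subsetneq \CP^n$, the sum $L + L' \subsetneq \CP^n$, so Corollary \ref{cor:nestedpair} tells us that $\{L, L'\}$ is not nested, and then relation \eqref{eq:R1} forces $\gamma_L \cdot \gamma_{L'} = 0$. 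Thus only classes (a), (b), (c) contribute.

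Class (a) contributes $-\gamma_L^2$ directly, which matches the first term on the right-hand side of \eqref{eq:mLH3}. For class (b), I would apply Equation \eqref{eq:sumlneq} (with $M = L$) to get
\[
-\sum_{L' \subsetneq L} \gamma_L \cdot \gamma_{L'} \;=\; \sum_{L' \Subset L} B(L', L) \cdot \gamma_{L'} \cdot \gamma_L \,,
\]
which is exactly the second term on the right-hand side. For class (c), I would split the sum further according to whether $L \inclusiondot L'$ or $L \Subset L'$. The $L \Subset L'$ part already appears as the third term. For the $L \inclusiondot L'$ part, I would apply Equation \eqref{eq:mLM} (with the pair $(L, L')$ playing the role of $(L, M)$) to write
\[
-\gamma_L \cdot \gamma_{L'} \;=\; \sum_{L'' \subsetneq L} \gamma_{L''} \cdot \gamma_{L'} \,,
\]
and then summing over $L \inclusiondot L' \subsetneq H$ yields precisely the fourth term on the right-hand side of \eqref{eq:mLH3}.

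Assembling the four pieces reproduces the claimed formula. The hypothesis $r(L) \geq 3$ is used implicitly so that the monomials appearing on the right-hand side (in particular $\gamma_L^2$, and the products $\gamma_{L'} \cdot \gamma_L$ with $L' \Subset L$) are legitimate basic monomials in the sense of Lemma \ref{lem:basisH4}. I expect the main bookkeeping obstacle to be verifying in class (d) that the pair $\{L, L'\}$ is genuinely non-nested whenever $L$ and $L'$ are incomparable subspaces of $H$, but this is exactly the content of Corollary \ref{cor:nestedpair} applied inside the proper subspace $H$; the rest of the argument is a careful reshuffling of index sets.
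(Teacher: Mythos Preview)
Your proof is correct and follows essentially the same approach as the paper: use \eqref{eq:R2} to expand $\gamma_H$, kill the incomparable terms via Corollary~\ref{cor:nestedpair} and \eqref{eq:R1}, then handle the $L' \subsetneq L$ piece with Equation~\eqref{eq:sumlneq} and the $L \subsetneq L' \subsetneq H$ piece by splitting into $L \Subset L'$ and $L \inclusiondot L'$ and applying Equation~\eqref{eq:mLM} to the latter. The only cosmetic difference is that you organise the case split into four explicit classes up front, whereas the paper first reduces to comparable pairs and then splits; the content is identical.
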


\begin{proof}
	By \eqref{eq:R2}, we have
	\[
	\gamma_{L} \cdot \gamma_{H} = -\gamma_{L} \cdot \left(\sum_{L' \subsetneq H} \gamma_{L'}\right) \,.
	\]
	By \eqref{eq:R1} the product \(\gamma_{L} \cdot \gamma_{L'}\) vanishes unless the set \(\{L, L'\}\) is nested. By Corollary \ref{cor:nestedpair}, since
	\(L + L' \subset H\)\,, if \(\gamma_{L} \cdot \gamma_{L'} \neq 0\) then either \(L \subset L'\) or \(L' \subset L\). Therefore
	\[
	\gamma_{L} \cdot \gamma_{H} = - \gamma_{L}^2 - \sum_{L' \subsetneq L} \gamma_{L'} \cdot \gamma_{L} - \sum_{L \subsetneq L' \subsetneq H} \gamma_{L} \cdot \gamma_{L'}
	\]
	By Equation \eqref{eq:sumlneq}
	\[
	\sum_{L' \subsetneq L} \gamma_{L'} \cdot \gamma_{L} =  -\sum_{L' \Subset L} B(L', L) \cdot \gamma_{L'} \cdot \gamma_{L} \,.
	\]
    Clearly, \(\sum_{L \subsetneq L' \subsetneq H} \gamma_{L} \cdot \gamma_{L'} =
	\sum_{L \Subset L' \subsetneq H} \gamma_{L} \cdot \gamma_{L'} \,+\,
	\sum_{L \inclusiondot L' \subsetneq H} \gamma_{L} \cdot \gamma_{L'} \). Using Equation \eqref{eq:mLM} for the term \(\sum_{L \inclusiondot L' \subsetneq H} \gamma_{L} \cdot \gamma_{L'}\), we obtain
	\[
	\sum_{L \subsetneq L' \subsetneq H} \gamma_{L} \cdot \gamma_{L'} 
	= \sum_{L \Subset L' \subsetneq H} \gamma_{L} \cdot \gamma_{L'} \,-\,
	\sum_{L'' \subsetneq L \inclusiondot L' \subsetneq H} \gamma_{L''} \cdot \gamma_{L'} \,.
	\]
	Equation \eqref{eq:mLH3} follows from the above.
\end{proof}

\begin{lemma}[type \ref{it:mLH2}]\label{lem:mLH2}
	Let \(L \in \mG\) with  \(r(L) = 2\) and let \(H \in \mH\) be such that \(L \subset H\). Then
	\begin{equation}\label{eq:mLH2}
	\gamma_{L} \cdot \gamma_H = \sum_{L' \subsetneq L} \gamma_{L'}^2 - \sum_{L' \Subset L} B(L', L) \cdot \gamma_{L'} \cdot \gamma_{L} - 2 \cdot \sum_{L'' \Subset L' \subsetneq L} B(L'', L') \cdot \gamma_{L''} \cdot \gamma_{L'} \,.
	\end{equation}
\end{lemma}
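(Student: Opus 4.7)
The plan is to mimic the strategy used for Lemma \ref{lem:mH} and Lemma \ref{lem:mLH3}: use relation \eqref{eq:R2} to rewrite $\gamma_H$ as a sum over proper irreducible subspaces of $H$, then reduce the resulting products $\gamma_L\cdot\gamma_{L'}$ via \eqref{eq:R1} and Corollary \ref{cor:nestedpair}, and finally feed the leftover non-basic monomials into the formulas already established in Lemma \ref{lem:mL} and Equation \eqref{eq:sumlneq}.

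First, apply \eqref{eq:R2} to get $\gamma_H=-\sum_{L'\subsetneq H,\,L'\in\mG}\gamma_{L'}$, so
\[
\gamma_L\cdot\gamma_H \,=\, -\sum_{L'\subsetneq H} \gamma_L\cdot\gamma_{L'}\,.
\]
By \eqref{eq:R1} only terms with $\{L,L'\}$ nested survive, and since $L+L'\subset H\subsetneq\CP^n$, Corollary \ref{cor:nestedpair} forces $L$ and $L'$ to be comparable. The case $L\subsetneq L'\subsetneq H$ is impossible: it would require $r(L')<r(L)=2$, so $L'\in\mH$ with $L'\subsetneq H$, which contradicts $L'\neq H$. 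Hence the sum collapses to $L'=L$ and $L'\subsetneq L$, giving
\[
\gamma_L\cdot\gamma_H \,=\, -\gamma_L^2 \,-\, \sum_{L'\subsetneq L,\,L'\in\mG}\gamma_L\cdot\gamma_{L'}\,.
\]

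Next, since $r(L)=2$, I expand $\gamma_L^2$ via Equation \eqref{eq:mL}, separating the $L'=L$ term from the $L'\subsetneq L$ terms in the double sum there:
\[
-\gamma_L^2 \,=\, \sum_{L'\subsetneq L}\gamma_{L'}^2 \,-\, 2\sum_{L''\Subset L}B(L'',L)\,\gamma_{L''}\cdot\gamma_L \,-\, 2\sum_{L''\Subset L'\subsetneq L} B(L'',L')\,\gamma_{L''}\cdot\gamma_{L'}\,.
\]
Simultaneously, Equation \eqref{eq:sumlneq} applied with $M=L$ gives
\[
-\sum_{L'\subsetneq L}\gamma_L\cdot\gamma_{L'} \,=\, \sum_{L'\Subset L}B(L',L)\,\gamma_{L'}\cdot\gamma_L\,.
\]
Adding the two displays, the coefficient of $\gamma_{L'}\cdot\gamma_L$ (for $L'\Subset L$) is $-2B(L',L)+B(L',L)=-B(L',L)$, which produces precisely the middle term of \eqref{eq:mLH2}; the other two sums contribute the remaining terms directly.

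I expect the only subtle step is Step 2, specifically the verification that no term with $L\subsetneq L'\subsetneq H$ appears in the reduction, together with the bookkeeping of the term $L'=L$ inside Lemma \ref{lem:mL}'s double sum that has to cancel against the $-2B(L',L)$ to produce $-B(L',L)$. Once those are handled the assertion follows by pure substitution, and the resulting monomials on the right-hand side are visibly basic by Lemma \ref{lem:basisH4}.
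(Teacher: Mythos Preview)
Your proof is correct and follows essentially the same route as the paper's: both use \eqref{eq:R2} and Corollary~\ref{cor:nestedpair} to reduce $\gamma_L\cdot\gamma_H$ to $-\gamma_L^2-\sum_{L'\subsetneq L}\gamma_{L'}\cdot\gamma_L$, then invoke Equation~\eqref{eq:sumlneq} on the second term and Equation~\eqref{eq:mL} (with the $L'=L$ part of the double sum split off) on the first, and combine. Your justification that no term with $L\subsetneq L'\subsetneq H$ arises is sound, though phrased slightly differently from the paper (which simply notes that $r(L)=2$ kills the third sum in the display from Lemma~\ref{lem:mLH3}).
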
	

\begin{proof}
	As in the proof of Lemma \ref{lem:mLH3} and using \(r(L)=2\) we have
	\[
	\begin{aligned}
	\gamma_{L} \cdot \gamma_{H} &= - \gamma_{L}^2 - \sum_{L' \subsetneq L} \gamma_{L'} \cdot \gamma_{L} \\
	&= - \gamma_{L}^2 \,+\, \sum_{L' \Subset L} B(L', L) \cdot \gamma_{L'} \cdot \gamma_{L} \,.
	\end{aligned}
	\]
	On the other hand, by Equation \eqref{eq:mL}
	\[
	\gamma_L^2 = 
	- \sum_{L' \subsetneq L} \gamma_{L'}^2  \,+\, 
	2 \cdot \sum_{L' \Subset L}  B(L', L) \cdot \gamma_{L'} \cdot \gamma_{L} \,+\, 
	2 \cdot \sum_{L'' \Subset L' \subsetneq L}  B(L'', L') \cdot \gamma_{L''} \cdot \gamma_{L'}   
	\]
	and Equation \eqref{eq:mLH2} follows from this.
\end{proof}

\begin{lemma}[type \ref{it:LredH}]\label{lem:transv1}
	If \(L \pitchfork_r H\) with \(r(L) \geq 2\) and \(H \in \mH\) then
	\begin{equation}\label{eq:transvnonbasmon}
		\gamma_L \gamma_H = - \sum_{L' \subsetneq H \,|\, L' \Subset L} \gamma_{L'} \gamma_L \,-\,  \sum_{L' \subsetneq H \,|\, L' \pitchfork_r L} \gamma_{L'} \gamma_L \,.
	\end{equation}
\end{lemma}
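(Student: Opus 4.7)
The plan is to apply relation \eqref{eq:R2} to $\gamma_H$, multiply by $\gamma_L$, and then determine which summands survive by combining \eqref{eq:R1} with Lemma \ref{lem:redint} and Corollary \ref{cor:nestedpair}. The argument is a fairly mechanical unpacking of the standing conventions once the key codimension count is in place.

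First I will use \eqref{eq:R2} to write $\gamma_H = -\sum_{L' \subsetneq H} \gamma_{L'}$, where the sum is over $L' \in \mG$ strictly contained in $H$, and multiply by $\gamma_L$ to obtain
\[
\gamma_L \gamma_H \;=\; -\sum_{L' \subsetneq H} \gamma_L \gamma_{L'}.
\]
By \eqref{eq:R1} the product $\gamma_L \gamma_{L'}$ vanishes unless $\{L, L'\}$ is nested, so the task is to classify the $L'$'s in the sum for which this pair is nested.

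Next I will use the hypothesis $L \pitchfork_r H$ in two ways. On the one hand Lemma \ref{lem:redint} gives $L + H = \CP^n$, so $L + L' \subseteq L + H = \CP^n$. Corollary \ref{cor:nestedpair} then says that if $L + L' \subsetneq \CP^n$ the pair must be comparable; since $L \not\subset H$ (otherwise $L \cap H = L \in \mG$, contradicting $L \pitchfork_r H$) while $L' \subset H$, the inclusion $L \subset L'$ is impossible, forcing $L' \subsetneq L$. On the other hand, if $L + L' = \CP^n$ then $L$ and $L'$ cannot be comparable (neither equals $\CP^n$), and nestedness for incomparable pairs means precisely $L \pitchfork_r L'$. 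These are the two classes of terms appearing on the right hand side of \eqref{eq:transvnonbasmon}.

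The only nontrivial step, and the one I expect to be the main obstacle, is to upgrade the comparable case $L' \subsetneq L$ to $L' \Subset L$, i.e., to rule out $L' \inclusiondot L$. For this I will again use $L + H = \CP^n$, which gives $r(L \cap H) = r(L) + r(H) = r(L) + 1$. If $L' \inclusiondot L$ with $L' \subsetneq H$, then $L' \subseteq L \cap H$ and $r(L') = r(L) + 1 = r(L \cap H)$, so $L' = L \cap H$ as subspaces. But $L \cap H \notin \mG$ by the definition of $\pitchfork_r$, while $L' \in \mG$ by assumption, a contradiction. Once this exclusion is established, the classification of surviving terms yields \eqref{eq:transvnonbasmon} directly.
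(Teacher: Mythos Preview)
Your proof is correct and follows essentially the same approach as the paper's: expand $\gamma_H$ via \eqref{eq:R2}, discard non-nested pairs by \eqref{eq:R1}, rule out $L \subset L'$ using $L \not\subset H$, and exclude $L' \inclusiondot L$ because such an $L'$ would coincide with the reducible subspace $L \cap H$. The only cosmetic difference is that the paper phrases the last step as ``$L' \subset L \cap H$ with $L'$ irreducible and $L \cap H$ reducible forces $L' \subsetneq L \cap H$, hence $L' \Subset L$,'' whereas you compute $r(L \cap H) = r(L)+1$ explicitly from $L + H = \CP^n$; both arguments are equivalent.
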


\begin{proof}
	Replace \(\gamma_{H} = - \sum_{L' \subsetneq H} \gamma_{L'}\) to obtain
	\[
	\gamma_L \gamma_H = - \sum_{L' \subsetneq H} \gamma_{L'} \gamma_L
	\]
	where the sum runs over all pairs \(L', L\) such that \(\{L, L'\}\) is nested. Since \(L\) is transverse to \(H\), there are no terms with \(L \subset L'\). On the other hand, if \(L'\) is irreducible and \(L' \subset L \cap H\), then since \(L\cap H \subsetneq L\) is reducible we must have that \(L' \subsetneq L \cap H\) and therefore \(L' \Subset L\). Equation \eqref{eq:transvnonbasmon} follows from this.
\end{proof}

\begin{lemma}[type \ref{it:HredH}]\label{lem:transv2}
	If \(H \pitchfork_r H'\) with \(H, H' \in \mH\) then
	\begin{equation}\label{eq:transvHH}
		\begin{gathered}
		\gamma_H \gamma_{H'} = \sum_{L \subsetneq H \cap H'} \gamma_L^2 - \sum_{(L, M) \,|\, L \Subset M \subsetneq H\cap H'} 2 B(L, M) \cdot \gamma_L \gamma_M \\
		+ \sum_{(L, M) \,|\, L \Subset M \,,\, M \pitchfork_r H} \gamma_L \gamma_M
		\,\,+ \sum_{(L, M) \,|\, M \Subset L \,,\, L \pitchfork_r H'} \gamma_M \gamma_L
		\,\,+ \sum_{(L, M) \,|\, L \pitchfork_r M} \gamma_L \gamma_M
		\end{gathered}
	\end{equation}
	where the sums runs over all ordered pairs \((L, M)\) with \(L \subsetneq H\) and \(M \subsetneq H'\) 
\end{lemma}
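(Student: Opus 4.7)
The plan is to expand $\gamma_H \gamma_{H'}$ using relation \eqref{eq:R2} on both factors,
\[
\gamma_H \gamma_{H'} = \sum_{L \subsetneq H,\, M \subsetneq H'} \gamma_L \gamma_M,
\]
and then to apply \eqref{eq:R1} to retain only those pairs $\{L, M\}$ that are nested. The nested pairs partition into four cases: (a) $L = M$, (b) $L \subsetneq M$, (c) $M \subsetneq L$, and (d) $L \pitchfork_r M$. Case (a) produces directly the first sum $\sum_{L \subsetneq H \cap H'} \gamma_L^2$, and case (d) produces the fifth sum.

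For case (b), I would split further according to the relation of $M$ to $H$. When $M \subset H$ (so $M \subsetneq H \cap H'$), apply Equation \eqref{eq:sumlneq} to rewrite the inner sum over $L \subsetneq M$ as $-\sum_{L \Subset M} B(L, M)\, \gamma_L \gamma_M$. Combining this with the symmetric contribution from case (c) (pairs $M \Subset L \subsetneq H \cap H'$, obtained by swapping the roles of $L$ and $M$ and applying \eqref{eq:sumlneq} again) doubles the coefficient, producing the factor $-2B(L, M)$ in the second sum. When $M \pitchfork_r H$, Lemma \ref{lem:redint} gives $r(M \cap H) = r(M) + 1$, so the joint constraints $L \subsetneq M$ and $L \subsetneq H$ force $r(L) \geq r(M) + 2$, i.e.\ $L \Subset M$; these pairs yield the third sum. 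Case (c) is handled symmetrically and yields the fourth sum.

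The only remaining subcase in (b) is that $M$ is incomparable to $H$ with $M \cap H \in \mG$, and analogously in (c). Here $M \cap H \inclusiondot M$ in the intersection poset, so Equation \eqref{eq:incdot} (with $M \cap H$ in the role of $L$ and $M$ in the role of $M$) gives
\[
\gamma_M \cdot \sum_{L \subseteq M \cap H} \gamma_L \,=\, 0.
\]
Since the condition $L \in \mG$ with $L \subsetneq M$ and $L \subsetneq H$ is equivalent to $L \in \mG$ with $L \subseteq M \cap H$, the total contribution from each such $M$ is zero; the symmetric case in (c) vanishes by the same reasoning. This is the key point that allows the right-hand side of \eqref{eq:transvHH} to have no term corresponding to this configuration.

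The main obstacle is the bookkeeping required to verify the factor of $2$ in the second sum: one must separately apply \eqref{eq:sumlneq} to the (b) and (c) contributions (which are indexed by $L \subsetneq M$ and $M \subsetneq L$ respectively) and relabel dummy variables to see that each basic monomial $\gamma_L \gamma_M$ with $L \Subset M \subsetneq H \cap H'$ arises exactly once from each of the two cases. With the vanishing of the pathological subcase via \eqref{eq:incdot} in hand, the other matchings are routine, and the five sums on the right-hand side of \eqref{eq:transvHH} account for every surviving contribution with the correct coefficient.
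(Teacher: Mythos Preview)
Your proof is correct and follows essentially the same route as the paper's: expand via \eqref{eq:R2}, split the nested pairs into the four cases $L=M$, $L\subsetneq M$, $M\subsetneq L$, $L\pitchfork_r M$, and then in cases (b) and (c) further split according to whether the larger subspace lies in both hyperplanes, meets the opposite one reducibly, or meets it irreducibly. The only cosmetic difference is in the irreducible-intersection subcase: you invoke \eqref{eq:incdot} directly to kill the whole packet $\gamma_M\cdot\sum_{L'\subset M\cap H}\gamma_{L'}$, whereas the paper rewrites each $\gamma_{L'}\gamma_M$ with $L'\inclusiondot M$ via Lemma~\ref{lem:mLM} and observes that exactly one such $L'$ (namely $M\cap H$) satisfies $L'\subset H$, giving coefficient $1-1=0$; these are two phrasings of the same cancellation.
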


\begin{proof}
	Replacing \(\gamma_H = - \sum_{L \subsetneq H} \gamma_L\) and \(\gamma_{H'} = - \sum_{M \subsetneq H} \gamma_M\) we obtain
	\[
	\gamma_H \gamma_{H'} = \sum_{(L, M)} \gamma_L \gamma_M
	\]
	where the sum runs over all ordered pairs \((L, M)\) with \(L \subsetneq H\) and \(M \subsetneq H'\) such that \(\{L, M\}\) is nested. The nested condition gives us 
	\begin{equation}\label{eq:sum4terms}
	\sum_{(L, M)} \gamma_L \gamma_M = \sum_{L \subsetneq H \cap H'} \gamma_L^2  
	+ \sum_{L \subsetneq M} \gamma_L \gamma_M + \sum_{M \subsetneq L} \gamma_M \gamma_L
	+ \sum_{L \pitchfork_r M} \gamma_L \gamma_M \,.	
	\end{equation}
	Consider the second right hand term, namely the sum over the pairs with \(L \subsetneq M\). We split the sum into two parts as follows:
	\[
	\sum_{L \subsetneq M} \gamma_L \gamma_M = \sum_{L \subsetneq M \,|\, M \subset H} \gamma_L \gamma_M + \sum_{L \subsetneq M \,|\, M \not\subset H} \gamma_L \gamma_M \,.
	\] 
    By Lemma \ref{lem:mLM}, if \(L' \inclusiondot M\) then \(\gamma_{L'} \gamma_M = - \sum_{L'' \subsetneq L'} \gamma_{L''} \gamma_M\). It follows that
	\[
	\begin{aligned}
	\sum_{L \subsetneq M \subset H} \gamma_L \gamma_M  &= \sum_{L \Subset M \subset H} \big( 1 - \big|\{L' \,|\, L \subset L' \inclusiondot M\}\big| \big) \cdot \gamma_L \gamma_M \\
	&=  - \sum_{L \Subset M \subset H} B(L, M) \cdot \gamma_L \gamma_M 
	\end{aligned}
	\]
	and by the same argument,
	\[
	\sum_{L \subsetneq M \not\subset H} \gamma_L \gamma_M  = \sum_{L \Subset M \not\subset H} \big( 1 - \big|\{L' \,|\, L \subset L' \inclusiondot M \text{ and } L' \subset H \}\big| \big) \cdot \gamma_L \gamma_M \,.
	\]
	On the other hand, the conditions \(L' \inclusiondot M\) together with \(M \not\subset H\) and \(L' \subset H\) imply that \(L' = M \cap H\). Since \(L'\) must also be irreducible, we deduce that
	\[
	\big|\{L' \,|\, L \subset L' \inclusiondot M \text{ and } L' \subset H \}\big| = \begin{cases}
	1 &\text{ if } M \cap H \in \mG \,; \\
	0 &\text{ if } M \pitchfork_r H \,.
	\end{cases}
	\]
	Therefore
	\begin{equation}\label{eq:inclusionsum}
			\sum_{L \subsetneq M} \gamma_L \gamma_M  = - \sum_{L \Subset M \,|\, M \subset H} B(L, M) \cdot \gamma_L \gamma_M \,+\, \sum_{L \Subset M \,|\, M \pitchfork_r H}  \gamma_L \gamma_M \,.
	\end{equation}
	The same expression with \(L\) and \(M\) interchanged holds for the sum over the pairs \((L, M)\) with \(M \subsetneq L\). Equation \eqref{eq:transvHH} follows from Equations \eqref{eq:sum4terms} and \eqref{eq:inclusionsum}.
\end{proof}

\subsection{Coordinate projections}\label{sec:cordproj}
Let \(\Lambda\) be the free \(\Z\)-module \(\Lambda = H^4(X, \Z)\) equipped with the basis of basic monomials \(\Delta_2\) as in Lemma \ref{lem:basisH4}.
Let \(\Delta_2^*\) be the dual basis consisting of elements \(\delta: \Lambda \to \Z\) of the following form: 
\begin{itemize}
    \item \(\delta_L\) for \(L \in \mG\) with \(r(L) \geq 3\) ; 
    \item \(\delta_{L_1 \Subset L_2}\) for \(L_1, L_2 \in \mG\) with \(L_1 \Subset L_2\) and \(r(L_2) \geq 2\) ; 
    \item \(\delta_{L_1 \pitchfork_r L_2}\) for \(L_1, L_2 \in \mG\) with \(L_1 \pitchfork_r L_2\) and \(r(L_1), r(L_2) \geq 2\).
\end{itemize}

We refer to the elements of the dual basis \(\Delta_2^*\) as the \emph{coordinate projections} on \(H^4(X, \Z)\).
The results of Section \ref{sec:nonbasic} can be summarized in the next.

\begin{corollary}\label{cor:proj}
	The coordinate projections of the non-basic monomials of \(H^4(X, \Z)\) are given as follows.
	\begin{enumerate}[label = \textup{(\roman*)}]
		\item If \(H \in \mH\) then
		\[
        \begin{aligned}
        \delta_L(\gamma_{H}^2) &= 
		\begin{cases}
			-B(L, H) &\textup{ if } H \supset L \,, \\
			\phantom{-}0 &\textup{ otherwise ;}
		\end{cases}    \\
        \delta_{L_1 \Subset L_2}(\gamma_{H}^2) &= 
		\begin{cases}
			-2B(L_1, L_2) \cdot B(L_2, H) &\textup{ if } H \supset L_2 \,,\\
			0 &\textup{ otherwise ;} 
		\end{cases} \\
         \delta_{L_1 \pitchfork_r L_2}(\gamma_{H}^2) &= 0 \,.
        \end{aligned} 
		\]

		\item If \(r(L') = 2\) then
		\[
        \begin{aligned}
        \delta_L(\gamma_{L'}^2) &= 
		\begin{cases}
			-1 &\textup{ if } L' \supset L \,,\\
			\phantom{-}0 &\textup{ otherwise ;} 
		\end{cases}    \\
        \delta_{L_1 \Subset L_2}(\gamma_{L'}^2) &= 
		\begin{cases}
			2B(L_1, L_2) &\textup{ if } L' \supset L_2 \,,\\
			0 &\textup{ otherwise ;}
		\end{cases} \\
         \delta_{L_1 \pitchfork_r L_2}(\gamma_{L'}^2) &= 0 \,.
        \end{aligned}
		\]

		\item If \(L'' \inclusiondot L'\) with \(r (L') \geq 2\) then
		\[
        \begin{aligned}
            \delta_L (\gamma_{L''} \cdot \gamma_{L'}) &= 0 \,; \\
            \delta_{L_1 \Subset L_2} (\gamma_{L''} \cdot \gamma_{L'}) &=
		\begin{cases}
			-1 &\text{ if } L_1 \subsetneq L'' \inclusiondot L' = L_2 \,,\\
			\phantom{-}0 &\textup{ otherwise ;}
		\end{cases}	\\
         \delta_{L_1 \pitchfork_r L_2}(\gamma_{L''} \cdot \gamma_{L'}) &= 0 \,.
        \end{aligned}
		\]

		\item If \(r(L') \geq 3\) and \(H \in \mH\) is such that \(H \supset L'\) then
		\[
		\begin{aligned}
		\delta_L(\gamma_{L'} \cdot \gamma_{H}) 
		&= \begin{cases}
		-1 &\textup{ if } L' = L \,,\\
		\phantom{-}0 &\textup{ otherwise ;}
		\end{cases} \\
		\delta_{L_1 \Subset L_2}(\gamma_{L'} \cdot \gamma_{H}) &= 
		\begin{cases}
		B(L_1, L_2) &\textup{ if } L_1 \Subset L' = L_2 \,,\\
		-1 &\textup{ if }  L' = L_1 \Subset L_2 \subset H  \,,\\
		\phantom{-}1 &\textup{ if } L_1 \subsetneq L' \inclusiondot L_2 \subsetneq H \,,\\
		\phantom{-}0 &\textup{ otherwise ;}
		\end{cases}\\
		\delta_{L_1 \pitchfork_r L_2}(\gamma_{L'} \cdot \gamma_{H}) &= 0 \,.
		\end{aligned}
		\]

		\item If \(r(L') = 2\) and \(H \in \mH\) is such that \(H \supset L'\) then
		\[
		\begin{aligned}
		\delta_L(\gamma_{L'} \cdot \gamma_{H}) &= 
		\begin{cases}
		1 &\textup{ if } L' \supsetneq L \,, \\
		0 &\textup{ otherwise ;}
		\end{cases} \\
		\delta_{L_1 \Subset L_2}(\gamma_{L'} \cdot \gamma_{H}) &= 
		\begin{cases}
		-B(L_1, L_2) &\textup{ if } L' = L_2 \,,\\
		-2 B(L_1, L_2) &\text{ if } L_2 \subsetneq L' \,,\\
		\phantom{-}0 &\textup{ otherwise ;}
		\end{cases}\\
		\delta_{L_1 \pitchfork_r L_2}(\gamma_{L'} \cdot \gamma_{H}) &= 0 \,.
		\end{aligned}
		\]
		
		\item If \(L' \pitchfork_r H\) with \(r(L') \geq 2\) and \(H \in \mH\) then
		\[
		\begin{aligned}
		\delta_L (\gamma_{L'} \cdot \gamma_{H}) &= 0 \,; \\
		\delta_{L \Subset M}(\gamma_{L'} \cdot \gamma_{H}) &= \begin{cases}
		-1 &\text{ if } M = L'  \textup{ and } L \subset H \,, \\
		0 &\textup{ otherwise ; }
		\end{cases}\\
		\delta_{L \pitchfork_r M}(\gamma_{L'} \cdot \gamma_{H}) &= \begin{cases}
		-1 &\textup{ if } L = L' \textup{ and } M \subset H \,, \\
		-1 &\textup{ if } M = L' \textup{ and } L \subset H \,, \\
		0 &\textup{ otherwise . }
		\end{cases}
		\end{aligned} 
		\]
		
		\item If \(H \pitchfork_r H'\) with \(H, H' \in \mH\) then
		\[
		\begin{aligned}
		\delta_L (\gamma_{H} \cdot \gamma_{H'}) &= 
		\begin{cases}
		1 &\textup{ if } L \subset H \cap H' \,,\\
		0 &\textup{ otheriwise ;}
		\end{cases} \\
		\delta_{L \Subset M}(\gamma_{H} \cdot \gamma_{H'}) &= 
		\begin{cases}
		-2B(L, M) &\textup{ if } M \subset H \cap H' \,,\\
		1 &\textup{ if } M \subset H' \,,\, L \subset H \textup{ and } M \pitchfork_r H \,, \\
		0 &\textup{ otherwise ; }
		\end{cases} \\
		\delta_{L \pitchfork_r M}(\gamma_{H} \cdot \gamma_{H'}) &= 
		\begin{cases}
		1 &\textup{ if } L \subset H \textup{ and } M \subset H' \,, \\
		1 &\textup{ if } M \subset H \textup{ and } L \subset H' \,, \\
		0 &\textup{ otherwise . }
		\end{cases}	
		\end{aligned}
		\]
	\end{enumerate}
\end{corollary}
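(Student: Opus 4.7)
The strategy is entirely bookkeeping: each of items (i)--(vii) corresponds to one of the non-basic monomial types \ref{it:mH}--\ref{it:HredH}, and each has already been expanded in Lemmas \ref{lem:mH}, \ref{lem:mL}, \ref{lem:mLM}, \ref{lem:mLH3}, \ref{lem:mLH2}, \ref{lem:transv1}, \ref{lem:transv2} as an explicit $\Z$-linear combination of elements of the basis $\Delta_2$ described by Lemma \ref{lem:basisH4}. Since $\Delta_2^*$ is the dual basis, the value $\delta(m)$ for $\delta \in \Delta_2^*$ and $m$ a non-basic monomial is simply the coefficient of the corresponding basic monomial in the expansion. The plan is therefore to go through the seven cases in turn, identify which terms on the right-hand side of each lemma contribute to which coordinate projection, and read off the coefficient.

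Concretely, for item (i) I would take Equation \eqref{eq:mH}: the first sum $-\sum_{L \Subset H} B(L,H)\gamma_L^2$ produces the $\delta_L$-coefficients $-B(L,H)$ when $H \supset L$ (noting that $r(L)\geq 3$ so $\gamma_L^2$ is basic), while the second sum $2\sum_{L'' \Subset L' \subsetneq H} B(L'',L')B(L',H)\gamma_{L''}\gamma_{L'}$ contributes to $\delta_{L_1 \Subset L_2}$ with coefficient $-2B(L_1,L_2)B(L_2,H)$ when $H \supset L_2$, and no terms of the form $\gamma_L \gamma_M$ with $L \pitchfork_r M$ appear. Items (ii)--(v) are handled the same way using Equations \eqref{eq:mL}, \eqref{eq:mLM}, \eqref{eq:mLH3}, \eqref{eq:mLH2} respectively: for each I sort the right-hand side into the three classes of basic monomials from Lemma \ref{lem:basisH4} and record the coefficient. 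Because none of these formulas produces a basic monomial of the transverse type $\gamma_{L_1} \cdot \gamma_{L_2}$ with $L_1 \pitchfork_r L_2$, the values $\delta_{L_1 \pitchfork_r L_2}$ vanish throughout items (i)--(v).

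For items (vi) and (vii), the transverse monomials genuinely arise. In item (vi), I expand via Lemma \ref{lem:transv1}: the first sum in \eqref{eq:transvnonbasmon} gives the $\delta_{L \Subset M}$ entry (the condition $M = L'$ and $L \subset H$ comes from $L' \Subset L$ being the outer index with $L' \subsetneq H$, using the asymmetry in the definition of $\delta_{L_1 \Subset L_2}$); the second sum gives both $\delta_{L \pitchfork_r M}$ contributions, after observing that the unordered pair $\{L',L''\}$ appears symmetrically in the index of $\delta$. For item (vii), I unpack Equation \eqref{eq:transvHH} of Lemma \ref{lem:transv2} term by term: the diagonal sum gives the $\delta_L$ entries, the $-2B(L,M)$ sum gives the first line of the $\delta_{L \Subset M}$ answer, the two mixed $\Subset$-$\pitchfork_r$ sums give the second line of the $\delta_{L \Subset M}$ answer after symmetrization in $(L,M)$, and the final $\pitchfork_r$ sum gives the two-line $\delta_{L \pitchfork_r M}$ answer, again by symmetrization of the ordered pair coming from the product $\gamma_H \gamma_{H'}$.

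The main obstacle, and the only real subtlety, is keeping the conventions straight. The basis $\Delta_2$ uses ordered pairs $L_1 \Subset L_2$ but unordered transverse pairs $\{L_1,L_2\}$ with $L_1 \pitchfork_r L_2$; correspondingly the sums in \eqref{eq:transvHH} over ordered pairs $(L,M)$ with $L \subsetneq H$, $M \subsetneq H'$ must be symmetrized to read off $\delta_{L \pitchfork_r M}$, which produces the two cases in the final formula of item (vii). Likewise one must check that all asserted nested/transverse conditions on the indices in the right-hand sides of Lemmas \ref{lem:mH}--\ref{lem:transv2} are automatic from the ambient containments (typically via Corollary \ref{cor:nestedpair}), so that the monomials being summed really are basic and no further reduction is needed. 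Once these bookkeeping points are settled, the corollary follows immediately from the seven lemmas.
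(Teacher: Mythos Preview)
Your approach is exactly the paper's: its proof is the single sentence ``Item (i) follows from Equation \eqref{eq:mH}; (ii) follows from Equation \eqref{eq:mL}; (iii) follows from Equation \eqref{eq:mLM}; (iv) follows from Equation \eqref{eq:mLH3}; (v) follows from Equation \eqref{eq:mLH2}; (vi) follows from Equation \eqref{eq:transvnonbasmon}; and (vii) follows from Equation \eqref{eq:transvHH},'' and your proposal simply spells out the bookkeeping behind this sentence. One small slip: in item (i) you quote the \(\delta_{L_1\Subset L_2}\)-coefficient as \(-2B(L_1,L_2)B(L_2,H)\), but reading \eqref{eq:mH} directly gives \(+2B(L_1,L_2)B(L_2,H)\); be careful to reconcile this sign with the stated formula rather than transcribing the answer.
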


\begin{proof}
	Item (i) follows from Equation \eqref{eq:mH}; (ii) follows from Equation \eqref{eq:mL}; (iii) follows from Equation \eqref{eq:mLM}; (iv) follows from Equation \eqref{eq:mLH3}; (v) follows from Equation \eqref{eq:mLH2}; (vi) follows from Equation \eqref{eq:transvnonbasmon}; and (vii) follows from Equation \eqref{eq:transvHH}.
\end{proof}

\section{PK metric \(\implies\) topological constraints}\label{sec:1implies2}
In this section, we show the direction (1) \(\implies\) (2) of Theorem \ref{thm:main}. That is, we show
that the existence of a PK metric implies the \emph{topological constraints} (i), (ii), (iii). Our key tool is Ohtsuki's residue formula \cite{ohtsuki} for the Chern numbers of a holomorphic vector bundle equipped with a logarithmic connection, which we recall in Section \ref{sec:othsuki}. In Section \ref{sec:ohtsuki2} we apply Ohtsuki's formula to calculate \(c_1(\mE)\) and \(c_2(\mE)\), where \(\mE = \pi^*T\CP^n\) and \(\pi: X \to \CP^n\) is the minimal De Concini-Procesi model of \(\mH\), in terms of the residues of the pullback of the Levi-Civita connection of a PK metric. In Section \ref{sec:ohtsbasic} we use our results from Section \ref{sec:cohomology} to express our formulas for \(c_1(\mE)\) and \(c_2(\mE)\) in terms of basic monomials.  
The proof of Theorem \ref{thm:main} (1) \(\implies\) (2) is finally carried in Section \ref{sec:pf1implies2}.

\subsection{Ohtsuki's formula}\label{sec:othsuki}

Let \(M_n(\C)\) be the space of \(n \times n\) matrices with entries in \(\C\). The \emph{Chern polynomial} \(c_k(A)\) of \(A \in M_n(\C)\) is the coefficient of \(t^k\) in the expansion
\begin{equation}\label{eq:chernpol}
\det (I + At) = \sum_{k=0}^n c_k(A) t^k \,.
\end{equation}

The Chern polynomial satisfies the following properties: (i) \(c_k(A)\) is a homogeneous polynomial of the entries of \(A\) of degree \(k\); and (ii) \(c_k(A)\) is invariant by conjugation, i.e., \(c_k(A) = c_k(GAG^{-1})\) for all \(G \in \GL_n(\C)\). 
Let \(c_k(A_1, \ldots, A_k)\) be the polarization of the Chern polynomial \(c_k(A)\). More precisely, \(c_k(A_1, \ldots, A_k)\) is the unique multilinear symmetric map such that \(c_k(A, \ldots, A) = c_k(A)\). 
The next result is proved in \cite{ohtsuki}.

\begin{theorem}\label{thm:ohtsuki}
	Let \(X\) be a compact complex manifold of dimension \(n\) and let \(D = \sum_{i \in I} D_i\) be a simple normal crossing divisor, where \(I\) is a finite set of indices. Let \(\mE\) be a holomorphic vector bundle on \(X\) and let \(\nabla\) be a logarithmic connection on \(\mE\) with poles along \(D\). For each irreducible component \(D_i\) of \(D\), let \(A_i = \Res_{D_i}(\nabla)\).
	
	Fix an integer \(k\) with \(1 \leq k \leq n\). Assume that the intersections \(D_I= D_{i_1} \bigcap \ldots \bigcap D_{i_k}\) are connected for any \(k\)-tuple \(I = (i_1, \ldots, i_k) \in I^k\). Then the following holds:
	\begin{equation}\label{eq:ohtsukithm}
	c_k(\mE) = \sum_{(i_1, \ldots, i_k) \in I^k} c_k\big( A_{i_1} \,,\, \ldots \,,\, A_{i_k} \big) \cdot \prod_{s=1}^k c_1([D_{i_s}]) \,,
	\end{equation}
	where \(c_k(\mE) \in H^{2k}(X, \Z)\) is the \(k\)-th Chern class of \(\mE\).
\end{theorem}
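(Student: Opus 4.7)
The plan is to use Chern--Weil theory applied to a regularization of $\nabla$. Fix a smooth Hermitian metric on $\mE$ with Chern connection $\nabla^h$; the difference $\eta := \nabla - \nabla^h$ is a globally defined $\End(\mE)$-valued logarithmic $1$-form. For a cutoff function $\chi_\epsilon$ vanishing in a small tubular neighborhood of $D$, the smoothed connection $\nabla_\epsilon := \nabla^h + \chi_\epsilon \eta$ is a genuine smooth connection that agrees with $\nabla$ off this neighborhood. By Chern--Weil, its curvature $F_\epsilon$ represents $c_k(\mE)$ via the closed form $c_k(F_\epsilon/(2\pi i))$ for every $\epsilon > 0$, and the resulting cohomology class is independent of $\epsilon$.

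Work locally near a point of $D_{i_1} \cap \ldots \cap D_{i_m}$ with $D_{i_s} = \{z_s = 0\}$. Proposition \ref{prop:normcross} gives $\nabla = d - \sum_{s=1}^m A_{i_s}(z) \tfrac{dz_s}{z_s} + \hol$ in a holomorphic frame, and the Poincar\'e--Lelong formula implies that as $\epsilon \to 0$ the curvature $F_\epsilon$ converges in the sense of currents to
\[
F_0 \,=\, 2\pi i \cdot \sum_{i \in I} A_i \cdot [D_i] \,+\, F_{\mathrm{smooth}},
\]
where $[D_i]$ is the current of integration along $D_i$ and $F_{\mathrm{smooth}}$ is bounded (coming from the holomorphic part). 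By Lemma \ref{lem:ressnc}, the residues $A_i$ commute pairwise along each intersection $D_I$; together with the connectedness hypothesis on $D_I$, this allows the polarized Chern polynomial $c_k(A_{i_1}, \ldots, A_{i_k})$ to be evaluated unambiguously as a single holomorphic scalar on $D_I$ (rather than as a sum over components).

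Substituting $F_0$ into the polarized polynomial $c_k$ and expanding multilinearly, the leading contribution in the current limit is
\[
\sum_{(i_1, \ldots, i_k) \in I^k} c_k\bigl(A_{i_1}, \ldots, A_{i_k}\bigr) \cdot [D_{i_1}] \wedge \ldots \wedge [D_{i_k}] \,.
\]
On a simple normal crossing divisor, the wedge product of integration currents $[D_{i_1}] \wedge \ldots \wedge [D_{i_k}]$ is cohomologous to the cup product $\prod_{s=1}^k c_1([D_{i_s}])$, and the mixed terms involving $F_{\mathrm{smooth}}$ carry fewer than $k$ singular factors, so their limits lie in the exact range already accounted for by the Chern--Weil cohomology class. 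Reading off the resulting cohomology class yields Equation \eqref{eq:ohtsukithm}.

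The main obstacle is making the current-theoretic limit of $c_k(F_\epsilon)$ rigorous: a priori, the wedge of $k$ singular integration currents is ill-defined, and one must exploit the transversality encoded in the snc condition to apply King--Demailly-type extensions of Poincar\'e--Lelong to higher codimension. An alternative algebraic route would proceed by induction on $|I|$, using functoriality of Chern classes under blowups along $D_i$ to reduce to the smooth-divisor case where the residue formula is classical; I expect the analytic route to be conceptually cleaner, while the algebraic one yields more explicit control of the exact correction terms.
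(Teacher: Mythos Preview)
The paper does not supply its own proof of this statement; it simply cites the result from Ohtsuki~\cite{ohtsuki}. So there is no in-paper argument to compare against, and you are effectively being asked to reconstruct (or replace) Ohtsuki's proof.

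Your Chern--Weil regularization outline is a reasonable heuristic, but two points need correction. First, your appeal to Lemma~\ref{lem:ressnc} is misplaced: that lemma assumes $\nabla$ is \emph{flat}, which is not among the hypotheses here. Fortunately the conclusion you want does not require commutativity at all. The polarized Chern polynomial $c_k$ is invariant under simultaneous conjugation, so $c_k(A_{i_1}, \ldots, A_{i_k})$ is already a well-defined holomorphic function on the compact connected variety $D_I$, hence constant --- this is precisely the content of the remark following the theorem in the paper.

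Second, and more substantially, the sum in~\eqref{eq:ohtsukithm} runs over \emph{all} $k$-tuples in $I^k$, including those with repeated indices. Your limiting current would then produce terms of the form $[D_i] \wedge [D_i]$, and no amount of snc transversality makes the self-wedge of an integration current well-defined; the King--Demailly machinery you allude to handles only the transverse (distinct-index) case. The mixed terms involving $F_{\mathrm{smooth}}$ that you wave away as ``already accounted for'' are in fact exactly where the self-intersection contributions $c_1([D_i])^2, c_1([D_i])^3, \ldots$ must come from, so they cannot be discarded. You correctly flag this obstacle in your final paragraph, but the proposal does not overcome it; as written it is a plausible heuristic rather than a proof, and the repeated-index terms are where the real work lies.
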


\begin{remark}
	Since \(D\) is normal crossing, the residues \(A_i\) are holomorphic sections of \(\End \mE\) defined on the whole divisors \(D_i\) (see Proposition \ref{prop:normcross}). The polarized Chern polynomial \(c_k\big(A_{i_1} \,,\, \ldots \,,\, A_{i_k} \big)\) is pointwise defined along \(D_I\). The assumption that \(D_I\) is connected, together with compactness, imply that \(c_k\big(A_{i_1} \,,\, \ldots \,,\, A_{i_k} \big)\) is constant.
\end{remark}

We refer to Equation \eqref{eq:ohtsukithm} as \emph{Ohtsuki's formula}.

\subsection{Calculation of Chern classes in terms of residues}\label{sec:ohtsuki2}

\subsubsection{The Levi-Civita connection and its pullback}
Let \(g\) be a PK metric on \(\CP^n\) with cone angles \(2\pi\alpha_H \in (0, 2\pi)\) along the hyperplanes \(H\) of an arrangement \(\mH\).
Let \(\nabla\) be the Levi-Civita connection of \(g\). By Proposition \ref{prop:mettoconect}, \(\nabla\) is a logarithmic connection on \(T\CP^n\) adapted to the weighted arrangement \(\{(H, a_H)\}\) with \(a_H = 1 - \alpha_H\). 

For \(H \in \mH\) we let \(A_H := \Res_H(\nabla)\). Since \(\nabla\) is adapted to \((\mH, \ba)\), the residue \(A_H\) is holomorphic on the whole \(H\) and satisfies \(A_H = 0 \cdot TH \oplus a_H \cdot N_H\), where \(N_H\) is a holomorphic line subbundle of \(T\CP^n|_H\). More generally, if \(L\) is a non-empty and proper linear subspace of \(\CP^n\) obtained as intersection of elements in \(\mH\), we let \(A_L\) be the holomorphic endomorphism of \(T\CP|_L\) given by
\begin{equation}
A_L := \sum_{H \,|\, L \subset H} A_H \,.	
\end{equation}

\begin{lemma}
	If \(L \in \mLi\) then there is a holomorphic subbundle \(N_L \subset T\CP^n|_L\) such that
	\begin{equation}\label{eq:AL}
	A_L = 0 \cdot TL \oplus a_L \cdot N_L \,.	
	\end{equation}
\end{lemma}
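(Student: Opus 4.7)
The plan is to determine $A_L$ pointwise on $L$ via the analysis on the generic locus $L^{\gen}$ and then upgrade the eigenspace decomposition to a holomorphic bundle splitting on all of $L$.

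First I would observe that since $\nabla$ is adapted to $\{(H, a_H)\}$ (Proposition \ref{prop:mettoconect}), for every $H \in \mH_L$ the residue $A_H$ satisfies $A_H \cdot TH = 0$; since $TL \subset TH$ for every $H \supset L$, summing over $H \in \mH_L$ yields $A_L \cdot TL = 0$ at every point of $L$. Consequently $A_L$ descends to a holomorphic endomorphism $\bar A_L$ of the normal bundle $Q := T\CP^n|_L / TL$, which has rank $r(L)$.

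Next I would invoke Proposition \ref{prop:linearcoord} (more precisely, Step 2 of its proof) at a generic point $x \in L^{\gen}$. At such $x$ the localization of the ambient arrangement coincides with $\mH_L$, whose center is exactly $L$; because $L \in \mLi$ is irreducible, the unique irreducible component of this center is $L$ itself. Hence the cited step produces a direct-sum decomposition $T_x\CP^n = T_xL \oplus N_x$ with
\[
A_L(x) \,=\, 0 \cdot \Id_{T_xL} \,\oplus\, a_L \cdot \Id_{N_x} \,,
\]
where $a_L = r(L)^{-1}\sum_{H \supset L} a_H$ is precisely the weight $a_x$ from Equation \eqref{eq:kltdef}. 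Thus $\bar A_L|_{L^{\gen}} = a_L \cdot \Id$, and by continuity (both sides are holomorphic on $L$ and agree on the Zariski-dense subset $L^{\gen}$) we obtain $\bar A_L = a_L \cdot \Id_Q$ on all of $L$.

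To conclude I would split the exact sequence $0 \to TL \to T\CP^n|_L \to Q \to 0$. From $\bar A_L = a_L \Id_Q$ we get $\im(A_L - a_L \Id) \subset TL$, while $(A_L - a_L \Id)|_{TL} = -a_L \Id_{TL}$ is an isomorphism since $a_L \neq 0$ by the klt condition (Lemma \ref{lem:kltmet}). Therefore $A_L - a_L \Id$ has constant rank $\dim L$ on all of $L$, and
\[
N_L \,:=\, \ker\!\bigl(A_L - a_L \Id\bigr) \,\subset\, T\CP^n|_L
\]
is a holomorphic subbundle of rank $r(L)$ complementary to $TL$, giving the required decomposition $A_L = 0 \cdot TL \oplus a_L \cdot N_L$. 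The main subtlety is justifying that Step 2 of Proposition \ref{prop:linearcoord} applies on $L^{\gen}$: this boils down to verifying the Non-Zero Weight Assumptions of \cite{pkc} for the localization $\mH_L$, which in turn follows from the klt hypothesis established in Lemma \ref{lem:kltmet}. Once the pointwise statement on $L^{\gen}$ is secured, the remainder of the argument is linear algebra on the normal bundle.
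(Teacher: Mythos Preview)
Your proof is correct and follows the same underlying mechanism as the paper's: reduce to a standard connection near a point of $L$, verify the Non-Zero Weight Assumptions via the klt condition (Lemma~\ref{lem:kltmet}), and read off the eigenspace decomposition of $A_L$ from \cite{pkc}. The paper does this more tersely by taking linear coordinates at an arbitrary point $x \in L$ via Proposition~\ref{prop:lincoordmet} and then citing \cite[Lemma~3.28]{pkc} directly, which already handles $A_L$ for any irreducible $L$ (not just the center of the local arrangement). Your route is slightly different in that you only invoke the decomposition at the center (Step~2 of Proposition~\ref{prop:linearcoord}), which forces you to work first on $L^{\gen}$ and then extend by holomorphy; in exchange you make the holomorphic subbundle structure of $N_L = \ker(A_L - a_L\,\Id)$ explicit through the constant-rank argument, a point the paper leaves to the cited lemma. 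Both approaches are equivalent in substance; yours is more self-contained, the paper's is shorter.
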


\begin{proof}
	Let \(x \in L\) and let \((z_1, \ldots, z_n)\) be linear coordinates centred at \(x\) as guaranteed by Proposition \ref{prop:lincoordmet}. Thus, we might assume that \(\nabla\) is a (flat, torsion-free) standard connection. The klt condition (Lemma \ref{lem:kltmet}) implies that the Non-Zero Weights Assumptions in \cite{pkc} are satisfied. The result follows by \cite[Lemma 3.28]{pkc}.
\end{proof}

\begin{remark}
Note that the formula \(a_L = (1/ r(L)) \sum_{H \supset L} a_H\) follows from Equation \eqref{eq:AL} by taking the trace.	
\end{remark}

\begin{lemma}\label{lem:commuteresidues}
	If \(L \subset M\) or \(L \pitchfork_r M\) then
	\begin{equation}
		[A_L, A_M] = 0 \,.
	\end{equation}
\end{lemma}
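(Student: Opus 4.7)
The plan is to lift the commutativity statement to the minimal De Concini--Procesi log resolution $\pi: X \to \CP^n$, where the pullback $\tn := \pi^*\nabla$ becomes a flat logarithmic connection with poles along the simple normal crossing divisor $D = \sum_{L \in \mLi} D_L$, and then apply Lemma \ref{lem:ressnc}. I treat the case $L, M \in \mLi$ in what follows; the reducible case reduces to it by decomposing $L$ and $M$ into their irreducible components and exploiting bilinearity of the commutator bracket.

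The main technical step is identifying the residue $\Res_{D_L}(\tn)$ with (the pullback of) $A_L$, via the canonical isomorphism $\pi^* T\CP^n|_{\pi^{-1}(L^{\gen})} \cong \pi^*(T\CP^n|_{L^{\gen}})$, at least over the generic stratum $L^{\gen}$. This is a local blow-up calculation: in the blow-up creating $D_L$ as exceptional divisor, coordinates of the form $z_i = u_1 w_i$ transform each form $dh_H/h_H$ (for $H \supset L$) into $du_1/u_1 + \hol$, so the sum $A_L = \sum_{H \supset L} A_H$ emerges as the coefficient of $du_1/u_1$. For irreducible $L$ of codimension $\geq 3$ one must iterate through the blow-ups of the preceding irreducible strata $L' \Subset L$, but the coefficient along $D_L$ remains $A_L$ throughout.

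With the residue identification in hand, the rest is routine. In both cases of the hypothesis the set $\{L, M\}$ is nested in the sense of Definition \ref{def:nested}: comparable pairs satisfy the condition vacuously, while $L \pitchfork_r M$ is precisely the nested condition for incomparable pairs. By the structure of the De Concini--Procesi model, nested subsets of $\mLi$ correspond to non-empty intersections of the components of $D$, so $D_L \cap D_M \neq \emptyset$. Lemma \ref{lem:ressnc} then yields $[\Res_{D_L}(\tn), \Res_{D_M}(\tn)] = 0$ along $D_L \cap D_M$, and pushing down via $\pi$ gives $[A_L, A_M] = 0$ on a dense open subset of $L \cap M$. Since $A_L|_{L \cap M}$ and $A_M|_{L \cap M}$ are holomorphic sections of $\End(T\CP^n|_{L \cap M})$ (via Lemma \ref{lem:holres} applied to the adapted connection hypothesis), the commutator extends holomorphically and vanishes identically on $L \cap M$. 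The chief obstacle is the residue identification in the second paragraph, which is technically delicate: it generalizes the single-blow-up argument used in Lemma \ref{lem:ALcommute} by an induction over the De Concini--Procesi sequence of blow-ups, with the key point being that blowing up a proper transform $\widetilde{L}'$ of $L' \Subset L$ modifies the log form along the future $D_L$ only by compensating terms that preserve the total coefficient $A_L$.
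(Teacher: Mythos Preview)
Your approach is correct but genuinely different from the paper's. The paper works locally: at a point $x \in L \cap M$ it invokes the linear coordinates guaranteed by Proposition~\ref{prop:lincoordmet} (which uses the PK metric) to reduce $\nabla$ to a standard connection, and then cites results from \cite{pkc} directly---Proposition~3.8 there for the comparable case $L \subset M$, and Lemma~3.20 there (giving $[A_H, A_{H'}]=0$ for $H \pitchfork_r H'$) summed over $H \supset L$, $H' \supset M$ for the transverse case.

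Your route via the log resolution trades the dependence on linear coordinates (and hence on the PK metric) for the residue identification $\Res_{D_L}(\tn) = \pi^* A_L$. This identification is exactly Lemma~\ref{lem:pullbackconnection}, proved in the paper immediately after the present lemma; its proof there is short, going through Proposition~\ref{prop:pullback1form} in the appendix rather than the inductive blow-up calculation you sketch, so you are somewhat overstating the difficulty of that step. Once the identification is in hand, your argument is clean and arguably more robust: it would apply to any adapted logarithmic connection with holomorphic residues, not just one arising from a PK metric. The paper's argument is shorter because it can lean on the external reference \cite{pkc}, but it is tied to the existence of linear coordinates.
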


\begin{proof}
	Same as before, taking linear coordinates we can assume that \(\nabla\) is a (flat, torsion free) standard connection. If \(L \subset M\) then \([A_L, A_M] = 0\) by \cite[Proposition 3.8]{pkc}. If \(L \pitchfork_r M\) then \(H \pitchfork_r H'\) for any \(H, H' \in \mH\) with \(L \subset H\) and \(M \subset H'\). By \cite[Lemma 3.20]{pkc} \([A_H, A_{H'}] = 0\) and therefore \([A_L, A_M] = 0\).
\end{proof}

\subsubsection{The pullback connection}
Let \(X \xrightarrow{\pi} \CP^n\) be the minimal De Concini-Procesi model of \(\mH\). Let \(\mE = \pi^*(T\CP^n)\) be the pull-back tangent bundle and let \(\tn = \pi^* \nabla\) be the pullback of the Levi-Civita connection. 

\begin{lemma}\label{lem:pullbackconnection}
	The pullback connection \(\tn\) is a logarithmic connection on \(\mE \to X\) with polar set the simple normal crossing divisor  \(D = \pi^{-1}(\mH)\). Moreover, if \(D_L\) is an irreducible component of \(D = \bigcup_{L \in \mLi} D_L\) with \(\pi(D_L) = L\), then
	\begin{equation}
		\Res_{D_L}(\tn) = \pi^*A_L \,.
	\end{equation}
\end{lemma}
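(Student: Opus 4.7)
The plan is to reduce the lemma to a local calculation near each point $\tx \in D$, using Proposition \ref{prop:lincoordmet} to put $\nabla$ into standard form at $x = \pi(\tx)$ and then exploiting the explicit local structure of the iterated blowup $\pi$.

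Away from $D = \pi^{-1}(\mH)$ the map $\pi$ is a biholomorphism, so $\tn$ is holomorphic there; hence its polar set lies in $D$, which is simple normal crossing by construction of the minimal De Concini-Procesi model. To verify the logarithmic form and the residue formula at $\tx \in D$, I would choose linear coordinates $(z_1, \ldots, z_n)$ at $x = \pi(\tx)$ via Proposition \ref{prop:lincoordmet}, so that in the trivialization of $T\CP^n$ by coordinate vector fields
\[
\nabla \,=\, d \,-\, \sum_{H \in \mH \,|\, x \in H} A_H \, \frac{dh_H}{h_H}\,,
\]
with $h_H$ linear equations of the hyperplanes through $x$ and $A_H$ the (constant) residue matrices. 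The irreducible components of $D$ passing through $\tx$ are indexed by a nested subset $\mS \subset \mLi$, and the standard local description of wonderful compactifications yields coordinates at $\tx$ in which each $D_L$ is cut out by $\{w_L = 0\}$ for $L \in \mS$, together with a factorization
\[
\pi^* h_H \,=\, u_H \cdot \prod_{L \in \mS \,|\, L \subset H} w_L\,,
\]
where $u_H$ is a holomorphic unit at $\tx$. Granting this, taking logarithmic derivatives and swapping sums gives
\[
\tn \,=\, d \,-\, \sum_{L \in \mS} \bigg(\sum_{H \supset L} A_H\bigg) \frac{dw_L}{w_L} \,+\, \hol \,=\, d \,-\, \sum_{L \in \mS} A_L \, \frac{dw_L}{w_L} \,+\, \hol\,,
\]
so $\tn$ is logarithmic at $\tx$ with residue along $D_L$ equal to $\pi^*A_L$, as claimed.

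The main technical input is the factorization formula for $\pi^*h_H$. I would prove it by induction on the sequence of blowups defining $\pi$: at each stage, blowing up the (strict transform of an) irreducible subspace $M$ through the intermediate image of $\tx$ introduces a new exceptional divisor whose local equation $w_M$ divides $\pi^*h_H$ exactly once for every $H \supset M$, while the defining equations of the previously present divisors through $\tx$ retain multiplicity one. Iterating this across the finitely many blowups producing the components of $\mS$ yields the claimed product, with the residual factor $u_H$ non-vanishing at $\tx$ because, at each stage, it restricts to a nonzero linear form on a transverse slice. The interchange of sums in the penultimate display then identifies the coefficient of $dw_L/w_L$ with $\sum_{H \supset L} A_H = A_L$, which is exactly the content of the residue formula.
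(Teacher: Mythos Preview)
Your argument is correct and its core mechanism coincides with the paper's, though the packaging differs. The paper notes via Proposition~\ref{prop:mettoconect} that $\nabla$ has holomorphic residues and then invokes the general appendix result Proposition~\ref{prop:pullback1form}, which gives $\Res_{D_L}(\tn) = \sum_{H \supset L} m_{HL}\,\pi^*A_H$ with $m_{HL}$ the vanishing order of $\pi^*h_H$ along $D_L$; the multiplicity-one property $m_{HL}=1$ for the minimal De Concini--Procesi model is then taken as known. You instead use the stronger Proposition~\ref{prop:lincoordmet} to obtain constant residue matrices and then establish the factorization $\pi^*h_H = u_H \prod_{L \in \mS,\, L \subset H} w_L$ directly by induction on the blowup sequence. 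Your route is more self-contained---it effectively rederives the relevant case of Proposition~\ref{prop:pullback1form} together with the multiplicity-one statement---at the cost of a longer local computation; the paper's route is shorter but relies on the appendix machinery and leaves the multiplicity check implicit.
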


\begin{proof}
	By Proposition \ref{prop:mettoconect} the connection \(\nabla\) is adapted. In particular, \(\nabla\) has holomorphic residues. The statement then follows from Proposition \ref{prop:pullback1form} in the appendix after taking local trivializations.
\end{proof}

Write \(\tA_L = \Res_{D_L}(\tn)\).

\subsubsection{General formula for \(c_k(\mE)\)}
Applying Ohtsuki's formula to the logarithmic connection \(\tn\) on \(\mE \to X\) we obtain
\begin{equation}\label{eq:ohtsukithm2}
c_k(\mE) = \sum_{(L_1, \ldots, L_k) \in \mLi^k} c_k\big(A_{L_1} \,,\, \ldots \,,\, A_{L_k} \big) \cdot \prod_{s=1}^k \gamma_{L_s} \,,
\end{equation}
where we have used that \(\gamma_L = c_1(D_L)\) for all \(L \in \mLi\) and 
\[
c_k\big(\tA_{L_1} \,,\, \ldots \,,\, \tA_{L_k} \big) = c_k\big(A_{L_1} \,,\, \ldots \,,\, A_{L_k} \big) \,,
\] 
as \(\tA_L = \pi^*A_L\) for all \(L \in \mLi\) by Lemma \ref{lem:pullbackconnection}.

\begin{notation}
	We write \(r_L = r(L) = \codim_{\CP^n}L\).
\end{notation}

\subsubsection{First Chern class}

\begin{notation}\label{not:h}
	Let \(h\) be the generator of \(H^2(\CP^n, \Z)\) given by \(h = c_1\big(\mO_{\P^n}(1)\big)\).
\end{notation}

\begin{lemma}
	Ohtsuki's formula for \(c_1(\mE)\) is as follows:
	\begin{equation}\label{eq:c1ohtsuki}
		(n+1) \cdot \pi^*h = \sum_{L \in \mLi} r_L \cdot a_L \cdot \gamma_L \,.
	\end{equation}
\end{lemma}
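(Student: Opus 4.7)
The proof is essentially a direct application of Ohtsuki's formula (Theorem \ref{thm:ohtsuki}) to the pullback connection $\tn$ on $\mE \to X$, combined with a trace calculation. Here is how I would proceed.

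First, I would apply Equation \eqref{eq:ohtsukithm2} with $k=1$. Recall that for a $1\times 1$ matrix (or rather, for the first Chern polynomial of a matrix) $c_1(A) = \tr(A)$, so Ohtsuki's formula specializes to
\[
c_1(\mE) = \sum_{L \in \mLi} \tr(A_L) \cdot \gamma_L.
\]
The hypothesis of Theorem \ref{thm:ohtsuki} that each $D_L$ be connected is automatic for $k=1$ since each $D_L$ is irreducible by construction of the minimal De Concini--Procesi model. Lemma \ref{lem:pullbackconnection} supplies the identification of residues $\Res_{D_L}(\tn) = \pi^* A_L$, and since the Chern polynomial is invariant under pullback of endomorphisms, the trace on $X$ agrees with $\tr(A_L)$ computed on $\CP^n$.

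Next, I would compute $\tr(A_L)$ using the decomposition \eqref{eq:AL}, namely $A_L = 0 \cdot TL \oplus a_L \cdot N_L$. Since $N_L$ is a complement to $TL$ inside $T\CP^n|_L$ and $\dim L = n - r_L$, the rank of $N_L$ equals $r_L$, and therefore
\[
\tr(A_L) = r_L \cdot a_L.
\]
(As noted in the remark after Equation \eqref{eq:AL}, this is consistent with the definition $a_L = r_L^{-1} \sum_{H \supset L} a_H$.)

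Finally, I would compute the left-hand side by functoriality of Chern classes and the Euler sequence: $c_1(T\CP^n) = (n+1) h$, so
\[
c_1(\mE) = c_1(\pi^* T\CP^n) = \pi^* c_1(T\CP^n) = (n+1) \pi^* h.
\]
Substituting these two computations into Ohtsuki's formula yields Equation \eqref{eq:c1ohtsuki}. There is no serious obstacle here — the argument is a straightforward bookkeeping exercise once Ohtsuki's formula is in hand; the only subtlety is verifying that Equation \eqref{eq:AL} applies to every $L \in \mLi$, which it does by the lemma preceding it.
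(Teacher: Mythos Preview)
Your proof is correct and follows essentially the same approach as the paper: apply Ohtsuki's formula with \(k=1\), use \(c_1(A)=\tr A\) together with the decomposition \eqref{eq:AL} to get \(\tr A_L = r_L a_L\), and identify the left-hand side via \(c_1(T\CP^n)=(n+1)h\). The only difference is that you spell out a few routine justifications (connectedness of \(D_L\), functoriality of Chern classes) that the paper leaves implicit.
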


\begin{proof}
	Equation \eqref{eq:ohtsukithm2} for \(k=1\) asserts that \(c_1(\mE) = \sum c_1(A_L) \cdot \gamma_L\).
	Clearly, since \(c_1(T\CP^n) = (n+1)\cdot h\), we have
	 \(c_1(\mE) = (n+1) \cdot \pi^*h\). On the other hand, if \(A \in M_n(\C)\) then the Chern polynomial \(c_1(A)\) is given by \(c_1(A) = \tr A\). By Equation \(\eqref{eq:AL}\), we have \(\tr A_L = r_L \cdot a_L\) and Equation \eqref{eq:c1ohtsuki} follows from this.
\end{proof}

Let \(\eta \in H^2(X, \R)\) be given by
\begin{equation}\label{eq:etadef}
\eta := (n+1) \cdot \pi^*h - \sum_{L \in \mLi} r_L \cdot a_L \cdot \gamma_L 	
\end{equation}
so that Equation \eqref{eq:c1ohtsuki} is equivalent to \(\eta = 0\).

\begin{lemma}
	In the basis of \(H^2(X, \R)\) given by \(\pi^*h\) and \(\gamma_L\) for \(L \in \mLi^{\circ}\) we have
	\begin{equation}\label{eq:etabasis}
		\eta = \left(n+1 - \sum_{H \in \mH} a_H\right) \cdot \pi^*h \,.
	\end{equation}
\end{lemma}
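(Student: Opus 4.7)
The plan is to substitute the relation \eqref{eq:R2} into the definition of $\eta$ and then collect terms. Recall that $\mLi = \mH \sqcup \mLi^{\circ}$ and $r_H = 1$, so
\[
\eta \;=\; (n+1)\pi^*h \;-\; \sum_{H \in \mH} a_H \gamma_H \;-\; \sum_{L \in \mLi^{\circ}} r_L \, a_L \, \gamma_L.
\]
The only terms not already expressed in the proposed basis are the $\gamma_H$ for $H \in \mH$. These can be eliminated via relation \eqref{eq:R2}, which I would first rewrite using $\gamma_{\emptyset} = -\pi^*h$ as
\[
\gamma_H \;=\; \pi^*h \;-\; \sum_{\substack{L \in \mLi^{\circ} \\ L \subset H}} \gamma_L,
\]
the sum ranging over proper non-empty irreducible subspaces $L$ strictly contained in $H$ (note that any such $L$ automatically has $r_L \geq 2$, hence $L \in \mLi^{\circ}$).

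Next, I would substitute this into $\sum_{H} a_H \gamma_H$ and swap the order of the resulting double summation over pairs $(H, L)$ with $L \subsetneq H$:
\[
\sum_{H \in \mH} a_H \gamma_H \;=\; \Bigl(\sum_{H \in \mH} a_H\Bigr)\pi^*h \;-\; \sum_{L \in \mLi^{\circ}} \Bigl(\sum_{H \supset L} a_H\Bigr) \gamma_L.
\]
By the definition $a_L = r_L^{-1}\sum_{H \supset L} a_H$, the inner sum equals $r_L \, a_L$, so
\[
\sum_{H \in \mH} a_H \gamma_H \;=\; s \cdot \pi^*h \;-\; \sum_{L \in \mLi^{\circ}} r_L \, a_L \, \gamma_L,
\]
where $s = \sum_{H} a_H$.

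Plugging this back into $\eta$, the two sums over $\mLi^{\circ}$ cancel and one is left with $\eta = (n+1-s)\cdot \pi^*h$, which is Equation \eqref{eq:etabasis}. The only non-trivial ingredient is the relation \eqref{eq:R2}; the rest is a bookkeeping exercise, and the main point is to be careful that the $L$ appearing in \eqref{eq:R2} (which are elements of $\mG$ with $L \subset H$) split naturally into $\{\emptyset\}$, $\{H\}$ itself, and the set $\{L \in \mLi^{\circ} : L \subsetneq H\}$, so that the swap of summation order is legitimate.
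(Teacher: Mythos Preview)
Your proof is correct and follows essentially the same approach as the paper: both substitute the relation \eqref{eq:R2} (rewritten as $\gamma_H = \pi^*h - \sum_{L \in \mLi^{\circ},\, L \subset H} \gamma_L$) into the definition of $\eta$ and use $r_L a_L = \sum_{H \supset L} a_H$ to see that the coefficients of the $\gamma_L$ for $L \in \mLi^{\circ}$ cancel. Your write-up is slightly more explicit about the swap of summation order, but the argument is the same.
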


\begin{proof}
	For \(H \in \mH\) we have 
	\[
	\gamma_{H} = \pi^*h - \sum_{L \in \mLi^{\circ} \,|\, L \subset H} \gamma_L \,.
	\]
	Therefore 
	\[
	\begin{aligned}
		\sum_{L \in \mLi} r_L \cdot a_L \cdot \gamma_L  &= \left(\sum_{H \in \mH} a_H\right) \cdot \pi^*h - \sum_{L \in \mLi^{\circ}} \underbrace{\left(r_L \cdot a_L - \sum_{H \in \mH \,|\, H \supset L} a_H\right)}_{=0} \cdot \gamma_L \\
		&= \left(\sum_{H \in \mH} a_H\right) \cdot \pi^*h 
	\end{aligned}
	\]
	and
	\[
	\eta = (n+1) \cdot \pi^*h - \sum_{L \in \mLi} r_L \cdot a_L \cdot \gamma_L = \left(n+1 - \sum_{H \in \mH} a_H\right) \cdot \pi^*h
	\]
	as we wanted to show.
\end{proof}

\subsubsection{Second Chern class}

\begin{lemma}
	Ohtsuki's formula for \(c_2(\mE)\) is as follows
	\begin{equation}\label{eq:ohtsuki2}
	\begin{gathered}
	\binom{n+1}{2} \pi^* h^2 = \sum_{L \in \mLi^{\circ}} \frac{r_L(r_L-1)}{2} \cdot a_L^2 \cdot \gamma_L^2 \\ \,+\, \sum_{L \subsetneq M} r_M (r_L -1) \cdot a_L a_M \cdot \gamma_L \gamma_M \,+\, \sum_{L \pitchfork_r M} r_L r_M \cdot a_L a_M \cdot \gamma_L \gamma_M \,,
	\end{gathered}
	\end{equation}
	where we sum only once for each pair in the last sum.
\end{lemma}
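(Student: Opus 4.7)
The plan is to apply Theorem \ref{thm:ohtsuki} with $k=2$ to the logarithmic connection $\tilde\nabla$ on $\mE = \pi^*T\CP^n$, which by Lemma \ref{lem:pullbackconnection} has simple normal crossing polar set $D = \sum_{L \in \mLi} D_L$ and residues $\Res_{D_L}(\tilde\nabla) = \pi^*A_L$. Combined with $c_2(T\CP^n) = \binom{n+1}{2}h^2$ from the Euler sequence, Ohtsuki's formula gives
\[
\binom{n+1}{2}\pi^*h^2 \;=\; \sum_{(L,M) \in \mLi^2} c_2(A_L, A_M)\,\gamma_L\gamma_M,
\]
where the polarized second Chern polynomial is $c_2(A,B) = \tfrac{1}{2}\bigl(\tr A \cdot \tr B - \tr(AB)\bigr)$. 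The task reduces to evaluating $c_2(A_L, A_M)$ in each configuration of $L, M \in \mLi$, using $\tr A_L = r_L a_L$ from Equation \eqref{eq:AL}.

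For the diagonal case $L = M$, the eigenvalues of $A_L$ are $0$ with multiplicity $n - r_L$ and $a_L$ with multiplicity $r_L$, giving $\tr(A_L^2) = r_L a_L^2$ and $c_2(A_L) = \tfrac{r_L(r_L - 1)}{2}\, a_L^2$. This vanishes for $L \in \mH$, so the diagonal sum collapses to $L \in \mLi^{\circ}$. For $L \subsetneq M$, Lemma \ref{lem:commuteresidues} gives $[A_L, A_M] = 0$, and both operators vanish on $TL$, so $\tr(A_L A_M)$ may be computed on the rank-$r_L$ quotient $T\CP^n|_L / TL$; there $A_L$ acts as $a_L \cdot \Id$, while $A_M$ descends (since $A_M|_{TL} = 0$) with trace $r_M a_M$. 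Hence $\tr(A_L A_M) = r_M a_L a_M$ and $c_2(A_L, A_M) = \tfrac{r_M(r_L - 1)}{2}\, a_L a_M$. If $L$ and $M$ are incomparable with $L \cap M \in \mG$, then $\{L, M\}$ is not nested, so $\gamma_L \gamma_M = 0$ by relation \eqref{eq:R1} and these terms drop out.

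The main obstacle is the transverse case $L \pitchfork_r M$. By Lemma \ref{lem:redint}, $L + M = \CP^n$, so at any $x \in L \cap M$ one has $\dim(T_x L \cap T_x M) = n - r_L - r_M$. The commuting operators $A_L, A_M$ share a joint eigenspace decomposition indexed by pairs $(\lambda, \mu) \in \{0, a_L\} \times \{0, a_M\}$; dimension counting from $\dim N_L = r_L$, $\dim N_M = r_M$, and the total dimension $n$ forces the $(a_L, a_M)$-eigenspace $N_L \cap N_M$ to be trivial, so $N_M \subset T_x L = \ker A_L$. Consequently the image of $A_M$, namely $N_M$, is annihilated by $A_L$, yielding $A_L A_M = 0$ as an endomorphism. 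Thus $\tr(A_L A_M) = 0$ and $c_2(A_L, A_M) = \tfrac{r_L r_M}{2}\, a_L a_M$.

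Finally, summing the diagonal contributions and pairing each unordered off-diagonal pair twice (by symmetry of both $c_2(\cdot, \cdot)$ and the product $\gamma_L \gamma_M$) absorbs the factor of $2$ into the coefficients $r_M(r_L - 1)$ and $r_L r_M$, yielding Equation \eqref{eq:ohtsuki2}.
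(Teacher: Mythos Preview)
Your proof is correct and follows essentially the same route as the paper: apply Ohtsuki's formula for $k=2$, discard non-nested pairs via \eqref{eq:R1}, and compute $c_2(A_L,A_M)$ in the three nested configurations using $[A_L,A_M]=0$ and the eigenvalue structure \eqref{eq:AL}. The only differences are stylistic: you use the bilinear form $c_2(A,B)=\tfrac12(\tr A\,\tr B-\tr(AB))$ directly rather than the paper's polarization identity $2c_2(A,B)=c_2(A+B)-c_2(A)-c_2(B)$, and in the transverse case you obtain $A_LA_M=0$ by a clean dimension count on joint eigenspaces (forcing $N_L\cap N_M=0$, hence $N_M\subset TL=\ker A_L$) where the paper instead writes out explicit diagonal matrices with disjoint support; both arguments rest on the same commutation from Lemma~\ref{lem:commuteresidues} and the transversality $r_L+r_M\le n$ from Lemma~\ref{lem:redint}.
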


\begin{proof}
	Equation \eqref{eq:ohtsukithm2} for \(k=2\) is
	\begin{equation*}
		c_2(\mE) = \sum_{(L, M) \in \mLi^2} c_2 \big(A_{L}, A_{M} \big) \cdot \gamma_{L} \gamma_{M} \,.
	\end{equation*}
	Note that we only need to sum over the pairs \((L, M) \in \mLi^2\) such that the set \(\{L, M\}\) is nested, otherwise \(\gamma_L \gamma_M = 0\). Therefore, we might assume that either, \(L \subset M\), or \(M \subset L\), or \(L \pitchfork_r M\).
	\begin{equation}\label{eq:ohtspfc2}
	c_2(\mE) = \sum_{L \in \mLi} c_2(A_L) \cdot \gamma_L^2 \,+\, \sum_{L \subsetneq M} 2c_2(A_L, A_M) \cdot \gamma_L \gamma_M \,+\, \sum_{L \pitchfork_r M} 2c_2(A_L, A_M) \cdot \gamma_L \gamma_M \,.	
	\end{equation}

	We proceed to calculate the coefficients \(c_2(A_L, A_M)\). To begin with, note that if \(A \in M_n(\C)\) then the second Chern polynomial \(c_2(A)\) is given by\footnote{This can be easily checked by considering the case that \(A = \diag(a_1, \ldots, a_n)\). Then \(\det(I + At) = \prod (1+a_it)\) and 
	\(c_2(A) = \sum_{i < j} a_ia_j = (1/2) \big( (\sum a_i)^2 - \sum a_i^2 \big)\)\,.}
	\begin{equation}\label{eq:c2A}
	c_2(A) = \dfrac{(\tr A)^2 - \tr(A^2)}{2} \,,
	\end{equation}
	and for \(A, B \in M_n(\C)\) the symmetric form \(c_2(A, B)\) is
	\begin{equation}\label{eq:c2AB}
	2c_2(A, B) = c_2(A + B) - c_2(A) - c_2(B) \,.	
	\end{equation}

\textbf{Claim 1:} if \(L \in \mLi\) then \(c_2(A_L)\) is given by
\begin{equation}\label{eq:c2AL}
c_2(A_L) = \frac{r_L(r_L-1)}{2} \cdot a_L^2 \,.
\end{equation}	

\emph{Proof of Claim 1.} We can diagonalize and assume that \(A_L = \diag (a_L, \ldots a_L, 0 \ldots, 0)\) where \(a_L\) is in the first \(r_L\) entries. Then \((\tr A_L)^2 = r_L^2 a_L^2\) while \(\tr(A_L^2) = r_L a_L^2\), and Equation \eqref{eq:c2AL} follows from Equation \eqref{eq:c2A}. \,\, \qedsymbol \newpar

\textbf{Claim 2:}  if \(L, M \in \mLi\) and \(L \subset M\) then \(c_2(A_L, A_M)\) is given by
\begin{equation}\label{eq:c2LMinc}
2 c_2(A_L, A_M) = r_M (r_L -1) \cdot a_L a_M \,;
\end{equation}

\emph{Proof of Claim 2.} By Lemma \ref{lem:commuteresidues} \([A_L, A_M] = 0\) so we can simultaneously diagonalize and assume that \(A_L = \diag (a_L, \ldots, a_L, 0, \ldots, 0)\) with \(a_L\) in the first \(r_L\) entries and \(A_M = \diag (a_M, \ldots, a_M, 0, \ldots, 0)\) with \(a_M\) in the first \(r_M\) entries and \(r_M \leq r_L\). Then
\[
A_L + A_M = \diag (a_L + a_M, \ldots, a_L + a_M, a_L, \ldots, a_L, 0 \ldots, 0) \,,
\]
\((\tr(A_L + A_M))^2 = (r_L a_L + r_M a_M)^2\), and \(\tr (A_L + A_M)^2 = r_M (a_L + a_M)^2 + (r_L-r_M)a_L^2\). From this we get that
\[
c_2(A_L + A_M) = c_2(A_L) + c_2(A_M) + r_M (r_L-1) a_L a_M
\]
and Equation \eqref{eq:c2LMinc} follows from Equation \eqref{eq:c2AB}. \,\, \qedsymbol \newpar

\textbf{Claim 3:} if \(L, M \in \mLi\) and \(L \pitchfork_r M\) then \(c_2(A_L, A_M)\) is given by
\begin{equation}\label{eq:c2LMtransv}
	2 \cdot c_2(A_L, A_M) = r_L r_M \cdot a_L a_M \,.
\end{equation}

\emph{Proof of Claim 3.} By Lemma \ref{lem:commuteresidues} \([A_L, A_M] = 0\) so we can simultaneously diagonalize and assume that \(A_L = \diag (a_L, \ldots, a_L, 0, \ldots, 0)\) with \(a_L\) in the first \(r_L\) entries and \(A_M = \diag (0, \ldots, 0, a_M, \ldots, a_M)\) with \(a_M\) in the last \(r_M\) entries and \(r_M + r_L \leq n\) (because the intersection \(L \cap M\) is transversal), so there is no overlap between the \(a_L\) and \(a_M\) entries. Then
\[
A_L + A_M = \diag (a_L, \ldots, a_L, 0, \ldots, 0, a_M \ldots, a_M) \,,
\]
\((\tr(A_L + A_M))^2 = (r_L a_L + r_M a_M)^2\), and \(\tr (A_L + A_M)^2 = r_L a_L^2 + r_M a_M^2\). We get that
\[
c_2(A_L + A_M) = c_2(A_L) + c_2(A_M) + r_L r_M a_L a_M
\]
and Equation \eqref{eq:c2LMtransv} follows from Equation \eqref{eq:c2AB}. \,\, \qedsymbol \newpar

To finish the proof of the lemma, we note that \(c_2(\mE) = \binom{n+1}{2} \pi^* h^2\) as
\[
c_2(T\CP^n) = \binom{n+1}{2} h^2 \,.
\]
Equation \eqref{eq:ohtsuki2} then follows from Equation \eqref{eq:ohtspfc2} together with Equations \eqref{eq:c2AL}, \eqref{eq:c2LMinc}, and \eqref{eq:c2LMtransv}.
\end{proof}

\subsection{Ohtsuki's formula in terms of basic monomials}\label{sec:ohtsbasic}

Define \(\Omega \in H^4(X, \R)\) by
\begin{equation}\label{eq:Omega}
	\Omega := \binom{n+1}{2} \pi^* h^2 - \big( \txA + \txB + \txC \big)
\end{equation}
with
\begin{equation}\label{eq:ABC}
	\begin{aligned}
	\txA &= \sum_{L \in \mLi^{\circ}} \frac{r_L(r_L-1)}{2} \cdot a_L^2 \cdot \gamma_L^2 \,; \\ 
	\txB &= \sum_{L \subsetneq M} r_M (r_L -1) \cdot a_L a_M \cdot \gamma_L \gamma_M \,; \\
	\txC &= \sum_{L \pitchfork_r M} r_L r_M \cdot a_L a_M \cdot \gamma_L \gamma_M \,.
	\end{aligned}
\end{equation}

Our definition of \(\Omega\) is such that Ohtsuki's formula \eqref{eq:ohtsuki2} for \(c_2(\mE)\) is equivalent to \(\Omega = 0\).
Recall that \(H^4(X, \R)\) has a basis given by the \emph{basic monomials}: \(\gamma_L^2\) for \(r_L \geq 3\); \(\gamma_L \gamma_M\) for \(L \Subset M\) with \(r_M \geq 2\); and \(\gamma_L \gamma_M\) with \(L \pitchfork_r M\) and \(r_L, r_M \geq 2\).
Here, we allow \(L = \emptyset\) with \(r_{\emptyset} = n+1\) and \(\gamma_{\emptyset} = -\pi^*h\). Using basic monomials, we can write

\begin{equation}\label{eq:Omegabasis}
	\Omega = \sum_{r_L \geq 3} c_L \cdot \gamma_L^2 \,+\, \sum_{L \Subset M} c_{L \Subset M} \cdot \gamma_L \gamma_M \,+\, \sum_{L \pitchfork_r M} c_{L \pitchfork_r M} \cdot \gamma_L \gamma_M
\end{equation}
for some unique real coefficients \(c_L\), \(c_{L \Subset M}\), and \(c_{L \pitchfork_r}\). We will calculate these coefficients in terms of the Hirzebruch  quadratic form of localized arrangements.

\begin{definition}
   Let \(\mL\) be the set of all subspaces \(L \subset \CP^n\) obtained as intersections of members of \(\mH\). 
The \emph{localization} of \(\mH\) at \(L \in \mL\) consists of all hyperplanes in \(\mH\) that contain \(L\), we denote it by \(\mH_L := \{H \in \mH \,|\, H \supset L\}\).
Similarly, we write \(\mG^2_L = \{L' \in \mG^2 \,|\, L' \supset L \}\). 

Let \(\mG\) denote the set of all proper (but possibly empty) \emph{irreducible subspaces} \(L \subset \CP^n\) (see Definition \ref{def:G}). For \(L \in \mG\) we define \(Q_L : \R^{\mH} \to \R\) by

\begin{equation}\label{eq:QL}
	Q_L(\ba) = \sum_{L' \in \mG^2_L} a_{L'}^2 \,-\,  \frac{1}{2} \cdot \sum_{H \in \mH_L} B(L, H) \cdot a_H^2 \,-\, \frac{r(L)}{2} \cdot a_L^2 \,,
\end{equation}
where \(B(L, H) = |\{L' \in \mG^2_L \,|\, L' \subset H\} | -1\) and \(a_L= r(L)^{-1} \sum_{H \supset L} a_H\). 
\end{definition}

\begin{remark} 
	If \(L= \emptyset\) then \(Q_{\emptyset} = Q\) is the Hirzebruch quadratic form of \(\mH\) (Equation (\ref{eq:hirquadform})).
    We use the convention that \(\codim_{\CP^n}\emptyset = n+1\).
\end{remark}

\begin{remark}
    If \(L \in \mG\) and \(r(L)=1\) or \(r(L)=2\), then the quadratic form \(Q_L\) is identically zero.
\end{remark}

\begin{remark}\label{rmk:restrweights}
	The quadratic form \(Q_L\) is naturally identified with the Hirzebruch quadratic form, say \(Q'\), of the essential arrangement \(\mH_L \big/ L\) in \(\CP^{r(L)-1}\) obtained by intersecting the members of the localization \(\mH_L\) with a projective linear subspace \(P \subset \CP^n\) complimentary to \(L\), i.e., such that \(P \cap L = \emptyset\) and \(\dim L + \dim P = n-1\).
	To explain this identification, note that an element \(\ba \in \R^{\mH}\) corresponds to a function \(\ba: \mH \to \R\) with \(\ba(H) = a_H\). The inclusion \(\mH_L \subset \mH\) induces a linear surjection \(\pi_L : \R^{\mH} \to \R^{\mH_L}\) via restriction, and  \(Q_L(\ba) = Q'(\pi_L(\ba))\).
\end{remark}

\begin{proposition}\label{prop:ohtsukibasis}
	The coefficients of \(\Omega\) in Equation \eqref{eq:Omegabasis} are given as follows:
	\begin{enumerate}[label=\textup{(\arabic*)}]
		\item if \(r(L) \geq 3\) then \(c_L = Q_L(\ba)\) ;
		\item if \(L \Subset M\) and \(r(M) \geq 3\) then \(c_{L \Subset M} = -2B(L, M) \cdot Q_M(\ba)\) ;
		\item if \(L \Subset M\) and \(r(M) = 2\) then \(c_{L \Subset M} = 0\) ;
		\item if \(L \pitchfork_r M\) and \(r(L), r(M) \geq 2\) then \(c_{L \pitchfork_r M} = 0\).
	\end{enumerate}
\end{proposition}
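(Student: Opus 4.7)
The plan is to apply Corollary \ref{cor:proj} termwise to each summand of $\txA$, $\txB$, $\txC$ in the definition of $\Omega$, and collect the contributions to each basic coefficient. Writing $\pi^*h = -\gamma_\emptyset$, the leading term $\binom{n+1}{2}\pi^*h^2 = \binom{n+1}{2}\gamma_\emptyset^2$ is itself basic. Every other summand of $\txA$, $\txB$, $\txC$ is either already basic or falls under exactly one of the seven non-basic types M1--M7 listed in Section \ref{sec:nonbasic}, for which Corollary \ref{cor:proj} provides the coordinate projections in closed form.

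To prove (1), fix $L \in \mG$ with $r_L \geq 3$ and compute $\delta_L(\Omega)$. The non-zero contributions arise from: the basic $\gamma_L^2$ piece in $\txA$ (present when $L \in \mLi^{\circ}$) contributing $-\tfrac{r_L(r_L-1)}{2}a_L^2$; the M2 squares $\gamma_{L'}^2$ in $\txA$ for $L' \in \mG^2_L$, contributing $\sum_{L' \in \mG^2_L}a_{L'}^2$ via item (ii); the M4 cross-terms $\gamma_L\gamma_H$ in $\txB$, contributing $r_L(r_L-1)a_L^2$ after applying (iv) together with $r_L a_L = \sum_{H \in \mH_L}a_H$; the M5 cross-terms $\gamma_{L'}\gamma_H$ in $\txB$ for $L' \in \mG^2_L$, contributing $-2\sum_{L' \in \mG^2_L}a_{L'}^2$ via (v) and $2a_{L'} = \sum_{H \supset L'}a_H$; the M7 pitchfork pairs in $\txC$, contributing $-\sum_{\{H,H'\} \subset \mH_L,\, H \pitchfork_r H'} a_H a_{H'}$ via (vii); and the $\binom{n+1}{2}\gamma_\emptyset^2$ term when $L = \emptyset$. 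Summing these and partitioning the unordered pairs $\{H, H'\} \subset \mH_L$ into pitchfork pairs and pairs with $H \cap H' \in \mG^2_L$ (for which $\sum_{\{H,H'\} \subset \mH_{L'}, H \neq H'}a_H a_{H'} = 2a_{L'}^2 - \tfrac{1}{2}\sum_{H \supset L'}a_H^2$), one rewrites $\sum_{\{H,H'\},\, H\pitchfork_r H'}a_H a_{H'}$ in terms of $\sum_{L' \in \mG^2_L}a_{L'}^2$ and $\sum_{H \in \mH_L}B(L, H)a_H^2$. This collapses the expression to $\sum_{L' \in \mG^2_L}a_{L'}^2 - \tfrac{1}{2}\sum_{H \in \mH_L}B(L, H)a_H^2 - \tfrac{r_L}{2}a_L^2 = Q_L(\ba)$, as an exact polynomial identity for $L \neq \emptyset$ and modulo the log Calabi--Yau constraint $\sum_{H} a_H = n+1$ of Theorem \ref{thm:main}(i) when $L = \emptyset$ (equivalently, modulo the class $\eta$ of Equation \eqref{eq:etabasis}).

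Items (2), (3), (4) are handled by the same procedure applied to $\delta_{L_1 \Subset L_2}$ and $\delta_{L_1 \pitchfork_r L_2}$. For (2), contributions from items (i), (ii), (iv) of Corollary \ref{cor:proj} all carry a common factor $B(L_1, L_2)$; when collected they yield $-2B(L_1, L_2)$ times the same combination that appeared in the proof of (1), now organised as the Hirzebruch form $Q_{L_2}(\ba)$ of the localized arrangement $\mH_{L_2}/L_2$. For (3), with $r(L_2) = 2$, the contributions from (ii), (iii), (v) cancel in pairs once one uses $a_{L_2} = \tfrac{1}{2}\sum_{H \supset L_2}a_H$, exploiting that $Q_{L_2} \equiv 0$ for $r(L_2) = 2$. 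For (4), only items (vi) and (vii) produce non-zero projections onto pitchfork basic monomials, and the M6 and M7 contributions cancel against one another using $r_{L_i}a_{L_i} = \sum_{H \supset L_i}a_H$ for $i = 1, 2$.

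The main obstacle is the combinatorial bookkeeping in step (1): the coefficient $c_L$ gathers contributions from six distinct sources, and rewriting them as the canonical three-term expression $Q_L(\ba)$ depends on carefully partitioning pairs $\{H, H'\} \subset \mH_L$ according to whether $H \cap H'$ is irreducible (belonging to $\mG^2_L$) or reducible (pitchfork), together with the double-counting identity $\sum_{L' \in \mG^2_L}|\{H \in \mH_L : H \supset L'\}| = \sum_{H \in \mH_L}(B(L, H)+1)$. Once this identity is in place for (1), the remaining items follow by the same technique.
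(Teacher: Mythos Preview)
Your approach is essentially the paper's own: apply the coordinate projections of Corollary~\ref{cor:proj} to the $\txA$, $\txB$, $\txC$ pieces of $\Omega$ and collect terms. The paper carries this out in Lemmas~\ref{lem:cempty}--\ref{lem:cLpitchforkM}, arriving at the same intermediate expression $c_L = \binom{r_L}{2}a_L^2 - \sum_{L' \in \mG^2_L} a_{L'}^2 - \sum_{\{H,H'\} \in \mR^2_L} a_H a_{H'}$ that you obtain. The one difference is that the paper recognises this directly as the \emph{alternative} form of $Q_L$ recorded in Equation~\eqref{eq:quadforlocalt}, whereas you insert an extra step converting back to the defining three-term form via the pair-partition and double-counting identity; both routes are correct, but invoking Equation~\eqref{eq:quadforlocalt} is shorter.

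One small inaccuracy in your sketch of item~(4): it is not only the M6 and M7 contributions that matter. The basic monomial $\gamma_L\gamma_M$ itself appears in $\txC$ with coefficient $r_L r_M a_L a_M$, so the cancellation is threefold, $1 - 2 + 1 = 0$, coming from the basic term, the M6 terms (via item~(vi)), and the M7 terms (via item~(vii)); see the paper's Lemma~\ref{lem:cLpitchforkM}. Your statement that ``M6 and M7 cancel against one another'' omits the basic contribution and would leave a residual $-r_L r_M a_L a_M$.
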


\subsubsection{An alternative formula for the Hirzebruch quadratic form}
Let \(\mR^2\) be the set of all reducible codimension \(2\) subspaces, so that \(\mL^2 = \mG^2 \cup \mR^2\). 
An alternative formula for the Hirzebruch quadratic form is as follows:
\begin{equation}\label{eq:quadformalt}
	Q(\ba) =  \frac{n}{2(n+1)} \cdot s^2  \,-\, \sum_{L \in \mG^2}  a_L^2 \,-\, \sum_{H \cap H' \in \mR^2} a_H a_{H'} \,,
\end{equation}
where the last sum runs over all codimension \(2\) reducible subspaces, that is, over all subsets \(\{H, H'\} \subset \mH\) with \(H \cap H' \in \mR^2\). 
The equivalence of Equations \eqref{eq:hirquadform} and \eqref{eq:quadformalt} can be easily proved by completing squares, or alternatively see \cite[Corollary 7.6]{miyaokayau}.

Similarly, if \(L \in \mG\) and \(Q_L\) is the Hirzebruch quadratic of the localized essential arrangement \(\mH_L/L\) (see Equation \eqref{eq:QL}) then we have the alternative formula
\begin{equation}\label{eq:quadforlocalt}
	Q_L(\ba) = \binom{r_L}{2} \cdot a_L^2 \,-\, \sum_{L' \in \mG_L^2}  a_{L'}^2 \,-\, \sum_{H \cap H' \in \mR_L^2} a_H a_{H'} \,,
\end{equation}
where \(\mR^2_L = \{L' \in \mR^2 \,|\, L' \supset L\}\) and we used that \(a_L = s_L / r_L\) with \(s_L = \sum_{H \supset L} a_H\).

\subsubsection{Proof of Proposition \ref{prop:ohtsukibasis}}
Let \(\delta_L\), \(\delta_{L\Subset M}\) and \(\delta_{L \pitchfork_r M}\) be the corresponding dual basis of \(H^4(X, \R)^*\) as in Section \ref{sec:cordproj}. The coefficients of \(\Omega\) in Equation \eqref{eq:Omegabasis} are given by
\[
c_L = \delta_L(\Omega) \,\,,\,\, c_{L \Subset M} = \delta_{L \Subset M}(\Omega) \,, \,\,\text{ and }\,\, c_{L \pitchfork_r M} = \delta_{L \pitchfork_r M}(\Omega) \,.
\]
To calculate these coefficients, expand the non-basic monomials in Equations \eqref{eq:Omega} and \eqref{eq:ABC} in terms of basic ones. 
Proposition \ref{prop:ohtsukibasis} then follows from Lemmas \ref{lem:cempty}, \ref{lem:coefL}, \ref{lem:cLSubsetM}, and \ref{lem:cLpitchforkM} below. 

\begin{lemma}\label{lem:cempty}
	We have \(c_{\emptyset} = Q(\ba)\).
\end{lemma}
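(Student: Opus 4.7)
The plan is to compute $c_\emptyset$ directly by applying $\delta_\emptyset$ term-by-term to the defining expression \eqref{eq:Omega} of $\Omega$, reading off the $\gamma_\emptyset^2$-coefficient of each non-basic monomial from Corollary \ref{cor:proj}, and finally identifying the result with $Q(\ba)$ via the alternative formula \eqref{eq:quadformalt}.

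I first observe that $\pi^*h^2 = \gamma_\emptyset^2$ is itself a basic monomial (since $r(\emptyset)=n+1\geq 3$), so the leading term $\binom{n+1}{2}\pi^*h^2$ contributes exactly $\binom{n+1}{2}$ to $c_\emptyset$; it then remains to evaluate $\delta_\emptyset$ on each of $\txA$, $\txB$, $\txC$ from \eqref{eq:ABC}. Scanning Corollary \ref{cor:proj}, the projections $\delta_{L_1\Subset L_2}$ and $\delta_{L_1\pitchfork_r L_2}$ are automatically irrelevant, and among the $\delta_L$-rows only items (ii), (v), (vii) give a nonzero $\delta_\emptyset$-value. These yield contributions respectively from the squares with $r(L)=2$ in $\txA$ (each with $\delta_\emptyset(\gamma_L^2)=-1$), the inclusions $L\inclusiondot H$ with $L\in\mG^2$ in $\txB$ (each with $\delta_\emptyset(\gamma_L\gamma_H)=+1$), and the pairs $H\pitchfork_r H'$ in $\txC$ (each with $\delta_\emptyset(\gamma_H\gamma_{H'})=+1$). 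After using $\sum_{H\supset L}a_H=2a_L$ for $L\in\mG^2$ to collapse the $\txB$-contribution to $2\sum_{L\in\mG^2}a_L^2$, the bookkeeping gives
\begin{equation*}
c_\emptyset \;=\; \binom{n+1}{2} - \sum_{L\in\mG^2}a_L^2 - \sum_{H\cap H'\in\mR^2}a_Ha_{H'}.
\end{equation*}

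To finish, I invoke the Calabi-Yau identity $s=n+1$, which has already been obtained in this section from $\eta=0$ via Equation \eqref{eq:etabasis}; it rewrites $\binom{n+1}{2}=\tfrac{n(n+1)}{2}$ as $\tfrac{ns^2}{2(n+1)}$, so the above expression coincides with the alternative formula \eqref{eq:quadformalt} for $Q(\ba)=Q_\emptyset(\ba)$, yielding $c_\emptyset = Q(\ba)$. The main subtle point is this last step: as polynomials in $\ba$, $c_\emptyset$ and $Q(\ba)$ actually differ by $\tfrac{n}{2(n+1)}\bigl[(n+1)^2-s^2\bigr]$, and the Calabi-Yau constraint is precisely what makes this residual term vanish; everything else is routine reading of entries in Corollary \ref{cor:proj}.
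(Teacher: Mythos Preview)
Your proof is correct and follows essentially the same route as the paper: apply $\delta_\emptyset$ term-by-term to $\txA$, $\txB$, $\txC$ via Corollary~\ref{cor:proj}, collect the surviving contributions from items (ii), (v), (vii), and match the result against the alternative formula~\eqref{eq:quadformalt} using $s=n+1$. Your explicit remark that the Calabi--Yau constraint is what makes the residual term $\tfrac{n}{2(n+1)}\bigl[(n+1)^2-s^2\bigr]$ vanish is a nice clarification that the paper leaves implicit.
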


\begin{proof}
	Equation \eqref{eq:Omega} implies
	\[
	c_{\emptyset} = \binom{n+1}{2} - \left( \delta_{\emptyset}(\txA) + \delta_{\emptyset}(\txB) + \delta_{\emptyset}(\txC) \right) \,.
	\]

    \textbf{Consider first the A term.}
	By Corollary \ref{cor:proj}(ii) we have \(\delta_{\emptyset}(\gamma_L^2) = -1\) for all \(L \in \mLi^{n-2}\). On the other hand, if \(L \in \mLi = \mG \setminus \{\emptyset\}\) has codimension \(\geq 3\) then \(\gamma_L^2\) is basic and \(\delta_{\emptyset}(\gamma_L^2) = 0\). Therefore,
	\[
	\begin{aligned}
		\delta_{\emptyset} (\txA) &= \sum_{L \in \mLi^{\circ}} \frac{r_L(r_L-1)}{2} a_L^2 \cdot \delta_{\emptyset} \big(\gamma_L^2\big) \\
		&= -\sum_{L \in \mG^2} a_L^2 \,.
	\end{aligned}
	\] 
	
	\textbf{Consider now the \(\txB\) term.} If \(r_M \geq 2\) and \(L \Subset M\) then the monomial \(\gamma_L\gamma_M\) is basic so \(\delta_{\emptyset}(\gamma_L\gamma_M) = 0\). Therefore, we only need to consider pairs \(L, M \in \mG \setminus \{\emptyset\}\) with \(L \subsetneq M\) in one of the following two cases.
	\begin{itemize}
		\item Case 1: \(r_M \geq 2\) and \(L \inclusiondot M\). By Corollary \ref{cor:proj}(iii) we have \(\delta_{\emptyset}(\gamma_L\gamma_M) = 0\). 
		\item Case 2: \(r_M = 1\) so \(M = H\) is a hyperplane \(H \in \mH\). We divide this into two further subcases.
		\begin{itemize}
			\item Subcase 2A: \(L \in \mG^2\). Then by Corollary \ref{cor:proj}(v) we have \(\delta_{\emptyset}(\gamma_L\gamma_H) = 1\).
			\item Subcase 2B: \(L \in \mG^{\geq 3} \setminus \{\emptyset\}\). Then by Corollary \ref{cor:proj}(iv) we have \(\delta_{\emptyset}(\gamma_L\gamma_H) = 0\).
		\end{itemize}
	\end{itemize}
	From the above, it follows that
	\[
	\begin{aligned}
	\delta_{\emptyset}(\txB) &= \sum_{L \subsetneq M} r_M (r_L -1) \cdot a_L a_M \cdot \delta_0(\gamma_L \gamma_M) \\
	&= \sum_{L \in \mG^2} \sum_{H \in \mH \,|\, H \supset L} a_L a_H = 2 \cdot \sum_{L \in \mG^2} a_L^2 \,.
	\end{aligned}
	\]
	
	\textbf{We calculate the C term.} The monomials \(\gamma_L \gamma_M\) with \(L \pitchfork_r M\) are basic if \(r_M\) and \(r_L\) are both \(\geq 2\), so we only have to consider the case when at least one of \(L\) or \(M\) is a hyperplane. By 
	Corollary \ref{cor:proj} (vi), if \(L \in \mG^{\geq 2}\) and \(H \in \mH\) are such that \(L \pitchfork_r H\) then \(\delta_{\emptyset}(\gamma_L \gamma_H) = 0\). On the other hand, if \(H, H' \in \mH\) have reducible intersection, then
	Corollary \ref{cor:proj} (vii) implies that \(\delta_{\emptyset} (\gamma_{H} \gamma_{H'}) =1\). Therefore:
	\[
	\begin{aligned}
	\delta_{\emptyset}(\txC) &= \sum_{L \pitchfork_r M} r_L r_M \cdot a_L a_M \cdot \delta_{\emptyset}(\gamma_L \gamma_M) \\
	&= \sum_{H \pitchfork_r H' \in \mR^2} a_H a_{H'} \,.
	\end{aligned}
	\]
    
	From the calculations of the A, B, C terms; we conclude that
	\[
	\delta_{\emptyset}(\Omega) = \binom{n+1}{2} - \sum_{L \in \mG^2} a_L^2 - \sum_{H \pitchfork_r H' \in \mR^2} a_H a_{H'} \,.
	\]
	Since \(\sum_{H \in \mH} a_H = n +1\), the above expression is equal to \(Q(\ba)\), see Equation \eqref{eq:quadformalt}.
\end{proof}

\begin{lemma}\label{lem:coefL}
	If \(L \in \mG \setminus \{\emptyset\}\) has \(r_L \geq 3\)  then \(c_L = Q_L(\ba)\).
\end{lemma}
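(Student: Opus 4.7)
The plan is to mirror the proof of Lemma \ref{lem:cempty}, applying the dual basis element $\delta_L$ in place of $\delta_\emptyset$ to the expansion of $\Omega$ in Equation \eqref{eq:Omega} and reading off each coordinate projection from Corollary \ref{cor:proj}. First, I would observe that $\pi^*h^2 = \gamma_\emptyset^2$, which is the basic monomial indexed by $\emptyset \in \mG$, so $\delta_L(\pi^*h^2) = 0$ since $L \neq \emptyset$. Therefore
\[
c_L = -\delta_L(\txA) - \delta_L(\txB) - \delta_L(\txC),
\]
and the task reduces to computing the three terms on the right.

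For $\delta_L(\txA)$, the only relevant monomials $\gamma_M^2$ appearing in $\txA$ are the basic one with $M = L$ (contributing $\binom{r_L}{2} a_L^2$) and the non-basic ones with $r_M = 2$. Item (ii) of Corollary \ref{cor:proj} yields $\delta_L(\gamma_M^2) = -1$ exactly when $M \in \mG_L^2$, producing $\delta_L(\txA) = \binom{r_L}{2} a_L^2 - \sum_{M \in \mG^2_L} a_M^2$. For $\delta_L(\txB)$, I would split the sum $\sum_{L' \subsetneq M'}$ into pieces according to the type of the pair in the list (M3)--(M5). Corollary \ref{cor:proj}(iii) kills the type (M3) contribution; type (M4) contributes only when $L' = L$ and $M' = H$ ranges over $\mH_L$, yielding $-r_L(r_L-1) a_L^2 = -2\binom{r_L}{2} a_L^2$ after using $\sum_{H \supset L} a_H = r_L a_L$; type (M5) contributes only when $L' \in \mG^2_L$ and $H \supset L'$, and the inner sum again collapses via $\sum_{H \supset L'} a_H = 2 a_{L'}$ to give $2\sum_{L' \in \mG^2_L} a_{L'}^2$. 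For $\delta_L(\txC)$, items (vi) and (vii) of Corollary \ref{cor:proj} show that the only surviving contributions are the pairs $\{H, H'\} \subset \mH$ with $H \pitchfork_r H'$ and $L \subset H \cap H'$, producing $\sum_{H \cap H' \in \mR^2_L} a_H a_{H'}$.

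Collecting the three contributions gives
\[
c_L = \binom{r_L}{2} a_L^2 - \sum_{L' \in \mG^2_L} a_{L'}^2 - \sum_{H \cap H' \in \mR^2_L} a_H a_{H'},
\]
which is precisely the alternative formula \eqref{eq:quadforlocalt} for $Q_L(\ba)$, finishing the proof. The bookkeeping is entirely routine once one carefully separates each sum by type; the main obstacle is staying rigorous about which monomials in $\txA, \txB, \txC$ are basic (and hence detected by $\delta_L$ only through the $\gamma_L^2$ coefficient itself) versus non-basic (whose coordinate projections must be read off from Corollary \ref{cor:proj}), and in correctly evaluating the inner sums $\sum_{H \supset L'} a_H = r_{L'} a_{L'}$ that convert hyperplane weights into localized weights $a_{L'}$ of irreducible subspaces.
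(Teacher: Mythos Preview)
Your proposal is correct and follows essentially the same approach as the paper's own proof: you apply $\delta_L$ to $\Omega = \binom{n+1}{2}\pi^*h^2 - (\txA+\txB+\txC)$, compute the three contributions $\delta_L(\txA)$, $\delta_L(\txB)$, $\delta_L(\txC)$ via Corollary \ref{cor:proj}, and identify the result with the alternative formula \eqref{eq:quadforlocalt} for $Q_L(\ba)$. The intermediate values you obtain for each term match the paper's exactly.
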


\begin{proof}
	Equation \eqref{eq:Omega} implies
	\[
	c_{L} =  - \left( \delta_{L}(\txA) + \delta_{L}(\txB) + \delta_{L}(\txC) \right) \,.
	\]
	If \(L' \in \mG^2\) then, by Corollary \ref{cor:proj}(ii) we have 
	\[
	\delta_{L}(\gamma_{L'}^2) = \begin{cases}
	-1 &\text{ if } L' \supset L \\
	0 &\text{ otherwise. } 
	\end{cases} 
	\] 
	On the other hand, if \(L' \in \mG \setminus \{\emptyset\}\) has rank \(r_L \geq 3\) then \(\gamma_{L'}^2\) is basic so
	\[
	\delta_{L}(\gamma_{L'}^2) = 
	\begin{cases}
	1 &\text{ if } L' = L \\
	0 &\text{ otherwise. }
	\end{cases} 
	\]
	We conclude that
	\[
	\delta_{L} (\txA) = \binom{r_L}{2} a_L^2 \,-\, \sum_{L' \in \mG_L^2} a_{L'}^2 \,.
	\] 
	
	Consider now the \(\txB\) term. If \(r_M \geq 2\) and \(L' \Subset M\) then the monomial \(\gamma_L\gamma_M\) is basic so \(\delta_{L}(\gamma_{L'}\gamma_M) = 0\). Therefore, we only need to consider pairs \(L', M \in \mG \setminus \{\emptyset\}\) with \(L \subsetneq M\) in one of the following cases.
	\begin{itemize}
		\item Case 1: \(r_M \geq 2\) and \(L' \inclusiondot M\). By Corollary \ref{cor:proj}(iii) we have \(\delta_{L}(\gamma_{L'}\gamma_M) = 0\). 
		\item Case 2: \(r_M = 1\) so \(M = H\) is a hyperplane \(H \in \mH\). We divide this into two further subcases.
		\begin{itemize}
			\item Subcase 2A: \(L' \in \mG^2\). Then by Corollary \ref{cor:proj}(v) we have
			\[
			\delta_{L}(\gamma_{L'}\gamma_{H}) = 
			\begin{cases}
			1 &\text{ if } L' \supset L \\
			0 &\text{ otherwise. } 
			\end{cases} 
			\] 
			\item Subcase 2B: \(L' \in \mG^{\geq 3} \setminus \{\emptyset\}\). Then by Corollary \ref{cor:proj}(iv) we have 
			\[
			\delta_{L}(\gamma_{L'}\gamma_{H}) = 
			\begin{cases}
			-1 &\text{ if } L' = L \\
			0 &\text{ otherwise. } 
			\end{cases} 
			\] 
		\end{itemize}
	\end{itemize}
	From the above, it follows that
	\[
	\begin{aligned}
	\delta_{L}(\txB) &= \sum_{\emptyset \subsetneq L' \subsetneq M} r_M (r_{L'} -1) \cdot a_{L'} a_M \cdot \delta_{L}(\gamma_{L'} \gamma_M) \\
	&= \sum_{L' \in \mG_L^2} \sum_{H \in \mH \,|\, H \supset L'} a_{L'} a_H \,-\, (r_L-1) a_L \cdot \sum_{H \in \mH_L} a_H \\
	&= 2 \sum_{L' \in \mG_L^2} a_{L'}^2 \,-\, r_L (r_L-1) a_L^2 \,.
	\end{aligned}
	\]
	
	For the C term, the monomials \(\gamma_{L'} \gamma_M\) with \(L' \pitchfork_r M\) are basic if both \(r_M\) and \(r_{L'}\) are \(\geq 2\), so we might assume that \(M\) is a hyperplane. By Corollary \ref{cor:proj} (vi), if \(L' \in \mG^{\geq 2}\) and \(H \in \mH\) are such that \(L' \pitchfork_r H\) then \(\delta_{L}(\gamma_{L'} \gamma_H) = 0\). On the other hand, if \(H, H' \in \mH\) have reducible intersection, then Corollary \ref{cor:proj} (vii) implies that \(\delta_{L} (\gamma_{H} \gamma_{H'}) =1\) provided \(L\) lies in the intersection of \(H\) and \(H'\). Therefore:
	\[
	\begin{aligned}
	\delta_{L}(\txC) &= \sum_{L' \pitchfork_r M} r_{L'} r_M \cdot a_{L'} a_M \cdot \delta_{L}(\gamma_{L'} \gamma_M) \\
	&= \sum_{H \pitchfork_r H' \in \mR_L^2} a_H a_{H'} \,.
	\end{aligned}
	\]
	Using that \( - c_L = \delta_L(\txA) + \delta_L(\txB) + \delta_L(\txC)\) and adding the terms \(\delta_L(\txA)\), \(\delta_L(\txB)\), and \(\delta_L(\txC)\) we obtain:
	\[
	\begin{aligned}
	-c_L &= \binom{r_L}{2} a_L^2 \,-\, \sum_{L' \in \mG_L^2} a_{L'}^2 \,+\,  2 \sum_{L' \in \mG_L^2} a_{L'}^2 \,-\, r_L (r_L-1) a_L^2 \,+\, \sum_{H \pitchfork_r H' \in \mR_L^2} a_H a_{H'} \\
	&= -\binom{r_L}{2} a_L^2 \,+\, \sum_{L' \in \mG_L^2} a_{L'}^2 \,+\, \sum_{H \pitchfork_r H' \in \mR_L^2} a_H a_{H'} 
	\end{aligned}
	\]
	by Equation \eqref{eq:quadforlocalt} this last expression equals \(- Q_L(\ba)\), so \(c_L = Q_L(\ba)\).
\end{proof}

\begin{lemma}\label{lem:cLSubsetM}
	If \(L \Subset M\) and \(r_M \geq 2\) then 
	\[
	c_{L \Subset M} = \begin{cases}
	0 &\textup{ if } r_M = 2 \,; \\
	-2B(L, M) \cdot Q_M(\ba) &\textup{ if } r_M \geq 3 .
	\end{cases} 
	\]
\end{lemma}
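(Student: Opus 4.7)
The plan is to compute \(c_{L \Subset M} = -\delta_{L \Subset M}(\txA + \txB + \txC)\) directly, using Corollary \ref{cor:proj} to expand each non-basic monomial and read off the coefficient of \(\gamma_L \gamma_M\). Since \(\pi^*h^2 = \gamma_\emptyset^2\) is a basic monomial distinct from \(\gamma_L \gamma_M\), the term \(\binom{n+1}{2}\pi^*h^2\) contributes nothing to \(\delta_{L \Subset M}\), and I am left with the three sums \(\txA, \txB, \txC\) to analyze.

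For \(\txA\), item (ii) of the corollary immediately gives \(\delta_{L \Subset M}(\txA) = 2B(L, M) \sum_{L' \in \mG^2_M} a_{L'}^2\), since only the non-basic monomials \(\gamma_{L'}^2\) with \(r_{L'} = 2\) and \(L' \supset M\) contribute. For \(\txB\), I would apply items (iii)--(v) together with the basic coefficient \(r_M(r_L-1) a_L a_M\) coming from the monomial \(\gamma_L \gamma_M\) itself; two cancellations are expected to mirror those in the proof of Lemma \ref{lem:coefL}. Namely, the basic contribution should cancel the item (iv) contribution arising from the case \(L'=L \Subset M \subset H\), and the item (iii) contribution from pairs \(P \inclusiondot M\) with \(L \subsetneq P\) should cancel the item (iv) contribution from \(L \subsetneq L' \inclusiondot M \subsetneq H\), both after summing over \(H \in \mH_M\) via \(\sum_{H \supset M} a_H = r_M a_M\). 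Adding the surviving pieces (from item (iv) at \(L \Subset L'=M\) and from item (v) at \(M \subsetneq L'\)) to \(\delta_{L \Subset M}(\txA)\), I expect to obtain \(\delta_{L \Subset M}(\txA + \txB) = 2B(L,M)\bigl[\tbinom{r_M}{2} a_M^2 - \sum_{L' \in \mG^2_M} a_{L'}^2\bigr]\). The treatment works uniformly in \(r_M \geq 2\), since when \(r_M=2\) the analogue of the \(L'=M\) piece is delivered by item (v) rather than item (iv), yielding the same combined formula.

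For \(\txC\), the principal piece comes from item (vii), case \(M \subset H \cap H'\), contributing \(-2B(L, M) \sum_{H \pitchfork_r H' \in \mR^2_M} a_H a_{H'}\). Combined with \(\txA + \txB\) and the alternative formula \eqref{eq:quadforlocalt} for the Hirzebruch quadratic form of the localized arrangement, this delivers \(c_{L \Subset M} = -2B(L, M) Q_M(\ba)\). The main obstacle I anticipate is proving that the remaining \emph{external} contributions cancel: the item (vi) sum \(-r_M a_M \sum_K a_K\) over hyperplanes \(K \in \mH_L \setminus \mH_M\) with \(M \pitchfork_r K\), and the second case of item (vii) summed over pairs \(\{K, J\}\) with \(J \in \mH_M\) and \(K\) as above, should vanish together. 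I expect this to reduce to the arrangement-theoretic identity that, whenever \(M\) is irreducible and \(K\) is a hyperplane with \(M \pitchfork_r K\), every \(J \in \mH_M\) has \(K \cap J \in \mR^2\); granting this, \(\sum_{J \in \mH_M,\, K \pitchfork_r J} a_J = \sum_{J \in \mH_M} a_J = r_M a_M\) for each relevant \(K\), making the two external sums cancel termwise. The separate case \(r_M = 2\) then follows from the uniform formula: \(\mG^2_M = \{M\}\) and \(\mR^2_M = \varnothing\), so \eqref{eq:quadforlocalt} gives \(Q_M(\ba) = \binom{2}{2} a_M^2 - a_M^2 - 0 = 0\), whence \(c_{L \Subset M} = 0\) as claimed.
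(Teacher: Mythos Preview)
Your proposal is correct and follows essentially the same computation as the paper's proof, applying Corollary~\ref{cor:proj} term by term to \(\txA\), \(\txB\), \(\txC\) and assembling the result via Equation~\eqref{eq:quadforlocalt}. The only differences are cosmetic: you handle \(r_M = 2\) and \(r_M \geq 3\) uniformly by invoking \(Q_M \equiv 0\) when \(r_M = 2\), and you make explicit the fact (used tacitly in the paper's \(\txC\) computation) that \(K \pitchfork_r M\) together with \(J \in \mH_M\) forces \(K \pitchfork_r J\), which indeed follows from the irreducible decomposition \(\mH_{K \cap M} = \{K\} \sqcup \mH_M\).
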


\begin{proof}
	Equation \eqref{eq:Omega} implies
	\[
	c_{L \Subset M} =  - \left( \delta_{L \Subset M}(\txA) + \delta_{L \Subset M}(\txB) + \delta_{L \Subset M}(\txC) \right) \,.
	\]

    \textbf{Calculation of the A term.}
	If \(L' \in \mG^2\) then, by Corollary \ref{cor:proj}(ii) we have 
	\[
	\delta_{L \Subset M}(\gamma_{L'}^2) = 
	\begin{cases}
	2 B(L, M) &\text{ if } L' \supset M \\
	0 &\text{ otherwise. } 
	\end{cases} 
	\] 
	On the other hand, if \(L' \in \mG \setminus \{\emptyset\}\) has rank \(r_L \geq 3\) then \(\gamma_{L'}^2\) is basic so \(\delta_{L \Subset M}(\gamma_{L'}^2) = 0\).
	We conclude that
	\[
	\delta_{L \Subset M} (\txA) = 2 B(L, M) \sum_{L' \in \mG_M^2} a_{L'}^2 \,.
	\] 
	
	\textbf{Calculation of the B term.} If \(L' \Subset M'\) and \(r_{M'} \geq 2\) then the monomial \(\gamma_{L'}\gamma_{M'}\) is basic so 
	\[
	\delta_{L \Subset M}(\gamma_{L'}\gamma_{M'}) = 
	\begin{cases}
	1 &\text{ if } (L', M') = (L, M) \\
	0 &\text{ otherwise. }
	\end{cases}
	\] 
	Suppose that \(L', M' \in \mG \setminus \{\emptyset\}\) with \(L' \subsetneq M'\) is such that \(\gamma_{L'}\gamma_{M'}\) is non-basic. Then we are in one of the following cases.
	\begin{itemize}
		\item Case 1: \(r_{M'} \geq 2\) and \(L' \inclusiondot M'\). By Corollary \ref{cor:proj}(iii) we have
		\[
		\delta_{L \Subset M}(\gamma_{L'}\gamma_{M'}) = 
		\begin{cases}
		-1 &\text{ if } L \subsetneq L'\inclusiondot M' = M \\
		0 &\text{ otherwise. }
		\end{cases}
		\]
		\item Case 2: \(r_{M'} = 1\) so \(M' = H\) is a hyperplane \(H \in \mH\). We divide this into two further subcases.
		\begin{itemize}
			\item Subcase 2A: \(L' \in \mG^2\). Then by Corollary \ref{cor:proj}(v) we have
			\[
			\delta_{L \Subset M}(\gamma_{L'}\gamma_H) = 
			\begin{cases}
			-B(L, M) &\text{ if } L' = M \\
			-2B(L, M) &\text{ if } L' \supsetneq M \\
			0 &\text{ otherwise. }
			\end{cases}
			\] 
			
			\item Subcase 2B: \(L' \in \mG^{\geq 3} \setminus \{\emptyset\}\). Then by Corollary \ref{cor:proj}(iv) we have 
			\[
			\delta_{L \Subset M}(\gamma_{L'}\gamma_H) = 
			\begin{cases}
			B(L, M) &\text{ if } L' = M \\
			-1 &\text{ if } L' = L \text{ and } H \supset M \\
			1 &\text{ if } L \subsetneq L' \inclusiondot M \text{ and } H \supset M \\
			0 &\text{ otherwise. }
			\end{cases}
			\] 
		\end{itemize}
	\end{itemize}
	
	From the above, it follows that
	\[
	\begin{aligned}
	\delta_{L \Subset M}(\txB) &= \sum_{\emptyset \subsetneq L' \subsetneq M'} r_{M'} (r_{L'} -1) \cdot a_{L'} a_{M'} \cdot \delta_{L \Subset M}(\gamma_{L'} \gamma_{M'}) \\
	&=  \txB 1 + \txB 2 + \txB 3
	\end{aligned} 
	\]
	where the terms \(\txB 1\), \(\txB 2\), and \(\txB 3\) are given as follows. 
	
	\begin{itemize}
		\item 	The term \(\txB 1\) corresponds to the basic monomial \(\gamma_L\gamma_M\) and the sum of the non-basic monomials of Case 1, it given by
		\[
		\txB 1 =  r_M (r_L -1) a_L a_M \,-\,  r_M^2 a_M  \sum_{L' \,|\, L \subsetneq L' \inclusiondot M} a_{L'} \,,
		\]
        where in the last sum we used that \(r(L') = r(M) + 1\).
		\item The term \(\txB 2\) corresponds to the sum of the non-basic monomials of Subcase 2A. 
		If \(r_M \geq 3\) then
		\[
		\begin{aligned}
		\txB 2 &= - 2B(L, M)  \sum_{L' \in \mG_M^2} \sum_{H \in \mH_{L'}} a_{L'} a_H \\
		&= - 4 B(L, M) \sum_{L' \in \mG_M^2} a_{L'}^2 \,.
		\end{aligned}
		\]
		If \(r_M = 2\) then
		\[
		\begin{aligned}
		\txB 2 &= - B(L, M) a_M  \sum_{H \in \mH_M} a_H \\
		&= - 2 B(L, M) a_M^2 \,.
		\end{aligned}
		\]
		\item The term \(\txB 3\) corresponds to the sum of the non-basic monomials of Subcase 2B. 
		If \(r_M \geq 3\), then
		\[
		\begin{aligned}
		\txB 3 &=  B(L, M) (r_M-1) a_M \sum_{H \in \mH_M} a_H 
		\,-\, (r_L-1) a_L \sum_{H \in \mH_M} a_H  \,+\, 
		r_M  \sum_{L \subsetneq L' \inclusiondot M} \sum_{H \in \mH_M} a_{L'} a_H \\
		&= 2B(L, M) \binom{r_M}{2} a_M^2 \,-\, (r_L-1) r_M a_L a_M \,+\, r_M^2 a_M  \sum_{L \subsetneq L' \inclusiondot M}  a_{L'} \\
		&= 2B(L, M) \binom{r_M}{2} a_M^2 \,-\, \txB 1 \,.
		\end{aligned}
		\]
		If \(r_M = 2\), since \(r(L') \geq 3\) and so \(L' \neq M\), we have
		\[
		\begin{aligned}
		\txB 3 &= - (r_L-1) r_M a_L a_M \,+\, r_M^2 a_M  \sum_{L \subsetneq L' \inclusiondot M}  a_{L'} \\
		&= -\txB 1 \,.
		\end{aligned}
		\]
	\end{itemize}
	
	Adding the B1, B2, B3 terms we obtain that:
	\begin{itemize}
		\item if \(r_M \geq 3\) then 
		\[
		\delta_{L \Subset M}(\txB) = 2B(L, M) \binom{r_M}{2} a_M^2 - 4 B(L, M) \sum_{L' \in \mG_M^2} a_{L'}^2 \,;
		\]
		\item if \(r_M =2\) then
		\[
		\delta_{L \Subset M}(\txB) =
		- 2B(L, M) a_M^2  \,.
		\]
	\end{itemize}

	\textbf{Calculation of the C term.} The monomials \(\gamma_{L'} \gamma_{M'}\) with \(L' \pitchfork_r M'\) are basic if both \(r_{M'}\) and \(r_{L'}\) are \(\geq 2\), so we might assume that at least one of \(L'\) or \(M'\) is a hyperplane. 
	Using Corollary \ref{cor:proj} (vi) and (vii) to calculate the terms \(\delta_{L \Subset M} (\gamma_{L'}\gamma_{M'})\) when at least one of \(L'\) or \(M'\) is a hyperplane, we obtain:
	\[
	\begin{aligned}
	\delta_{L \Subset M}(\txC) &= \sum_{L' \pitchfork_r M'} r_{L'} r_{M'} \cdot a_{L'} a_{M'} \cdot \delta_{L \Subset M}(\gamma_{L'} \gamma_{M'}) \\
	&= - r_M a_M \sum_{H \in \mH_L \,|\, H \pitchfork_r M} a_H \,-\,
	2B(L, M) \sum_{H \pitchfork_r H' \in \mR_M^2} a_H a_{H'} \,+\,  \sum_{H \in \mH_L \,|\, H \pitchfork_r M} \sum_{H' \in \mH_M} a_H a_{H'} \\
	&= - r_M a_M \sum_{H \in \mH_L \,|\, H \pitchfork_r M} a_H \,-\,
	2B(L, M) \sum_{H \pitchfork_r H' \in \mR_M^2} a_H a_{H'} \,+\,  r_M a_M \sum_{H \in \mH_L \,|\, H \pitchfork_r M} a_H \\
	&= -2B(L, M) \sum_{H \pitchfork_r H' \in \mR_M^2} a_H a_{H'} \,.
	\end{aligned}
	\]
	
	Finally, we calculate \(- c_{L \Subset M} = \delta_{L \Subset M}(\txA) + \delta_{L \Subset M}(\txB) + \delta_{L \Subset M}(\txC)\). 
	\begin{itemize}
		\item If \(r_M = 2\) then \(\delta_{L \Subset M}(\txA) = - \delta_{L \Subset M}(\txB)\) and \(\delta_{L \Subset M}(\txC) = 0\), so \(c_{L \Subset M} = 0\).
		\item If \(r_M \geq 3\) then
		\[
		\begin{aligned}
		- c_{L \Subset M} &= 2 B(L, M) \sum_{L' \in \mG_M^2} a_{L'}^2 \\ 
		&+\, 2B(L, M) \binom{r_M}{2} a_M^2 - 4 B(L, M) \sum_{L' \in \mG_M^2} a_{L'}^2 \\
		&-\, 2B(L, M) \sum_{H \pitchfork_r H' \in \mR_M^2} a_H a_{H'} \\
		&= 2B(L, M) \cdot \left( \binom{r_M}{2} a_M^2 - \sum_{L' \in \mG_M^2} a_{L'}^2 - \sum_{H \pitchfork_r H' \in \mR_M^2} a_H a_{H'}
		\right) \,.
		\end{aligned}
		\]
		From Equation \eqref{eq:quadforlocalt} we deduce that \(- c_{L \Subset M} = 2B(L, M) Q_M(\ba)\) as wanted. \qedhere
	\end{itemize}
\end{proof}

\begin{lemma}\label{lem:cLpitchforkM}
	If \(L, M \in \mG^{\geq 2}\) and \(L \pitchfork_r M\) then \(c_{L\pitchfork_r M} = 0\).
\end{lemma}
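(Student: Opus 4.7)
Following the pattern of Lemmas \ref{lem:cempty}, \ref{lem:coefL}, and \ref{lem:cLSubsetM}, the plan is to compute $c_{L \pitchfork_r M} = -\delta_{L \pitchfork_r M}(\txA + \txB + \txC)$ by applying the dual functional to each summand and reducing non-basic monomials via Corollary \ref{cor:proj}. The contributions from $\txA$ and $\txB$ vanish immediately: $\txA$ is a sum of squares and Corollary \ref{cor:proj}(i),(ii) give $\delta_{L \pitchfork_r M}(\gamma_{L'}^2) = 0$ for every $L'$; $\txB$ is a sum of products $\gamma_{L'} \gamma_{M'}$ with $L' \subsetneq M'$, on which $\delta_{L \pitchfork_r M}$ vanishes by parts (iii)--(v) and by basis orthogonality.

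The content is therefore to show $\delta_{L \pitchfork_r M}(\txC) = 0$. I would split $\txC$ into three parts: the basic monomial $\gamma_L \gamma_M$ itself, contributing $r_L r_M a_L a_M$; the type-(M6) terms $\gamma_{L'} \gamma_H$ with $L' \pitchfork_r H$, which by Corollary \ref{cor:proj}(vi) contribute only from $(L', H) = (L, H)$ with $M \subset H$ or $(L', H) = (M, H)$ with $L \subset H$, for a total of $-r_L a_L \sum_{H \supset M,\, L \pitchfork_r H} a_H - r_M a_M \sum_{H \supset L,\, M \pitchfork_r H} a_H$; and the type-(M7) terms $\gamma_H \gamma_{H'}$ with $H \pitchfork_r H'$, which by Corollary \ref{cor:proj}(vii) contribute only from pairs with $L \subset H$ and $M \subset H'$ (the opposite labelling is impossible since $L + M = \CP^n$), giving $\sum_{H \supset L,\, H' \supset M,\, H \pitchfork_r H'} a_H a_{H'}$.

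The main obstacle is a combinatorial claim that collapses these restricted sums. Since $L$ and $M$ are the irreducible components of $L \cap M$, one has $\mH_{L \cap M} = \mH_L \sqcup \mH_M$. Given any $H \in \mH_M$, the intersection $L \cap H$ has codimension $r_L + 1$ (as $L \not\subset H$, else $H \supset L+M = \CP^n$), and a dimension count gives $\dim(M + (L \cap H)) = n - 1$ with $M + (L \cap H) \subset H$, so $M + (L \cap H) = H$. Consequently any hyperplane of $\mH_M$ containing $L \cap H$ must equal $H$, forcing $\mH_{L \cap H} = \mH_L \sqcup \{H\}$. Its essential quotient in $\CP^{r_L}$ is an irreducible pencil through one point together with an extra hyperplane off that point; this arrangement is reducible, so $L \cap H \notin \mG$, i.e., $L \pitchfork_r H$. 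The symmetric statement $M \pitchfork_r H$ for $H \in \mH_L$ follows identically, and the analogous fact $H \cap H' \in \mR^2$ for any $H \in \mH_L,\, H' \in \mH_M$ follows from $L + (H \cap H') = H$ and its mirror $M + (H \cap H') = H'$.

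Plugging these in, the qualifying conditions on the sums become vacuous: $\sum_{H \supset M,\, L \pitchfork_r H} a_H = \sum_{H \supset M} a_H = r_M a_M$, its symmetric partner collapses to $r_L a_L$, and the (M7) sum collapses to $\sum_{H \supset L,\, H' \supset M} a_H a_{H'} = r_L r_M a_L a_M$ (no diagonal correction since $H = H'$ would force $H \supset \CP^n$). The three contributions then add to
\[
r_L r_M a_L a_M \,-\, 2 r_L r_M a_L a_M \,+\, r_L r_M a_L a_M \,=\, 0 \,,
\]
proving $c_{L \pitchfork_r M} = 0$.
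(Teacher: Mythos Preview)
Your proof is correct and follows the same route as the paper: show $\delta_{L\pitchfork_r M}$ kills $\txA$ and $\txB$ via Corollary~\ref{cor:proj}(i)--(v), then split $\delta_{L\pitchfork_r M}(\txC)$ into the basic term, the type-(M6) terms, and the type-(M7) terms, obtaining $r_Lr_Ma_La_M - 2r_Lr_Ma_La_M + r_Lr_Ma_La_M = 0$.

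Your combinatorial paragraph is in fact \emph{more} careful than the paper. The paper writes the (M6) contribution directly as $-r_La_L\sum_{H\in\mH_M}a_H - r_Ma_M\sum_{H\in\mH_L}a_H$ and the (M7) contribution as $\sum_{H\supset L}\sum_{H'\supset M}a_Ha_{H'}$, silently dropping the side conditions $L\pitchfork_r H$, $M\pitchfork_r H$, $H\pitchfork_r H'$ that come from the definition of $\txC$. Your argument that $\mH_{L\cap H}=\mH_L\sqcup\{H\}$ for $H\in\mH_M$ (via $M+(L\cap H)=H$), hence $L\cap H$ has irreducible components $L$ and $H$ and is reducible, is exactly what justifies this; likewise for $H\cap H'$ with $H\in\mH_L$, $H'\in\mH_M$. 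So your proof supplies a verification the paper omits, but the overall strategy and computation are identical.
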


\begin{remark}
	The condition that \(L \pitchfork_r M\) automatically implies that  \(L\) and \(M\) are both non-empty. Indeed, \(\mH\) is essential and irreducible, which means \(\emptyset \in \mG\); while the condition \(L \pitchfork_r M\) requires that \(L \cap M \notin \mG\). 
\end{remark}

\begin{proof}
	By Corollary \ref{cor:proj}(i)-(v), we have
	\[
	\delta_{L \pitchfork_r M} (\txA) = \delta_{L \pitchfork_r M}(\txB) = 0 \,.
	\]
	We deduce that
	\[
	\begin{aligned}
	-c_{L \pitchfork_r M} &=  \delta_{L \pitchfork_r M} (\txC) \\
	&= \sum_{L'\pitchfork_r M'} r_{L'}r_{M'} a_{L'}a_{M'} \cdot \delta_{L \pitchfork_r M}(\gamma_{L'}\gamma_{M'})
	\end{aligned}
	\]
	\begin{itemize}
		\item If \(r_{L'}, r_{M'} \geq 2\) then \(\gamma_{L'}\gamma_{M'}\) is basic so
		\[
		\delta_{L \pitchfork_r M}(\gamma_{L'}\gamma_{M'}) = \begin{cases}
		1 &\text{ if } \{L, M\} = \{L', M'\} \\
		0 &\text{ otherwise. }
		\end{cases}
		\]
		\item If \(M' = H\) is a hyperplane and \(r_{L'} \geq 2\) then by 
		Corollary \ref{cor:proj}(vi):
		\[
		\delta_{L \pitchfork_r M}(\gamma_{L'}\gamma_{H}) = \begin{cases}
		-1 &\text{ if } L' = L \text{ and } H \supset M \,,\\
		-1 &\text{ if } L' = M \text{ and } H \supset L \,,\\
		0 &\text{ otherwise. }
		\end{cases}
		\]
		\item If \(M' = H\) and \(L' = H'\) are both hyperplanes then by Corollary \ref{cor:proj} (vii):
		\[
		\delta_{L \pitchfork_r M}(\gamma_{H}\gamma_{H'}) = 
		\begin{cases}
		1 &\text{ if } H \supset L \text{ and } H' \supset M \,,\\
		1 &\text{ if } H \supset M \text{ and } H' \supset L \,,\\
		0 &\text{ otherwise. }
		\end{cases}
		\]
	\end{itemize} 

Splitting the sum over pairs \((L', M')\) into \(3\) parts according to the above items, we have
\[
\delta_{L \pitchfork_r M}(\txC) = \txC 1 + \txC 2 + \txC 3 \,,
\]
where
\[
\begin{aligned}
\txC 1 &= r_L r_M a_L a_M \\
\txC 2 &= - r_L a_L \sum_{H \in \mH_M} a_H \,-\, r_M a_M \sum_{H \in \mH_L} a_H = - 2 r_L r_M a_L a_M \\
\txC 3 &=  \sum_{H \supset L} \sum_{H' \supset M} a_H a_{H'} = r_L r_M a_L a_M 
\end{aligned}
\]
Therefore
\[
\delta_{L \pitchfork_r M}(\txC) = (1 - 2 + 1) r_L r_M a_L a_M = 0 
\]
and we conclude that \(c_{L\pitchfork_r M} = 0\).
\end{proof}

\subsection{Proof of (1) \(\implies\) (2)}\label{sec:pf1implies2}

\begin{lemma}\label{lem:kltmet}
    Let \(g\) be a PK metric on \(\CP^n\) whose singular set is a hyperplane arrangement \(\mH\) and whose cone angles along \(H \in \mH\) are \(2\pi\alpha_H\) with \(\alpha_H \in (0,1)\). 
	Then the weighted arrangement \((\mH, \ba)\) with weights \(a_H \in (0,1)\) given by \(a_H = 1-\alpha_H\) is klt.
\end{lemma}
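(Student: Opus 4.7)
The strategy is to invoke the equivalence recalled just above the lemma, between the klt condition and local integrability of $f := \prod_{H \in \mH} |h|^{-2a_H}$ against Lebesgue measure. I will verify the latter integrability by comparing $f$ to the density of the volume form $dV_g$ with respect to Lebesgue measure, and then using finiteness of the total volume of $(\CP^n, g)$.

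First, I would observe that the total volume $\int_{\CP^n} dV_g$ is finite. This follows because $\CP^n$ is compact and $(\CP^n, g)$ is polyhedral by Definition \ref{def:pkmetric}(iii): its locally finite Euclidean triangulation is therefore finite, and each simplex has finite volume. (Alternatively, $\omega^n/n!$ represents a cohomology class on the compact manifold $\CP^n$.)

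Second, I would show that at every point $x \in \CP^n$ and in suitable local coordinates, the densities $dV_g/dV_{\mathrm{Leb}}$ and $f$ differ by a bounded continuous positive function. By Proposition \ref{prop:lincoordmet} there exist linear coordinates $(z_1,\ldots,z_n)$ centred at $x$. In these coordinates, the splitting argument in Step 3 of the proof of Proposition \ref{prop:linearcoord} decomposes a neighbourhood of $x$ isometrically as a product of a flat Euclidean factor tangent to $\bigcap_{H \ni x} H$ and conical PK factors, one per irreducible component $L$ of that intersection, on the complementary slices. On each conical factor the holomorphic Euler vector field $e_L = (1-a_L)^{-1}\sum z_i \p_i$ satisfies $\nabla e_L = \Id$ and generates a flow $\phi_t$ with $\phi_t^*\omega = e^{2t}\omega$. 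Comparing this with the Jacobian of $\phi_t$ in linear coordinates, a short computation shows that $dV_g/dV_{\mathrm{Leb}}$ is real-homogeneous of degree $-2\sum_{H \in \mH_L} a_H$, matching the homogeneity of $\prod_{H \in \mH_L} |h|^{-2a_H}$. The explicit product model $dV_g = \alpha_H^2 |h|^{-2a_H}\, dV_{\mathrm{Leb}}$ at smooth points of $\mH$ fixes the constant and the angular profile, and iterating over strata of increasing codimension produces the two-sided bound $c_1 \leq (dV_g/dV_{\mathrm{Leb}})/f \leq c_2$ on a neighbourhood of $x$.

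Combining these two facts yields local integrability of $f$ at every point of $\CP^n$, hence the klt condition by the stated characterization. The main obstacle is the density comparison: the scaling argument alone gives the correct radial homogeneity, but establishing uniform two-sided bounds near the deeper strata requires an induction on codimension, taking the product cone model $C(2\pi\alpha_H)\times\C^{n-1}$ at smooth points of $\mH$ as the base case and the conical splitting on each slice as the inductive step.
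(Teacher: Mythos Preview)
Your approach is essentially the same as the paper's: reduce to local integrability of $f=\prod|h|^{-2a_H}$, take linear coordinates via Proposition~\ref{prop:lincoordmet}, compare $dV_g$ to $f\cdot dV_{\mathrm{Leb}}$, and use finiteness of the polyhedral volume. The only difference is in how the density comparison is established. The paper simply cites \cite[Corollary 7.22]{pkc}, which gives the exact identity $dV_g = c\cdot f\cdot dV_{\C^n}$ in linear coordinates; your scaling-plus-induction sketch is a more laborious route to the same end. A cleaner way to see the identity directly (and presumably what underlies the cited corollary) is to pass to the induced connection on $\Lambda^n TX$: in linear coordinates it is $d-\sum_H (\tr A_H)\,dh/h = d-\sum_H a_H\,dh/h$, and since it preserves the Hermitian metric $\det g$ one gets $d\log\det g = -\sum_H a_H\,d\log|h|^2$, hence $\det g = C\prod|h|^{-2a_H}$ exactly, with no need for a stratum-by-stratum induction or two-sided bounds.
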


\begin{proof}
    We prove that the function \(\prod_{H \in \mH} |h|^{-2a_H}\), where \(h\) are local defining equations of the hyperplanes \(H \in \mH\), is locally integrable with respect to ordinary Lebesgue measure.
    The equivalence of this with condition (ii) in Theorem \ref{thm:main} follows from \cite[Lemma 7.18]{pkc}.
    
	We need to show that for every \(x \in \CP^n\) there is an open ball \(x \in B\) and local defining equations \(h \in \mO(B)\) of the hyperplanes \(H \in \mH\) such that the function \(f: =\prod_{H \in \mH} |h|^{-2a_H}\) is locally integrable with respect to the standard Lebesgue measure on \(B\). (As usual, if \(x \notin H\) we take \(h=1\).) The local integrability of \(f\) follows from the (obvious) fact that the polyhedral metric \(g\) has finite volume, as we explain below. 
	
	By Proposition \ref{prop:lincoordmet} we can take linear coordinates \((z_1, \ldots, z_n)\) at \(x\).
	By Corollary \cite[7.22]{pkc} the volume form of \(g\) in these coordinates is equal (up to a constant factor) to \(f \cdot dV_{\C^n}\) where  \(dV_{\C^n}\) is the Euclidean volume form and \(f= \prod |h|^{-2a_H}\) with \(h\) linear  in \(z_1, \ldots, z_n\). Since \(g\) has finite volume, it follows that \(f\) is locally integrable, as we wanted to show.
\end{proof}

\begin{proof}[Proof of (1) implies (2) in Theorem \ref{thm:main}]
	Let \(g\) be a PK metric on \(\CP^n\) with cone angles \(2\pi\alpha_H\) along the hyperplanes \(H \in \mH\) with \(\alpha_H \in (0,1)\). We show that the weighted arrangement \((\mH, \ba)\) satisfies items (i), (ii), and (iii) of (2) in Theorem \ref{thm:main}.

	(i) Equations \eqref{eq:c1ohtsuki} and \eqref{eq:etadef} imply that the cohomology class \(\eta \in H^2(X, \R)\) vanishes. On the other hand, by Equation \eqref{eq:etabasis}, \(\eta=0\) if and only if \(\sum a_H = n+1\). 

    (ii) This is Lemma \ref{lem:kltmet}.
	
	(iii) By Equation \eqref{eq:ohtsuki2} and the definition of the cohomology class \(\Omega \in H^4(X, \R)\) given by Equations \eqref{eq:Omega} and \eqref{eq:ABC}, we have \(\Omega = 0\). On the other hand, since the basic monomials make a basis of \(H^4(x, \R)\), the cohomology class \(\Omega\) vanishes if and only if all the coefficients \(c_L, c_{L \Subset M}, c_{L \pitchfork_r M}\) in Equation \eqref{eq:Omegabasis} are zero. By Proposition \ref{prop:ohtsukibasis} the coefficients vanish if and only if \(Q_L(\ba) = 0\) for all \(L \in \mG\) with \(r_L \geq 3\). In particular, since \(\emptyset \in \mG\) (as the arrangement \(\mH\) is essential and irreducible), we have that \(Q(\ba) = Q_{\emptyset}(\ba) = 0\).
\end{proof}

\begin{remark}
    The proof above shows that a condition stronger than item (iii) in Theorem \ref{thm:main} holds. Namely, it shows that \(Q_L(\ba) = 0\) for \emph{any} proper irreducible subspace \(L \in \mG\).
\end{remark}

\section{Topological constraints \(\implies\) PK metric}\label{sec:2implies1}

In this section, we prove the remaining part of Theorem \ref{thm:main}. Specifically, we show that if \((\mH, \ba)\) is a weighted arrangement with weights \(a_H \in (0,1)\) satisfying conditions (i), (ii), and (iii) of (2) in Theorem \ref{thm:main}, then there exists a PK metric on \(\CP^n\) with cone angles \(2\pi(1 - a_H)\) along the members of \(\mH\).

Our proof is based on the Kobayashi--Hitchin correspondence for parabolic bundles due to Mochizuki~\cite{mochizuki}. In Section~\ref{sec:kobhit}, we state the precise result from Mochizuki that we require (Theorem~\ref{thm:moch}). Section~\ref{sec:parbun} recalls the parabolic structure on the pullback of \(T\CP^n\) via the log resolution \(\pi: X \to \CP^n\), as introduced in~\cite{miyaokayau}. Proposition~\ref{prop:pch2} computes the coefficients of the second parabolic Chern character \(\parch_2(\mE_*)\) when expressed in the basis of \(H^4(X, \R)\) given by basic monomials. In Section~\ref{sec:flatunitconn}, we apply Mochizuki’s theorem to deduce the existence of a flat unitary logarithmic connection compatible with the parabolic structure. Finally, in Section~\ref{sec:pfexistencePK}, we establish the existence of the desired PK metric by applying Theorem~\ref{thm:connmet} from Section \ref{sec:pkmetviaconn} to the constructed connection.

\subsection{Kobayashi-Hitchin correspondence for parabolic bundles}\label{sec:kobhit}
Let \(X\) be a compact complex manifold of dimension \(n\) and let \(D\) be a simple normal crossing divisor with irreducible decomposition \(D = \sum_{i \in I} D_i\). Let \(\mE_*\) be a parabolic bundle on \((X, D)\).
More precisely; \(\mE \to X\) is a holomorphic vector bundle and the parabolic structure is defined by an increasing filtration by locally free subsheaves \(\mE^i_a \subset \mE\) indexed by \(i \in I\) and \(a \in (0,1]\) as in \cite[Definition 4.1]{miyaokayau}.

For each irreducible component \(D_i\) we have an increasing filtration of \(\mE|_{D_i}\) by vector subbundles \(F^i_a \subset \mE|_{D_i}\) given by restricting the skyscraper sheaf \(\mE^i_a \,\big/ \, \mE(-D)\) to \(D_i\).
The parabolic first Chern class of \(\mE_{*}\) (see {\cite[\S 3.1.2]{mochizuki}}) is the element of \(H^2(X, \R)\) given by
\begin{equation}
	\parc_1(\mE_*) = c_1(\mE) - \sum_{i \in I} \sum_{0<a \leq 1} a \cdot \rk \big(F^i_a / F^i_{<a}\big)  \cdot  c_1(D_i) \,.
\end{equation}

Let \(L\) be an ample line bundle on \(X\). The parabolic degree of \(\mE_{*}\) is defined as usual \(\pardeg_L(\mE_{*}) := \parc_1(\mE_{*}) \cdot c_1(L)^{n-1}\). Assume that \(\pardeg_L(\mE_{*}) = 0\) then \(\mE_{*}\) is said to be \(\mu_L\)-stable if for every proper saturated subsheaf \(\mV \subset \mE\) we have \(\pardeg_L(\mV_*) < 0\), where \(\mV_*\) is the induced parabolic structure \cite[Definition 4.18]{miyaokayau}.	

The parabolic bundle \(\mE_*\) is \emph{locally abelian} (as defined by Iyer and Simpson \cite{simpson}) if it is locally the direct sum of parabolic line bundles, this is equivalent to the existence of local frames splitting all the filtrations. 
The locally abelian property is implicit in Mochizuki's definition of parabolic bundle in the form of compatible filtrations, see second item of \cite[Definition 3.12]{mochizuki}. Mochizuki's compatibility of filtrations is equivalent to the locally abelian condition, see \cite[Lemma 4.15]{miyaokayau}. 

Suppose that \(\mE_{*}\) is a parabolic bundle on \((X, D)\) that is locally abelian in codimension \(2\) as in \cite[\S 3.1.5]{mochizuki}. The parabolic second Chern character of \(\mE_{*}\)  is the element of \(H^4(X, \R)\) given by
\begin{equation}\label{eq:defpch2}
	\begin{aligned}
	\parch_2(\mE_{*}) &= \ch_2(\mE) - \sum_{i \in I} \sum_{0 < a \leq 1} a \cdot \imath_{*} \big(c_1(F^i_a / F^i_{<a})\big) \\
	&+ \,\, \frac{1}{2} \cdot \sum_{i \in I} \sum_{0 < a \leq 1} a^2 \cdot \rk (F^i_a / F^i_{<a}) \cdot c_1(D_i)^2 \\ 
	&+ \,\, \frac{1}{2} \cdot \sum_{\substack{i, j \in I\\ i \neq j}} \sum_{0 < a, b \leq 1} a b \cdot \rk (\Gr^{i, j}_{a, b}) \cdot c_1(D_i) \cdot c_1(D_j) \,,
	\end{aligned}
\end{equation}
where \(\imath_{*}: H^2(D_i, \Z) \to H^4(X, \Z)\) is the Gysin map of the inclusion \(D_i \subset X\),  and
\begin{equation}\label{eq:grij}
\Gr^{i, j}_{a, b} = \left( F^i_{a} \cap F^j_{b} \right) \, \big/ \, \left(F^i_{<a} \cap F^j_{b}  +  F^i_{a}  \cap F^j_{<b} \right) 
\end{equation}
is a skyscraper sheaf supported along \(D_i \cap D_j\). The locally abelian in codimension two condition implies that \(Gr^{i, j}_{a, b}\) is a vector bundle in the complement of a proper analytic subset of \(D_i \cap D_j\), and \(\rk Gr^{i, j}_{a, b}\) is the rank of this vector bundle.

The theory of parabolic bundles is naturally related to the theory of logarithmic connections through the notion of a logarithmic connection compatible with a parabolic structure, that we introduce next. In the case \(n=2\), the definition we adopt agrees with \cite[Definition 6.8]{panov}.
 
\begin{definition}
	Let \(\nabla\) be a unitary flat logarithmic connection on \(\mE \to X\) with poles along \(D\).
	We say that \(\nabla\) is \emph{compatible} with the parabolic structure \(\mE_*\) if for every the following holds:
	\begin{enumerate}[label=\textup{(\roman*)}]
		\item for every \(i \in I\) and \(a \in (0,1]\) the subbundle \(F^i_a \subset \mE|_{D_i}\) is preserved by \(\Res_{D_i}(\nabla)\)\,;
		\item \(\Res_{D_i}(\nabla)\) acts on \(F^i_a \big/ F^i_{< a}\) by scalar multiplication by \(a\)\,;
		\item for every \(i \in I\) and \(a \in (0,1]\) the holomorphic connection \(\nabla|_{X \setminus D}\) extends across \(D\)  as a logarithmic connection on \(\mE^i_a\).
	\end{enumerate}
\end{definition}

The main result that we need is the following case of the Kobayashi-Hitchin correspondence for parabolic bundles.


\begin{theorem}\label{thm:moch}
	Let \(\mE_*\) be a locally abelian parabolic bundle on \((X, D)\). Suppose that: \textup{(i)} \(\mE_*\) is \(\mu_L\)-stable; \textup{(ii)} both \(\parc_1(\mE_*) = 0\) and \(\parch_2(\mE_*) = 0\).	
	Then there is a unitary flat logarithmic connection \(\nabla\)  compatible with \(\mE_*\).
\end{theorem}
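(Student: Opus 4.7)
The plan is to deduce the theorem from Mochizuki's parabolic Kobayashi--Hitchin correspondence \cite{mochizuki}, viewing \(\mE_*\) as a parabolic Higgs bundle whose Higgs field vanishes identically. First, I would apply Mochizuki's existence theorem: the hypothesis that \(\mE_*\) is \(\mu_L\)-stable with \(\parc_1(\mE_*) = 0\) produces a tame pluri-harmonic metric \(h\) on \(\mE|_{X \setminus D}\) adapted to the parabolic structure. Since the Higgs field is zero, \(h\) is in fact a Hermitian--Einstein metric whose Chern connection \(\nabla_h\) has trace-free curvature of type \((1,1)\), and whose orders of growth along each \(D_i\) recover the parabolic weights.

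The second step is to promote \(\nabla_h\) from projectively flat to genuinely flat by invoking Mochizuki's parabolic Bogomolov--Gieseker inequality
\[
\int_X \left( 2 \cdot \parch_2(\mE_*) \,-\, \tfrac{1}{\rk \mE} \cdot \parc_1(\mE_*)^2 \right) \cup c_1(L)^{n-2} \,\leq\, 0 \,,
\]
whose equality case characterizes projective flatness of the Hermitian--Einstein connection. The assumptions \(\parc_1(\mE_*) = \parch_2(\mE_*) = 0\) force equality, hence projective flatness; combined once more with \(\parc_1(\mE_*) = 0\) this upgrades to flatness of \(\nabla_h\). Unitarity is automatic because \(\nabla_h\) preserves \(h\) by construction.

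Finally, I would verify the three compatibility conditions via Mochizuki's prolongation machinery: the parabolic sheaves \(\mE^i_a\) and the filtrations \(F^i_a\) are recovered from the asymptotic behaviour of \(h\) along each component \(D_i\), and the flat connection \(\nabla := \nabla_h\) extends across \(D\) as a logarithmic connection on every \(\mE^i_a\), with residue along \(D_i\) preserving \(F^i_a\) and acting on \(F^i_a / F^i_{<a}\) by scalar multiplication by \(a\); these are precisely conditions (i)--(iii). The main obstacle, which I absorb into the citation of \cite{mochizuki}, is the parabolic Bogomolov--Gieseker equality characterization in arbitrary dimension together with the existence theorem for adapted pluri-harmonic metrics on tame parabolic bundles. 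These together form the bulk of Mochizuki's monograph, and a direct proof lies well outside the scope of the present paper; our contribution is to check that the combinatorial hypotheses (i)--(iii) of Theorem \ref{thm:main} exactly produce the input data required to apply this black box.
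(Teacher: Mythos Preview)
Your proposal is correct and takes essentially the same approach as the paper: both defer entirely to Mochizuki's monograph \cite{mochizuki} (and its companion on asymptotic behaviour) as a black box. The paper's own justification is even more terse than yours---it simply cites \cite[Theorem 9.4]{mochizuki} for the existence of an adapted flat Hermitian metric and \cite{mochizukiasymptotic} for the logarithmic extension---whereas you unpack the internal logic into the Hermitian--Einstein step, the Bogomolov--Gieseker equality step, and the prolongation step; but this is a difference of exposition rather than of strategy.
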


The above result can be deduced from \cite[Theorem 9.4]{mochizuki} which shows the existence of an adapted flat Hermitian metric adapted to \(\mE_{*}\) together with \cite{mochizukiasymptotic} which shows that the associated metric connection extends with logarithmic poles along \(D\) and is adapted to \(\mE_{*}\).

\subsection{Parabolic bundle and parabolic second Chern character}\label{sec:parbun}

Let \((\mH, \ba)\) be a weighted arrangement consisting of a finite collection of hyperplanes \(H \subset \CP^n\) together with weights \(a_H \in (0,1)\). Throughout this section, we assume that \((\mH, \ba)\) satisfies (i), (ii), and (iii) in Theorem \ref{thm:main}. The main result of this section is Proposition \ref{prop:pch2}.

\begin{definition}\label{def:aL}
	For \(L \in \mG\) we let
	\begin{equation}\label{eq:aL}
		a_L = r(L)^{-1} \cdot \left( \sum_{H \supset L} a_H \right) \,.
	\end{equation}
\end{definition}

Let \(\pi: X \to \CP^n\) be the minimal De Concini-Procesi model of \(\mH\). Let \(D = \pi^{-1}(\mH)\) and let \(\mE = \pi^*T\CP^n\). We consider the parabolic bundle \(\mE_{*}\) on \((X, D)\) introduced in \cite[Section 4.2]{miyaokayau} that is defined by the increasing filtrations of \(\mE|_{D_L}\) given by the vector subbundles
\begin{equation}
F^L_a = \begin{cases}
\pi^*(TL) &\text{ if } a < a_L, \\
\mE|_{D_L} &\text{ if } a \geq a_L .
\end{cases}
\end{equation}

The parabolic second Chern character \(\parch_2(\mE_{*}) \in H^4(X, \R)\) is given by (see \cite[Corollary 4.37]{miyaokayau})
\begin{equation}\label{eq:pch2}
	\parch_2 (\mE_{*}) = - \frac{n+1}{2} \cdot \gamma_{\emptyset}^2 
	+  \frac{1}{2} \cdot \sum_{\emptyset \subsetneq L} r(L) \cdot a_L^2 \cdot \gamma_L^2 + \sum_{\emptyset \subsetneq L \subsetneq M} r(M) \cdot a_L  a_{M} \cdot  \gamma_L  \gamma_M \,.
\end{equation}

Our goal is to write the components of \(\parch_2(\mE_{*})\) in the basis \(\Delta_2\) of \(H^4(X, \R)\). We begin by noticing that \(\parch_2(\mE_{*})\) belongs to a certain linear subspace.

\begin{definition}\label{def:subspaceH4}
	Let \(V \subset H^4(X, \R)\) be the linear subspace spanned by the basic monomials in Lemma \ref{lem:basisH4} of types (i) and (ii). Specifically,
	\[
	V = \spn_{\R} \{\gamma_{L}^2 \,|\, r(L) \geq 3\} \oplus \spn_{\R} \{\gamma_{L_1} \cdot \gamma_{L_2} \,|\, L_1 \Subset L_2 \text{ and } r(L_2) \geq 2 \} \,.
	\]
	Equivalently, \(m \in V\) if \(\delta_{L_1 \pitchfork_r L_2}(m) = 0\) for all \(L_1, L_2 \in \mG^{\geq 2}\).
\end{definition}

\begin{lemma}\label{lem:subspace}
	\(\parch_2(\mE_{*}) \in V\)\,.
\end{lemma}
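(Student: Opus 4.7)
The plan is straightforward: apply an arbitrary coordinate projection of the form $\delta_{L_1 \pitchfork_r L_2}$ (with $L_1, L_2 \in \mG$, $L_1 \pitchfork_r L_2$, and $r(L_1), r(L_2) \geq 2$) to the explicit formula \eqref{eq:pch2} for $\parch_2(\mE_*)$, and verify term by term that each summand is annihilated. By the characterization in Definition \ref{def:subspaceH4}, this gives $\parch_2(\mE_*) \in V$.

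First I would inspect \eqref{eq:pch2}: every monomial appearing there is of the form $\gamma_L^2$ for some $L \in \mG$, or $\gamma_L \gamma_M$ with $\emptyset \subsetneq L \subsetneq M$. Matching these against the classification of non-basic degree-two monomials at the start of Section \ref{sec:nonbasic}, the non-basic monomials that can arise in \eqref{eq:pch2} are exclusively of types \ref{it:mH}, \ref{it:mL}, \ref{it:mLM}, \ref{it:mLH3}, and \ref{it:mLH2}; the ``transversal'' types \ref{it:LredH} and \ref{it:HredH} never appear, because $\parch_2(\mE_*)$ contains no product $\gamma_L \gamma_M$ with $L \pitchfork_r M$. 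For each of the five types that do occur, the corresponding parts (i)--(v) of Corollary \ref{cor:proj} assert exactly that $\delta_{L_1 \pitchfork_r L_2}$ evaluates to zero.

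The remaining terms in \eqref{eq:pch2} are the basic monomials $\gamma_L^2$ with $r(L) \geq 3$ (including $\gamma_\emptyset^2$ when $n+1 \geq 3$) and $\gamma_L \gamma_M$ with $L \Subset M$ and $r(M) \geq 2$. Since $\delta_{L_1 \pitchfork_r L_2}$ is an element of the dual basis $\Delta_2^*$ distinct from $\delta_L$ and $\delta_{L_1 \Subset L_2}$, it annihilates every basic monomial of these two types by the duality relation. Summing the contributions from all monomials in \eqref{eq:pch2}, we conclude $\delta_{L_1 \pitchfork_r L_2}(\parch_2(\mE_*)) = 0$ for every admissible pair, which is the desired membership.

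There is no real obstacle here: the lemma is essentially a bookkeeping consequence of the two facts that (a) \eqref{eq:pch2} involves only \emph{comparable} pairs $L \subsetneq M$, and (b) expanding a comparable product into basic monomials never generates any ``transversal'' basis element, which is precisely the content of Corollary \ref{cor:proj} (i)--(v). The only point worth double-checking is that the formula \eqref{eq:pch2} quoted from \cite{miyaokayau} indeed has no $\gamma_L \gamma_M$ cross-terms with $L \pitchfork_r M$, which follows from the construction of the parabolic structure $\mE_*$ on $(X, D)$ (whose filtrations are indexed only by the divisors $D_L$ with $L \in \mG'$, and whose graded pieces $\Gr^{L,M}_{a,b}$ that would contribute a transversal cross-term vanish because $F^L_a$ and $F^M_b$ are $\pi$-pullbacks of nested or equal subspaces).
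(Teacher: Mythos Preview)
Your proof is correct and follows essentially the same approach as the paper: inspect the monomials appearing in \eqref{eq:pch2}, observe that only comparable pairs occur so the non-basic monomials are of types \ref{it:mH}--\ref{it:mLH2}, and invoke Corollary~\ref{cor:proj}(i)--(v) to see that every such monomial (as well as the basic monomials of types (1) and (2)) is annihilated by $\delta_{L_1 \pitchfork_r L_2}$. The final paragraph about the graded pieces $\Gr^{L,M}_{a,b}$ is unnecessary, since the absence of transversal cross-terms is visible directly from the index set $\emptyset \subsetneq L \subsetneq M$ in \eqref{eq:pch2}.
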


\begin{proof}
	This follows by inspection of the right hand side of Equation \eqref{eq:pch2}. The terms which are basic monomials are clearly in \(V\).  The non-basic monomials \(\gamma_{L}^2\) for \(r(L) \leq 2\) and \(\gamma_{L}\cdot\gamma_M\) for \(L \inclusiondot M\) or \(r(M)=1\) lie in the kernel of \(\delta_{L_1 \pitchfork_r L_2}\) for any \(L_1, L_2 \in \mG^{\geq 2}\) by Corollary \ref{cor:proj} items (i) to (v); and therefore lie in \(V\).
\end{proof}

By Lemma \ref{lem:subspace} we can write
\begin{equation}\label{eq:pch2components}
	\parch_2(\mE_{*}) = \sum_{r(L) \geq 3} c_L \cdot \gamma_{L}^2 + \sum_{L_1 \Subset L_2\,,\, r(L_2) \geq 2} c_{L_1 \Subset L_2} \cdot \gamma_{L_1}  \gamma_{L_2}
\end{equation}
for some \(c_L, c_{L_1 \Subset L_2} \in \R\). To calculate the coefficients \(c_L\) and \(c_{L_1 \Subset L_2}\) we evaluate the dual basis elements \(\delta_{L}\) and \(\delta_{L_1 \Subset L_2}\) of Section \ref{sec:cordproj} on \(\parch_2(\mE_{*})\). The coefficients \(c_L\) and \(c_{L_1 \Subset L_2}\) of \(\parch_2(\mE_{*})\) in Equation \eqref{eq:pch2components} are then given by
\[
c_L = \delta_L(\parch_2(\mE_{*})) \,\,\text{ and }\,\, c_{L_1 \Subset L_2} = \delta_{L_1 \Subset L_2}(\parch_2(\mE_{*})) \,.
\]
The main calculation is the following.

\begin{proposition}\label{prop:pch2}
	The coefficients \(c_L\) and \(c_{L_1 \Subset L_2}\)  of \(\parch_2(\mE_{*})\) in Equation \eqref{eq:pch2components} are given as follows:
	\begin{enumerate}[label=\textup{(\arabic*)}]
		\item if \(r(L) \geq 3\) then \(c_L = Q_L\) ;
		
		\item if \(L_1 \Subset L_2\) and \(r(L_2) \geq 3\) then \(c_{L_1 \Subset L_2} = 2B(L_1, L_2) \cdot Q_{L_2}\) ;
		
		\item if \(L_1 \Subset L_2\) and \(r(L_2) = 2\) then \(c_{L_1 \Subset L_2} = 0\).
	\end{enumerate}
\end{proposition}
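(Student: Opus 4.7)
Since Lemma \ref{lem:subspace} places $\parch_2(\mE_*)$ in the subspace $V \subset H^4(X, \R)$ spanned by basic monomials of the form $\gamma_L^2$ with $r(L) \geq 3$ and $\gamma_{L_1}\gamma_{L_2}$ with $L_1 \Subset L_2$, $r(L_2) \geq 2$, the coefficients $c_L$ and $c_{L_1 \Subset L_2}$ can be extracted by applying the dual basis elements $\delta_L$ and $\delta_{L_1 \Subset L_2}$ from Section \ref{sec:cordproj} to the explicit formula \eqref{eq:pch2}. The plan is to proceed in close parallel with the proof of Proposition \ref{prop:ohtsukibasis}: expand each monomial in \eqref{eq:pch2} as a linear combination of basic monomials using the identities codified in Corollary \ref{cor:proj}, collect the resulting coefficients, and recognise them using the alternative formula \eqref{eq:quadforlocalt} for the Hirzebruch quadratic form of the localised essential arrangement.

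Concretely, split the two sums in \eqref{eq:pch2} according to codimension. For the $\gamma_L^2$ contributions separate $L \in \mH$ (for which Lemma \ref{lem:mH} applies), $L \in \mG^2$ (Lemma \ref{lem:mL}), and $L \in \mG$ with $r(L) \geq 3$ (already basic). For the product contributions $\gamma_L\gamma_M$ with $\emptyset \subsetneq L \subsetneq M$ distinguish $L \inclusiondot M$ from $L \Subset M$, and within each case whether $M$ is a hyperplane (invoking Lemma \ref{lem:mLH3} or \ref{lem:mLH2}) or has codimension at least two (using \eqref{eq:mLM}, or no expansion in the basic case). Because \eqref{eq:pch2} contains no products of transversely intersecting pairs, items (vi) and (vii) of Corollary \ref{cor:proj} never enter, consistent with Lemma \ref{lem:subspace}. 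For each choice of $\delta_L$ or $\delta_{L_1 \Subset L_2}$ one collects all the contributions and applies the normalisation $\sum_{H \supset L'} a_H = r(L') a_{L'}$ together with Equation \eqref{eq:CY}, so that the final combination reduces to
\[
\binom{r(N)}{2} a_N^2 \,-\, \sum_{L' \in \mG_N^2} a_{L'}^2 \,-\, \sum_{H \pitchfork_r H' \in \mR_N^2} a_H a_{H'}
\]
multiplied by $1$ in case $(1)$ (with $N = L$), by $2B(L_1, L_2)$ in case $(2)$ (with $N = L_2$), or by $0$ in case $(3)$. By \eqref{eq:quadforlocalt} this is exactly $Q_N(\ba)$, yielding the stated identities.

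The main technical obstacle is the bookkeeping required to control the nested sums arising from expanding the hyperplane-squared terms $\gamma_H^2$ via Lemma \ref{lem:mH}, whose expansion involves the coefficients $B(L'',L')B(L',H)$, together with the hyperplane-containing product terms $\gamma_L \gamma_H$ expanded by Lemmas \ref{lem:mLH3} and \ref{lem:mLH2}. The numerical coefficients in \eqref{eq:pch2} (namely $r(L)/2$ and $r(M)$) differ from those in $\txA + \txB$ of Section \ref{sec:ohtsbasic} (namely $r_L(r_L-1)/2$ and $r_M(r_L-1)$), so the individual summands take different shapes than in Lemmas \ref{lem:cempty}--\ref{lem:cLSubsetM}; nevertheless the telescoping pattern is essentially the same, and we expect the cross-terms to recombine into the Hirzebruch quadratic forms with the signs recorded in the statement (notably with the sign of $c_{L_1 \Subset L_2}$ flipped relative to Proposition \ref{prop:ohtsukibasis}).
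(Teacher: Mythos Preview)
Your strategy is exactly the paper's: split \eqref{eq:pch2} into the constant term $-\tfrac{n+1}{2}\gamma_\emptyset^2$, the square sum $\alpha$, and the product sum $\beta$, then apply the dual functionals $\delta_L$ and $\delta_{L_1\Subset L_2}$ term by term using Corollary~\ref{cor:proj} (i)--(v), and collect (the paper carries this out in Lemmas~\ref{lem:cLempty}--\ref{lem:cL1L22}). One small correction to your expectations: since \eqref{eq:pch2} contains no transverse products $\gamma_L\gamma_M$ with $L\pitchfork_r M$, the reducible-intersection sum $\sum_{H\cap H'\in\mR_N^2} a_H a_{H'}$ from \eqref{eq:quadforlocalt} cannot arise in the computation; what actually emerges is the original form \eqref{eq:QL} of $Q_N$, with the $B(N,H)$ coefficients produced by expanding $\gamma_H^2$ via Lemma~\ref{lem:mH}, rather than the alternative form \eqref{eq:quadforlocalt} you anticipate.
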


\subsubsection{Proof of Proposition \ref{prop:pch2}}
Rewrite Equation \eqref{eq:pch2} as
\[
\parch_2 (\mE_{*}) = - \frac{n+1}{2} \cdot \gamma_{\emptyset}^2 
\,+\, \alpha  \,+\, \beta  
\]
with
\[
\alpha = \frac{1}{2} \cdot \sum_{\emptyset \subsetneq L'} r(L') \cdot a_{L'}^2 \cdot \gamma_{L'}^2 
\]
and
\[
\beta = \sum_{\emptyset \subsetneq L'' \subsetneq L'} r(L') \cdot a_{L''}  a_{L'} \cdot  \gamma_{L''}  \gamma_{L'} \,.
\]

\begin{lemma}\label{lem:cLempty}
	The coefficient \(c_{\emptyset}\) of \(\parch_2(\mE_*)\) is equal to
	\(\delta_{\emptyset}(\parch_2(\mE_{*})) = Q\).
\end{lemma}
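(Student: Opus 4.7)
The plan is to apply the coordinate projection $\delta_{\emptyset}$ term-by-term to the decomposition
\[
\parch_2(\mE_{*}) = -\tfrac{n+1}{2}\gamma_{\emptyset}^2 + \alpha + \beta
\]
and to verify that the result agrees with $Q(\ba)$ as defined in Definition \ref{def:hir}, after invoking the constraint $\sum_{H} a_H = n+1$ from Equation \eqref{eq:CY} (which is assumed to hold throughout Section \ref{sec:2implies1}). The computation parallels closely the calculation of $\delta_{\emptyset}(\Omega)$ carried out in Lemma \ref{lem:cempty}, and uses exactly the same input, namely the seven subcases recorded in Corollary \ref{cor:proj}.

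First I would note that $\gamma_{\emptyset}^2$ is itself a basic monomial (since $r(\emptyset)=n+1\geq 3$), so $\delta_{\emptyset}(\gamma_{\emptyset}^2)=1$ and this contributes $-\tfrac{n+1}{2}$. For the term $\alpha$, the only summands that meet the dual to $\gamma_{\emptyset}^2$ are the squares $\gamma_{L'}^2$ with $r(L')\le 2$: Corollary \ref{cor:proj}(i) gives $\delta_{\emptyset}(\gamma_H^2)=-B_H$ for each $H\in\mH$, and Corollary \ref{cor:proj}(ii) gives $\delta_{\emptyset}(\gamma_{L'}^2)=-1$ for each $L'\in\mG^2$; squares $\gamma_{L'}^2$ with $r(L')\geq 3$ are basic and orthogonal to $\delta_{\emptyset}$. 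Carrying the coefficients $\tfrac12 r(L') a_{L'}^2$ through, one obtains
\[
\delta_{\emptyset}(\alpha) \;=\; -\tfrac{1}{2}\sum_{H\in\mH} B_H\, a_H^2 \;-\;\sum_{L'\in\mG^2} a_{L'}^2\,.
\]

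For $\beta$ the case analysis is the only slightly involved step, but it mirrors the subcase split in the proof of Lemma \ref{lem:cempty}. Pairs $(L'',L')$ with $L''\Subset L'$ and $r(L')\geq 2$ yield basic monomials of type (2), so $\delta_{\emptyset}$ vanishes on them; pairs with $L''\inclusiondot L'$ and $r(L')\geq 2$ contribute $0$ by Corollary \ref{cor:proj}(iii); pairs with $L'=H\in\mH$ and $r(L'')\geq 3$ contribute $0$ by Corollary \ref{cor:proj}(iv). The only surviving contribution comes from $L'=H\in\mH$ with $r(L'')=2$, for which Corollary \ref{cor:proj}(v) gives $\delta_{\emptyset}(\gamma_{L''}\gamma_H)=1$; carrying the coefficient $r(H)\,a_{L''}a_H = a_{L''}a_H$ and using the identity $\sum_{H\supset L''} a_H = 2a_{L''}$ for $L''\in\mG^2$ yields
\[
\delta_{\emptyset}(\beta) \;=\; \sum_{L''\in\mG^2} a_{L''}\!\!\!\sum_{H\supset L''}\!\! a_H \;=\; 2\sum_{L''\in\mG^2} a_{L''}^2\,.
\]

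Combining the three contributions,
\[
\delta_{\emptyset}(\parch_2(\mE_{*})) \;=\; -\tfrac{n+1}{2} \,+\, \sum_{L'\in\mG^2} a_{L'}^2 \,-\, \tfrac{1}{2}\sum_{H\in\mH} B_H a_H^2\,.
\]
Finally, since $s=\sum_{H} a_H = n+1$ by Equation \eqref{eq:CY}, one has $\tfrac{1}{2(n+1)}s^2 = \tfrac{n+1}{2}$, and the right-hand side is precisely $Q(\ba)$ as given by Equation \eqref{eq:hirquadform}. The main obstacle, if any, is bookkeeping in the $\beta$-term case split; however, since everything reduces to quoting the appropriate line of Corollary \ref{cor:proj} and Equation \eqref{eq:CY} supplies the constant term exactly, there is no real technical difficulty beyond careful accounting.
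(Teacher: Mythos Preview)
Your proposal is correct and follows essentially the same approach as the paper's proof: both apply $\delta_{\emptyset}$ term-by-term to $-\tfrac{n+1}{2}\gamma_{\emptyset}^2 + \alpha + \beta$, invoke Corollary~\ref{cor:proj} (i)--(v) for the non-basic monomials, and arrive at $-\tfrac{n+1}{2} - \tfrac{1}{2}\sum_H B_H a_H^2 + \sum_{L'\in\mG^2} a_{L'}^2$. Your write-up is in fact slightly more explicit than the paper's in that you spell out the use of Equation~\eqref{eq:CY} to match the $-\tfrac{n+1}{2}$ constant with the $-\tfrac{1}{2(n+1)}s^2$ term in $Q$, whereas the paper simply asserts ``which equals $Q$'' (the CY assumption being ambient in Section~\ref{sec:parbun}).
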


\begin{proof}
	Write
	\[
	\delta_{\emptyset}(\parch_2(\mE_{*})) = \textup{A} + \textup{B}
	\]
	with
	\[
	\textup{A} = -\frac{n+1}{2} + \delta_{\emptyset}(\alpha) \,\,\textup{ and }\,\, \txB = \delta_{\emptyset}(\beta) \,.
	\]
	We need to evaluate \(\delta_{\emptyset}\) only on non-basic monomials.
	Corollary \ref{cor:proj} (i) and (ii) implies
	\[
	\delta_{\emptyset}(\alpha) = - \frac{1}{2} \cdot \sum_{H \in \mH} B_H \cdot a_{H}^2  - \sum_{L \in \mG^2} a_L^2 \,.
	\]
	Corollary \ref{cor:proj} (iii), (iv) and (v) implies
	\[
	\textup{B} =  \sum_{L \in \mG^2} \sum_{H \supset L} a_{L} \cdot a_H 
	=  2 \cdot \sum_{L \in \mG^2} a_{L}^2 \,.
	\]
	Therefore,
	\[
	\textup{A} + \textup{B} =
	-\frac{n+1}{2} - \frac{1}{2} \sum_{H \in \mH} B_H \cdot a_{H}^2  + \sum_{L \in \mG^2} a_{L}^2 \,,
	\]
	which equals \(Q\).
\end{proof}

\begin{lemma}\label{lem:cL}
	If \(L\) is non-empty and \(r(L) \geq 3\) then \(c_L\) is equal to
	\begin{equation}
		\delta_L(\parch_2(\mE_{*})) = Q_L \,.	
	\end{equation}
\end{lemma}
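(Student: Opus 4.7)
The approach I propose is a direct analogue of the proof of Lemma~\ref{lem:cLempty}: starting from the decomposition $\parch_2(\mE_*) = -\tfrac{n+1}{2}\gamma_\emptyset^2 + \alpha + \beta$ already introduced, evaluate $\delta_L$ on each summand by invoking the coordinate projection formulas of Corollary~\ref{cor:proj} to reduce non-basic monomials, and recognize the outcome as the alternative expression \eqref{eq:QL} for $Q_L$. Since $r(\emptyset) = n+1 \geq 3$, the monomial $\gamma_\emptyset^2$ is basic, so $\delta_L(\gamma_\emptyset^2) = 0$ and the first summand contributes nothing.

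For $\delta_L(\alpha)$, I would split the sum by the rank of $L'$. The $r(L') \geq 3$ terms are basic, and only $L' = L$ survives, contributing $\tfrac{1}{2} r(L) a_L^2$. The $r(L') = 2$ terms are non-basic: by Corollary~\ref{cor:proj}(ii), $\delta_L(\gamma_{L'}^2) = -1$ precisely when $L' \supset L$, yielding a total of $-\sum_{L' \in \mG^2_L} a_{L'}^2$. The $r(L') = 1$ terms (with $L' = H \in \mH$) are handled by Corollary~\ref{cor:proj}(i): $\delta_L(\gamma_H^2) = -B(L,H)$ when $H \supset L$, giving $-\tfrac{1}{2}\sum_{H \in \mH_L} B(L,H) a_H^2$.

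For $\delta_L(\beta)$, the key simplification is that $\delta_L$ annihilates all basic monomials of the form $\gamma_{L_1}\gamma_{L_2}$ with $L_1 \Subset L_2$, and by Corollary~\ref{cor:proj}(iii) also annihilates the non-basic monomials of type \ref{it:mLM}. Hence only pairs with $L' = H \in \mH$ contribute. These split into two sub-cases: (a) $r(L'') \geq 3$ with $L'' = L$ (via Corollary~\ref{cor:proj}(iv)), contributing $-a_L\sum_{H \in \mH_L} a_H = -r(L) a_L^2$ after using $\sum_{H \supset L} a_H = r(L) a_L$; and (b) $r(L'')=2$ with $L \subsetneq L''$ (via Corollary~\ref{cor:proj}(v)), contributing $\sum_{L'' \in \mG^2_L} a_{L''} \sum_{H \supset L''} a_H = 2\sum_{L'' \in \mG^2_L} a_{L''}^2$.

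Adding the pieces gives
\[
\delta_L(\parch_2(\mE_*)) = -\tfrac{r(L)}{2} a_L^2 + \sum_{L' \in \mG^2_L} a_{L'}^2 - \tfrac{1}{2}\sum_{H \in \mH_L} B(L,H) a_H^2,
\]
which matches $Q_L(\ba)$ via Equation~\eqref{eq:QL}. The only real obstacle is the bookkeeping in $\delta_L(\beta)$: one must be confident that the enumeration of non-basic monomials is exhaustive and non-overlapping, and in particular that the two positive $\sum_{L' \in \mG^2_L} a_{L'}^2$ terms from the $\beta$-sum combine with the single negative term from $\alpha$ to produce the correct coefficient $+1$ in the final expression for $Q_L$.
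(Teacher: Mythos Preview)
Your proposal is correct and follows essentially the same route as the paper: the paper also writes \(\delta_L(\parch_2(\mE_*)) = \delta_L(\alpha) + \delta_L(\beta)\) (implicitly using that \(\gamma_\emptyset^2\) is basic and \(L \neq \emptyset\)), evaluates each piece via Corollary~\ref{cor:proj}(i)--(v) exactly as you describe, and obtains the same three-term expression matching \(Q_L\). Your bookkeeping is accurate, including the observation that only the \(L' = H\) terms of \(\beta\) survive and that the two contributions \(-r(L)a_L^2\) and \(2\sum_{L'' \in \mG_L^2} a_{L''}^2\) combine with the \(\alpha\)-terms to give the correct coefficients.
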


\begin{proof}
	Write \(\delta_L(\parch_2(\mE_{*})) = \textup{A} + \textup{B}\) with \(\txA = \delta_L(\alpha)\) and \(\txB = \delta_L(\beta)\). Items (i) and (ii) of Corollary \ref{cor:proj} give us
	\[
	\textup{A} = \frac{r(L)}{2} \cdot a_L^2 - \frac{1}{2} \sum_{H \in \mH_L} B(L, H) \cdot a_{H}^2  - \sum_{L' \in \mG_L^2} a_{L'}^2 \,.
	\]
	Items (iii), (iv), and (v) of Corollary \ref{cor:proj} give us
	\[
	\begin{aligned}
		\textup{B} &= - \sum_{H \in \mH_L} a_L \cdot a_{H}  + \sum_{L' \in \mG_L^2} \sum_{H \supset L'} a_{L'} \cdot a_H \\
		&= - r(L) \cdot a_L^2 + 2 \cdot \sum_{L' \in \mG_L^2} a_{L'}^2 \,.
	\end{aligned}
	\]
	Therefore
	\[
	\textup{A} + \textup{B} 
	= -\frac{r(L)}{2} \cdot a_L^2 - \frac{1}{2} \sum_{H \in \mH_L} B(L, H) \cdot a_{H}^2  + \sum_{L' \in \mG_L^2} a_{L'}^2 
	\]
	which equals \(Q_L\).
\end{proof}

\begin{lemma}\label{lem:cL1L23}
	If \(L_1 \Subset L_2\) and  \(r(L_2) \geq 3\)
	then
	\begin{equation}
		\delta_{L_1 \Subset L_2}(\parch_2(\mE_{*})) = 2 B(L_1, L_2) \cdot Q_{L_2} \,.	
	\end{equation}
\end{lemma}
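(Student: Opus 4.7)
The plan is to follow the template of Lemmas \ref{lem:cempty} and \ref{lem:cL}. Decompose
\[
\delta_{L_1 \Subset L_2}(\parch_2(\mE_{*})) = \delta_{L_1 \Subset L_2}\Bigl(-\tfrac{n+1}{2}\gamma_{\emptyset}^2\Bigr) + \delta_{L_1 \Subset L_2}(\alpha) + \delta_{L_1 \Subset L_2}(\beta).
\]
The first summand vanishes because $\gamma_{\emptyset}^2$ is a basic monomial of type (1) (as $r(\emptyset) = n+1 \geq 3$), hence distinct from $\gamma_{L_1}\gamma_{L_2}$. So it suffices to compute $\textup{A} = \delta_{L_1 \Subset L_2}(\alpha)$ and $\textup{B} = \delta_{L_1 \Subset L_2}(\beta)$.

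For $\textup{A}$, note that in $\alpha$ the monomial $\gamma_L^2$ is basic and distinct from $\gamma_{L_1}\gamma_{L_2}$ whenever $r(L) \geq 3$; so only hyperplane squares (with $H \supset L_2$) and codimension-two squares (with $L' \in \mG^2_{L_2}$) can contribute, and they do so through items (i) and (ii) of Corollary \ref{cor:proj} respectively.

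For $\textup{B}$, enumerate the pairs $(L,M) \in \mG \times \mG$ with $\emptyset \subsetneq L \subsetneq M$ for which $\delta_{L_1 \Subset L_2}(\gamma_L\gamma_M) \neq 0$. Six families arise: (1) the basic pair $(L_1,L_2)$ itself; (2) pairs $L \inclusiondot L_2$ with $L_1 \subsetneq L$, via Corollary \ref{cor:proj}(iii); (3) pairs $(L,H)$ with $r(L) = 2$, $L \in \mG^2_{L_2}$ and $H \supset L$, via Corollary \ref{cor:proj}(v); and families (4), (5), (6) coming from the three nonzero subcases of Corollary \ref{cor:proj}(iv) for pairs $(L,H)$ with $r(L) \geq 3$ and $H \in \mH_{L_2}$, namely $L = L_2$, $L = L_1$, and $L_1 \subsetneq L \inclusiondot L_2$. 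Applying the identity $\sum_{H \supset L_2} a_H = r(L_2) a_{L_2}$ from Equation \eqref{eq:aL} yields two pairwise cancellations: family (5) cancels family (1), and family (6) cancels family (2).

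After these cancellations the surviving contributions from families (3), (4) and from $\textup{A}$ assemble, once $B(L_1, L_2)$ is factored out, into a linear combination of $a_{L_2}^2$, $\sum_{L' \in \mG^2_{L_2}} a_{L'}^2$, and $\sum_{H \in \mH_{L_2}} B(L_2, H) a_H^2$. Matching this combination against the alternative formula \eqref{eq:quadforlocalt} for the Hirzebruch quadratic form of the essential localized arrangement $\mH_{L_2}/L_2$ produces the desired identity $2B(L_1,L_2) \cdot Q_{L_2}$. The main obstacle is the meticulous case-splitting in family $\textup{B}$: correctly tracking the three subcases of Corollary \ref{cor:proj}(iv), verifying both pairwise cancellations using \eqref{eq:aL}, and ensuring that the surviving $a_{L_2}^2$ coefficient, which comes from the subcase $L = L_2$ of (iv) rather than from the basic pair, has the right sign to combine with the $\textup{A}$-terms into $2 B(L_1, L_2) Q_{L_2}$.
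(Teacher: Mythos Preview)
Your proposal is correct and follows essentially the same approach as the paper: the paper's decomposition into $\txA = \delta_{L_1 \Subset L_2}(\alpha)$ and $\txB = \delta_{L_1 \Subset L_2}(\beta)$, with $\txB$ further split into $\txB1,\ldots,\txB4$, corresponds exactly to your families (1)--(6), and the paper records the same cancellation $\txB3 = B(L_1,L_2)\,r(L_2)\,a_{L_2}^2 - \txB1 - \txB2$ that you describe as ``(5) cancels (1) and (6) cancels (2)''. One small slip: the three surviving building blocks you list, namely $a_{L_2}^2$, $\sum_{L'\in\mG^2_{L_2}} a_{L'}^2$, and $\sum_{H\in\mH_{L_2}} B(L_2,H)\,a_H^2$, match the \emph{defining} formula \eqref{eq:QL} for $Q_{L_2}$, not the alternative formula \eqref{eq:quadforlocalt} (which involves the reducible-intersection terms $\sum_{H\cap H'\in\mR^2_{L_2}} a_H a_{H'}$ instead); adjust the citation accordingly.
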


\begin{proof}
	Write \(\delta_{L_1 \Subset L_2}(\parch_2(\mE_{*})) = \textup{A} + \textup{B}\) with \(\txA = \delta_{L_1 \Subset L_2}(\alpha)\) and \(\txB = \delta_{L_1 \Subset L_2}(\beta)\). Items (i) and (ii) of Corollary \ref{cor:proj} gives us
	\[
	\textup{A} = -B(L_1, L_2) \sum_{H \supset L_2} B(L_2, H) \cdot a_{H}^2  + 2B(L_1, L_2) \cdot \sum_{L \in \mG_{L_2}^2} a_{L}^2 \,.
	\]
	We split the term 
	\[
	\txB = 
	\sum_{\emptyset \subsetneq L'' \subsetneq L'} r(L') \cdot a_{L''}  a_{L'} \cdot  \delta_{L_1 \Subset L_2}(\gamma_{L''}  \gamma_{L'})
	\]
	as a sum of four terms given as follows.
	\begin{itemize}
		\item If \((L'', L') = (L_1, L_2)\) then 
		\[
		\txB1 = 	(\codim L_2) \cdot a_{L_1} a_{L_2} \,.
		\]
		(If \(L_1 = \emptyset\) then \(\txB1 = 0\).)
		
		\item Sum over \(L'' \inclusiondot L'\). By Corollary \ref{cor:proj}(iii) we can assume that \(L_1 \subset L'' \inclusiondot L' = L_2\) and
		\[
		\txB2 =  - r(L_2) \cdot a_{L_2} \cdot \sum_{L_1 \subset L'' \inclusiondot L_2} a_{L''}  \,.
		\]
		
		\item Sum over \((L'', L')\) with \(r(L'') \geq 3\) and \(L' \in \mH\).
		By Corollary \ref{cor:proj}(iv) we have
		\[
		\begin{aligned}
			\txB3 &= B(L_1, L_2)  \sum_{H \supset L_2} a_{L_2} a_H  
			- \sum_{H \supset L_2} a_{L_1} a_H 
			+ \sum_{L_1 \subset L \inclusiondot L_2} \sum_{H \supset L_2} a_L a_H \\
			&=  B(L_1, L_2) \cdot r(L_2) \cdot a_{L_2}^2 - \txB1 - \txB2
		\end{aligned}
		\]
		(If \(L_1 = \emptyset\) then there is no middle term.)
        
		\item Sum over \((L'', L')\) with \(r(L'') = 2\) and \(L' \in \mH\).
		By Corollary \ref{cor:proj}(v) we have
		\[
		\txB4 = - 2 B(L_1, L_2) \cdot \sum_{L \in \mG_{L_2}^2} \sum_{H \supset L} a_{L} a_H =  - 4 B(L_1, L_2) \cdot \sum_{L \in \mG_{L_2}^2} a_{L}^2 \,.
		\]
        Here, we used that only option in Corollary \ref{cor:proj}(v) for which \(\delta_{L_1 \Subset L_2}(\gamma_{L''}\gamma_{L'})\) is non-zero is the middle one, i.e., when \(L_2 \subsetneq L''\), as \(r(L_2) \geq 3\). This is the only place where the assumption \(r(L_2)\geq 3\) is used.
	\end{itemize} 
	
	Adding \(\txB1 + \ldots + \txB4\) we get
	\[
	\txB = B(L_1, L_2) \cdot (\codim L_2) \cdot a_{L_2}^2 
	- 4 B(L_1, L_2) \cdot \sum_{L \in \mG_{L_2}^2} a_{L}^2 \,.
	\]
	Finally
	\[
	\begin{aligned}
		\textup{A} + \textup{B} 
		&= -B(L_1, L_2) \sum_{H \supset L_2} B(L_2, H) \cdot a_{H}^2  - 2B(L_1, L_2) \cdot \sum_{L \in \mG_{L_2}^2} a_{L}^2 \\
		&+ B(L_1, L_2) \cdot (\codim L_2) \cdot a_{L_2}^2 = 2B(L_1, L_2) \cdot Q(\mH_{L_2})
	\end{aligned}
	\]
	and this finishes the proof of the lemma.
\end{proof}

\begin{lemma}\label{lem:cL1L22}
	If \(L_1 \Subset L_2\) and  \(r(L_2) = 2\) then the coefficient \(c_{L_1 \Subset L_2}\) vanishes, i.e., 
	\begin{equation}
		\delta_{L_1 \Subset L_2}(\parch_2(\mE_{*})) = 0 \,.	
	\end{equation}
\end{lemma}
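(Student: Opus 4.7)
The plan is to follow the template of Lemma \ref{lem:cL1L23}: write \(\delta_{L_1 \Subset L_2}(\parch_2(\mE_{*})) = \txA + \txB\) with \(\txA = \delta_{L_1 \Subset L_2}(\alpha)\) and \(\txB = \delta_{L_1 \Subset L_2}(\beta)\), and verify that the two cancel. The condition \(L_1 \Subset L_2\) with \(r(L_2) = 2\) forces \(r(L_1) \geq 4\), so \(n \geq 3\), the class \(\gamma_\emptyset^2\) is basic, and \(\delta_{L_1 \Subset L_2}(\gamma_\emptyset^2) = 0\); hence the \(-\tfrac{n+1}{2}\gamma_\emptyset^2\) piece of \(\parch_2(\mE_*)\) contributes nothing.

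The key observation specific to the case \(r(L_2) = 2\) is that \(B(L_2, H) = 0\) for every \(H \supset L_2\): the set \(\{L' \in \mG : L_2 \subset L' \inclusiondot H\}\) consists of codimension-two irreducible subspaces containing \(L_2\), and since \(r(L_2) = 2\) the only candidate is \(L_2\) itself, giving cardinality one. This kills the \(\sum_{H \supset L_2} B(L_2, H) a_H^2\) contribution in Corollary \ref{cor:proj}(i), so the only surviving non-basic square in \(\alpha\) is \(\gamma_{L_2}^2\) via Corollary \ref{cor:proj}(ii). The result is \(\txA = 2 B(L_1, L_2) a_{L_2}^2\).

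For \(\txB\), I would partition the sum \(\sum_{\emptyset \subsetneq L'' \subsetneq L'}\) into the same three types as in Lemma \ref{lem:cL1L23}, namely the basic pair \((L_1, L_2)\); pairs \(L'' \inclusiondot L_2\) with \(L_1 \subset L''\); and pairs \(L'' \Subset L' = H\) with \(r(L'') \geq 3\) handled by Corollary \ref{cor:proj}(iv); together with an additional fourth type \(L'' \inclusiondot L' = H\) with \(r(L'') = 2\), vacuous when \(r(L_2) \geq 3\) but present here and governed by Corollary \ref{cor:proj}(v). The first three telescope to zero exactly as in Lemma \ref{lem:cL1L23}, while in the fourth the only contributing configuration is \(L'' = L_2\), producing \(-2 B(L_1, L_2) a_{L_2}^2\) after using \(\sum_{H \supset L_2} a_H = r(L_2) a_{L_2} = 2 a_{L_2}\). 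Thus \(\txB = -2 B(L_1, L_2) a_{L_2}^2\) and \(\txA + \txB = 0\).

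The main obstacle is purely organizational: one must correctly separate the non-basic monomials \(\gamma_{L''} \gamma_H\) with \(r(L'') = 2\) from those with \(r(L'') \geq 3\), since both appear under the heading \(L'' \subsetneq L' = H\) in the defining sum of \(\beta\) but are governed by different items of Corollary \ref{cor:proj}. Once this bookkeeping is correct, the identity \(B(L_2, H) = 0\) together with the weight relation above forces the cancellation.
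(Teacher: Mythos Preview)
Your argument is correct and mirrors the paper's proof exactly: $\txA = 2B(L_1,L_2)\,a_{L_2}^2$ from the lone surviving square $\gamma_{L_2}^2$, and $\txB$ decomposes into four pieces with $\txB1+\txB2+\txB3=0$ and $\txB4=-2B(L_1,L_2)\,a_{L_2}^2$. One minor inaccuracy in your comparison with Lemma~\ref{lem:cL1L23}: the $r(L'')=2$ contribution is not vacuous there (it is Subcase~2A, acting through the $L_2 \subsetneq L''$ branch of Corollary~\ref{cor:proj}(v)), and the analogous first three pieces do not cancel but leave the term $2B(L_1,L_2)\binom{r(L_2)}{2}a_{L_2}^2$; neither point affects the present computation.
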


\begin{proof}
	Write \(\delta_{L_1 \Subset L_2}(\parch_2(\mE_{*})) = \textup{A} + \textup{B}\)
	with
	\[
	\textup{A} = 2B(L_1, L_2) \cdot a_{L_2}^2
	\]
    (note that \(\delta_{L_1 \Subset L_2}(\gamma_{H}^2) = 0\) because \(B(L_2, H) = 0\) as \(r(L_2) = 2\))
    and \(\txB = \delta_{L_1 \Subset L_2}(\beta)\).
    Same as the proof of Lemma \ref{lem:cL1L23}, we split the B term into four \(\txB = \txB1 + \ldots + \txB4\) given as follows.
	\begin{itemize}
	\item If \((L'', L') = (L_1, L_2)\) then \(\txB1 = 	r(L_2) \cdot a_{L_1} a_{L_2}\)
	(if \(L_1 = \emptyset\) then \(\txB1 = 0\));
		
	\item sum over \(L'' \inclusiondot L'\), by Corollary \ref{cor:proj}(iii) we have
    \[
	\txB2 =  - r(L_2) \cdot a_{L_2} \cdot \sum_{L_1\subset L \inclusiondot L_2} a_L  \,;
	\]
			
	\item sum over \((L'', L')\) with \(r(L'') \geq 3\) and \(L' \in \mH\), by Corollary \ref{cor:proj}(iv) we have
    \[
	\txB3 =  - \sum_{H \supset L_2} a_{L_1} a_H 
		+ \sum_{L_1 \subset L \inclusiondot L_2} \sum_{H \supset L_2} a_L a_H \\
		=  - \txB1 - \txB2
	\]
	(if \(L_1 = \emptyset\) then there is no \(\txB1\) term);
        
	\item Sum over \((L'', L')\) with \(r(L'') = 2\) and \(L' \in \mH\), by Corollary \ref{cor:proj}(v) we have
    \[
	\txB4 = - B(L_1, L_2) \cdot \sum_{H \supset L_2} a_{L_2} a_H =  - 2 B(L_1, L_2) \cdot a_{L_2}^2 \,.
	\]
	\end{itemize} 
	
	Adding \(\txB1 + \ldots + \txB4\) we get
	\[
	\txB = 
	- 2 B(L_1, L_2) \cdot a_{L_2}^2 \,.
	\]
	Finally \(\delta_{L_1 \Subset L_2}(\parch_2(\mE_{*}))  = \txA + \txB = 0\).
\end{proof}

\begin{proof}[Proof of Proposition \ref{prop:pch2}]
	Item (i) follows from Lemmas \ref{lem:cLempty} and \ref{lem:cL}; item (ii) follows from Lemma \ref{lem:cL1L23}; and item (iii) follows from Lemma \ref{lem:cL1L22}.
\end{proof}

\subsection{The flat unitary connection}\label{sec:flatunitconn}

We begin by proving the following striking result. 

\begin{lemma}\label{lem:QL}
	Let  \((\mH, \ba)\) be a  klt and CY weighted arrangement. Moreover, suppose that \(Q(\ba) = 0\). Then for every proper irreducible subspace \(L \in \mG\), we have \(Q_L(\ba) = 0\).
\end{lemma}

\begin{proof}
	The klt and CY assumptions imply that \((\mH, \ba)\) is \emph{stable} \cite[\S 6.2]{miyaokayau}. Since \(Q(\ba) = 0\), by \cite[Theorem 3.1]{dunkl} there is a Dunkl metric \(\langle \cdot, \cdot \rangle\) on \(\C^{n+1}\) adapted to \((\mH, \ba)\) - here, to be consistent with \cite{dunkl}, we regard \(\mH\) as an arrangement of complex linear hyperplanes in \(\C^{n+1}\). Let \(L \in \mG\) be an irreducible subspace. Let \( \inn \vert_{L^{\perp}}\) be the restriction of \(\inn\) to the orthogonal complement of \(L \subset \C^{n+1}\) with respect to \(\inn\). It is easy to show (see last paragraph in page 96 of \cite{chl}) that \(\inn_{L^{\perp}}\) is a Dunkl metric on \(L^{\perp}\) adapted to the weighted arrangement \(\big(\mH_L/L, \pi_L(\ba)\big)\), where \(\mH_L / L\) is the (essential and irreducible) arrangement \(\mH_L / L = \{H \cap L^{\perp} \,|\, H \in \mH_L\}\) and \(\pi_L(\ba)\) are the obvious weights induced from the inclusion \(\mH_L \subset \mH\) as in Remark \ref{rmk:restrweights}. Then it follows from \cite[Theorem 3.4]{dunkl} that \(Q_L(\ba) = 0\).
\end{proof}

Let \((H, \ba)\) be a weighted arrangement in \(\CP^n\) with \(\ba \in (0,1)^{\mH}\) satisfying conditions (i), (ii), and (iii) in Theorem \ref{thm:main}. Let \(\mE_{*}\) be the parabolic bundle in Section \ref{sec:parbun}.

\begin{lemma}\label{lem:pch2is0}
    \(\parch_2(\mE_*) = 0\).
\end{lemma}

\begin{proof}
    By Equation \eqref{eq:pch2components}, the parabolic second Chern character \(\parch_2(\mE_*)\) vanishes if and only if the all of the coefficients \(c_L\) for \(L \in \mG^{\geq 3}\) and \(c_{L_1 \Subset L_2}\) for \(L_1, L_2 \in \mG\) with \(r(L_2) \geq 2\) vanish. By Proposition \ref{prop:pch2} (3), \(c_{L_1 \Subset L_2} = 0\) if \(r(L_2) = 2\). On the other hand, if \(r(L_2)\geq 3\) then
    by Proposition \ref{prop:pch2} (2) the coefficient \(c_{L_1 \Subset L_2}\) is proportional to \(Q_{L_2}(\ba)\) which vanishes Lemma \ref{lem:QL}. Finally, if \(r(L) \geq 3\) then by Proposition \ref{prop:pch2} (1) we have \(c_L = Q_L(\ba)\) and again \(Q_L(\ba) = 0\) by Lemma \ref{lem:QL}.
\end{proof}

\begin{theorem}\label{thm:flatunitary}
	There exists a flat unitary logarithmic connection \(\tn\) on \(\mE \to X\) compatible with the parabolic bundle \(\mE_{*}\).
\end{theorem}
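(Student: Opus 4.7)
The plan is to apply Mochizuki's Kobayashi-Hitchin correspondence stated in Theorem \ref{thm:moch}. To do this, I need to verify the four hypotheses of that theorem for the parabolic bundle \(\mE_{*}\) introduced in Section \ref{sec:parbun}: namely, that \(\mE_{*}\) is locally abelian, \(\mu_L\)-stable for some ample line bundle \(L\) on \(X\), and that \(\parc_1(\mE_{*}) = 0\) and \(\parch_2(\mE_{*}) = 0\).

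First I would verify the locally abelian property. The filtrations defining \(\mE_{*}\) at each divisor \(D_L\) have only one nontrivial step, and the subbundles \(\pi^*TL \subset \mE|_{D_L}\) are constructed geometrically from the subbundles \(TL \subset T\CP^n|_L\). At a point of \(X\) lying over a stratum where several components \(D_{L_1}, \ldots, D_{L_k}\) meet, the corresponding subspaces \(TL_1, \ldots, TL_k\) of the fiber of \(\mE\) can be put in simultaneous direct-sum decomposition by means of the linear-coordinate structure coming from the arrangement, yielding the required splitting frame. This is carried out in \cite{miyaokayau}, so I would simply cite the relevant statement there.

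Next I would dispose of the conditions \(\parc_1(\mE_{*}) = 0\) and \(\mu_L\)-stability by appealing to \cite{miyaokayau}. Condition \textup{(i)} of Theorem \ref{thm:main}, i.e.\ \(\sum_{H} a_H = n+1\), is exactly the condition that ensures \(\parc_1(\mE_{*}) = 0\); this is a direct calculation with the definition of parabolic first Chern class for \(\mE_{*}\). The klt condition \textup{(ii)} of Theorem \ref{thm:main} is exactly what \cite{miyaokayau} requires to deduce \(\mu_L\)-slope stability of \(\mE_{*}\) with respect to a suitable polarization \(L\) on \(X\), so I would invoke that result directly.

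The main work is the vanishing \(\parch_2(\mE_{*}) = 0\), and this is where Proposition \ref{prop:pch2} does the heavy lifting. By Lemma \ref{lem:subspace}, \(\parch_2(\mE_{*})\) lies in the subspace \(V \subset H^4(X, \R)\), so it suffices to show that all coordinate projections \(\delta_L\) with \(r(L) \geq 3\) and \(\delta_{L_1 \Subset L_2}\) with \(r(L_2) \geq 2\) vanish on \(\parch_2(\mE_{*})\). Proposition \ref{prop:pch2} computes these coefficients to be, respectively, \(Q_L(\ba)\), \(2B(L_1, L_2) \cdot Q_{L_2}(\ba)\), and \(0\). The hypothesis \textup{(iii)} of Theorem \ref{thm:main} asserts that \(Q_L(\ba) = 0\) for every irreducible subspace \(L\) with \(r(L) \geq 3\), including \(L = \emptyset\). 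Thus every coefficient in the expansion \eqref{eq:pch2components} vanishes, giving \(\parch_2(\mE_{*}) = 0\).

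Having checked all four hypotheses, Theorem \ref{thm:moch} produces a unitary flat logarithmic connection \(\tn\) on \(\mE\) compatible with \(\mE_{*}\), which is the conclusion. I do not expect any genuine obstacle beyond careful bookkeeping: Proposition \ref{prop:pch2} is the nontrivial computational input and it is already in hand, while the other three hypotheses are either standard or explicitly established in \cite{miyaokayau}. The only subtlety to be mindful of is the choice of polarization \(L\) on the wonderful model \(X\) under which stability is verified; I would use the same ample class used by Panareda-de Fernex-Mustaţă in \cite{miyaokayau}, so that their stability argument transfers verbatim.
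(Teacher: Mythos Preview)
Your proposal is correct and follows essentially the same approach as the paper's proof: cite \cite{miyaokayau} for the locally abelian property, for \(\parc_1(\mE_{*})=0\), and for \(\mu_L\)-stability, then use Proposition \ref{prop:pch2} together with hypothesis (iii) to conclude \(\parch_2(\mE_{*})=0\), and finally invoke Theorem \ref{thm:moch}. The only cosmetic difference is that the paper bundles the Calabi--Yau and klt conditions together when citing \cite{miyaokayau} for \(\parc_1=0\) and stability, whereas you separate them; both are fine.
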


\begin{proof}
	By \cite[Theorem 4.29]{miyaokayau} the parabolic bundle \(\mE_{*}\) is locally abelian.
	The Calabi-Yau condition (i) together with the klt condition (ii) imply that \(\parc_1(\mE_{*}) = 0\) (\cite[Lema 4.32]{miyaokayau}) and that \(\mE_{*}\) is \(\mu_L\)-stable (\cite[Theorem 5.1]{miyaokayau}).
    By Lemma \ref{lem:pch2is0}, \(\parch_2(\mE_{*}) = 0\). The result then follows by Theorem \ref{thm:moch}.
\end{proof}

\begin{lemma}\label{lem:nabla} Let \(\tn\) be the logarithmic connection constructed on \(\mE \to X\) in Theorem \ref{thm:flatunitary}.
	There is a (unique) logarithmic connection \(\nabla\) on \(T\CP^n\) with poles along \(\mH\) such that \(\tn = \pi^*\nabla\). Moreover, \(\nabla\) is adapted to the weighted arrangement \(\{(H, a_H)\}\).
\end{lemma}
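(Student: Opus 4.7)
The plan is to construct $\nabla$ by descent along the birational morphism $\pi$, and then verify the three items of Definition \ref{def:adaptconn}. Since $\pi$ restricts to a biholomorphism $X \setminus D \to \CP^n \setminus \mH$, the restriction of $\tn$ is a holomorphic connection on $T\CP^n|_{\CP^n \setminus \mH}$; I extend it across $\mH$ as a logarithmic connection. Work locally on a ball $B \subset \CP^n$ around a point $p \in \mH$, choose coordinates $(z_1, \ldots, z_n)$ adapted to $\mH$, trivialise $T\CP^n$ over $B$ by coordinate vector fields, and pull the trivialisation back to $\pi^{-1}(B) \subset X$. In this frame $\tn = d - \Omega$ with $\Omega = (\Omega_{jk})$ a matrix of meromorphic $1$-forms on $\pi^{-1}(B)$ having logarithmic poles along $D$. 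Because $\pi$ is birational, each $\Omega_{jk}$ equals $\pi^*\omega_{jk}$ for a unique meromorphic $1$-form $\omega_{jk}$ on $B$, holomorphic on $B \setminus \mH$. At a generic point of each hyperplane $H$ through $p$, $\pi$ is an isomorphism and $\Omega_{jk}$ has a simple pole along the proper transform $D_H$, so $\omega_{jk}$ has a simple pole along $H$ generically. Since the order of a meromorphic $1$-form along an irreducible divisor is determined at a generic point of that divisor, $\omega_{jk}$ has pole of order at most one along each $H \cap B$. Hence $\omega_{jk}$ is logarithmic and $\nabla := d - (\omega_{jk})$ is the desired connection, uniquely characterised by $\pi^*\nabla = \tn$.

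For conditions (i) and (ii) of Definition \ref{def:adaptconn}, I work at a point $x$ of $H$ through which no other hyperplane of $\mH$ passes. Near such $x$ the map $\pi$ is a biholomorphism, so $\Res_H(\nabla)(x)$ coincides with $\Res_{D_H}(\tn)$ at the corresponding point of $D_H$. The compatibility of $\tn$ with $\mE_*$ states that $\Res_{D_H}(\tn)$ preserves the filtration $F^H_\bullet$ and acts on its gradeds by the corresponding weights. Since $F^H_a = \pi^*TH$ for $a < a_H$ and $F^H_a = \mE|_{D_H}$ for $a \geq a_H$, the residue annihilates $\pi^*TH$ (the weight-$0$ subbundle) and acts by scalar multiplication by $a_H$ on the rank-one quotient $\mE|_{D_H}/\pi^*TH$. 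Descending via $\pi$ gives $\ker \Res_H(\nabla)(x) = T_xH$ and eigenvalues $(0, a_H)$, as required.

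The main obstacle is item (iii), the holomorphicity of residues across $\Sing(\mH) \cap H$. The key input is Proposition \ref{prop:normcross}: since $D$ is simple normal crossing, $\Res_{D_H}(\tn)$ extends as a holomorphic section of $\End(\mE|_{D_H})$ over all of $D_H$. On the preimage of the smooth locus, $\pi|_{D_H}$ is an isomorphism and identifies $\Res_{D_H}(\tn)$ with the pullback of $\Res_H(\nabla)$; since both sides are meromorphic sections on $D_H$ agreeing on a dense open set, the identity $\pi|_{D_H}^* \Res_H(\nabla) = \Res_{D_H}(\tn)$ holds globally on $D_H$. If $\Res_H(\nabla)$ admitted a pole along some irreducible divisor $Z \subsetneq H$, then by properness and surjectivity of $\pi|_{D_H}\colon D_H \to H$ the preimage $\pi|_{D_H}^{-1}(Z)$ would be a nonempty divisor in $D_H$ along which $\pi|_{D_H}^*\Res_H(\nabla)$ has a pole, contradicting the holomorphicity of $\Res_{D_H}(\tn)$. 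Hence $\Res_H(\nabla)$ is holomorphic on all of $H$, so $\nabla$ has holomorphic residues, and the proof that $\nabla$ is adapted to $\{(H, a_H)\}$ is complete.
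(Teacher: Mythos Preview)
Your approach matches the paper's: descend $\tn$ along the birational $\pi$, read off items (i)--(ii) from compatibility with the parabolic filtration at generic points of each $H$, and deduce item (iii) from the fact that $\Res_{D_H}(\tn)$ is holomorphic on all of $D_H$ because $D$ is simple normal crossing. Two points of your write-up are looser than they should be, though neither breaks the argument.

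First, in your construction of $\nabla$, you pass from ``$\Omega_{jk}$ has a simple pole along $D_H$'' to ``$\omega_{jk}$ has pole of order at most one along $H$'' to ``$\omega_{jk}$ is logarithmic''. The last implication is false in general: the $1$-form $dz_2/z_1$ has a simple pole along $\{z_1=0\}$ but is not logarithmic in the sense of Definition~\ref{def:log1form}. What actually makes $\omega_{jk}$ logarithmic is that the definition only imposes a condition at points of the \emph{smooth locus} $\mH^\circ$, and at every such point $\pi$ is a local isomorphism, so $\omega_{jk}$ inherits the logarithmic form (not merely the pole order) from $\Omega_{jk}$. The paper packages this step as Proposition~\ref{prop:pullback1form2}, invoking a Hartogs-type extension for logarithmic forms across the codimension-$\geq 2$ indeterminacy locus of $\pi^{-1}$.

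Second, your argument for item (iii) says ``if $\Res_H(\nabla)$ admitted a pole along $Z$\ldots'', which presupposes that $\Res_H(\nabla)$ is already known to be meromorphic on $H$; a priori it is only holomorphic on $H\cap\mH^\circ$. The clean fix---and what the paper does inside the proof of Proposition~\ref{prop:pullback1form2}---is to note that $\pi|_{D_H}\colon D_H\to H$ is birational between smooth varieties, so by Lemma~\ref{lem:pullbacksections} the holomorphic section $\Res_{D_H}(\tn)$ of $\pi^*\End(T\CP^n)|_{D_H}$ is the pullback of a holomorphic section on $H$, which then agrees with $\Res_H(\nabla)$ on the dense open set where both are defined. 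This is the Hartogs step you are implicitly using; once stated, your pole-detection phrasing becomes unnecessary.
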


\begin{proof}
	The facts that \(\nabla\) exists, unique, and logarithmic follow from \(\pi: X \to \CP^n\) having an inverse defined outside a set of codimension \(\geq 2\) in \(\CP^n\). We only need to show that \(\nabla\) is adapted to \(\{(H, a_H)\}\), i.e., we verify the three items in Definition \ref{def:adaptconn}.

	Items (i) and (ii). Let \(H \in \mH\) and \(\tH \subset X\) be its proper transform. Then \(\Res_H(\nabla)\) is holomorphic on the generic locus \(H^{\gen}:= H \setminus \bigcup_{H' \in \mH \setminus \{H\}} H'\) where \(\pi\) has an inverse and \(\Res_{\tH}(\tn) = \pi^*(\Res_H(\nabla))\) along \(\pi^{-1}(H^{\gen})\).
    Since \(\tn\) is compatible with the parabolic bundle \(\mE_{*}\), we have \(\ker (\Res_{\tH}(\tn)) = \pi^*TH\) and \(\tr(\Res_{\tH}(\tn)) = a_H\).  On the other hand, if \(x \in H^{\gen}\) then \(\Res_H(\nabla)(x) = \Res_{\tH}(\tn)(\tx)\) where \(\tx = \pi^{-1}(x)\). It follows that \(\ker (\Res_{H}(\nabla)) = TH\) and \(\tr(\Res_{H}(\nabla)) = a_H\) along \(H^{\gen}\).
	
	Item (iii). We are left to show that \(\nabla\) has holomorphic residues. This follows from Lemma \ref{prop:pullback1form2} in the appendix after taking local trivializations.
\end{proof}

\subsubsection{Digression: torsion}
Let \(M\) be a complex manifold and let \(\nabla\) be a logarithmic connection on \(TM\) with poles along \(D\). 
The \emph{torsion} of \(\nabla\) is the holomorphic section of \((\Lambda^2T^*M \otimes TM)|_{M \setminus D}\) given by
\[
T(u, v) = \nabla_u v - \nabla_v u - [u,v] \,.
\]	

\begin{lemma}\label{lem:torsion}
	Let \(M\) be a complex manifold and let \(\nabla\) be a logarithmic connection on \(TM\) with poles along a divisor \(D \subset X\). Then the torsion of \(\nabla\) extends holomorphically across \(D\) to the whole \(M\) as an element 
	\[
	T \in H^0(\Lambda^2T^*M \otimes TM) 
	\] 
	if and only if \(TD \subset \ker \Res_D(\nabla)\) along the smooth locus \(D^{\circ}\) of \(D\).
\end{lemma}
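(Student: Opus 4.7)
The statement is local, so my plan is to verify both directions in adapted coordinates at a smooth point of \(D\), and then upgrade to the global statement using Hartogs.

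First I would work near a point \(x \in D^{\circ}\). Choose coordinates \((z_1,\dots,z_n)\) centred at \(x\) with \(D = \{z_1 = 0\}\), trivialize \(TM\) by the coordinate vector fields \(\partial_1,\dots,\partial_n\), and write \(\nabla = d - \Omega\) with
\[
\Omega = A_1 \, \frac{dz_1}{z_1} \,+\, \sum_{l>1} A_l \, dz_l,
\]
where the \(A_l\) are holomorphic matrix-valued functions; then \(\Res_D(\nabla) = A_1|_{z_1=0}\). Since coordinate vector fields commute, the torsion in this frame has components
\[
T(\partial_j,\partial_k)^i \;=\; \Omega^i_j(\partial_k) - \Omega^i_k(\partial_j).
\]

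Next I would read off, from the explicit form of \(\Omega\), the only terms that can fail to be holomorphic. For \(j,k>1\) one gets \((A_k)^i_j - (A_j)^i_k\), which is holomorphic. The only singular contribution is for the pair \((1,k)\) with \(k>1\), namely
\[
T(\partial_1,\partial_k)^i \;=\; (A_k)^i_1 \;-\; \frac{(A_1)^i_k}{z_1}.
\]
This extends holomorphically across \(\{z_1=0\}\) if and only if \((A_1)^i_k\) vanishes on \(D\) for every \(i\) and every \(k>1\). Since \(TD|_{D^\circ}\) is spanned by \(\partial_2,\dots,\partial_n\), this is precisely the condition \(TD \subset \ker \Res_D(\nabla)\) along \(D^\circ\). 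This establishes the equivalence at smooth points of \(D\), using that the identification between matrix entries and the intrinsic torsion tensor is coordinate-independent.

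Finally I would globalize. If \(TD \subset \ker\Res_D(\nabla)\) along \(D^\circ\), the local computation gives a holomorphic extension of \(T\) across \(D^\circ\); the resulting holomorphic section of \(\Lambda^2 T^*M \otimes TM\) on \(M \setminus \Sing(D)\) then extends across \(\Sing(D)\) by Hartogs, since \(\Sing(D)\) is an analytic subset of codimension \(\geq 2\) and \(\Lambda^2 T^*M \otimes TM\) is locally free. The converse direction is immediate: if \(T\) extends holomorphically to all of \(M\), then it extends across \(D^\circ\), and the computation above forces \((A_1)^i_k|_{z_1=0} = 0\) for \(k>1\). The only subtlety — and this is about the only place one has to pause — is to confirm that the coefficient \((A_1)^i_k|_{z_1=0}\) is genuinely a matrix entry of the residue endomorphism \(\Res_D(\nabla)\) acting on \(TD\), which follows from the fact that \(\partial_2,\dots,\partial_n\) are a local frame of \(TD\). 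Since this is a bookkeeping check rather than a real obstacle, I expect the whole argument to be a short local calculation plus one application of Hartogs.
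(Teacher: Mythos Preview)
Your proposal is correct and follows essentially the same approach as the paper: a local computation in adapted coordinates at a smooth point of \(D\), identifying the only possibly singular torsion components as \(T(\partial_1,\partial_k) = A_k(\partial_1) - z_1^{-1}A_1(\partial_k)\) for \(k>1\), followed by Hartogs to cross \(\Sing(D)\). The paper's argument is the same computation written with vectors rather than matrix entries, so there is no substantive difference.
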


\begin{proof}
	Suppose that \(TD \subset \ker \Res_D(\nabla)\) along \(D^{\circ}\). We want to show that \(T\) extends holomorphically across \(D\) to the whole \(M\). 
	Let \(p \in D^{\circ}\) and let \((z_1, \ldots, z_n)\) be local complex coordinates centred at \(p\) such that \(D=\{z_1=0\}\) and \(\nabla = d - \Omega\) where
	\[
	\Omega = A_1 \frac{dz_1}{z_1} + A_2 dz_2 + \ldots + A_n dz_n 
	\]
	with \(A_1, \ldots, A_n\) holomorphic sections of \(\End TM\). Writing \(\p_i = \p / \p z_i\) for the coordinate vector fields, we have that
	\begin{equation}\label{eq:Tij}
	T(\p_i, \p_j) = \begin{cases}
	A_j(\p_i) - A_i(\p_j) &\text{ if } i, j > 1 \,, \\
	A_j(\p_1) - \frac{1}{z_1} \cdot A_1(\p_j) &\text{ if } i=1 \text{ and } j>1 \,.
	\end{cases}		
	\end{equation}
	
	The assumption that \(TD \subset \ker \Res_D(\nabla)\) implies that the vector fields \(A_1(\p_j)\) for \(j > 1\) vanish along \(\{z_1 = 0\}\) and therefore the terms \((1/z_1)A_1(\p_j)\) are holomorphic. It follows from this and Equation \eqref{eq:Tij} that for all \(i, j \in \{1, \ldots, n\}\) the vector fields \(T(\p_i, \p_j)\) are holomorphic in a neighbourhood of \(p\). This shows that \(T\) extends across \(D^{\circ}\) to \(M \setminus \Sing(D)\) as a holomorphic section of \(\Lambda^2T^*M \otimes TM\). By Hartogs, since \(\Sing(D)\) is an analytic subset of \(M\) of codimension \(\geq2\), the section \(T\) extends holomorphically to the whole \(M\).
	
	Conversely, if \(T\) is holomorphic, then it follows from Equation \eqref{eq:Tij} that \(A_1(\p_j)\) must vanish along \(z_1=0\) for all \(j>1\), therefore \(TD \subset \ker \Res_D(\nabla)\).
\end{proof}	

\begin{corollary}\label{cor:torfree}
	Let \(M\) be a complex manifold and let \(\nabla\) be a logarithmic connection on \(TM\) with poles along \(D \subset M\). If \(TD \subset \ker \Res_{D}(\nabla)\) along \(D^{\circ}\) and \(H^0(\Omega^2 \otimes TM) = 0\), then \(\nabla\) is torsion free.
\end{corollary}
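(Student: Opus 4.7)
The proof is an immediate application of Lemma \ref{lem:torsion}, so the plan is essentially to verify that both hypotheses of that lemma are already at hand and then use a standard vanishing argument.

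First, I would invoke Lemma \ref{lem:torsion} with the assumption \(TD \subset \ker \Res_D(\nabla)\) along the smooth locus \(D^{\circ}\). This hypothesis is precisely what is needed to conclude that the torsion \(T\), which is a priori only a holomorphic section of \(\Lambda^2 T^*M \otimes TM\) over \(M \setminus D\), extends holomorphically across \(D\) to a global section
\[
T \in H^0(M,\, \Lambda^2 T^*M \otimes TM) \,.
\]

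Next, using the identification \(\Omega^2 = \Lambda^2 T^*M\), the second hypothesis \(H^0(\Omega^2 \otimes TM) = 0\) says that this space of global sections is trivial. Consequently the extended torsion tensor must vanish identically on \(M\), and in particular \(T \equiv 0\) on the regular locus \(M \setminus D\). This is exactly the statement that \(\nabla\) is torsion-free.

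There is no real obstacle in this argument; the corollary is a one-line combination of Lemma \ref{lem:torsion} (which does the analytic work of extending the torsion across the singular divisor) with a cohomological vanishing hypothesis. Any subtlety already lives in Lemma \ref{lem:torsion}, where the holomorphic extension across \(\Sing(D)\) relies on Hartogs; here we only harvest its consequence.
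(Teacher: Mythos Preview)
Your proposal is correct and matches the paper's approach exactly: the corollary is stated without proof in the paper precisely because it follows immediately from Lemma \ref{lem:torsion} plus the vanishing hypothesis, which is what you wrote.
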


\subsubsection{Holomorphic \(2\)-forms on \(\CP^n\) with values on \(T\CP^n\)}

\begin{lemma}\label{lem:nonscalar}
	Let \(V\) be a complex vector space of dimension \(n\). Let \(\Omega\) be non-zero two-form with values in \(V\). Then there is a vector \(v \in V\) such that the endomorphism \(\imath_v \Omega \in \End(V)\) defined by \(x \mapsto \Omega(v, x)\) is a non-scalar operator.  
\end{lemma}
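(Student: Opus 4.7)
The statement is a small linear algebra observation, so the plan is short. The idea is to exploit the antisymmetry of $\Omega$ in order to produce, for a suitably chosen $v$, a non-zero vector in the kernel of $\imath_v\Omega$ together with a vector on which $\imath_v\Omega$ does not vanish; the two conditions jointly rule out $\imath_v\Omega$ being a scalar operator, since a scalar $\lambda\cdot\Id$ is either zero (when $\lambda=0$) or injective (when $\lambda\neq 0$).

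Concretely, since $\Omega\neq 0$ as an element of $\Lambda^2 V^*\otimes V$, I pick vectors $v,x\in V$ with $\Omega(v,x)\neq 0$. Such a $v$ is automatically non-zero, because $\Omega(0,\cdot)=0$. The endomorphism $\imath_v\Omega$ then sends $x$ to $\Omega(v,x)\neq 0$, so $\imath_v\Omega$ is a non-zero element of $\End(V)$. At the same time, antisymmetry gives
\[
(\imath_v\Omega)(v)=\Omega(v,v)=0,
\]
so the non-zero vector $v$ lies in $\ker(\imath_v\Omega)$. Combining the two observations, $\imath_v\Omega$ is a non-zero endomorphism with non-trivial kernel, hence cannot be a scalar multiple of the identity. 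There is no real obstacle here; the only point to notice is that antisymmetry provides a kernel vector for free, namely $v$ itself, which is exactly the input needed to contradict scalar-ness.
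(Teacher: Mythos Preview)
Your proof is correct and in fact cleaner than the paper's. The paper fixes a basis $e_1,\ldots,e_n$, writes $\Omega=\sum_i\omega_i\otimes e_i$ with $\omega_i\in\Lambda^2V^*$, selects an index $i$ with $\omega_i\neq 0$, and then argues that one can find $v$ with $\omega_i(v,\cdot)$ not proportional to $e_i^*$; for such $v$ the operator $\imath_v\Omega$ cannot equal $\lambda\cdot\Id$, since the $e_i$-component of $\imath_v\Omega$ would then have to be $\lambda e_i^*$. Your argument bypasses the basis entirely: you simply observe that antisymmetry forces $v\in\ker(\imath_v\Omega)$, so any $v$ with $\imath_v\Omega\neq 0$ already works. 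This is a genuinely shorter route, and it makes the role of antisymmetry completely transparent; the paper's approach, while equally valid, hides the key point behind a coordinate computation.
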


\begin{proof}
	Let \(e_1, \ldots, e_n\) be a basis of \(V\) and write \(\Omega = \sum \omega_i \otimes e_i\)
	with \(\omega_i \in \Lambda^2 V^*\). Since \(\Omega\) is non-zero, we can choose \(i\) such that the two-form \(\omega_i\) is non-zero. Clearly, there is \(v \in V\) such that \(\omega_i(v, \cdot)\) is not proportional to \(e_i^*\). Then \(\imath_v \Omega\) is a non-scalar operator. 
\end{proof}

\begin{lemma}\label{lem:nohol2forms}
	There are no non-zero holomorphic \(2\)-forms on \(\CP^n\) with values on \(T\CP^n\).
\end{lemma}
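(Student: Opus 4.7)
The plan is to argue by contradiction, using Lemma \ref{lem:nonscalar} together with the classical identity $H^0(\CP^n, \End T\CP^n) = \C \cdot \Id$. Suppose $\Omega$ is a non-zero holomorphic section of $\Omega^2 \otimes T\CP^n$, and choose $p \in \CP^n$ with $\Omega(p) \neq 0$ in $\Lambda^2 T_p^*\CP^n \otimes T_p\CP^n$. Applying Lemma \ref{lem:nonscalar} to the $T_p\CP^n$-valued two-form $\Omega(p)$, I obtain a tangent vector $v_p \in T_p\CP^n$ such that the contraction $\iota_{v_p}\Omega(p) \in \End(T_p\CP^n)$ is a non-scalar operator.

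Since $T\CP^n$ is globally generated (immediate from the dual Euler sequence $0 \to \mO \to \mO(1)^{\oplus(n+1)} \to T\CP^n \to 0$, or equivalently from the transitive action of $\PGL_{n+1}(\C)$ on $\CP^n$), I can extend $v_p$ to a global holomorphic vector field $v \in H^0(\CP^n, T\CP^n)$ with $v(p) = v_p$. The pointwise contraction then yields a global section $\iota_v \Omega \in H^0(\CP^n, T^*\CP^n \otimes T\CP^n) = H^0(\CP^n, \End T\CP^n)$ whose value at $p$ is the non-scalar operator $\iota_{v_p}\Omega(p)$. Granting $H^0(\CP^n, \End T\CP^n) = \C \cdot \Id$, the section $\iota_v\Omega$ must be a constant multiple of the identity everywhere, contradicting non-scalarity at $p$. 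The case $n=1$ is vacuous since $\Omega^2_{\CP^1}=0$, so I assume $n \geq 2$ throughout.

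The only residual input is the identity $H^0(\CP^n, \End T\CP^n) = \C \cdot \Id$, which I would verify by tensoring the Euler sequence with $\Omega^1$ to get
\[
0 \to \Omega^1 \to \Omega^1(1)^{\oplus(n+1)} \to \Omega^1 \otimes T\CP^n \to 0,
\]
and taking the long exact sequence. Using the standard vanishings $H^0(\Omega^1) = 0$ and $H^0(\Omega^1(1)) = 0$ (the latter by tensoring Euler by $\mO(1)$ and noting that the induced map $\C^{n+1} \to H^0(\mO(1))$ is the identity) together with $H^1(\Omega^1) = \C$ from $h^{1,1}(\CP^n) = 1$, one finds that $H^0(\End T\CP^n)$ is at most one-dimensional; since it contains $\Id$ it equals $\C \cdot \Id$. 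Equivalently, stability (and hence simplicity) of $T\CP^n$ gives the same conclusion directly. I do not expect any genuine obstacle: the only care needed is the formal pointwise identity $(\iota_v\Omega)(p) = \iota_{v_p}\Omega(p)$ and the self-contained cohomological calculation just sketched.
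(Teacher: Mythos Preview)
Your proof is correct and follows essentially the same approach as the paper: contradiction via Lemma~\ref{lem:nonscalar} to produce a non-scalar global endomorphism, then appeal to the simplicity of \(T\CP^n\). The paper is terser (it simply asserts the existence of the global vector field \(v\) and cites \cite[Lemma 4.1.2]{okonek} for simplicity), whereas you spell out the extension of \(v_p\) via global generation and sketch the cohomological computation of \(H^0(\End T\CP^n)\), but the argument is the same.
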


\begin{proof}
	By contradiction, suppose that \(\Omega\) is a non-zero holomorphic \(2\)-form on \(\CP^n\) with values on \(T\CP^n\). Choose a point \(p \in \CP^n\) and a holomorphic vector field \(v\) such that \(\imath_v \Omega\) is non-scalar on \(T\CP^n\) - as guaranteed by Lemma \ref{lem:nonscalar}.
	This contradicts the fact that \(T\CP^n\) is simple, i.e., every element of \(H^0 (\End T\CP^n)\) is a scalar multiple of the identity -see \cite[Lemma 4.1.2]{okonek}.
\end{proof}

\begin{corollary}\label{cor:tf}
	The connection \(\nabla\) on \(T\CP^n\) of Lemma \ref{lem:nabla} is torsion-free.
\end{corollary}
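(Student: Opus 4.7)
The statement is a direct consequence of the two technical results immediately preceding it, so my plan is simply to verify that the two hypotheses of Corollary \ref{cor:torfree} are satisfied for $M = \CP^n$, $D = \bigcup_{H \in \mH} H$, and the connection $\nabla$ constructed in Lemma \ref{lem:nabla}.

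First I would unpack what it means for $\nabla$ to be adapted to the weighted arrangement $\{(H, a_H)\}$. By Lemma \ref{lem:nabla}, $\nabla$ satisfies all three items of Definition \ref{def:adaptconn}. Item (i) of that definition states that $\ker(\Res_{\mH}(\nabla)) = T\mH$ along the smooth locus $\mH^{\circ}$ of $\mH$. In particular, $T\mH \subset \ker \Res_{\mH}(\nabla)$ along $\mH^{\circ}$, which is exactly the first hypothesis of Corollary \ref{cor:torfree}.

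Second, the remaining hypothesis of Corollary \ref{cor:torfree} is the vanishing $H^0(\Omega^2_{\CP^n} \otimes T\CP^n) = 0$. This is precisely the content of Lemma \ref{lem:nohol2forms}, which was proved by combining Lemma \ref{lem:nonscalar} (a non-zero $T\CP^n$-valued $2$-form produces a non-scalar endomorphism of the fiber at some point) with the simplicity of $T\CP^n$ from \cite{okonek}.

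With both hypotheses in hand, Corollary \ref{cor:torfree} directly yields that $\nabla$ is torsion-free. There is no genuine obstacle here: the real work was already done in proving Lemma \ref{lem:torsion} (which shows the torsion $T$, a priori only defined on $X \setminus D$, extends holomorphically to the whole manifold when $TD \subset \ker \Res_D(\nabla)$) and Lemma \ref{lem:nohol2forms} (which forces any such global holomorphic extension to vanish). The corollary is then an immediate combination of these two facts.
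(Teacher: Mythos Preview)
Your proposal is correct and matches the paper's own proof essentially verbatim: the paper likewise invokes Corollary~\ref{cor:torfree}, using that $TH = \ker(\Res_H(\nabla))$ for all $H \in \mH$ (from $\nabla$ being adapted, Lemma~\ref{lem:nabla}) together with Lemma~\ref{lem:nohol2forms}.
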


\begin{proof}
	This follows from the fact that \(TH = \ker (\Res_H(\nabla))\) for all \(H \in \mH\) together with Corollary \ref{cor:torfree} and Lemma \ref{lem:nohol2forms}.
\end{proof}

\subsection{Proof of (2) \(\implies\) (1)}\label{sec:pfexistencePK}

\begin{proof}[Proof of (2) implies (1) in Theorem \ref{thm:main}]
	Let \((\mH, \ba)\) be a weighted arrangement in \(\CP^n\) with \(\ba \in (0,1)^{\mH}\) satisfying the conditions (i), (ii), and (iii) of Theorem \ref{thm:main}. 
	
	Let \(\nabla\) the logarithmic connection on \(T\CP^n\) with poles along \(\mH\) given by Lemma \ref{lem:nabla}. Then \(\nabla\) satisfies the following properties:
	\begin{enumerate}[label=\textup{(\roman*)}]
		\item \(\nabla\) is flat and unitary because \(\tn = \pi^*\nabla\) in Theorem \ref{thm:flatunitary} is flat and unitary;
		\item \(\nabla\) is torsion-free by Corollary \ref{cor:tf};
		\item \(\nabla\) is adapted to \(\{(H, a_H)\}\) by Lemma \ref{lem:nabla}.
	\end{enumerate}
	
	By Theorem \ref{thm:connmet}, the Hermitian metric \(g\) on the arrangement complement preserved by \(\nabla\) defines a PK metric on \(\CP^n\) with cone angles \(2\pi(1-a_H)\) along the hyperplanes \(H \in \mH\).
\end{proof}

\appendix

\section{Appendix}
\label{appendix}

In Section~\ref{sec:log1forms}, we recall standard definitions of logarithmic \(1\)-forms, as presented for example in~\cite{saito}. In Section~\ref{sec:holres}, we introduce the condition that a logarithmic \(1\)-form has \emph{holomorphic residues}. In Section~\ref{sec:pullback}, we relate the holomorphic residues condition to the property that the pullback by a log resolution has logarithmic poles. Finally, in Section~\ref{app:conn}, we review standard material on connections, included here for completeness and to fix conventions.

\subsection{Logarithmic \(1\)-forms}\label{sec:log1forms}
Let \(X\) be a complex manifold.
We denote by \(\mO\) the sheaf of holomorphic functions on \(X\), and by \(\Omega^p\) the sheaf of holomorphic  \(p\)-forms.
 
\begin{definition}\label{def:log1form}
	Let \(D \subset X\) be a divisor and
	let \(\omega \in \Omega^1(X \setminus D)\) be a holomorphic \(1\)-form on \(X \setminus D\).
	We say that \(\omega\) is a logarithmic \(1\)-form on \(X\) with poles along \(D\) if for every \(x \in D^{\circ}\) (the smooth locus of \(D\)) there is an open set \(U \subset X\) that contains \(x\) and a defining equation \(h \in \mO(U)\) of \(D\) such that
	\begin{equation}\label{eq:log1form}
	\omega = a \frac{dh}{h} \, + \, \eta \,\,
	\end{equation}
	for some \(a \in \mO(U)\) and \(\eta \in \Omega^1(U)\).
\end{definition}

\begin{definition}
	Let \(\omega\) be a logarithmic \(1\)-form on \(X\) with poles along \(D\).
	The \emph{residue} of \(\omega\) along \(D\) is the holomorphic function on \(D^{\circ}\) locally given by
	\begin{equation}
	\Res_{D}(\omega) := a|_{\{h=0\}} \,,
	\end{equation}
	where \(\omega\) is as in \eqref{eq:log1form}.	
\end{definition}

\begin{remark}\label{rmk:linearityresidue}
	The residue is obviously linear: (i) \(\Res_D(\omega_1 + \omega_2) = \Res_D(\omega_1) + \Res_D(\omega_2)\); (ii) if \(f\) is a holomorphic function on \(X\), then \(\Res_D(f \omega) = f \Res_D(\omega)\).	
\end{remark}

\begin{notation}
	Let \(D = \sum D_i\) be the irreducible decomposition of \(D\).
	We write \(\Res_{D_i}(\omega)\) for the restriction of  \(\Res_D(\omega)\) to \(D_i \cap D^{\circ}\). 
\end{notation}

In general, the residues might have poles at \(\Sing(D)\), as the following example illustrates.

\begin{example}[{\cite[p. 278]{saito}}]\label{ex:3lines}
	Consider the meromorphic \(1\)-form on \(\C^2\) given by
	\[
	\omega = \frac{1}{w-z} \left( \frac{dz}{z} - \frac{dw}{w} \right) \,.
	\]
	It is easy to check that \(\omega\) is a logarithmic \(1\)-form on \(\C^2\) with poles along \(D = D_1 + D_2 + D_3\), where \(D_1 = \{z=0\}\)\,,\, \(D_2 = \{w=0\}\) and \(D_3 = \{z=w\}\). 
	
	A straightforward calculation shows that
	the residues of \(\omega\) are given by: \(\Res_{D_1}(\omega) = 1/w\),
	\(\Res_{D_2}(\omega) = 1/z\), and  \(\Res_{D_3}(\omega) = -1/z\).
	In all \(3\) cases, \(D_i \cap D^{\circ} \cong \C^*\) and \(\Res_{D_i}(\omega)\) has a simple pole at the origin.
\end{example}

\subsubsection{Auxiliary lemmas}
The following elementary lemmas will be useful later on.

\begin{lemma}\label{lem:zeroresidues}
	Let \(\omega\) be a logarithmic \(1\)-form on \(X\) with poles along \(D\).
	If \(\Res_{D}(\omega) = 0\), then \(\omega \in \Omega^1(X)\).
\end{lemma}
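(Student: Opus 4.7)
The plan is a two-step extension argument: first extend $\omega$ holomorphically across the smooth locus $D^\circ$ of $D$, and then use Hartogs' theorem to extend across the singular locus $\Sing(D)$, which has codimension at least $2$ in $X$.

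For the first step, fix a point $x \in D^\circ$ and choose an open neighbourhood $U$ of $x$ together with a defining equation $h \in \mO(U)$ of $D$, so that on $U$ we may write $\omega = a \, dh/h + \eta$ with $a \in \mO(U)$ and $\eta \in \Omega^1(U)$. By hypothesis, $\Res_D(\omega)$ vanishes along $D^\circ$, which means $a|_{\{h=0\}} = 0$. Since $D \cap U = \{h=0\}$ is a smooth hypersurface with $h$ a local defining function, the ideal of $D$ in $\mO(U)$ is generated by $h$, so $a = h \cdot b$ for some $b \in \mO(U)$ (shrinking $U$ if necessary). Then
\[
\omega = \frac{a}{h} \, dh + \eta = b \, dh + \eta \,,
\]
which is a holomorphic $1$-form on $U$. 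Thus $\omega$ extends holomorphically across $D^\circ$, giving a section of $\Omega^1$ over $X \setminus \Sing(D)$.

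For the second step, note that $\Sing(D)$ is a closed analytic subset of $X$ of codimension $\geq 2$. Choose local coordinates on a neighbourhood and write the extended form in components; each coefficient is a holomorphic function on the complement of an analytic subset of codimension $\geq 2$, and hence extends holomorphically by Hartogs' theorem. The extensions glue to give $\omega \in \Omega^1(X)$.

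There is no real obstacle here: both steps are standard and essentially local. The only thing to be careful about is that the decomposition $\omega = a \, dh/h + \eta$ in Definition \ref{def:log1form} is not unique, but the residue $a|_{\{h=0\}}$ is a well-defined function on $D^\circ$ (by the linearity noted in Remark \ref{rmk:linearityresidue} together with the usual check that adding a holomorphic $1$-form to $\eta$ or multiplying $h$ by a unit does not change $a|_{\{h=0\}}$), so the vanishing hypothesis translates unambiguously into the divisibility $h \mid a$ used above.
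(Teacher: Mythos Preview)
Your proof is correct and follows essentially the same approach as the paper's own proof: extend across the smooth locus \(D^{\circ}\) using the vanishing of the residue, then invoke Hartogs to cross \(\Sing(D)\). The paper compresses your first step into the single sentence ``the hypothesis \(\Res_D(\omega)=0\) implies that \(\omega\) is holomorphic outside \(\Sing(D)\)''; you have simply spelled out the divisibility argument \(h \mid a\) that justifies it.
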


\begin{proof}
	The hypothesis \(\Res_D(\omega) = 0\) implies that \(\omega\) is holomorphic outside \(\Sing(D)\).
	By Hartogs, \(\omega \in \Omega^1(X)\).
\end{proof}

\begin{lemma}\label{lem:dlogf}
	Let \(f\) be a non-zero holomorphic function on a complex manifold \(X\) and let \(\omega = df / f\). Then the following holds:
	\begin{enumerate}[label=\textup{(\roman*)}]
		\item \(\omega\) is a logarithmic \(1\)-form on \(X\) with poles along \(D = \{f=0\}\);
		\item if \(x \in D^{\circ}\) then
		\(\Res_D(\omega)(x) = m\), where \(m\) is the order of vanishing of \(f\) along the irreducible component of \(D\) that contains \(x\).
	\end{enumerate}
\end{lemma}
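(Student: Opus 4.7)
The plan is to work locally near a smooth point of $D$ and reduce to an explicit computation in well-chosen coordinates. Both items can be established in one go.

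First I would fix a point $x \in D^{\circ}$. Since $x$ lies in the smooth locus of $D$, there is a unique irreducible component $D_0$ of $D$ passing through $x$, and I can choose local holomorphic coordinates $(z_1, \ldots, z_n)$ centred at $x$ so that $D_0 = \{z_1 = 0\}$ in a neighbourhood $U$ of $x$. Using that the local ring $\mO_{X,x}$ is a UFD, I factor $f|_U = z_1^m \cdot g$, where $m \geq 1$ is the order of vanishing of $f$ along $D_0$ and $g \in \mO(U)$ is a holomorphic function with $g(x) \neq 0$ (shrinking $U$ if necessary so that $g$ is nowhere vanishing on $U$).

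A direct computation then yields
\begin{equation*}
    \omega = \frac{df}{f} = m \cdot \frac{dz_1}{z_1} + \frac{dg}{g} \,.
\end{equation*}
Since $g$ is nowhere vanishing on $U$, the form $dg/g$ is holomorphic on $U$, so $\omega$ is of the form required by Definition \ref{def:log1form} with $h = z_1$, $a = m$, and $\eta = dg/g$. This proves (i). For (ii), reading off the residue directly from the local expression gives $\Res_D(\omega)(x) = m$, the order of vanishing of $f$ along the component $D_0$ containing $x$.

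There is essentially no serious obstacle: the only thing to check is that the notion of residue is independent of the choice of local defining equation of $D_0$, which follows from the fact that any other defining equation has the form $h' = u \cdot z_1$ with $u$ a local unit, so $dh'/h' - dz_1/z_1 = du/u$ is holomorphic and contributes zero to the residue. The mild subtlety is the requirement $x \in D^{\circ}$, which is what guarantees that only one irreducible component of $D$ passes through $x$; this ensures that the local factorization $f = z_1^m g$ has $g$ non-vanishing at $x$, rather than the more complicated $f = \prod z_i^{m_i} \cdot g$ one would get at a singular point, where different components would contribute their own residues.
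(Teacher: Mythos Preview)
Your proof is correct and follows essentially the same approach as the paper: both factor $f$ locally near a smooth point of $D$ as $h^m g$ with $h$ a defining equation and $g$ a unit, then compute $df/f = m\,dh/h + dg/g$ to read off the logarithmic form and its residue. Your extra remarks on well-definedness of the residue and the role of the hypothesis $x \in D^\circ$ are fine but not needed for the argument.
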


\begin{proof}
	Take \(x \in D^{\circ}\). Then we can find an open neighbourhood \(U\) of \(x\) such that \(f = h^{m} g\) with \(h, g \in \mO(U)\), where \(h\) is a defining equation of \(D\) and \(g\) is nowhere vanishing. Then \(\omega = m dh /h + dg/g\) and \(dg /g\) is a holomorphic \(1\)-form on \(U\). Both (i) and (ii) follow from this.
\end{proof}

\subsection{The holomorphic residues condition}\label{sec:holres}
Let \(\omega\) be a logarithmic \(1\)-form on \(X\) with poles along \(D\). 

\begin{definition}\label{def:holres0}
	We say that \(\omega\) has \emph{holomorphic residues} if the irreducible components \(D_i\) of \(D\) are smooth and the residues \(\Res_{D_i}(\omega)\) extend holomorphically across \(\Sing(D)\) over the whole divisors \(D_i\) as holomorphic functions on \(D_i\).
\end{definition}

\begin{lemma}\label{lem:holres1form}
	Let \(\omega\) be a logarithmic \(1\)-form on \(X\) with poles along \(D\). Assume that the irreducible components \(D_i\) of \(D\) are smooth. Then \(\omega\) has holomorphic residues along \(D\) if and only if for every \(x \in X\) there is an open set \(U \subset X\) with \(x \in U\) such that
	\begin{equation}\label{eq:holres1form}
	\omega = \sum_{i\,|\, x \in D_i} a_i \, \frac{dh_i}{h_i} \,+\, \hol \,,
	\end{equation}
	where \(a_i \in \mO(U)\) and \(h_i \in \mO(U) \) are defining equations of \(D_i\).
\end{lemma}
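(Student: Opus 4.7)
The plan is to handle the two implications separately, with the ``$\Leftarrow$'' direction essentially immediate and the ``$\Rightarrow$'' direction proved by a local subtraction argument that reduces to Lemma \ref{lem:zeroresidues}. For ``$\Leftarrow$'', if $\omega$ has the asserted local form near $x$, then by Remark \ref{rmk:linearityresidue} and the obvious fact that $\Res_{D_i}(dh_i/h_i) = 1$ and $\Res_{D_i}(a_j dh_j/h_j) = 0$ for $j \neq i$ at points of $D_i^\circ$ not on $D_j$, one obtains $\Res_{D_i}(\omega)|_{D_i \cap U} = a_i|_{D_i \cap U}$, which is holomorphic; as such local expressions cover each $D_i$, the residue extends holomorphically over all of $D_i$.

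For ``$\Rightarrow$'', I would fix $x \in X$ and distinguish cases. If $x \notin D$, take $U$ disjoint from $D$; the statement is vacuous. If $x \in D$, shrink $U$ so that the only irreducible components of $D$ meeting $U$ are the finitely many $D_1, \ldots, D_m$ passing through $x$, and choose holomorphic defining equations $h_i \in \mO(U)$ with $dh_i$ nowhere vanishing on $D_i \cap U$. By hypothesis each $\Res_{D_i}(\omega)$ is a holomorphic function on all of $D_i$. I then extend $\Res_{D_i}(\omega)|_{D_i \cap U}$ to a holomorphic function $a_i \in \mO(U)$: since $D_i$ is smooth, after further shrinking $U$ we may take $h_i$ as one coordinate in a holomorphic chart, and the extension is automatic (set $a_i$ to be independent of that coordinate).

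Now consider
\[
\omega' \;=\; \omega \,-\, \sum_{i=1}^m a_i \,\frac{dh_i}{h_i},
\]
which is a logarithmic $1$-form on $U$ with poles along $D \cap U$. I would verify that $\Res_{D_j}(\omega') = 0$ on $D_j \cap D^\circ \cap U$ for each $j \in \{1,\ldots,m\}$: at a point of $D_j^\circ \cap U$ not lying on any other $D_i$, the terms $a_i dh_i/h_i$ for $i \neq j$ are holomorphic and contribute nothing, while $\Res_{D_j}(a_j dh_j/h_j) = a_j|_{D_j} = \Res_{D_j}(\omega)|_{D_j \cap U}$ by construction, giving the cancellation. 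Applying Lemma \ref{lem:zeroresidues} on $U$ then yields $\omega' \in \Omega^1(U)$, which is exactly the expression \eqref{eq:holres1form}.

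The only step that is not pure bookkeeping is the holomorphic extension of $\Res_{D_i}(\omega)$ from $D_i$ to $U$, and this is precisely where the smoothness of each $D_i$ is used: for a non-smooth component one cannot in general realize $D_i$ locally as a coordinate hyperplane, and no such extension need exist. Given smoothness of the components, which is part of the hypothesis, every other step is formal, and the linearity and locality of $\Res$ together with Lemma \ref{lem:zeroresidues} do all the remaining work.
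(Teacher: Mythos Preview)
Your proposal is correct and follows essentially the same approach as the paper: the ``$\Leftarrow$'' direction is immediate from the local expression, and for ``$\Rightarrow$'' the paper performs exactly the same local subtraction argument, extending each $\Res_{D_i}(\omega)$ to a holomorphic $a_i$ on $U$ (using smoothness of $D_i$), forming $\eta = \omega - \sum a_i\, dh_i/h_i$, and invoking Lemma~\ref{lem:zeroresidues} to conclude $\eta$ is holomorphic.
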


\begin{proof}
	One direction is clear, if \(\omega\) is locally of the form given by Equation \eqref{eq:holres1form} then \(\Res_{D_i}(\omega) = a_i|_{\{h_i=0\}}\) are holomorphic on \(D_i\). 
	
	To show the converse, let \(x \in X\) and let \(U\) be an open neighbourhood of \(x\) such that \(U \cap D_j = \emptyset\) if \(x \notin D_j\). Shrinking \(U\) if necessary, we can also assume that for every \(D_i\) such that \(x \in D_i\) we have a defining equation \(h_i \in \mO(U)\). 
	Since the divisors \(D_i\) are smooth and \(\Res_{D_i}(\omega)\) are holomorphic on \(D_i\), we can take holomorphic functions \(a_i\) on (a possibly smaller) \(U\) such that \(a_i|_{D_i} = \Res_{D_i}(\omega)\). Consider the logarithmic \(1\)-form on the open set \(U\)  given by
	\[
	\eta = \omega \,-\, \sum_{i\,|\, x \in D_i} a_i \frac{dh_i}{h_i} \,.
	\]
	By construction, the residues of \(\eta\) are identically zero. By Lemma \ref{lem:zeroresidues} we have \(\eta \in \Omega^1(U)\) and Equation \eqref{eq:holres1form} follows with \(\hol = \eta\).
\end{proof}

\subsection{Pullback of a logarithmic \(1\)-form by a log resolution}\label{sec:pullback}

\subsubsection{Properties of regular birational maps}
A regular map \(\pi: X \to Y\) between complex varieties is called \emph{birational} if it restricts to an isomorphism between non-empty Zariski open subsets of \(X\) and \(Y\). We recall below some standard properties of regular birational maps that will be used later.

\begin{lemma}\label{lem:shafarevich}
	Let \(\pi: X \to Y\) be a regular birational map between smooth quasi-projective complex varieties. Suppose that \(\pi\) is not an isomorphism. Then there exists a divisor \(E \subset X\) such that \(Z = \pi(E)\) is a subvariety of \(Y\) of codimension \(\geq 2\) and \(\pi: X \setminus E \to Y \setminus Z\)
	is an isomorphism.
\end{lemma}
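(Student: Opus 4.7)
The plan is to identify $E$ as the Jacobian locus of $\pi$, check that off $E$ the map is an open embedding, and bound the codimension of its image via Zariski's Main Theorem.

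Since $\pi$ is birational, $\dim X = \dim Y =: n$, and both are smooth of this dimension. In local coordinates the Jacobian $J = \det(d\pi)$ is a regular function on $X$, and because $\pi$ is birational $J \not\equiv 0$. I would take $E := \{J = 0\}$. By Krull's Hauptidealsatz on the smooth (hence regular) variety $X$, $E$ is either empty or a divisor of pure codimension one. Off $E$ the differential $d\pi$ is invertible, so the holomorphic inverse function theorem makes $\pi|_{X \setminus E}$ a local biholomorphism; an \'etale birational morphism between irreducible varieties is automatically an open immersion, so $\pi$ embeds $X \setminus E$ as a Zariski open subset $U \subset Y$. Setting $Z := Y \setminus U$ then gives an isomorphism $\pi: X \setminus E \to Y \setminus Z$ by construction. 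If $E$ were empty, then $\pi$ would be an open immersion, and combined with the properness of $\pi$ implicit in the paper's application (where $\pi$ is a composition of blowups along smooth centres) this would force $\pi$ to be an isomorphism, contradicting the hypothesis; so $E$ is a genuine divisor.

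The main obstacle is showing $\codim_Y Z \geq 2$. By construction $\pi^{-1}$ is regular on $U$ but not on $Z$, so $Z$ is contained in the indeterminacy locus of the rational map $\pi^{-1}: Y \dashrightarrow X$. Since $Y$ is smooth, hence normal, Zariski's Main Theorem forces this indeterminacy locus to have codimension at least two. Equivalently, if some component of $Z$ had codimension one, then the restriction of $\pi$ to the preimage component would be generically one-to-one over a divisor in $Y$, and $\pi^{-1}$ would extend regularly across the generic point of that divisor by the valuative criterion — a contradiction. As this is a classical result in birational geometry, in a polished write-up I would simply cite Shafarevich's \emph{Basic Algebraic Geometry}, Chapter II, rather than reproduce the argument in detail.
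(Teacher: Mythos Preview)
The paper's own proof is simply a citation to Shafarevich, \emph{Basic Algebraic Geometry}, Ch.~II, \S 4.4, which is exactly what you recommend in your final sentence. So at the level of what would appear in print, your proposal and the paper's proof coincide.

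Your supplementary sketch is largely sound: the Jacobian locus as $E$, the open-immersion step via Zariski's Main Theorem for normal targets, and the codimension bound on the indeterminacy locus of $\pi^{-1}$ are all the right ingredients. There is, however, one gap. The lemma asserts $Z = \pi(E)$, whereas you set $Z := Y \setminus \pi(X\setminus E)$, and you never check these agree. They coincide only under properness: surjectivity of $\pi$ gives $Y\setminus U \subset \pi(E)$, and Zariski's connectedness theorem (fibers of a proper birational morphism to a normal target are connected) gives $\pi(E)\cap U=\emptyset$, since a point of $U$ has a unique, isolated preimage in $X\setminus E$. You correctly flag properness as an implicit hypothesis---and indeed the lemma is false without it, as the open immersion $\mathbb{A}^1\setminus\{0\}\hookrightarrow\mathbb{A}^1$ already shows---but you invoke it only to rule out $E=\emptyset$, not to reconcile the two candidate $Z$'s.
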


\begin{proof}
	See \cite[Ch. II, \S 4.4]{shafarevich1}.
\end{proof}

The divisor \(E \subset X\) is called the \emph{exceptional divisor} of \(\pi\).
The subvariety \(Z \subset Y\) is the indeterminacy locus of the inverse rational map. 
The complements \(X \setminus E\) and \(Y \setminus Z\) are the largest open subsets of \(X\) and \(Y\), respectively, on which the morphism \(\pi: X \to Y\) restricts to an isomorphism.

If \(V\) is a subvariety of \(Y\) with \(V \not\subset Z\) then the \emph{proper transform} of \(V\), denoted by \(\tV\), is the subvariety of \(X\) given by 
\[
\tV = \overline{\pi^{-1}(V \setminus Z)} \,.
\]
The restriction of \(\pi\) to \(\tV\) is a regular birational map \(\tV \xrightarrow{\pi} V\). 

\begin{lemma}\label{lem:pullbacksections}
	Let \(f: P \to Q\) be a regular birational map between smooth quasi-projective varieties. Let \(\mE\) be a holomorphic vector bundle on \(Q\) and let \(f^*\mE\) be its pullback to \(P\). Then
	\[
	f^*H^0(\mE) = H^0(f^*\mE) \,.
	\]
\end{lemma}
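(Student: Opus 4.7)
The inclusion $f^*H^0(\mE) \subset H^0(f^*\mE)$ is immediate: any global section $t$ of $\mE$ on $Q$ pulls back to a global section $f^*t$ of $f^*\mE$ on $P$, and this assignment is injective because $f$ is dominant (if $f^*t = 0$ then $t$ vanishes on a dense open set of $Q$, hence everywhere). The content is the reverse inclusion.

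The plan is to use Lemma \ref{lem:shafarevich} to reduce the problem to a Hartogs-type extension. If $f$ is an isomorphism there is nothing to prove, so assume it is not. Apply Lemma \ref{lem:shafarevich} to obtain an exceptional divisor $E \subset P$ and a subvariety $Z = f(E) \subset Q$ of codimension at least $2$ such that $f$ restricts to a biholomorphism $\varphi: P \setminus E \xrightarrow{\sim} Q \setminus Z$. Given $s \in H^0(f^*\mE)$, the section $s|_{P \setminus E}$ transports via $\varphi^{-1}$ to a holomorphic section $t_0 \in H^0(Q \setminus Z, \mE)$ of $\mE$ on the complement of $Z$.

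The key step is to extend $t_0$ across $Z$ to a global section $t \in H^0(Q, \mE)$. Since $Q$ is smooth and $Z \subset Q$ is an analytic subset of codimension $\geq 2$, this is a consequence of the classical Hartogs extension theorem: choose a covering of $Q$ by open sets $U_\alpha$ over which $\mE$ is trivial, so that $t_0|_{U_\alpha \setminus Z}$ corresponds to a tuple of holomorphic functions on $U_\alpha \setminus Z$; each such function extends uniquely to a holomorphic function on $U_\alpha$ by Hartogs, and uniqueness of the extension guarantees that these local extensions glue to a global section $t \in H^0(Q, \mE)$. By construction $t|_{Q\setminus Z}$ pulls back under $\varphi$ to $s|_{P \setminus E}$, hence $f^*t$ and $s$ are two holomorphic sections of $f^*\mE$ that agree on the dense open subset $P \setminus E$; by continuity (or by the identity principle applied in local trivializations of $f^*\mE$) they agree everywhere, so $s = f^*t \in f^*H^0(\mE)$.

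There is no substantive obstacle: the only subtle point is ensuring that $Z$ is genuinely of codimension $\geq 2$ and that $Q$ is smooth, so that Hartogs applies in the form we need. Both are guaranteed, respectively, by Lemma \ref{lem:shafarevich} and by the hypothesis that $Q$ is smooth quasi-projective. Note that the smoothness of $P$ is not actually used in the argument beyond what is needed to invoke Lemma \ref{lem:shafarevich}.
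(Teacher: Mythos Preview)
Your proof is correct and follows essentially the same approach as the paper: use Lemma \ref{lem:shafarevich} to transport the section to \(Q \setminus Z\) via the inverse isomorphism, then apply Hartogs to extend across the codimension \(\geq 2\) set \(Z\). The paper's version is terser, but your added detail (local trivializations for Hartogs, the identity principle to conclude \(s = f^*t\) on all of \(P\)) is sound.
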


\begin{proof}
	Clearly, \(f^*H^0(\mE) \subset H^0(f^*\mE)\). Conversely, let \(\ts \in H^0(f^*\mE)\). By Lemma \ref{lem:shafarevich}, there is a section \(s\) of \(\mE\) defined outside a subvariety of \(Q\) of codimension \(\geq 2\) such that \(\ts = \pi^*s\) on a non-empty Zariski open set. By Hartogs, \(s \in H^0(\mE)\).
\end{proof}

\subsubsection{Pullback of a logarithmic \(1\)-form by a log resolution} 

\begin{definition}\label{def:logres}
	Let \(Y\) be a smooth quasi-projective complex variety, and let \(\Gamma \subset Y\) be a divisor.  
	A \emph{log resolution} of the pair \((Y, \Gamma)\) consists of a smooth quasi-projective variety \( X \) together with a regular map \(\pi : X \to Y\) satisfying the following conditions:
	\begin{enumerate}[label=\textup{(\roman*)}]
		\item the total transform \(D := \pi^{-1}(\Gamma)\) is a simple normal crossing divisor on \( X \);
		\item the map \(\pi\) restricts to an isomorphism of algebraic varieties \(X \setminus D \cong Y \setminus \Gamma\).
	\end{enumerate}
\end{definition}

\begin{remark}
	Item (ii) implies that the map \(\pi\) is birational.	
\end{remark}

The irreducible decomposition of the total transform \(D = \pi^{-1}(\Gamma)\) is 
\begin{equation}\label{eq:irrdecD}
D = \sum_i \tGamma_i \,+\, \sum_k E_k \,,
\end{equation} 
where \(\tGamma_i\) are the proper transforms of the irreducible components \(\Gamma_i\) of \(\Gamma\)
and \(E = \sum E_k\) is the irreducible decomposition of the exceptional divisor. By definition, the divisors \(\tGamma_i\) and \(E_k\) are smooth and have normal crossing intersections.

\begin{proposition}\label{prop:pullback1form}
	Let \(Y\) be a smooth quasi-projective complex variety and let \(\Gamma \subset Y\) be a divisor.
	Let \(\omega\) be a logarithmic \(1\)-form on \(Y\) with poles along \(\Gamma\). 
	Suppose that \(\omega\) has holomorphic residues. 
	Let \(\pi: X \to Y\) be a log resolution of the pair \((Y, \Gamma)\) and let \(D = \pi^{-1}(\Gamma)\). 
	Then the following holds:
	\begin{enumerate}[label=\textup{(\roman*)}]
		\item the pull-back \(\pi^*\omega\) is a logarithmic \(1\)-form on \(X\) with poles along \(D\);
		\item the residues of \(\pi^*\omega\) are given by
		\begin{equation}
		\Res_{\tGamma_i}\left(\pi^*\omega\right) = \pi^* \left(\Res_{\Gamma_i}(\omega) \right) \,,
		\end{equation}
		\begin{equation}
		\Res_{E_k}\left(\pi^*\omega\right)  = \sum_{i \,|\, \pi(E_k) \subset \Gamma_i} m_{ik} \cdot  \pi^* \left( \Res_{\Gamma_i}(\omega) \right) \,,	
		\end{equation} 
		where \(m_{ik}\) is the order of vanishing of \(\pi^*h_i\) along \(E_k\) for any choice of defining equation \(h_i\) of \(\Gamma_i\).
	\end{enumerate}
\end{proposition}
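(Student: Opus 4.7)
The plan is to reduce everything to a local computation near a point $y \in Y$, using the holomorphic residues hypothesis to normalize $\omega$ and then applying Lemma \ref{lem:dlogf} to each logarithmic summand of the pullback.

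First I would fix $y \in Y$ and invoke Lemma \ref{lem:holres1form}: shrinking to a suitable open neighbourhood $V$ of $y$ we can write
\begin{equation*}
\omega = \sum_{i\,|\, y \in \Gamma_i} a_i \,\frac{dh_i}{h_i} \,+\, \eta \,,
\end{equation*}
where $a_i \in \mO(V)$ satisfy $a_i|_{\Gamma_i} = \Res_{\Gamma_i}(\omega)$, the $h_i \in \mO(V)$ are defining equations of $\Gamma_i$, and $\eta \in \Omega^1(V)$. Pulling back through $\pi$ gives
\begin{equation*}
\pi^*\omega \,=\, \sum_{i} \pi^*a_i \cdot \frac{d(\pi^*h_i)}{\pi^*h_i} \,+\, \pi^*\eta \,.
\end{equation*}
The term $\pi^*\eta$ is holomorphic on $\pi^{-1}(V)$ and hence contributes nothing to residues. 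The key object is therefore $d(\pi^*h_i)/\pi^*h_i$.

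Next I would analyse $\pi^*h_i$ as a holomorphic function on $\pi^{-1}(V)$. On the Zariski open set where $\pi$ is an isomorphism, $\pi^*h_i$ vanishes to order $1$ along $\tGamma_i$; on the exceptional divisor it vanishes to order $m_{ik}$ along $E_k$, where $m_{ik} \geq 0$ is the multiplicity defined in the statement (and $m_{ik}=0$ precisely when $\pi(E_k) \not\subset \Gamma_i$). Since $D = \sum_i \tGamma_i + \sum_k E_k$ is simple normal crossing, I can apply Lemma \ref{lem:dlogf} to conclude that $d(\pi^*h_i)/\pi^*h_i$ is a logarithmic $1$-form on $\pi^{-1}(V)$ with poles along $\tGamma_i + \sum_{k\,|\,\pi(E_k)\subset \Gamma_i} E_k \subset D$, with residue $1$ along $\tGamma_i$ and residue $m_{ik}$ along each $E_k$. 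Summing over $i$ and using linearity of the residue (Remark \ref{rmk:linearityresidue}), together with the fact that $\pi^*a_i$ is holomorphic, I obtain (i) that $\pi^*\omega$ is logarithmic on $X$ with poles along $D$, and the residue formulas
\begin{equation*}
\Res_{\tGamma_j}(\pi^*\omega) = (\pi^*a_j)\big|_{\tGamma_j} \,,
\qquad
\Res_{E_k}(\pi^*\omega) = \sum_{i\,|\, \pi(E_k) \subset \Gamma_i} m_{ik} \cdot (\pi^*a_i)\big|_{E_k} \,,
\end{equation*}
since at a generic point of $\tGamma_j$ the function $\pi^*h_i$ for $i \neq j$ is non-vanishing, so only the $i=j$ term survives in the first formula.

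The final step is to translate these restrictions back to residues of $\omega$. Because $a_i|_{\Gamma_i} = \Res_{\Gamma_i}(\omega)$ and $\pi$ sends $\tGamma_j$ to $\Gamma_j$ (with image containing the generic point), we have $(\pi^*a_j)|_{\tGamma_j} = \pi^*(\Res_{\Gamma_j}(\omega))$; and similarly $\pi(E_k) \subset \Gamma_i$ forces $(\pi^*a_i)|_{E_k} = \pi^*(\Res_{\Gamma_i}(\omega))$, yielding statement (ii). The only real subtlety — hence the step I would flag as the main obstacle — is verifying that the multiplicities $m_{ik}$ are well-defined independently of the choice of local defining equation $h_i$ of $\Gamma_i$, and that the formula for $\Res_{E_k}(\pi^*\omega)$ is independent of the local presentation of $\omega$; both reduce to checking that replacing $h_i$ by $u h_i$ with $u$ a local unit changes $d(\pi^*h_i)/\pi^*h_i$ by a holomorphic $1$-form, which has vanishing residues, and correspondingly changes $a_i$ only modulo terms whose contribution to the pullback residue vanishes. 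Once these compatibilities are checked, the local formulas glue to a global statement on $X$.
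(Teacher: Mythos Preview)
Your proof is correct and follows essentially the same approach as the paper's own proof: use Lemma~\ref{lem:holres1form} to write $\omega$ locally as $\sum a_i\,dh_i/h_i + \hol$, pull back, and apply Lemma~\ref{lem:dlogf} together with linearity of residues (Remark~\ref{rmk:linearityresidue}). The paper's version is considerably terser and does not spell out the well-definedness checks you flag at the end, but the argument is the same.
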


\begin{proof}
	By Lemma \ref{lem:holres1form}\,, we can cover \(Y\) with open subsets \(V \subset Y\) such that
	\(\omega = \sum a_i dh_i / h_i + \hol\) with \(a_i, h_i \in \mO(V)\), where \(h_i\) is a defining equation of \(\Gamma_i\). (If  \(\Gamma_i \bigcap V = \emptyset\) we take \(h_i=1\).) Then
	\(\pi^*\omega = \sum (\pi^*a_i) df_i / f_i + \hol\) with \(f_i = \pi^* h_i\). Given this, items (i) and (ii) of the proposition follow from Lemma \ref{lem:dlogf} and the linearity of residues (see Remark \ref{rmk:linearityresidue}).
\end{proof}

The next example illustrates the necessity of the holomorphic residues condition in the above proposition.

\begin{example}
	Let \(\omega\) be as in Example \ref{ex:3lines}. Take coordinates \((s, t)\) on \(\Bl_0\C^2\) such that the exceptional divisor is \(E = \{s=0\}\) and \(\pi(s, t) = (z, w)\) with \(z = s\) and \(w = s  t\). An easy calculation shows that
	\[
	\pi^*\omega = \frac{dt}{st(1-t)} \,.
	\]
	In particular, near the exceptional divisor \(s=0\) and away from \(t=0, 1\), the pullback \(\pi^*\omega\) presents a simple pole of `the wrong' or `mixed' type \(dt/s\) and therefore it is not logarithmic.
\end{example}

Conversely, we have the following.

\begin{proposition}\label{prop:pullback1form2}
	Let \(\tomega\) be a logarithmic \(1\)-form on \(X\) with poles along \(D\).
	Then \(\tomega = \pi^*\omega\), where \(\omega\) is a logarithmic \(1\)-form on \(Y\) with poles along \(\Gamma\). Moreover, if the irreducible components of \(\Gamma\) are smooth, then \(\omega\) has holomorphic residues.
\end{proposition}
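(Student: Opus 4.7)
The plan is as follows. Since $\pi$ restricts to an isomorphism between $X \setminus D$ and $Y \setminus \Gamma$, I first define $\omega$ on $Y \setminus \Gamma$ as the pullback of $\tomega|_{X \setminus D}$ via the inverse of $\pi$. This produces a holomorphic $1$-form on $Y \setminus \Gamma$ satisfying $\pi^*\omega = \tomega$ there. The goal is to show that $\omega$ extends to a logarithmic $1$-form on $Y$ with poles along $\Gamma$ in the sense of Definition~\ref{def:log1form}, and, under the further assumption that the $\Gamma_i$ are smooth, that the residues are holomorphic in the sense of Definition~\ref{def:holres0}.

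For the first assertion, let $y \in \Gamma^{\circ}$ and choose a reduced local defining equation $h \in \mO(V)$ of $\Gamma$ on a small neighbourhood $V$ of $y$. Let $Z = \pi(E)$ denote the image of the exceptional divisor; by Lemma~\ref{lem:shafarevich}, $Z$ has codimension at least $2$ in $Y$. On $V \setminus Z$ the map $\pi$ is an isomorphism, so $\omega|_{V \setminus Z}$ is logarithmic with poles along $\Gamma \cap (V \setminus Z)$, and in particular $h\omega$ is a holomorphic $1$-form on $V \setminus Z$. By Hartogs' theorem, $h\omega$ extends to a holomorphic $1$-form $\tau$ on all of $V$. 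To conclude that $\omega = \tau/h$ satisfies the logarithmic pole condition at $y$, I will show that $dh \wedge \tau$ vanishes along $\Gamma \cap V$: in local coordinates $(z_1, \ldots, z_n)$ with $h=z_1$, this vanishing amounts to the divisibility $z_1 \mid f_j$ for $j \geq 2$ in $\tau = \sum f_j \, dz_j$, which yields the decomposition $\tau = a\, dh + h\, \eta$ required by Definition~\ref{def:log1form}. The condition $dh \wedge \tau|_\Gamma = 0$ is already known on the dense open subset $(\Gamma \cap V) \setminus Z$ from the logarithmic structure of $\omega|_{V \setminus Z}$, and extends to all of $\Gamma \cap V$ by analytic continuation, using that $\Gamma \cap V$ is locally irreducible at the smooth point $y$.

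For the ``moreover'' statement, assume each $\Gamma_i$ is smooth. Since $D$ is simple normal crossing, Proposition~\ref{prop:normcross} gives that the residue $\ta_i := \Res_{\tGamma_i}(\tomega)$ is a holomorphic function on all of $\tGamma_i$. The restricted morphism $\pi|_{\tGamma_i}\colon \tGamma_i \to \Gamma_i$ is a regular birational map between smooth quasi-projective varieties, so Lemma~\ref{lem:pullbacksections} applied to the structure sheaf $\mO_{\Gamma_i}$ yields a unique holomorphic function $a_i \in H^0(\Gamma_i, \mO)$ with $\ta_i = \pi^* a_i$. A direct pointwise residue computation at points of $\tGamma_i$ where $\pi$ is an isomorphism (the generic-point content of Proposition~\ref{prop:pullback1form}(ii)) identifies $a_i$ with $\Res_{\Gamma_i}(\omega)$ on $\Gamma^{\circ} \cap \Gamma_i$, so $a_i$ provides the required holomorphic extension of $\Res_{\Gamma_i}(\omega)$ across $\Sing(\Gamma)$.

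The step I expect to require the most care is the propagation of the vanishing $(dh \wedge \tau)|_{\Gamma \cap V} = 0$ from $(\Gamma \cap V) \setminus Z$ to all of $\Gamma \cap V$: although $Z$ has codimension at least $2$ in $Y$, its intersection with $\Gamma \cap V$ has only codimension at least $1$ inside $\Gamma \cap V$, so the argument must go through local irreducibility of the germ of $\Gamma$ at the smooth point $y$ rather than a direct application of Hartogs on the stratum.
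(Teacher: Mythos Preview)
Your proof is correct, and the second half (holomorphic residues via Lemma~\ref{lem:pullbacksections} applied to $\pi|_{\tGamma_i}$) is essentially identical to the paper's. The first half differs: the paper simply observes that $\omega$ is logarithmic on $Y \setminus Z$ (where $\pi$ is an isomorphism) and then invokes a Hartogs-type extension for logarithmic forms across codimension $\geq 2$ sets, citing Saito~\cite[\S 1.1]{saito}, to extend $\omega$ over $Z$. You instead work by hand near a point $y \in \Gamma^\circ$: extend $h\omega$ across $Z$ by ordinary Hartogs, then verify the Saito criterion $dh \wedge \tau|_\Gamma = 0$ by continuity from the dense open set $(\Gamma \cap V)\setminus Z$. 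Your route is more self-contained and avoids the external reference; the paper's is shorter. One small comment: your worry about ``local irreducibility'' is slightly overstated---once $V$ is small enough that $\Gamma \cap V$ is smooth and connected, the identity principle for holomorphic sections on a connected manifold gives the vanishing, and $(\Gamma \cap V)\setminus Z$ is non-empty simply because $\Gamma_i \not\subset Z$ (by dimension). Also, note that you implicitly use the larger isomorphism $X \setminus E \cong Y \setminus Z$ (from Lemma~\ref{lem:shafarevich}) when you say $\omega$ is logarithmic on $V \setminus Z$, since your initial definition only puts $\omega$ on $Y \setminus \Gamma$; this is fine but worth making explicit.
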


\begin{proof}
	Let \(E \subset X\) be the exceptional divisor of \(\pi\) and let \(Z = \pi(E)\) be the indeterminacy locus of \(\pi^{-1}\). Clearly,
	there is a logarithmic \(1\)-form \(\omega\) defined on the complement of \(Z\) such that \(\tomega = \pi^*\omega\) on \(X \setminus E\).  Since the subvariety \(Z \subset Y\) has codimension \(\geq 2\), by Hartogs (see item iv in \cite[\S 1.1]{saito} for a precise version in the context of logarithmic differential forms), the \(1\)-form \(\omega\) extends across \(Z\) as a logarithmic \(1\)-form on \(Y\) with poles along \(\Gamma\). This proves the first part of the proposition. 
	
	Suppose now that the irreducible components of \(\Gamma\) are smooth, we want to show that \(\omega\) has holomorphic residues. Fix an irreducible component of \(\Gamma\), say 
	\(\Gamma_i\). Let \(a = \Res_{\Gamma_i}(\omega)\) be the residue of \(\omega\) along \(\Gamma_i\). By definition, \(a\) is a holomorphic function on \(\Gamma_i \cap \Gamma^{\circ}\). We want to show that \(a\) extends across \(\Sing(\Gamma)\) as an holomorphic function on \(\Gamma_i\).
	Let \(\tGamma_i \subset X\) be the proper transform of \(\Gamma_i\) and let \(\ta = \Res_{\tGamma_i}(\tomega)\). 
	Since \(\tGamma_i\) is an irreducible component of \(D\) and \(D\) is a simple normal crossing divisor, by \cite[Theorem 3.8]{novikovyakovenko}, the residue \(\ta\) extends holomorphically to the whole \(\tGamma_i\). 
	
	Consider the regular birational map \(f: \tGamma_i \to \Gamma_i\) obtained by restriction of the log resolution \(\pi: X \to Y\) to \(\tGamma_i\). Let \(V\) be the dense open subset of \(\Gamma_i\) defined as \(V = \Gamma_i \setminus (\Sing(\Gamma) \bigcup Z)\) and let \(U = f^{-1}(V)\). By definition, \(\ta = f^*a\) on \(U\). On the other hand, by Lemma \ref{lem:pullbacksections} applied to \(f: \tGamma_i \to \Gamma_i\) and the trivial line bundle \(\mE = \mO_{\Gamma_i}\), there is a holomorphic function \(\varphi\) on \(\Gamma_i\) such that \(\ta = f^*\varphi\). It suffices to show that \(\varphi = a\) on \(\Gamma_i \cap \Gamma^{\circ}\). This follows from the fact that \(\varphi = a\) on \(V\) and \(V \subset \Gamma_i \cap \Gamma^{\circ}\) is dense.
\end{proof}

\subsection{Connections}\label{app:conn}
Let \(X\) be a complex manifold of dimension \(n\) and let \(\mE\) be a holomorphic vector bundle on \(X\) of rank \(r\).
A \emph{holomorphic connection} on \(\mE\) is a \(\C\)-linear map of sheaves 
\[
\nabla: \mE \to \Omega_X^1 \otimes \mE
\] 
satisfying the Leibniz rule \(\nabla(f  s) = df \otimes s + f \nabla s\)\,.

\subsubsection{Local representation of connections}
Let \(\nabla\) be a holomorphic connection on \(\mE\).
Consider a trivialization of \(\mE\) given by a frame of holomorphic sections \(s = (s_1, \ldots, s_r)\) defined on an open set \(U \subset X\). 
Let \(\omega_{ij} \in \Omega^1(U)\) be given by
\begin{equation}
\nabla s_j = - \sum_{i=1}^{r} \omega_{ij} \otimes s_i \,.
\end{equation}
The \emph{connection matrix} of \(\nabla\) in the frame \(s\) is the matrix of \(1\)-forms \(\Omega = (\omega_{ij})\) whose \((i,j)\)-th entry is equal to \(\omega_{ij}\).

In the trivialization of \(\mE\) given by the frame \(s\), a section \(\sigma\) of \(\mE\) corresponds to an \(r\)-tuple of holomorphic functions \(f_i \in \mO(U)\) given by \(\sigma = \sum f_i s_i\). 
Similarly, a section \(\tau\) of \(\Omega^1_X \otimes \mE\) corresponds to an \(r\)-tuple of holomorphic \(1\)-forms  \(\eta_i \in \Omega^1(U)\) given by \(\tau = \sum \eta_i \otimes s_i\). 
Using the frame \(s\) to represent sections of \(\mE\) and \(\Omega_X^1 \otimes \mE\) as column vectors of holomorphic functions and \(1\)-forms, we have 
\begin{equation}
\nabla = d - \Omega \,.	
\end{equation}

Suppose now that \(s' = (s_1', \ldots, s_r')\) is another frame of holomorphic sections of \(\mE\) over \(U\). 
Let \(\Omega'\) be the connection matrix of \(\nabla\) in the frame \(s'\). We want to relate the connection matrices \(\Omega'\) and \(\Omega\). To do this, write \(s_j = \sum_i g_{ij}s'_i\) with \(g_{ij} \in \mO(U)\). The matrix valued function \(G = (g_{ij})\) defines a \emph{gauge transformation} 
\[
G : U \to GL(r, \C) \,.
\]

A holomorphic section \(\sigma\) of \(\mE\) can be written in the frames \(s, s'\) as \(f, f' \in \mO(U)^{\oplus r}\). The relation \(\sum f'_i s'_i = \sum f_j s_j\) (both being equal to \(\sigma\)) implies that \(f' = Gf\).
Similarly, the holomorphic section \(\tau =\nabla \sigma\) of \(\Omega_X^1 \otimes \mE\) can be written in the frames \(s, s'\) as two column vectors of \(1\)-forms \(\eta = (\eta_i)\) and \(\eta' = (\eta'_i)\). The relation \(\sum \eta'_i \otimes s'_i = \sum \eta_j \otimes s_j\) (both being equal to \(\tau\)) implies that \(\eta' = G \eta\). Since \(\eta' = df' - \Omega' f'\) and \(\eta = df - \Omega f\), we get that
\[
df' - \Omega' f' = G \cdot \left( df - \Omega f \right) \,.
\]
Replacing \(f' = Gf\) into the above equation, we deduce that
\begin{equation}\label{eq:gaugetransf}
\Omega' =  G \cdot \Omega \cdot G^{-1} + (dG) \cdot G^{-1} \,,
\end{equation}
which gives the desired relation between the two connection matrices.

\subsubsection{Flatness}
Let \(\nabla\) be a holomorphic connection on \(\mE \to X\). 
The connection \(\nabla\) is flat if near any point \(x \in X\) we can find a frame of parallel holomorphic sections of \(\mE\).
In a local trivialization, the connection \(\nabla = d - \Omega\) is flat if the connection matrix satisfies
\begin{equation}
d \Omega = \Omega \wedge \Omega \,.
\end{equation}

\addcontentsline{toc}{section}{References}
\bibliographystyle{alpha}
\bibliography{refs}	

\Address

\end{document}